\documentclass[11pt, a4paper,leqno]{amsart}
\usepackage{latexsym}
\usepackage{amssymb}
\usepackage{amsmath}
\usepackage[english]{babel}
\usepackage{txfonts}

\usepackage[top=1.25in, bottom=1.25in, left=1.25in, right=1.25in]{geometry}

\numberwithin{equation}{section}
\theoremstyle{plain}
\newtheorem{theorem}{Theorem}[section]
\newtheorem{lemma}[theorem]{Lemma}
\newtheorem{corollary}[theorem]{Corollary}
\newtheorem{proposition}[theorem]{Proposition}
\theoremstyle{definition}
\newtheorem{definition}[theorem]{Definition}
\theoremstyle{remark}
\newtheorem{remark}[theorem]{Remark}
\theoremstyle{plain}
\newtheorem{mytheorem}{Theorem}

\newcommand{\rn}[1]{\mathbb{R}^{#1}} \newcommand{\re}{ \mathbb{R}}
\newcommand{\ep}{\epsilon} \newcommand{\es}{\emptyset} 
\newcommand{\al}{\alpha} \newcommand{\ga}{\gamma} \newcommand{\de}{\delta}
\newcommand{\ds}{\displaystyle} \newcommand{\ran}{\rangle} \newcommand{\lan}{\langle}
\newcommand{\Ga}{\Gamma} \newcommand{\la}{\lambda} \newcommand{\La}{\Lambda}
\newcommand{\ar}{\partial} \newcommand{\si}{\sigma} \newcommand{\om}{\omega}
\newcommand{\Om}{\Omega} \newcommand{\be}{\beta} \newcommand{\ph}{\phi}
\newcommand{\he}{\theta} \newcommand{\hs}[1]{\mbox{$ \hspace{#1}$}}
\newcommand{\sem}{\setminus} \newcommand{\ze}{\zeta} \newcommand{\De}{\Delta}
\newcommand{\ti}{\tilde} \newcommand{\cp}{\mathcal{C}_p}
\newcommand{\capp}{\mbox{Cap}_{p}}

\numberwithin{equation}{section}
\allowdisplaybreaks
\usepackage[colorlinks=true,pdfstartview={FitV 0},linkcolor=red,citecolor=red,urlcolor=blue,backref=page]{hyperref} 

\renewcommand*{\backref}[1]{}
\renewcommand*{\backrefalt}[4]{%
 \ifcase #1 (Not cited.)%
   \or        (Cited on page~#2.)%
    \else      (Cited on pages~#2.)%
    \fi}

\begin{document}

\title[Brunn--Minkowski inequality for $p$-capacity of compact sets]{Brunn--Minkowski inequality for \texorpdfstring{$p$}{p}-capacity of compact sets}

\author[M. Akman]{Murat Akman}
\address{{\bf Murat Akman}\\
Department of Mathematical Sciences \\
University of Essex \\
Colchester CO4 3SQ, United Kingdom}
\email{murat.akman@essex.ac.uk}

\keywords{The Brunn--Minkowski inequality, Inequalities and extremum problems,
Potentials and capacities, $p$-Laplace equation, $p$-harmonic Green's function,
compact sets, logarithmic capacity, viscosity solutions}
\subjclass[2020]{35J60,31B15,39B62,52A40,35J92,35D40}
\begin{abstract}
We prove the capacitary Brunn--Minkowski inequality for arbitrary
nonempty compact sets in $\mathbb R^n$, $n\geq2$, and every
$1<p<\infty$, with no assumption of convexity, connectedness,
interior, or boundary regularity. For $1<p<n$ the inequality is for
the variational $p$-capacity. For $p>n$ it is for the capacity
determined by the constant term at infinity of the normalized $p$-harmonic
Green's function with pole at infinity, and for $p=n$ for an
outer regularization of this quantity, which agrees with it for
$n$-regular sets and in the plane is a constant multiple of
logarithmic capacity. In particular, it holds for
Newtonian capacity of all compact sets in $\mathbb R^n$, $n\geq3$, and for logarithmic capacity
of all planar compact sets. Previously these inequalities were known
for convex sets and, for logarithmic capacity, for planar continua.
When both capacities are positive, equality holds for $0<\lambda<1$ if and only if the
filled sets, obtained by adjoining the bounded components of the
complement, are convex and equal up to translation and dilation. This
characterization is new for nonconvex sets, even for Newtonian and
logarithmic capacity.
\end{abstract}
\maketitle

\section{Introduction}
The classical Brunn--Minkowski inequality states that the $n$-th root
of volume is concave under Minkowski addition. This was shown first for convex bodies
and then for arbitrary nonempty compact sets. Here
$E+F=\{x+y:x\in E,\ y\in F\}$ is the Minkowski sum, and
$tE=\{tx:x\in E\}$ for $t\geq0$. See Gardner's survey \cite{G}. For Newtonian capacity, Borell \cite{B1} proved the
corresponding concavity of $\mbox{Cap}_2^{1/(n-2)}$ for convex bodies
in $\rn{n}$, $n\geq3$. Caffarelli, Jerison, and Lieb \cite{CJL}
established its equality case. Borell \cite{B2} also proved the
Brunn--Minkowski inequality for logarithmic capacity of planar convex
sets. Colesanti and Salani \cite{CS} extended the capacitary inequality
to $\mbox{Cap}_p^{1/(n-p)}$ for $1<p<n$ and convex bodies, and
characterized equality. Here
\begin{equation}\label{eqn1.4}
\mbox{Cap}_p(E)=\inf\left\{
\int_{\rn{n}}|\nabla v|^p\,dx:
 v\in C^\infty_0(\rn{n}),\ v\geq1\ \mbox{on }E
\right\}
\end{equation}
is the variational $p$-capacity. Colesanti and Cuoghi \cite{CC}
treated $p=n$ for convex bodies, using an $n$-dimensional logarithmic
capacity defined through the $n$-Laplace equation, and characterized
equality. For compact convex sets of positive capacity, the author
with Gong, Hineman, Lewis, and Vogel \cite{AGHLV} ($1<p<n$) and with
Lewis, Saari, and Vogel \cite{ALSV} ($p\geq n$) proved the inequality
for capacities of $\mathcal A$-harmonic equations modelled on the
$p$-Laplace equation, and characterized equality under structural
hypotheses satisfied by the $p$-Laplace operator. These two works allow
convex sets with empty interior. 

In the present article we extend these results, including the
characterization of equality, to arbitrary nonempty compact sets and
every $1<p<\infty$.  To describe these results, we note that the capacity $\Phi_p$ in \eqref{eq:degree-one} is homogeneous of degree
one. It equals $\mbox{Cap}_p^{1/(n-p)}$ when $1<p<n$, the outer
capacity $\bar{\mathcal C}_n$ when $p=n$, and $\mathcal C_p^{1/(p-n)}$
when $p>n$, where $\mathcal C_p$ is the capacity defined through the
$p$-harmonic Green's function with pole at infinity. The
\textit{filled} set $\hat E$ adjoins to $E$ the bounded components of
its complement. A compact set is $p$-regular when every boundary point
of its unbounded exterior domain is regular for continuous Dirichlet
data. Precise definitions are in Section~\ref{NSR}.

\begin{mytheorem}
\label{thmA}
Let $E_1$ and $E_2$ be nonempty compact sets in $\rn{n}$, $n\geq 2$, and let $\la\in [0,1]$.

\begin{itemize}
\item[(a)] If $1<p<n$ then
\end{itemize}
\begin{align}
\label{BMp}
\left[\capp(\lambda E_1+(1-\lambda) E_2)\right]^{\frac{1}{n-p}}\geq\lambda \left[\capp(E_{1})\right]^{\frac{1}{n-p}}+(1-\lambda)\left[\capp(E_2)\right]^{\frac{1}{n-p}}.
\end{align}
\begin{itemize}
\item[(b)] If $p=n$ and $E_1,E_2$ are $n$-regular, then
\end{itemize}
\begin{align}
\label{BMn}
\cp(\lambda E_1+(1-\lambda)E_2)\geq \lambda\,  \cp(E_1) +(1-\lambda)\, \cp(E_2).
\end{align}
\begin{itemize}
\item[(c)] If $n<p<\infty$ then
\end{itemize}

\begin{align}
\label{BMpn}
[\cp(\lambda E_1+(1-\lambda)E_2)]^{\frac{1}{p-n}}\geq
\lambda \, [\cp(E_1)]^{\frac{1}{p-n}} +(1-\lambda)\, [\cp(E_2)]^{\frac{1}{p-n}}.
\end{align}
\begin{itemize}
\item[(d)] If $1<p<\infty$, $0<\lambda<1$, and
$\Phi_p(E_1),\Phi_p(E_2)>0$, then
\end{itemize}
\begin{equation}\label{eq:equality}
\Phi_p(\lambda E_1+(1-\lambda)E_2)
=\lambda\Phi_p(E_1)+(1-\lambda)\Phi_p(E_2)
\end{equation}
if and only if $\hat E_1$ and $\hat E_2$ are convex and equal up to
translation and dilation.
\end{mytheorem}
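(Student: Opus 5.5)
The plan is to reduce everything to the known convex/regular case by approximation from outside, then use monotonicity and outer continuity of capacity. The key observation is that each of the capacities in question depends only on the filled set: $\Phi_p(E)=\Phi_p(\hat E)$. Indeed, $\mbox{Cap}_p(E)$ is unchanged by adjoining bounded complementary components, since the extremal is identically $1$ there by the maximum principle. The same holds for the Green's function with pole at infinity, which lives only on the unbounded component.

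The approximation step comes first. Given nonempty compact $E_1,E_2$, I would approximate each $E_i$ from outside by a decreasing sequence of smooth compact sets $E_i^k\downarrow E_i$, for instance the closures of $\epsilon_k$-neighbourhoods smoothed out. These are $p$-regular for every $p$ and have nonempty interior. The sums also decrease:
\[
\lambda E_1^k+(1-\lambda)E_2^k\downarrow \lambda E_1+(1-\lambda)E_2,
\]
because Minkowski sums of decreasing compacta converge in the Hausdorff sense to the sum of the limits.

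\textbf{The inequality for regular sets.} For (a)--(c) I would then prove the inequality for such regular sets by a Borell/Colesanti--Salani type argument that does not use convexity of the sets. Let $u_i$ be the capacitary potentials (for $p<n$) or the Green's functions with pole at infinity (for $p\ge n$). Form the sup-convolution
\[
w(z)=\sup\Bigl\{\min\bigl(u_1(x),u_2(y)\bigr)\text{-type combination}:\ z=\lambda x+(1-\lambda)y\Bigr\},
\]
suitably in power/exponential scaling, and show that $w$ is a viscosity subsolution of the $p$-Laplace equation in the complement of $\lambda E_1+(1-\lambda)E_2$. Comparing $w$ with the potential of the sum via a viscosity comparison principle, and reading off the behaviour at infinity, gives (\ref{BMp})--(\ref{BMpn}). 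The viscosity framework is what removes the need for convexity or smoothness: the subsolution property is checked pointwise at maximizing pairs $(x,y)$ using the equations for $u_1,u_2$, via the ``Minkowski'' inequality for the degenerate elliptic operator evaluated at parallel gradients.

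\textbf{Passage to the limit.} Outer continuity of $\mbox{Cap}_p$ along decreasing compacta, and of $\mathcal C_p$ for $p>n$ (where every point has positive capacity, so Green's functions converge uniformly), passes the inequality to arbitrary compact sets. For $p=n$ the limit is exactly what defines the outer regularization $\bar{\mathcal C}_n$. When the sets are $n$-regular, this agrees with $\cp$, which gives (b).

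\textbf{Equality case (d).} If $\hat E_1,\hat E_2$ are convex and homothetic, equality follows by homogeneity and translation invariance, since $\lambda\hat E_1+(1-\lambda)\hat E_2$ is a dilate of $\hat E_1$ and its filled set is the sum of the fillings. For the converse, set $K=\lambda E_1+(1-\lambda)E_2$. Then
\[
\hat K\supseteq \lambda\hat E_1+(1-\lambda)\hat E_2 \supseteq K,
\]
so equality for $E_i$ forces equality for the filled sets, which gives $\Phi_p(\operatorname{conv}E_i)\ge \Phi_p(\hat E_i)$. I would then show that equality forces $w$ to coincide with the potential of the sum. Tracing the equality in the pointwise Minkowski step forces the level sets of $u_1,u_2$ to be homothetic and the $u_i$ to be quasi-concave. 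Convexity of level sets near the boundary, passed to the limit, makes $\hat E_i$ convex, and the convex equality characterization from \cite{CS,CC,AGHLV,ALSV} then yields homothety.

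\textbf{Main obstacle.} I expect the hardest step to be this rigidity in the nonconvex setting. Equality is only known in the limit, so it must survive approximation. I would first try to work directly with the actual potentials of $E_i$, which are continuous off the sets when $\Phi_p>0$ after regularizing. I would then apply a strong comparison principle to $w$ and the potential of the sum on the unbounded component to obtain $w\equiv u$.
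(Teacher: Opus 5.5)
Your route to (a)--(c) is the paper's. The supremal convolution $w^*$ of $w_1,w_2$ is shown to be a viscosity $p$-subsolution in $\rn{n}\setminus\hat E$ and compared with $w$; the paper uses the plain $\min$ and recovers the full inequality from $\Phi_p(E)\geq\min\{\Phi_p(E_1),\Phi_p(E_2)\}$ by homogeneity (Lemma~\ref{lem5.5}). General compact sets are then reached through $E_\delta$, using $\la(E_1)_\delta+(1-\la)(E_2)_\delta=(\la E_1+(1-\la)E_2)_\delta$ and outer continuity. For (b) and (c) no approximation is needed: the hypothesis, respectively Lemma~\ref{lem9.6}~(b), makes $E_1,E_2$ regular, and Lemma~\ref{lem9.6}~(d) makes the sum regular.

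However, the step you describe as checked ``at maximizing pairs via parallel gradients'' is exactly where nonconvexity bites. Lemmas~\ref{lem5.2}--\ref{lem5.3} need $w_1(y)=w_2(z)$ and $\nabla w_1(y),\nabla w_2(z)\neq0$ at the optimal pair. For nonconvex sets neither is automatic; two disjoint balls already give a critical point. The missing ingredient is Lemma~\ref{lem7.1}. If the values differ, optimality and the strong maximum principle make one function locally constant, so the test function $\ph$ has a local minimum at $x$. If a gradient vanishes, apply Hopf's lemma (Lemma~\ref{lem3.3}) to $s-w_1$ in a small ball through $y$ on the side of $-\nabla\ph(x)$. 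This produces points $y+\vartheta$ in that ball with $w_1(y+\vartheta)\geq s$, hence $\ph(x+\la\vartheta)\geq\ph(x)$, and a Taylor expansion contradicts this unless $\nabla\ph(x)=0$. The Juutinen--Lindqvist--Manfredi definition imposes nothing on test functions with zero gradient, so their equivalence theorem makes $w^*$ $p$-subharmonic and comparison applies.

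The genuine gap is the converse in (d). Equality identifies only one number, the constant at infinity. It is in general not inherited by $E_\delta$: a square and its double form an equality pair, but their $\delta$-neighbourhoods are not homothetic. So nothing can be proved for approximants and ``passed to the limit''. The strong comparison of $w^*$ with $w$ that you propose is not available either. Here $w^*$ is only a viscosity subsolution and gradients may vanish, so the linearization behind Lemma~\ref{lem9.7} fails. Even $w^*\equiv w$ would not make $u_1,u_2$ quasi-concave.

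The paper instead works with the outer-limit functions $W_E$ themselves, in four steps.
\begin{enumerate}
\item Far from the sets, the level sets are smooth and strictly convex, and their capacities are fixed by the level (Lemma~\ref{lem:levels}).
\item By the pointwise inequality of Theorem~\ref{thmC}, the $\theta$-combination of these level sets lies in the corresponding level set of $M$. The convex equality theorems and strict monotonicity then make the far levels coincide after a translation (Lemma~\ref{lem:farlevels}).
\item The common convex levels are continued inward to a maximal level $T_*$. Hopf's lemma gives nonzero gradients on the boundary of the current level set, and real analyticity holds there, so no unique continuation is needed (Lemma~\ref{lem:bootstrap}).
\item What your proposal does not anticipate at all is excluding contact of that level set $C$ with the sets. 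A contact point must be an irregular point of $\partial\Omega_M$ (Lemma~\ref{lem:regular-contact}), and such points occur for general compact sets when $1<p\leq n$. They are ruled out by blowing up to the tangent cone of $C$ and identifying limits of rescaled Riesz measures with limits of rescaled capacities (Lemma~\ref{lem:riesz-capacity-limit}). For a contact point $x\in A\cap M$ one has $x+s(B-x)\subset M$ with $s=1-\theta$. This gives $\sigma_s\geq\sigma$ for the cone measures, which contradicts $\sigma(B(0,r))=o(r^{n-p})$ (Lemma~\ref{lem:cone-comparison}, Proposition~\ref{prop:unrestricted-contact}).
\end{enumerate}
Only then does $T_*=m$ follow, giving $\hat A=\hat B=\hat M$ convex.
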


The hypothesis in part $(b)$ holds, in particular, if no component of
$E_1$ or of $E_2$ is a single point, by Lemma~\ref{lem9.6} $(c)$.
Under the assumptions of parts $(b)$ and $(c)$, the Green's function
for the Minkowski combination exists, so the left-hand sides of
\eqref{BMn} and \eqref{BMpn} are well-defined (Section~\ref{section:proofA}).

In part $(d)$, we note that filling adds only the bounded components of the complement and generally
differs from taking the convex hull. The unit sphere $S$ is
nonconvex and has empty interior, but $\hat S=\overline{B(0,1)}$. For $0<\lambda<1$ the set
$\lambda S+(1-\lambda)S=\{x:|1-2\lambda|\leq|x|\leq1\}$ also has filling
$\overline{B(0,1)}$. Since $\Phi_p$ is unchanged by filling
(Lemma~\ref{lem:filling-invariance}), equality
holds in Theorem~\ref{thmA} $(d)$ for $E_1=E_2=S$. For the union $E$
of two disjoint closed balls, $\hat E=E$ is nonconvex, so
$\lambda E+(1-\lambda)E$ has strictly larger capacity than $E$ for
$0<\lambda<1$. Replacing the fillings by
convex hulls would therefore lose the equality characterization.

\begin{remark}\label{rmk:A-b}
For $p=n$, Theorem~\ref{thmB} $(c)$ below removes the $n$-regularity in
part $(b)$ by passing to the outer capacity $\bar{\mathcal C}_n=\Phi_n$.
Thus $\Phi_p(\lambda E_1+(1-\lambda)E_2)\geq\lambda\Phi_p(E_1)+(1-\lambda)\Phi_p(E_2)$
for all nonempty compact sets and every $1<p<\infty$.
\end{remark}

\begin{remark}\label{rmk:pgeqn}
For $p\geq n$ the capacity \eqref{eqn1.4} vanishes on every compact
set, and $1/(n-p)$ is undefined or negative. We therefore use
$\mathcal C_p$ of \eqref{eqn2.10}, defined through the Green's
function as in \cite{ALSV}. Relative capacities $\capp(E,B(0,R))$ are
not homogeneous under dilation of $E$ and serve only as tools, as in
\eqref{eqn2.17}.
\end{remark}

Without convexity, Pommerenke \cite{P} proved
\begin{equation}\label{eqn1.5}
\mbox{cap}(E_1+E_2)\geq\mbox{cap}(E_1)+\mbox{cap}(E_2)
\end{equation}
for planar continua, where $\mbox{cap}$ is logarithmic capacity.
For Brunn--Minkowski inequalities for other functionals, or under
radial addition, see
\cite{Bo,C,CCS,CF,GH}. To our knowledge, apart from Pommerenke's
theorem, Theorems~\ref{thmA} and~\ref{thmB} are the first capacitary
Brunn--Minkowski inequalities under Minkowski addition for nonconvex
sets, and we know of no earlier equality characterization for
nonconvex sets, even for Newtonian and logarithmic capacity.

At $p=n$, a continuous Green's function need not exist for an arbitrary
compact set. We therefore use the outer capacity
$\bar{\mathcal C}_n$ of \eqref{eqn2.14}, the limit along
the closed neighbourhoods $E_\delta=\{x:d(x,E)\leq\delta\}$ of
\eqref{eqn2.5}. Below, $\mbox{Cap}_n(E,B(z_0,r))$ is the relative
capacity \eqref{eqn2.15} and $\gamma=|\mathbb S^{n-1}|^{-1/(n-1)}$ as in
\eqref{eqn2.7}.

\begin{mytheorem}
\label{thmB}
Let $p=n\geq 2$ and let $\bar{\mathcal{C}}_n(\cdot)$ be as in \eqref{eqn2.14}.

\begin{itemize}
\item[(a)]  If $E\subset E'$ are nonempty compact sets, $\rho>0$, and $x\in\rn{n}$, then $\bar{\mathcal{C}}_n(E)\leq \bar{\mathcal{C}}_n(E')$ and $\bar{\mathcal{C}}_n(\rho E+x)=\rho\, \bar{\mathcal{C}}_n(E)$.

\item[(b)] If $E$ is a nonempty compact set for which there exists a function $G$ satisfying \eqref{eqn2.9}, in particular if $E$ is $n$-regular, then $\bar{\mathcal{C}}_n(E)=\mathcal{C}_n(E)$.

\item[(c)] If $E_1,E_2$ are nonempty compact sets in $\rn{n}$ and $\la\in[0,1]$, then
\begin{align}
\label{eqn2.16}
\bar{\mathcal{C}}_n(\lambda E_1+(1-\lambda)E_2)\geq \lambda\,  \bar{\mathcal{C}}_n(E_1) +(1-\lambda)\, \bar{\mathcal{C}}_n(E_2).
\end{align}

\item[(d)] If $E$ is a nonempty compact set and $z_0\in\rn{n}$, then
\begin{align}
\label{eqn2.17}
\bar{\mathcal{C}}_n(E)=\lim_{r\to\infty}\, e^{1/\ga}\, r\, \exp\left(-\frac{1}{\ga}\left[\mbox{\rm Cap}_n(E,B(z_0,r))\right]^{-\frac{1}{n-1}}\right),
\end{align}
where the exponential is zero when $\mbox{\rm Cap}_n(E,B(z_0,r))=0$.
In particular, $\bar{\mathcal{C}}_n(E)>0$ if and only if
$\mbox{\rm Cap}_n(E,B(z_0,r))>0$ for one, equivalently every,
$r>0$ with $E\subset B(z_0,r)$.

\item[(e)] If $n=2$ and $E$ is a nonempty compact set in $\rn{2}$, then $\bar{\mathcal{C}}_2(E)=e^{2\pi}\, \mbox{\rm cap}(E)$ where $\mbox{\rm cap}$ denotes logarithmic capacity.
Consequently, if $E_1,E_2$ are nonempty compact sets in $\rn{2}$ and $\la\in[0,1]$, then
\begin{align}
\label{eqn2.18}
\mbox{\rm cap}(\lambda E_1+(1-\lambda)E_2)\geq \lambda\,  \mbox{\rm cap}(E_1) +(1-\lambda)\, \mbox{\rm cap}(E_2).
\end{align}
\end{itemize}
\end{mytheorem}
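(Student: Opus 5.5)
We prove the five parts in the order (a), (b), (d), (c), (e). The basic object is the outer capacity \eqref{eqn2.14}, $\bar{\mathcal C}_n(E)=\lim_{\delta\to0}\mathcal C_n(E_\delta)$. Each $E_\delta=\{x:d(x,E)\le\delta\}$ satisfies an exterior ball-type condition at every boundary point, so it is $n$-regular, and its Green's function with pole at infinity exists. Since $\mathcal C_n$ is monotone on $n$-regular sets (comparison of Green's functions), $\delta\mapsto\mathcal C_n(E_\delta)$ is monotone, so the limit exists.

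For part (a), we use that $E\subset E'$ gives $E_\delta\subset E'_\delta$, so monotonicity passes to the limit. For dilation and translation, $(\rho E+x)_{\rho\delta}=\rho E_\delta+x$, and $G(\rho^{-1}(y-x))$ is the Green's function for $\rho E+x$ up to the additive constant $\log\rho$ in the asymptotics at infinity. This gives $\mathcal C_n(\rho E_\delta+x)=\rho\,\mathcal C_n(E_\delta)$; letting $\delta\to0$ proves (a).

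For part (b), suppose $G$ satisfies \eqref{eqn2.9}. Let $G_\delta$ be the Green's function of $E_\delta$. Comparison in $\mathbb R^n\setminus E_\delta$ gives $G_\delta\le G$. We then show $G_\delta\uparrow G$ locally uniformly off $E$. The increasing limit $\tilde G$ is $n$-harmonic off $E$ by Harnack convergence, has the same logarithmic growth, and has the boundary value $0$ on $E$ as forced by \eqref{eqn2.9}. The uniqueness of Green's functions satisfying \eqref{eqn2.9} then gives $\tilde G=G$. The constant terms at infinity converge by the uniform control of $G-\gamma^{-1}\log|x|$ near infinity. This is the main technical obstacle. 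I would handle it with the expansion at infinity through a Kelvin-type inversion, together with a Harnack bound for the nonnegative $n$-harmonic function $G-G_\delta$ on $\{|x|>R\}$, which is bounded by its values on $|x|=R$.

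For part (d), we use the relative capacity. For regular $E_\delta$ we compare $G_\delta$ with the capacitary potential $u_r$ of $E_\delta$ in $B(z_0,r)$. The function $u_r$ is squeezed between $1-(G_\delta-\min_{\partial B}G_\delta)/(\max_{\partial B}G_\delta-\ldots)$ type barriers. Using $\max_{\partial B(z_0,r)}G_\delta-\min_{\partial B(z_0,r)}G_\delta\to0$ and $G_\delta=\gamma^{-1}\log r+\gamma^{-1}\log(\ldots)+o(1)$, the capacity is $(\text{flux})$ with $\mathrm{Cap}_n(E_\delta,B(z_0,r))^{-1/(n-1)}\approx\gamma\,(\text{oscillation range})$. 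This yields \eqref{eqn2.17} for $E_\delta$ with an error uniform in $\delta$. Since relative capacity is outer regular on compact sets, $\mathrm{Cap}_n(E_\delta,B)\downarrow\mathrm{Cap}_n(E,B)$, and the two limits can be interchanged. Equivalence of positivity follows from the formula and the monotonicity of relative capacity in $r$.

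For part (c), we note that $(\lambda E_1+(1-\lambda)E_2)_\delta\supset\lambda (E_1)_\delta+(1-\lambda)(E_2)_\delta$, and both summand sets are $n$-regular. Theorem~\ref{thmA} (b), applied to them together with the monotonicity in (a)/(b), gives $\bar{\mathcal C}_n((\lambda E_1+(1-\lambda)E_2)_\delta)\ge\lambda\,\mathcal C_n((E_1)_\delta)+(1-\lambda)\mathcal C_n((E_2)_\delta)$. Letting $\delta\to0$ proves \eqref{eqn2.16}.

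For part (e), in the plane with $\gamma=(2\pi)^{-1}$, the Green's function of a regular set is the classical one, $G=\frac1{2\pi}\cdot2\pi(\log|z|-\log\mathrm{cap})$ suitably normalised. Matching the constant term gives $\mathcal C_2=e^{2\pi}\mathrm{cap}$ for $E_\delta$. The outer continuity of logarithmic capacity, $\mathrm{cap}(E_\delta)\to\mathrm{cap}(E)$, then gives the identity for $E$, and \eqref{eqn2.18} follows from (c).
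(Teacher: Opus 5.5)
Parts (a), (c) and (e) follow the paper's proof essentially step for step. So does (b), except for your justification of $k_\delta(\infty)\to k(\infty)$. For $n\geq3$ the difference $G-G_\delta$ of two $n$-harmonic functions is not $n$-harmonic, so no Harnack inequality applies to it. You only need $\sup_{|x|\geq R}(G-G_\delta)\leq\sup_{\partial B(0,R)}(G-G_\delta)=:c_R$, and this follows from the comparison principle. The function $(1+\epsilon)G_\delta+c_R$ is $n$-harmonic. It dominates $G$ on $\partial B(0,R)$ because $G_\delta\geq0$, and it exceeds $G$ on large spheres because of the extra $\epsilon F$. Letting $\epsilon\downarrow0$ gives $0\leq k(\infty)-k_\delta(\infty)\leq c_R\to0$. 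No Kelvin inversion is needed; the paper instead uses $k_\delta(\infty)\geq\inf_{\partial B(0,r)}(G_\delta-F)$ from Lemma~\ref{lem:radial-limits}.

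The genuine gap is in (d).

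\textbf{The uniformity in $\delta$ is assumed, not proved.} Your barrier argument, made precise, gives \eqref{eqn9.29} for each fixed $\delta$. Hence $|\Lambda_{E_\delta}(r)-k_\delta(\infty)|\leq\operatorname{osc}_{\partial B(0,r)}(G_\delta-F)$. You then assert that this error tends to zero as $r\to\infty$ uniformly in $\delta$. That uniformity is the entire content of interchanging $\delta\to0$ with $r\to\infty$, and you give no argument for it.
\begin{itemize}
\item For $n=2$ it would follow from the logarithmic potential representation.
\item For $n\geq3$, the decay of the oscillation comes from the Harnack inequality for differences (Lemma~\ref{lem9.7}). Its constant depends on the oscillation of $G_\delta-F$ on a fixed sphere.
\item A $\delta$-independent bound for that oscillation needs a $\delta$-independent bound on $G_\delta$ near $E$. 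One gets this only from $\mathrm{Cap}_n(E,B(0,2\rho_0))>0$, via \eqref{eqn9.29} and a Harnack chain.
\item When this capacity vanishes, $G_\delta\to\infty$, and that case has to be handled separately.
\end{itemize}

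\textbf{The positivity claim has the same hole.} For $z_0=0$ the formula reads $\bar{\mathcal C}_n(E)=\exp(-\lim_{r}\Lambda_E(r)/\gamma)$, with $\Lambda_E$ nondecreasing. So $\Lambda_E(r_1)<\infty$ for one $r_1$ does not make the limit finite. Monotonicity of $\mathrm{Cap}_n(E,B(z_0,r))$ in $r$ only propagates positivity to smaller balls.

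\textbf{How the paper avoids this.} It never needs uniformity.
\begin{itemize}
\item The series-condenser inequality \eqref{eqn9.31} shows that $\Lambda_E(r)$ is nondecreasing in $r$ and nonincreasing in $E$ (Lemma~\ref{lem9.12}).
\item Hence $k_\delta(\infty)=\sup_r\Lambda_{E_\delta}(r)$, and the suprema over $\delta$ and $r$ commute for free. Lemma~\ref{lem9.2} supplies $\Lambda_{E_\delta}(r)\to\Lambda_E(r)$.
\item Positivity is proved separately. Zero relative capacity for one ball means zero for every bounded open set containing $E$. Otherwise the Harnack chain on $\partial B(0,2\rho_0)$ together with \eqref{eqn9.29} gives $k_\delta(\infty)\leq c\,\kappa^{-1/(n-1)}-F(2\rho_0)$ uniformly in $\delta$.
\end{itemize}
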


Apart from the factor $e^{1/\gamma}$, \eqref{eqn2.17} is the condenser
formula for logarithmic $n$-capacity in \cite{Be,X}. For logarithmic
capacity see Armitage--Gardiner \cite{AG}. Theorem~\ref{thmA} $(a)$
with $p=2<n$ gives the Newtonian capacity inequality for all nonempty
compact sets in $\rn{n}$, $n\geq3$, and Theorem~\ref{thmB} $(e)$
extends \eqref{eqn1.5} to all nonempty planar compact sets.

Theorem~\ref{thmC} in Section~\ref{section:proofC} gives pointwise comparisons for the capacitary and
Green's functions and their outer limits, extending
\cite[(5.13)]{AGHLV} and \cite[(6.11)]{ALSV} to arbitrary compact sets.
Remark~\ref{rmk:zero-capacity-equality} treats equality when exactly
one of $\Phi_p(E_1),\Phi_p(E_2)$ vanishes, when both vanish and $p>n$
(then both sets are points), and when both vanish and $1<p\leq n$
(then equality means that the sum is $p$-polar). The following corollaries are
proved at the end of Section~\ref{section:proofEquality}.

\begin{corollary}\label{cor:self-average}
Let $E$ be a nonempty compact set with $\Phi_p(E)>0$.
For every $0<\lambda<1$,
\begin{equation}\label{eq:self-average}
\Phi_p\bigl(\lambda E+(1-\lambda)E\bigr)=\Phi_p(E)
\quad\Longleftrightarrow\quad \hat E\text{ is convex}.
\end{equation}
\end{corollary}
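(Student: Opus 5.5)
The plan is to apply Theorem~\ref{thmA}~$(d)$ with $E_1=E_2=E$. Both capacities equal $\Phi_p(E)>0$, so the hypotheses of part $(d)$ hold for every $0<\lambda<1$. The right-hand side of \eqref{eq:equality} becomes $\lambda\Phi_p(E)+(1-\lambda)\Phi_p(E)=\Phi_p(E)$. Hence the left identity in \eqref{eq:self-average} is exactly the equality case \eqref{eq:equality} for the pair $(E,E)$.

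By Theorem~\ref{thmA}~$(d)$, this equality holds if and only if $\hat E_1=\hat E$ and $\hat E_2=\hat E$ are convex and equal up to translation and dilation. The second requirement is automatic, since $\hat E$ is a translate and dilate of itself (identity map). So the condition reduces to convexity of $\hat E$, which gives both directions of \eqref{eq:self-average} at once.

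For a reader who wants the "if" direction independently of the full equality characterization, I would also give a direct check. Suppose $\hat E$ is convex. Then $\lambda\hat E+(1-\lambda)\hat E=\hat E$ by convexity. On the other hand, $\lambda E+(1-\lambda)E\subset\lambda\hat E+(1-\lambda)\hat E=\hat E$. Monotonicity of $\Phi_p$ and invariance under filling (Lemma~\ref{lem:filling-invariance}) then give
\[
\Phi_p\bigl(\lambda E+(1-\lambda)E\bigr)\le\Phi_p(\hat E)=\Phi_p(E).
\]
The reverse inequality comes from Theorem~\ref{thmA} and Remark~\ref{rmk:A-b}. This direct argument is optional; the corollary follows immediately from part $(d)$.

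No step is a genuine obstacle. All the difficulty sits inside Theorem~\ref{thmA}~$(d)$, which we are allowed to assume.
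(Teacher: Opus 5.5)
Your proposal is correct and matches the paper's proof, which simply applies Theorem~\ref{thmA}~$(d)$ with $E_1=E_2=E$, so the homothety condition on the two fillings is automatic. Your optional direct check of the ``if'' direction is also valid. It uses the inclusion $\lambda E+(1-\lambda)E\subset\hat E$, monotonicity and filling invariance from Lemma~\ref{lem:filling-invariance}, and the general inequality from Remark~\ref{rmk:A-b}.
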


\begin{corollary}\label{cor:sum-parallel}
Let $E,E'\subset\rn{n}$ be nonempty compact sets, let $\delta>0$,
and put $\bar B=\overline{B(0,1)}$. Then
\begin{align}
\label{eq:sum-capacity}
\Phi_p(E+E')&\geq\Phi_p(E)+\Phi_p(E'),\\
\label{eq:parallel-capacity}
\Phi_p(E_\delta)&\geq\Phi_p(E)+\delta\Phi_p(\bar B).
\end{align}
If $\Phi_p(E),\Phi_p(E')>0$, equality in \eqref{eq:sum-capacity}
holds if and only if $\hat E$ and $\hat{E'}$ are convex and equal
up to translation and dilation. Equality in
\eqref{eq:parallel-capacity} holds if and only if $\hat E$ is a
closed ball, allowing radius zero.
\end{corollary}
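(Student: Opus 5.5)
The corollary follows from Theorem~\ref{thmA} and Remark~\ref{rmk:A-b} by rescaling. Both claims use two facts. First, $\Phi_p$ is homogeneous of degree one and translation invariant (Theorem~\ref{thmB}~(a) for $p=n$, and directly from the definitions otherwise). Second, for $t>0$ we have $E+E'=2\bigl(\tfrac12E+\tfrac12E'\bigr)$.

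\emph{Inequality \eqref{eq:sum-capacity}.} Apply Remark~\ref{rmk:A-b} with $\lambda=\tfrac12$ to get
\[
\Phi_p(E+E')=2\,\Phi_p\bigl(\tfrac12E+\tfrac12E'\bigr)\ \ge\ \Phi_p(E)+\Phi_p(E').
\]
Equality here is exactly equality in \eqref{eq:equality} with $\lambda=\tfrac12$. When both capacities are positive, Theorem~\ref{thmA}~(d) therefore gives the characterization directly.

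\emph{Inequality \eqref{eq:parallel-capacity}.} First note that $E_\delta=E+\delta\bar B$. Indeed, $d(x,E)\le\delta$ with $E$ compact means $x=y+z$ with $y\in E$ and $|z|\le\delta$. Applying \eqref{eq:sum-capacity} with $E'=\delta\bar B$ and homogeneity gives $\Phi_p(E_\delta)\ge\Phi_p(E)+\delta\Phi_p(\bar B)$. Here $\Phi_p(\bar B)>0$, since balls have positive capacity in all three regimes.

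For the equality case we split according to $\Phi_p(E)$.

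If $\Phi_p(E)>0$, then equality holds in \eqref{eq:sum-capacity} with two positive capacities. So $\hat E$ and $\widehat{\delta\bar B}=\delta\bar B$ are convex and equal up to translation and dilation, i.e.\ $\hat E$ is a closed ball of positive radius. Conversely, suppose $\hat E=\overline{B(a,r)}$. Filling invariance (Lemma~\ref{lem:filling-invariance}) gives $\Phi_p(E)=\Phi_p(\hat E)$. We also need $\widehat{E_\delta}=\overline{B(a,r+\delta)}$. For this, observe that $\partial\hat E\subset E$, so $E_\delta$ contains the shell $\{r-\delta\le|x-a|\le r+\delta\}$ and is contained in $\overline{B(a,r+\delta)}$; hence its filling is that ball. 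Homogeneity then gives equality.

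If $\Phi_p(E)=0$, equality means $\Phi_p(E_\delta)=\delta\Phi_p(\bar B)$, and this is the main obstacle, since Theorem~\ref{thmA}~(d) does not apply. The claim is that this forces $E$ to be a single point, i.e.\ a ball of radius zero. The reverse direction is clear: if $E=\{a\}$ then $E_\delta=\overline{B(a,\delta)}$. For the forward direction, suppose $E$ contains two distinct points $a\neq b$. Then $E_\delta\supset\{a,b\}_\delta$, and by monotonicity it suffices to show $\Phi_p(\{a,b\}_\delta)>\delta\Phi_p(\bar B)$.

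I would get this by applying \eqref{eq:sum-capacity} with the positive-capacity sets $\{a,b\}_{\delta/2}$ and $\tfrac{\delta}{2}\bar B$, using $\{a,b\}_\delta=\{a,b\}_{\delta/2}+\tfrac\delta2\bar B$. If $|a-b|>\delta$, the filling of $\{a,b\}_{\delta/2}$ is two disjoint balls, which is not convex, so the inequality is strict. This gives
\[
\Phi_p(\{a,b\}_\delta)>\Phi_p(\{a,b\}_{\delta/2})+\tfrac\delta2\Phi_p(\bar B)\ge\delta\Phi_p(\bar B),
\]
where the last step uses monotonicity, since $\{a,b\}_{\delta/2}$ contains a ball of radius $\delta/2$. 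If $|a-b|\le\delta$, I would replace $\delta/2$ by a smaller $\eta$ with $2\eta<|a-b|$ and iterate the same splitting, $\{a,b\}_\delta=\{a,b\}_\eta+(\delta-\eta)\bar B$, which gives the same strict inequality. Consequently $E$ is a point, and the equality statement is established in all cases.
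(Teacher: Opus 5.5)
Your proof is correct. For the two inequalities and for the positive-capacity equality cases it follows the paper: Remark~\ref{rmk:A-b} with $\lambda=1/2$, Theorem~\ref{thmA}~(d), and $E_\delta=E+\delta\bar B$.

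The genuine difference is the zero-capacity equality case of \eqref{eq:parallel-capacity}. The paper invokes Remark~\ref{rmk:zero-capacity-equality}~(a) with $E_1=E$ and $E_2=\delta\bar B$. That remark rests on the separate strict inequality \eqref{eq:strict-translate-union}, $\Phi_p(K\cup(K+v))>\Phi_p(K)$. For $p<n$ this is proved by uniqueness of the energy minimizer. For $p=n$ it is proved by a linearization and Harnack argument built on the Green's function asymptotics.

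You instead use that the ball splits under Minkowski addition, $\delta\bar B=\eta\bar B+(\delta-\eta)\bar B$. A two-point subset $\{a,b\}\subset E$ then thickens to $\{a,b\}_\eta$, which has positive capacity and whose filling is two disjoint balls. So Theorem~\ref{thmA}~(d) itself gives strict inequality, and monotonicity carries it over to $E_\delta$.

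Your route is shorter, uniform in $p$, and needs nothing beyond Theorem~\ref{thmA}~(d). Its limitation is that it depends on the second summand being a ball. The paper's route proves the zero-capacity equality statement for an arbitrary positive-capacity second set $E_2$, where no such splitting exists.

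Two small remarks. Your case distinction on $|a-b|$ is unnecessary: any $0<\eta<\min\{\delta,|a-b|/2\}$ works in one step. For the converse you verify $\widehat{E_\delta}=\overline{B(a,r+\delta)}$ directly from $\partial\hat E\subset E$ instead of quoting \eqref{eq:filled-neighbourhoods}, which is fine.
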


For sets of positive volume, equality in the classical inequality
characterizes convex sets equal up to translation and dilation
\cite{G}. Theorem~\ref{thmA} $(d)$ gives the same conclusion for the
filled sets under positive capacity, but the two equality cases
differ. For the annulus $E=\{x:r\leq|x|\leq1\}$, $0<r<1$, the filling
and the midpoint sum are both the closed unit ball, so the capacitary
inequality is an equality while the volume inequality is strict.
In Corollary~\ref{cor:self-average} positive capacity is needed only
for $1<p\leq n$. For these values of $p$, a two-point set $E$ has nonconvex filling, and
$\lambda E+(1-\lambda)E$ is finite, hence of capacity zero. For $p>n$ the only
sets of zero capacity are singletons, whose fillings are convex.

The main obstruction is the loss of convexity. For convex bodies the
exterior functions (the capacitary function when $1<p<n$ and the
Green's function with pole at infinity when $p\geq n$) have convex
level sets and nonzero gradients \cite{L,ALSV}. The earlier
comparison arguments \cite{CS,CC}, \cite[Lemmas~4.3--4.4]{AGHLV},
\cite[Lemma~6.1]{ALSV} use this, and in the equality case the
nonvanishing gradient lets common level sets found far from the sets
be continued analytically down to the sets \cite{CJL}. For nonconvex
sets both properties fail. The exterior function of two disjoint
equal balls has a critical point at the midpoint of their centres, and
unique continuation for two $p$-harmonic functions is open in general
when $n\geq3$ and $p\neq2$ \cite{GM}. Arbitrary compact sets may also
have irregular boundary points.

For $1<p<n$ and sets satisfying the corkscrew condition
(Definition~\ref{defn2.4}), let $u_1,u_2,u$ be the capacitary
functions of $E_1$, $E_2$ and $\lambda E_1+(1-\lambda)E_2$. Following
\cite{B1,CS,AGHLV} we prove that the supremal convolution
\begin{equation}\label{eqn1.6}
u^*(x)=\sup\left\{\min\{u_1(y),u_2(z)\}:
 x=\lambda y+(1-\lambda)z,\quad y,z\in\rn{n}\right\}
\end{equation}
satisfies $u^*\leq u$, using the viscosity method of \cite{CSq,CuS}
(see also \cite{BLS,LS,Sa15}). The new step, Lemma~\ref{lem7.1}, shows
that an optimal pair at which the two function values differ, or a
vanishing gradient, forces every test function touching $u^*$ from above
to have zero gradient. The test criterion of
\cite{JLM} then makes $u^*$ $p$-subharmonic without any assumption on
the gradients of the exterior functions. For $p\geq n$ we use the
negatives of the Green's functions, as in \cite{ALSV}. The
neighbourhoods $E_\delta$ satisfy the corkscrew condition and
$\lambda(E_1)_\delta+(1-\lambda)(E_2)_\delta
=(\lambda E_1+(1-\lambda)E_2)_\delta$, so decreasing approximation
gives the general case. For equality we continue the common convex
level sets of the exterior functions from infinity towards the sets,
using analytic continuation only where Hopf's lemma gives a nonzero
gradient. A contact of a continued level set with the sets can only
occur at an irregular boundary point when $1<p\leq n$. A blow-up and a
comparison of Riesz measures on a tangent cone exclude it
(Proposition~\ref{prop:unrestricted-contact}), so no unique
continuation theorem is needed.

Section~\ref{section:preliminaries} develops the exterior functions,
Section~\ref{section:proofA} proves the inequalities in
Theorem~\ref{thmA}, Section~\ref{section:proofB}
proves Theorems~\ref{thmB} and~\ref{thmC},
and Section~\ref{section:proofEquality} proves the equality criterion
and the corollaries.

\section*{Acknowledgement}
The author used ChatGPT to assist with exposition, the revision
and checking of mathematical arguments, and literature searches, and takes
full responsibility for the final manuscript.

\section{Notation and capacities}
\label{NSR}

\noindent Let $\mathbb S^{n-1}$ be the unit sphere in $\rn{n}$, $n\geq2$, and
$\si_{n-1}=\mathcal{H}^{n-1}(\mathbb{S}^{n-1})$. We write $\bar E$,
$\ar E$, $\mbox{diam}(E)$ and $d(y,E)$ for the closure, boundary,
diameter and distance to $E$, and use $E+F$, $E+x=E+\{x\}$ and $\rho E$
as in the introduction. $B(z,r)$ is the open ball of centre $z$ and
radius $r$, and $\lan\cdot,\cdot\ran$ the inner product. The spaces
$W^{1,q}(O)$, $W^{1,q}_0(O)$ and $C^\infty_0(O)$ are as in
\cite[Chapter~1]{HKM}. For a symmetric matrix $X$ we write
$X[e,e]=\lan Xe,e\ran$, and $D^2h$ is the Hessian of $h$.
\begin{definition}
\label{defn2.1}
	Let $p\in (1,\infty)$. Given an  open set
 $ O  $ we say that $ u $ is $p$-harmonic in $ O $ provided $ u \in W^ {1,p} ( D ) $ for each bounded open $ D $ with  $ \bar D \subset O $ and
	\begin{align}
	\label{eqn2.1}
\int | \nabla u(y)|^{p-2}\,  \lan    \nabla u(y), \nabla \he ( y ) \ran \, dy = 0 \quad \mbox{whenever} \, \,\he \in W^{1, p}_0 ( D ).
			\end{align}
We write $\nabla \cdot (|\nabla u|^{p-2}\nabla u)=0$ in $O$.
We call $u\in W^{1,p}_{\mathrm{loc}}(O)$ a weak supersolution
if the integral in \eqref{eqn2.1} is nonnegative for every
nonnegative $\theta\in W^{1,p}_0(D)$ and every bounded open
$D\Subset O$. A weak subsolution satisfies the reverse inequality.
\end{definition}
This is \cite[Definition~2.2]{ALSV} with $\mathcal{A}(\eta)=|\eta|^{p-2}\eta$.
It is invariant under $u\mapsto au+b$ and under composition with
translations, dilations and rotations.

\begin{definition}\label{def:p-superharmonic}
A function $v:O\to(-\infty,\infty]$ is $p$-superharmonic in $O$ if it
is lower semicontinuous, finite on a dense subset of $O$, and
$h\leq v$ in $D$ whenever $D\Subset O$ is open and $h\in C(\bar D)$ is
$p$-harmonic in $D$ with $h\leq v$ on $\partial D$. It is
$p$-subharmonic if $-v$ is $p$-superharmonic. We use the lower
(upper) semicontinuous representative \cite[Chapter~7]{HKM}.
\end{definition}

By \cite[Chapter~7]{HKM}, every $p$-superharmonic function $v$ in $O$ satisfies
\begin{equation}\label{eq:superharmonic-representative}
v(x)=\operatorname*{ess\,liminf}_{y\to x}v(y)\qquad(x\in O).
\end{equation}

 Given a compact set $E\subset \rn{n}$ we let
 \begin{align}
 \label{eqn2.4}
 \begin{split}
 &\Om_E \, \, \mbox{denote the unbounded component of}\, \, \rn{n}\sem E \, \, \mbox{and}\\
 &\hat E=\rn{n}\sem \Om_E.
 \end{split}
 \end{align}
Thus $\hat E$ is compact, it is the union of $E$ and the bounded
components of $\rn{n}\sem E$, and $\ar \hat E=\ar \Om_E\subset \ar E$.

 \begin{definition}[\bf Corkscrew condition]
 \label{defn2.4}
 A nonempty compact set $E\subset \rn{n}$ satisfies the corkscrew condition if there exist $c_E > 1$ and $r_E>0$ such that for each $w\in E$ and $0<r<r_E$ the ball $B(w,r)$ contains a ball $B(y, r/c_E)\subset E$.
 \end{definition}
This class is closed under Minkowski addition. Indeed, if $w=e+f$ with $e\in E$, $f\in F$, and $B(y,r/c_E)\subset B(e,r)\cap E$, then $B(y+f,r/c_E)\subset B(w,r)\cap(E+F)$.
\begin{remark}
\label{rmk:Lipcorkscrew}
It is a well-known geometric fact that the closure of a bounded Lipschitz domain satisfies the corkscrew
condition. See the construction following \cite[(8.2)]{ALSV}.
\end{remark}
For a nonempty compact set $E$ and $\de>0$ put
\begin{align}
\label{eqn2.5}
E_\de=\{x\in \rn{n}:\, d(x,E)\leq \de\}.
\end{align}
By Lemma~\ref{lem9.1}, $E_\de$ is compact and satisfies the corkscrew condition.

Let $1<p<n$. By Lemma~\ref{lem4.1}, if $E$ is $p$-regular
(Definition~\ref{defn9.5}) there is a unique continuous function $u$
on $\rn{n}$, the $p$-capacitary function for $E$, with
\begin{align}
\label{eqn2.6}
\begin{split}
(a)&\, \, u\, \, \mbox{is $p$-harmonic in}\, \,  \rn{n}\sem E,  \\
(b)&\, \,  u \equiv 1\, \, \mbox{on}\, \,  E,     \\
 (c)&\, \,  | \nabla u | \in   L^p ( \rn{n} )\, \,  \mbox{and}\, \, u \in L^{p^*}  ( \rn{n}  )\, \, \mbox{for}\, \,  p^* =  \frac{np}{n-p}.
  \end{split}
  \end{align}
 If $n\leq p<\infty$ let
 \begin{align}
 \label{eqn2.7}
 F(x)=\left\{
 \begin{array}{ll}
 \ga \log|x| +1  \, \,   \mbox{where}\, \, \ga=\si_{n-1}^{-\frac{1}{n-1}} &\mbox{when}\, \, p=n,\\
 \xi^{-1}\, \si_{n-1}^{-\frac{1}{p-1}}\, |x|^{\xi} \, \,  \mbox{where}\, \, \xi=\frac{p-n}{p-1}&\mbox{when}\, \, n<p<\infty,
 \end{array}
 \right.
 \end{align}
for $x\neq0$, and $F(0)=0$ if $p>n$. Since
$|\nabla F|^{p-2}\nabla F(y)=\si_{n-1}^{-1}|y|^{-n}y$, $F$ is
$p$-harmonic in $\rn{n}\sem\{0\}$ and
$\int|\nabla F|^{p-2}\lan\nabla F,\nabla\he\ran\,dy=-\he(0)$ for
$\he\in C^\infty_0(\rn{n})$.
It is the fundamental solution of \cite[Lemmas~4.4 and~4.6]{ALSV},
normalized as in \cite[Section~5]{ALSV}. Adapting
\cite[Definition~5.1]{ALSV} to sets whose complements may have bounded
components, we call $G$ the $p$-harmonic Green's function for
$\rn{n}\sem E$ with pole at $\infty$ if
 \begin{align}
 \label{eqn2.9}
 \begin{split}
 (a)&\, \, G\, \, \mbox{is continuous on $\rn{n}$ with $G\equiv 0$ on $E$},\\
 (b)&\, \, G\, \, \mbox{is $p$-harmonic in}\, \,  \rn{n}\sem E,  \\
(c)&\, \,  G(x)=F(x)+k(x) \, \, \mbox{where $k$ is a bounded function in a neighbourhood of $\infty$.}
 \end{split}
 \end{align}
By Lemmas~\ref{lem4.2} and~\ref{lem9.13}, $G$ exists whenever $E$ is
$p$-regular, and whenever $G$ exists it is unique and
$k(\infty)=\lim_{x\to\infty}k(x)$ exists finitely. Following
\cite[Definition~5.4]{ALSV} we put
 \begin{align}
 \label{eqn2.10}
\cp( E ) :=
\left\{
\begin{array}{ll}
  e^{- k ( \infty )/\ga} &  \mbox{when}\, \, p = n, \\
 (- k ( \infty ))^{p-1} & \mbox{when}\, \,  p > n.
\end{array}
\right.
\end{align}
For convex $E$ this is $\mathcal{C}_{\mathcal{A}}(E)$ of \cite{ALSV}
with $\mathcal{A}(\eta)=|\eta|^{p-2}\eta$. If $p>n$ then $\cp(E)$ is
defined for every nonempty compact set, and $\cp(E)=0$ if and only if
$E$ is a single point (Lemmas~\ref{lem9.6} and~\ref{lem4.2}).

At $p=n$ a continuous Green's function need not exist. For a
singleton it would extend $n$-harmonically across the point
\cite[Theorem~7.36]{HKM} and attain an interior minimum there. We
therefore define
\begin{align}
\label{eqn2.14}
\bar{\mathcal C}_n(E)=\lim_{\delta\to0}\mathcal C_n(E_\delta),
\end{align}
which exists and is finite by Lemma~\ref{lem:outer-capacity}. For a
compact set $K$ in an open set $O$, possibly $O=\rn{n}$
\cite[pp.~27--28]{HKM}, the relative capacity is
\begin{align}
\label{eqn2.15}
\capp(K,O)=\inf\left\{
\int_O|\nabla v|^p\,dx:
v\in C^\infty_0(O),\ v\geq1\ \mbox{on }K
\right\}.
\end{align}

For a nonempty compact set $E$ and $\de>0$, let $u_\de$ be the
$p$-capacitary function for $E_\de$ if $1<p<n$, and $G_\de$ the
$n$-harmonic Green's function for $\rn{n}\sem E_\de$ with pole at
$\infty$ if $p=n$. As $\de$ decreases, $u_\de$ does not increase and
$G_\de$ does not decrease (Section~\ref{section:proofC}). Put
\begin{align}
\label{eqn2.19}
\bar u_E=\lim_{\de\to 0}u_\de\, \, \mbox{when}\, \, 1<p<n\quad \mbox{and}\quad \bar G_E=\lim_{\de\to 0}G_\de\, \, \mbox{when}\, \, p=n.
\end{align}
Then $0\leq \bar u_E\leq 1$ and $0\leq \bar G_E\leq \infty$. Let
$\underline{u}_E(x)=\lim_{r\to 0}\inf_{B(x,r)}\bar u_E$ be the lower
semicontinuous regularization of $\bar u_E$. Finally, put
\begin{equation}\label{eq:degree-one}
\Phi_p(E)=
\begin{cases}
\capp(E)^{1/(n-p)},&1<p<n,\\
\overline{\mathcal C}_n(E),&p=n,\\
\cp(E)^{1/(p-n)},&p>n.
\end{cases}
\end{equation}
Its basic properties are proved in Lemma~\ref{lem:filling-invariance}.

\section{Basic estimates and exterior functions}
\label{section:preliminaries}
We recall interior estimates, introduce boundary regularity, and
construct the exterior functions for $p$-regular sets.

\subsection{Basic estimates}
\label{section3}

We use continuous representatives of $p$-harmonic functions and assume $1<p<\infty$.
Throughout the paper, $c\geq1$ denotes a constant depending only on $p,n$ and may change at each occurrence.
The constant $c(a_1,\ldots,a_m)$ may also depend on the listed quantities.
For positive quantities $a,b$, we write $a\approx b$ if $c^{-1}b\leq a\leq cb$ for a constant of this kind.

\begin{lemma}
\label{lem3.1}
   Let $ \ti u $  be  a positive
 $p$-harmonic function in $B (w,4r)$ for some $w\in \rn{n}$ and $r>0$. Then
 \begin{align}
	\label{eqn3.1}
	\begin{split}
		(i)&\, \, r^{p - n } \,\int_{B ( w, r/2)} \, | \nabla \ti  u |^{  p } \, dy \, \leq \, c \, (\sup_{B ( w,r)}   \ti  u )^p ,  \\
		(ii)&\, \,   {\ds \sup_{B ( w, r ) }} \,\ti   u \, \leq c {\ds \inf_{ B ( w, r )} }\ti u.
		\end{split}
\end{align}
Also $ \ti u$ has a  representative   with H\"{o}lder
continuous partial derivatives in $B(w,4r) $  (also denoted $\ti u$), and there exist $  \be \in (0,1]$ and  $c \geq 1 $,
depending only on $ p$ and $n$, such that if $ x, y \in B (  w,  r ), $   then
	\begin{align}
	\label{eqn3.2}
c^{ - 1} \, | \nabla \ti u ( x ) - \nabla \ti u ( y ) | \, \leq \,
 ( | x - y |/ r)^{ \be }\, \sup_{B (  w, 2r )} \, | \nabla \ti u | \leq \, c \,  r^{ - 1} \, ( | x - y |/ r )^{ \be }\, \ti u (w).
\end{align}
Finally, if $\nabla \ti u(w)\neq 0$ then $\ti u$ has H\"{o}lder continuous second partial derivatives in a neighbourhood of $w$ and
\begin{align}
\label{eqn3.3}
\De \ti u (x)+(p-2)\, |\nabla \ti u(x)|^{-2} \, D^2 \ti u(x)[\nabla \ti u(x), \nabla \ti u(x)]=0
\end{align}
for $x$ in this neighbourhood.
\end{lemma}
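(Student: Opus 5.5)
This lemma collects standard interior regularity facts for $p$-harmonic functions, and I would prove it by assembling them from the classical literature rather than from scratch. The order is: Caccioppoli, then Harnack, then $C^{1,\beta}$, then the non-degenerate case.

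\emph{Step 1: the energy bound (i).} Test \eqref{eqn2.1} with $\theta=\ti u\,\eta^p$, where $\eta\in C^\infty_0(B(w,r))$, $\eta\equiv1$ on $B(w,r/2)$ and $|\nabla\eta|\leq c/r$. Young's inequality then gives
\[
\int_{B(w,r/2)}|\nabla\ti u|^p\,dy\leq c\,r^{-p}\int_{B(w,r)}\ti u^p\,dy\leq c\,r^{n-p}\bigl(\sup_{B(w,r)}\ti u\bigr)^p .
\]
This is exactly (i).

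\emph{Step 2: Harnack (ii).} The Harnack inequality $\sup_{B(w,r)}\ti u\leq c\inf_{B(w,r)}\ti u$ for positive solutions in $B(w,4r)$ is Serrin's theorem, obtained by Moser iteration. I would cite it in the form of \cite[Chapter~6]{HKM}. The constant depends only on $p$ and $n$, since the structure is scale invariant.

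\emph{Step 3: the $C^{1,\beta}$ estimate \eqref{eqn3.2}.} I would invoke the regularity theorems of Ural'tseva, Uhlenbeck, Evans, DiBenedetto, Lewis and Tolksdorf. These give $\nabla\ti u\in C^{\beta}_{\rm loc}$ with the scale-invariant estimate
\[
|\nabla\ti u(x)-\nabla\ti u(y)|\leq c\,(|x-y|/r)^\beta\sup_{B(w,2r)}|\nabla\ti u|\qquad (x,y\in B(w,r)).
\]
For the second inequality I need $\sup_{B(w,2r)}|\nabla\ti u|\leq c\,r^{-1}\sup_{B(w,3r)}\ti u$. This is the standard $L^\infty$ gradient bound, obtained by combining Step 1 on slightly larger balls with the Lipschitz estimate. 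Step 2, applied on a chain of balls inside $B(w,4r)$, then bounds $\sup_{B(w,3r)}\ti u$ by $c\,\ti u(w)$.

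The only delicate point is keeping every ball inside $B(w,4r)$. Harnack needs room of a fixed factor, so I would cover $B(w,3r)$ by finitely many balls $B(x_i,r/4)$ with $B(x_i,r)\subset B(w,4r)$. The number of balls depends only on $n$, so $c$ remains a function of $p$ and $n$ alone.

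\emph{Step 4: the non-degenerate case \eqref{eqn3.3}.} Suppose $\nabla\ti u(w)\neq0$. By continuity of the gradient, $|\nabla\ti u|\geq|\nabla\ti u(w)|/2$ on a small ball $B(w,s)$. There the equation, written in divergence form as $\sum_{i,j}\partial_i\bigl(a_{ij}(\nabla\ti u)\,\partial_j\ti u\bigr)=0$, is uniformly elliptic with $C^\beta$ coefficients. Indeed, differentiating the weak formulation shows that $\partial_k\ti u\in W^{1,2}$ solves a uniformly elliptic divergence-form equation whose coefficients $a_{ij}(\nabla\ti u)$ are Hölder continuous. Schauder theory then gives $\ti u\in C^{2,\beta}(B(w,s/2))$.

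Expanding the divergence yields $|\nabla\ti u|^{p-2}\bigl(\De\ti u+(p-2)|\nabla\ti u|^{-2}D^2\ti u[\nabla\ti u,\nabla\ti u]\bigr)=0$. Dividing by $|\nabla\ti u|^{p-2}\neq0$ gives \eqref{eqn3.3}.

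The main obstacle is Step 3, which is deep. I would not reprove it but cite it, checking only that the stated scale-invariant form follows by rescaling to $r=1$.
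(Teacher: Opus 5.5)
Your proposal is correct and takes essentially the same route as the paper, which also assembles (i), (ii) and \eqref{eqn3.2} from Serrin, Tolksdorf and the scale-invariant estimates of \cite[Lemmas~3.3--3.4]{ALSV}, using rescaling and a Harnack chain for the last inequality, and then obtains \eqref{eqn3.3} by expanding the equation where $|\nabla\ti u|$ is bounded away from zero. The one step you leave implicit is that $\partial_k\ti u\in W^{1,2}$ near a noncritical point, which you need before differentiating the weak equation; it follows from a difference-quotient argument in the non-degenerate region, or directly from the higher-regularity result the paper cites.
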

\begin{proof}
The estimates follow from \cite{S,T} and
\cite[Lemmas~3.3--3.4]{ALSV}, after rescaling the balls and applying
Harnack's inequality. Near a point where $\nabla\tilde u\ne0$,
continuity bounds its length away from zero, and the higher
regularity assertion of \cite[Lemma~3.4]{ALSV} applies.
Expanding the $p$-Laplace equation there gives \eqref{eqn3.3}.
\end{proof}
Equation \eqref{eqn3.3} can be written as
\begin{align}
\label{eqn3.4}
\mbox{trace}\left( \left( I+(p-2)\nu\otimes \nu\right) D^2 \ti u(x)\right)=0\quad \mbox{where}\, \, \nu=\pm \frac{\nabla \ti u(x)}{|\nabla \ti u (x)|}.
\end{align}
Here $I$ is the identity matrix and $\nu\otimes \nu$ has entries $\nu_i\nu_j$. The matrix $I+(p-2)\nu\otimes \nu$ has eigenvalues $1$ and $p-1$.
Adding a constant shows that the last sentence of Lemma \ref{lem3.1} holds for $p$-harmonic functions which are not positive.

We use the comparison principle \cite[Theorem~7.6]{HKM}. If $O$ is bounded and open, $v,h\in C(\bar O)$ are $p$-harmonic in $O$, and $v\leq h$ on $\ar O$, then $v\leq h$ in $O$.
We also use the strong maximum principle \cite[Theorem~6.5]{HKM}. A nonconstant $p$-harmonic function in a domain attains neither its supremum nor its infimum there.
The next lemma follows from a radial barrier, as in \cite{L}. See also \cite{To,V}.

\begin{lemma}
\label{lem3.3}
Let $\hat w\in \rn{n}$, $\de>0$, and let $h$ be a positive $p$-harmonic function in $B(\hat w,\de)$ which is continuous on $\bar B(\hat w,\de)$. Then there exists $c_-\geq 1$ depending only on $p$ and $n$ such that
\begin{align}
\label{eqn3.6}
c_-\, h(x) \, \geq \, \left(\inf_{\bar B(\hat w,\de/2)} h\right) \, \frac{\de-|x-\hat w|}{\de}\quad \mbox{whenever}\, \, x\in B(\hat w,\de)\sem B(\hat w,\de/2).
\end{align}
In particular,  if $w\in \ar B(\hat w,\de)$,  $h(w)=0$, and $h$ has an extension to a neighbourhood of $w$ which is differentiable at $w$,  then $\lan \nabla h(w), \hat w-w\ran>0$.
\end{lemma}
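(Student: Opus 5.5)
The plan is to compare $h$ with an explicit radial $p$-harmonic barrier on the annulus $A=B(\hat w,\de)\sem \bar B(\hat w,\de/2)$. Put $m=\inf_{\bar B(\hat w,\de/2)} h$, which is positive because $h$ is positive and continuous on the compact ball. By translation, dilation and the invariance under $u\mapsto au+b$, I will assume $\hat w=0$, $\de=1$ and $m=1$.

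Let $\psi$ be the radial $p$-harmonic function in $A$ with $\psi=1$ on $|x|=1/2$ and $\psi=0$ on $|x|=1$. Explicitly, $\psi(x)=\phi(|x|)$, where
\[
\phi(t)=\frac{t^{(p-n)/(p-1)}-1}{2^{(n-p)/(p-1)}-1}\quad (p\neq n),\qquad \phi(t)=\frac{\log(1/t)}{\log 2}\quad (p=n).
\]
Both formulas are $p$-harmonic away from the origin, since $|\nabla\cdot|^{p-2}\nabla\cdot$ of $t^{(p-n)/(p-1)}$ or $\log t$ is a multiple of $x|x|^{-n}$, as noted after \eqref{eqn2.7}.

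First, $\psi\le h$ on $\ar A$. On $|x|=1/2$ this holds because $h\ge m=1$. On $|x|=1$ it holds because $h\ge0$ there by continuity. Both functions are continuous on $\bar A$ and $p$-harmonic in $A$, so the comparison principle \cite[Theorem~7.6]{HKM} gives $\psi\le h$ in $A$.

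Second, I need $\phi(t)\ge c^{-1}(1-t)$ for $1/2\le t\le1$, with $c$ depending only on $p,n$. Here $\phi$ is smooth on $[1/2,1]$ with $\phi(1)=0$ and $\phi'(t)<0$ throughout, so $\phi(t)/(1-t)$ extends to a continuous positive function on $[1/2,1]$. Its value at $t=1$ is $-\phi'(1)>0$, so its minimum is a positive constant depending only on $p,n$. Undoing the normalization gives \eqref{eqn3.6} with $c_-$ equal to the reciprocal of this minimum.

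For the final assertion, take $w$ with $|w-\hat w|=\de$ and $h(w)=0$, and let $h$ be differentiable at $w$. Set $e=(\hat w-w)/\de$, the inward unit normal. For small $s>0$ the point $w+se$ lies in the annulus, and \eqref{eqn3.6} gives $h(w+se)\ge c_-^{-1}m\,s/\de$. Dividing by $s$ and letting $s\to0$ yields
\[
\lan\nabla h(w),e\ran\ge \frac{m}{c_-\de}>0,
\]
which is the claimed positivity of $\lan\nabla h(w),\hat w-w\ran$.

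The only point needing care is that the lower bound on $\phi(t)/(1-t)$ is uniform and strictly positive, in particular at the endpoint $t=1$, where it reduces to $\phi'(1)\neq0$. This is immediate from the explicit formulas, so I expect no real obstacle.
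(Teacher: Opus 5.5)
Your proposal is correct and is essentially the paper's proof. Both use the same radial barrier on the annulus $B(\hat w,\de)\setminus \bar B(\hat w,\de/2)$, compare to get $h\geq m\psi$, bound $-g'$ (equivalently $\phi(t)/(1-t)$) below uniformly on $[1/2,1]$, and take the difference quotient along $\hat w-w$ for the Hopf-type conclusion. One cosmetic fix: the reciprocal of your minimum can be less than $1$ (it equals $\ln 2$ when $p=n$), so take $c_-$ to be its maximum with $1$, which is harmless since $h\geq 0$.
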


\begin{proof}
Put $m=\inf_{\bar B(\hat w,\de/2)}h>0$ and $\xi=(p-n)/(p-1)$. Let
$\psi(x)=g(|x-\hat w|/\de)$, where $g(t)=(t^{\xi}-1)/(2^{-\xi}-1)$ if $p\neq n$ and $g(t)=\log(1/t)/\log2$ if $p=n$.
This radial function is $p$-harmonic on the annulus, with values $1$ and $0$ on its inner and outer boundaries. Comparison gives $h\geq m\psi$ there.
Since $g(1)=0$ and $\inf_{[1/2,1]}(-g')>0$ depends only on $p,n$, this proves \eqref{eqn3.6}.
For the last assertion, apply this estimate at $x=w+t(\hat w-w)$, divide by $t$, and let $t\downarrow0$.
\end{proof}

The next estimate applies to $v_1-v_2$ and requires a lower bound for
only one of $|\nabla v_1|$ and $|\nabla v_2|$.
\begin{lemma}
\label{lem9.7}
Let $1<p<\infty$, $s>0$, $\mathcal U=B(0,2s)\sem \bar B(0,s/2)$,  and let $v_1,v_2$ be $p$-harmonic functions in $\mathcal U$ with $v_1\geq v_2$ in $\mathcal U$. Suppose that for some $\tau>0$ and $\hat c\geq 1$,
\begin{align}
\label{eqn9.12}
\begin{split}
&|\nabla v_1|\leq \hat c\, \tau \, \, \mbox{and}\, \,  |\nabla v_2|\leq \hat c\, \tau\, \, \mbox{in}\, \, \mathcal U,\, \, \mbox{and either}\\
&|\nabla v_1|\geq \hat c^{-1}\, \tau\, \, \mbox{in}\, \, \mathcal U\, \, \mbox{or}\, \, |\nabla v_2|\geq \hat c^{-1}\, \tau \, \, \mbox{in}\, \, \mathcal U.
\end{split}
\end{align}
Then
\begin{align}
\label{eqn9.13}
\sup_{\ar B(0,s)}(v_1-v_2)\leq c\, \inf_{\ar B(0,s)}(v_1-v_2)
\end{align}
where $c\geq 1$ depends only on $p,n$, and $\hat c$.
\end{lemma}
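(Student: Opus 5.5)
The approach is to reduce \eqref{eqn9.13} to a Harnack inequality for the nonnegative function $w=v_1-v_2$, which solves a linear uniformly elliptic equation in divergence form. Write
\[
\mathcal A(\eta)=|\eta|^{p-2}\eta,\qquad
0=\nabla\cdot\bigl(\mathcal A(\nabla v_1)-\mathcal A(\nabla v_2)\bigr)=\nabla\cdot\bigl(A(x)\nabla w\bigr),
\]
where
\[
A(x)=\int_0^1 D\mathcal A\bigl(t\nabla v_1(x)+(1-t)\nabla v_2(x)\bigr)\,dt .
\]
This identity holds in the weak sense, since $v_1,v_2\in C^{1,\beta}_{\rm loc}$ by Lemma~\ref{lem3.1}. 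Since $D\mathcal A(\eta)$ has eigenvalues $|\eta|^{p-2}$ and $(p-1)|\eta|^{p-2}$, the ellipticity of $A$ at $x$ is controlled by
\[
\int_0^1|t\nabla v_1+(1-t)\nabla v_2|^{p-2}\,dt .
\]

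The key step is to bound this integral above and below by $\tau^{p-2}$, up to constants depending on $p,n,\hat c$. For definiteness assume $|\nabla v_1|\geq\hat c^{-1}\tau$. The upper bound for $p\geq2$ is immediate from $|\nabla v_i|\leq\hat c\tau$. For $p<2$ the integrand is singular where the segment approaches $0$, but the standard estimate
\[
\int_0^1|t a+(1-t)b|^{p-2}\,dt\approx(|a|+|b|)^{p-2},\qquad 1<p<\infty,
\]
with constants depending only on $p$, handles both cases at once. Since $|\nabla v_1|+|\nabla v_2|\in[\hat c^{-1}\tau,2\hat c\tau]$, we get
\[
c^{-1}\tau^{p-2}|\xi|^2\leq\langle A\xi,\xi\rangle,\qquad |A\xi|\leq c\,\tau^{p-2}|\xi|,
\]
with $c=c(p,n,\hat c)$. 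This is exactly where only one lower gradient bound is needed: the sum $|a|+|b|$ is bounded below as soon as one of the two vectors is. Dividing the equation by $\tau^{p-2}$ normalizes $A$ to be uniformly elliptic with ellipticity constants depending only on $p,n,\hat c$.

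Next, $w\geq0$ is a weak solution of this linear equation in $\mathcal U$, so the Moser/De Giorgi Harnack inequality applies on balls. Cover the sphere $\partial B(0,s)$ by a chain of balls $B(x_j,s/8)$ with $B(x_j,s/4)\subset\mathcal U$. The number of balls depends only on $n$, by scaling. Chaining Harnack across these balls gives
\[
\sup_{\partial B(0,s)}w\leq c\inf_{\partial B(0,s)}w,
\]
which is \eqref{eqn9.13}. If $w$ vanishes somewhere in $\mathcal U$, the strong minimum principle contained in Harnack forces $w\equiv0$, and the inequality is trivial.

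The main obstacle is the ellipticity bound of $A$ in the singular range $1<p<2$, i.e. the elementary integral estimate above. I would prove it by noting that the distance from $0$ to the segment $[b,a]$ is at most $|a|+|b|$; this gives the lower bound on the integral when $p<2$. The upper bound when $p<2$ comes from parametrizing by distance along the segment and using $p-2>-1$, so that the singularity is integrable. Everything else is standard linear theory.
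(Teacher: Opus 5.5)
Your proposal is correct and follows the paper's proof essentially step by step: you write $\mathcal A(\nabla v_1)-\mathcal A(\nabla v_2)$ as a linear operator applied to $\nabla(v_1-v_2)$ along the segment, show $\int_0^1|t\nabla v_1+(1-t)\nabla v_2|^{p-2}\,dt\approx\tau^{p-2}$ using a lower bound on only one of the gradients, and chain the linear Harnack inequality in balls of radius $s/8$ centred on $\partial B(0,s)$. The one point to tighten is your sketch of the upper bound for $1<p<2$: the bound $|a-b|^{p-2}\int_0^1|t-t_0|^{p-2}\,dt$ is useless when $|a-b|$ is small compared with $|a|+|b|$, so, as the paper does, treat that short-segment case separately (the segment then stays at distance $\gtrsim|a|+|b|$ from the origin); also, for the lower bound what you need is that every point of the segment has norm at most $|a|+|b|$, not merely that the distance from $0$ to the segment is.
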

\begin{proof}
Let $\mathcal A(\eta)=|\eta|^{p-2}\eta$, with $\mathcal A(0)=0$, and put
$\eta_t=t\nabla v_1+(1-t)\nabla v_2$,
\[
\hat b_{ij}(x)=\int_0^1D_j\mathcal A_i(\eta_t(x))\,dt,
\qquad
\mathcal I(x)=\int_0^1|\eta_t(x)|^{p-2}\,dt.
\]
We claim that
\begin{align}
\label{eqn9.14}
c^{-1}\tau^{p-2}\leq\mathcal I(x)\leq c\tau^{p-2},
\end{align}
with $c=c(p,\hat c)$.
By symmetry assume $a=\nabla v_2(x)$ satisfies $|a|\geq\tau/\hat c$, and set $b=\nabla v_1(x)-a$.
Then $|\eta_t|\leq\hat c\tau$, $|b|\leq2\hat c\tau$, and $|\eta_t|\geq\tau/(2\hat c)$ for $0\leq t\leq(2\hat c)^{-2}$.
These bounds prove \eqref{eqn9.14} when $p\geq2$, and its lower bound when $1<p<2$.
For the remaining upper bound, if $|b|\leq|a|/2$, then $|\eta_t|\geq\tau/(2\hat c)$ for all $t$.
Otherwise, with $t_0=-\lan a,b\ran/|b|^2$, we have $|\eta_t|\geq|b||t-t_0|$ and
\[
\mathcal I(x)\leq |b|^{p-2}\int_0^1|t-t_0|^{p-2}\,dt
\leq\frac{2}{p-1}\left(\frac{\tau}{2\hat c}\right)^{p-2}.
\]
The derivative formula for $\mathcal A$ gives
\begin{align}
\label{eqn9.15}
\min\{1,p-1\}\mathcal I(x)|\vartheta|^2
\leq\sum_{i,j}\hat b_{ij}(x)\vartheta_i\vartheta_j
\leq\max\{1,p-1\}\mathcal I(x)|\vartheta|^2.
\end{align}
Integration along the segment, whose derivative is integrable by \eqref{eqn9.14}, gives
$\mathcal A_i(\nabla v_1)-\mathcal A_i(\nabla v_2)
=\sum_j\hat b_{ij}(v_1-v_2)_{x_j}$.
Subtracting the weak equations, we find that $h=v_1-v_2\geq0$ solves
$\sum_{i,j}\ar_{x_i}(\hat b_{ij}h_{x_j})=0$ weakly in $\mathcal U$, and $h\in W^{1,2}(\mathcal U)$ by \eqref{eqn9.12}.
After division of the coefficients by $\tau^{p-2}$, \eqref{eqn9.14}--\eqref{eqn9.15} give uniform ellipticity with constants depending only on $p,n,\hat c$.
The Harnack inequality of \cite{GT} in balls $B(y,s/8)$ with $y\in\ar B(0,s)$ and a chain of a bounded number of these balls on the sphere prove \eqref{eqn9.13}.
\end{proof}

\begin{lemma}
\label{lem:radial-limits}
Let $\rho_0>0$.
\begin{itemize}
\item[(a)] Let $1<p<n$, and let $u$ be positive and $p$-harmonic in $\rn{n}\sem\bar B(0,\rho_0)$, with $u(x)\to0$ as $x\to\infty$.
Put $\mathbf g(x)=|x|^{(p-n)/(p-1)}$ and, for $r>\rho_0>0$,
\[
\mathbf a(r)=\inf_{\ar B(0,r)}u/\mathbf g,
\qquad
\mathbf A(r)=\sup_{\ar B(0,r)}u/\mathbf g.
\]
Then
\begin{align}
\label{eqn9.33}
\mathbf a(r)\leq\mathbf a(s)\leq\mathbf A(s)\leq\mathbf A(r)
\quad(\rho_0<r<s),
\end{align}
and $\ell=\lim_{x\to\infty}u(x)/\mathbf g(x)$ exists with
$0<\mathbf a(r)\leq\ell\leq\mathbf A(r)<\infty$.
\item[(b)] Let $p\geq n$, and let $G$ be continuous on $\rn{n}\sem B(0,\rho_0)$ and $p$-harmonic in $\rn{n}\sem\bar B(0,\rho_0)$.
Suppose $k=G-F$ is bounded there, with $F$ as in \eqref{eqn2.7}.
For $r\geq\rho_0$ put $m(r)=\inf_{\ar B(0,r)}k$ and $M(r)=\sup_{\ar B(0,r)}k$.
Then $m$ is non-decreasing, $M$ is non-increasing, and
\begin{align}
\label{eqn9.16}
\begin{split}
&(i)\quad m(r)\leq k(x)\leq M(r)\quad(|x|\geq r\geq\rho_0),\\
&(ii)\quad k(\infty)=\lim_{x\to\infty}k(x)\ \mbox{exists finitely, and}\
m(r)\leq k(\infty)\leq M(r).
\end{split}
\end{align}
\end{itemize}
\end{lemma}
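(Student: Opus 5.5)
The plan is to prove both parts by comparison with explicit radial $p$-harmonic functions on annuli, using the comparison principle \cite[Theorem~7.6]{HKM} on bounded annuli and letting the outer radius tend to infinity.

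\emph{Part (a).} Fix $\rho_0<r<s$. The function $\mathbf g$ is positive and $p$-harmonic in $\rn{n}\sem\{0\}$, so $\mathbf a(r)\mathbf g$ is $p$-harmonic, and $\mathbf a(r)\mathbf g\leq u$ on $\ar B(0,r)$. For $R>s$, put $\epsilon_R=\sup_{\ar B(0,R)}u$. We have $\epsilon_R\to0$ because $u\to0$ at infinity. On the annulus $B(0,R)\sem\bar B(0,r)$, compare $u$ with the $p$-harmonic function $\mathbf a(r)\mathbf g-\mathbf a(r)\mathbf g(R)$. It vanishes on $\ar B(0,R)$ and is at most $u$ on $\ar B(0,r)$, so it lies below $u$. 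Letting $R\to\infty$, using $\mathbf g(R)\to0$ since $p<n$, gives $u\geq\mathbf a(r)\mathbf g$ on $|x|\geq r$. Symmetrically, $u\leq\mathbf A(r)\mathbf g+\epsilon_R$ on $\ar B(0,R)\cup\ar B(0,r)$, and adding a constant preserves $p$-harmonicity. Comparison and $R\to\infty$ then give $u\leq\mathbf A(r)\mathbf g$ for $|x|\geq r$. These two inequalities yield \eqref{eqn9.33}.

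Positivity and finiteness of $\mathbf a(r)$ and $\mathbf A(r)$ follow from continuity and positivity of $u$ on the compact sphere. Thus $\mathbf a$ is nondecreasing and bounded above, $\mathbf A$ is nonincreasing and bounded below, and both have limits $\mathbf a_\infty\leq\mathbf A_\infty$. The main obstacle is to show $\mathbf a_\infty=\mathbf A_\infty$. I would handle it by rescaling. Put $u_s(y)=u(sy)/\mathbf g(s)$ on $\mathcal U=B(0,2)\sem\bar B(0,1/2)$. These functions are $p$-harmonic and, by the bounds just proved, satisfy $\mathbf a(s/2)\mathbf g\leq u_s\leq\mathbf A(s/2)\mathbf g$ there. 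Apply Lemma~\ref{lem9.7} with $v_1=u_s-\mathbf a(s/2)\mathbf g$ in the role of the difference, namely $v_1=u_s$ and $v_2=\mathbf a(s/2)\mathbf g$. The function $\mathbf g$ has gradient comparable to $1$ on $\mathcal U$, and $u_s$ has bounded gradient by Lemma~\ref{lem3.1}. Lemma~\ref{lem9.7} then gives
\[
\sup_{\ar B(0,1)}(u_s-\mathbf a(s/2)\mathbf g)\leq c\inf_{\ar B(0,1)}(u_s-\mathbf a(s/2)\mathbf g),
\]
that is, $\mathbf A(s)-\mathbf a(s/2)\leq c(\mathbf a(s)-\mathbf a(s/2))$. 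The right side tends to $0$, so $\mathbf A_\infty=\mathbf a_\infty=\ell$. The limit of $u/\mathbf g$ then exists, since $u/\mathbf g$ on $\ar B(0,s)$ is squeezed between $\mathbf a(s)$ and $\mathbf A(s)$.

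\emph{Part (b).} The argument is the same with additive constants. $F+m(r)$ is $p$-harmonic and lies below $G$ on $\ar B(0,r)$. On $\ar B(0,R)$, where $G-F\geq\inf k$ with $k$ bounded, compare with $F+m(r)-\eta_R$, where $\eta_R=\max(0,m(r)-m(R))\leq\sup k-\inf k$. This crude version does not close by itself, so I would instead use the maximum principle for $k$ directly. Here the obstacle is that $k$ itself is not $p$-harmonic. Instead I would compare $G$ with $F+m(r)-\epsilon(F-F(r))/(F(R)-F(r))\cdot C$, which uses the fact that $F\to\infty$ for $p\geq n$. The barrier $\epsilon\,(F-F(r))$ is a multiple of a $p$-harmonic function plus a constant, so $(1+\epsilon')F+\text{const}$ is $p$-harmonic. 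Take $(1-t)F+m(r)+tF(r)$ with $t=(\sup k-\inf k)/(F(R)-F(r))\to0$. This function is $\leq G$ on both spheres, so it is $\leq G$ between them by comparison. Letting $R\to\infty$ gives $k\geq m(r)$ on $|x|\geq r$. The same argument from above gives $k\leq M(r)$, which proves (i) and the monotonicity of $m$ and $M$. For (ii), rescale as before with $v_1=G(sy)/\tau$ and $v_2=(F(sy)+m(s/2))/\tau$, where $\tau=|\nabla F|$ at radius $s$. The gradient of $F$ is comparable to $\tau$ on the annulus, and interior estimates from Lemma~\ref{lem3.1} together with the boundedness of $k$ bound $|\nabla G|$. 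Lemma~\ref{lem9.7} then gives $M(s)-m(s/2)\leq c(m(s)-m(s/2))\to0$, so $k(\infty)$ exists and lies between $m(r)$ and $M(r)$.
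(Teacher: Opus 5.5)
Your proposal is correct and follows the paper's proof. Radial barriers on annuli, with the outer radius sent to infinity, give the monotonicity and (i); then Lemma~\ref{lem9.7} on a dyadic annulus, comparing $u$ with $\mathbf a(r)\mathbf g$ (resp.\ $G$ with $F+m(r)$), yields $A(2r)-a(r)\leq c\,[a(2r)-a(r)]$ with $c$ independent of $r$, which forces the limiting extrema to coincide.

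The one point worth writing out is this scale-independence of $\hat c$. In (a) it uses $\mathbf a(\rho_1)\leq\mathbf a\leq\mathbf A\leq\mathbf A(\rho_1)$ beyond a fixed $\rho_1$. In (b) with $p=n$, the bound $|\nabla G|\leq c|\nabla F|$ must come from the oscillation of $G$ on $B(x,|x|/2)$, which is at most $F(3|x|/2)-F(|x|/2)+\sup k-\inf k$, and not from $\sup G$, which would only give $|x|^{-1}\log|x|$.
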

\begin{proof}
In $(a)$, compare $u$ with $\mathbf A(r)\mathbf g+\ep$ and $\mathbf a(r)\mathbf g-\ep$ on annuli with fixed inner radius $r$ and large outer radius.
Since $u,\mathbf g\to0$ at infinity, comparison and $\ep\to0$ give $\mathbf a(r)\mathbf g\leq u\leq\mathbf A(r)\mathbf g$ for $|x|\geq r$, proving \eqref{eqn9.33}.
In $(b)$, compare $G$ with
$(1+\ep)F+M(r)-\ep F(r)$ and
$(1-\ep)F+m(r)+\ep F(r)$.
These have the required values on the inner sphere and the required ordering on large outer spheres because $k$ is bounded and $F\to\infty$.
Letting $\ep\to0$ gives \eqref{eqn9.16} $(i)$ and the monotonicity of $m,M$.

Fix $\rho_1>\rho_0$.
In either case the two spherical extrema are bounded and have finite limits.
For $r\geq2\rho_1$, apply Lemma \ref{lem9.7} with $s=2r$ on $B(0,4r)\sem\bar B(0,r)$.
In $(a)$ take $v_1=u$, $v_2=\mathbf a(r)\mathbf g$, and $\tau=\mathbf a(r)r^{(1-n)/(p-1)}$.
The interior estimate $|\nabla u(x)|\leq c|x|^{-1}u(x)$, the preceding comparison, and
$|\nabla\mathbf g(x)|=\frac{n-p}{p-1}|x|^{-1}\mathbf g(x)$
verify \eqref{eqn9.12}, with $\hat c$ depending only on $p,n,\mathbf A(\rho_1)/\mathbf a(\rho_1)$.
In $(b)$ take $v_1=G$, $v_2=F+m(r)$, and $\tau=r^{(1-n)/(p-1)}$.
For $|x|\geq2\rho_1$, put $B_x=B(x,|x|/2)$ and apply the interior
gradient estimate to $G-\inf_{B_x}G+\eta$ with radius $|x|/8$,
then let $\eta\downarrow0$.
The comparison bounds give
$\mathop{\rm osc}_{B_x}G
\leq F(3|x|/2)-F(|x|/2)+M(\rho_1)-m(\rho_1)$.
The first difference is $\ga\log3$ when $p=n$ and is bounded by
$c|x|^\xi$ when $p>n$. Thus
$|\nabla G(x)|\leq c|\nabla F(x)|$, with $c$ depending only on
$p,n,\rho_1,M(\rho_1)-m(\rho_1)$. The radial derivative of $F$
gives the lower gradient bound for $v_2$.

Writing $a,A$ for $\mathbf a,\mathbf A$ in $(a)$ and for $m,M$ in $(b)$, \eqref{eqn9.13} on $\ar B(0,2r)$ gives in both cases
$A(2r)-a(r)\leq c[a(2r)-a(r)]$
with $c$ independent of $r$.
In $(a)$ the common positive factor $\mathbf g(2r)$ cancels.
Letting $r\to\infty$ shows that the two limiting extrema coincide.
The comparison bounds then give the stated limits. In $(a)$ positivity follows from $\mathbf a(\rho_1)>0$.
This is the Harnack argument used in \cite[(4.16)--(4.17)]{ALSV}.
\end{proof}

\begin{definition}
\label{defn3.6}
Let $O\subset \rn{n}$ be an open set and let $v$ be an upper semicontinuous real valued function on $O$. We say that $v$ is a viscosity $p$-subsolution in $O$ if the following holds:
whenever $x_0\in O$ and $\ph$ is a function with continuous second partial derivatives in $O$ such that $v(x_0)=\ph(x_0)$, $v(x)<\ph(x)$ for $x\in O\sem\{x_0\}$, and $\nabla \ph(x_0)\neq 0$, then
\begin{align}
\label{eqn3.8}
\De \ph (x_0)+(p-2)\, |\nabla \ph(x_0)|^{-2} \, D^2 \ph(x_0)[\nabla \ph(x_0), \nabla \ph(x_0)]\geq 0.
\end{align}
\end{definition}
The left-hand side of \eqref{eqn3.8} multiplied by $|\nabla \ph(x_0)|^{p-2}$ is $\nabla \cdot(|\nabla \ph|^{p-2}\nabla \ph)$ at $x_0$. Thus $v$ is a viscosity $p$-subsolution in $O$ if and only if $-v$ is a viscosity $p$-supersolution in $O$ in the sense of \cite{JLM}.
As in \cite{JLM}, no inequality is imposed on test functions with $\nabla\ph(x_0)=0$.
\begin{lemma}
\label{lem3.7}
Let $O$ be a domain in $\rn{n}$ and let $v$ be a viscosity $p$-subsolution in $O$. If $D$ is a domain with $\bar D\subset O$, $\bar D$ compact, and if $h$ is continuous on $\bar D$, $p$-harmonic in $D$, with $v\leq h$ on $\ar D$, then $v\leq h$ in $D$.
\end{lemma}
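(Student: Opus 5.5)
We prove Lemma~\ref{lem3.7} by contradiction. We perturb $h$ into a strict supersolution with nonvanishing gradient, so that a test function touching $v$ from above at an interior maximum of the difference yields a contradiction with \eqref{eqn3.8}. This is the standard route of \cite{JLM}: viscosity subsolutions satisfy comparison with $p$-harmonic functions. The only care needed is the convention that gradient-zero test functions impose no condition.

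First, $v-h$ is upper semicontinuous on $\bar D$. Suppose $\sup_D(v-h)=\mu>0$. The difficulty is that $h$ is only $C^{1,\beta}$ (Lemma~\ref{lem3.1}) and its gradient may vanish, so $h$ cannot itself serve as a test function. I would avoid this by showing $v\le h$ in the equivalent form that $v$ is $p$-subharmonic in $O$, using the JLM equivalence, and then applying Definition~\ref{def:p-superharmonic}.

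\textbf{Step 1.} Show that $v$ satisfies the comparison principle against a rich class of smooth strict supersolutions. Concretely, take $\psi$ with $C^2$ regularity, $\nabla\psi\ne0$ on $\bar B$, and $\Delta\psi+(p-2)|\nabla\psi|^{-2}D^2\psi[\nabla\psi,\nabla\psi]<0$ on a ball $B\Subset O$, and suppose $v\le\psi$ on $\partial B$. If $v-\psi$ had a positive maximum inside $B$ at $x_0$, then $\varphi=\psi+(v-\psi)(x_0)+|x-x_0|^4$ touches $v$ strictly from above at $x_0$. We may modify $\varphi$ outside a neighbourhood to be admissible on all of $O$. Its gradient at $x_0$ equals $\nabla\psi(x_0)\ne0$, and its second-order expression equals that of $\psi$, which is negative. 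This contradicts \eqref{eqn3.8}.

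\textbf{Step 2.} Prove a local comparison for $p$-harmonic $h$. This is the main obstacle. I would invoke the argument of \cite{JLM} (Juutinen--Lindqvist--Manfredi): viscosity supersolutions coincide with $p$-superharmonic functions. Applied to $-v$, this shows that $v$ is $p$-subharmonic in $O$. Their proof proceeds through Step 1 style strict comparisons. It also uses an approximation of $h$ on slightly smaller domains by solutions of regularized equations $\nabla\cdot((|\nabla h_\epsilon|^2+\epsilon)^{(p-2)/2}\nabla h_\epsilon)=-\epsilon'$, which are smooth. Then $\psi=h_\epsilon+\epsilon''$ becomes admissible after handling critical points. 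At points where $\nabla h_\epsilon=0$, the test function has zero gradient; this is exactly the degenerate case JLM treat by the doubling-of-variables trick with penalization $|x-y|^q/\epsilon$, $q>p/(p-1)$, which forces nonzero gradients. I would cite this equivalence directly, since the paper's Definition~\ref{defn3.6} matches \cite{JLM} under $v\mapsto -v$, as remarked after it.

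\textbf{Step 3.} Conclude from Definition~\ref{def:p-superharmonic} applied to $-v$. Since $-v$ is $p$-superharmonic and $-h\in C(\bar D)$ is $p$-harmonic with $-h\le -v$ on $\partial D$, we get $-h\le -v$ in $D$, i.e.\ $v\le h$. The finiteness requirement holds because $v$ is real valued, and lower semicontinuity of $-v$ holds because $v$ is upper semicontinuous.
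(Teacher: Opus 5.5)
Your proposal is correct and follows the paper's proof: both use the equivalence theorem of \cite{JLM} to conclude that $-v$ is $p$-superharmonic in $O$, and then apply the comparison property in Definition~\ref{def:p-superharmonic} to $-v$ and the $p$-harmonic function $-h$ on $D\Subset O$. Once that equivalence is cited, your Step~1 and your sketch of the internal argument of \cite{JLM} are not needed.
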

\begin{proof}
By the equivalence theorem of \cite{JLM},
$-v$ is $p$-superharmonic in $O$.
Apply the comparison property in Definition~\ref{def:p-superharmonic} to $-v$ and the $p$-harmonic function $-h$.
\end{proof}

\subsection{Relative capacity and boundary regularity}
\label{subsection:capacity-parallel}
We extend the relative capacity in \eqref{eqn2.15} to arbitrary subsets of an open set $O\subset\rn{n}$, which may be unbounded.
If $U\subset O$ is open then $\capp(U,O)$ is the supremum of $\capp(K,O)$ over all compact sets $K\subset U$. If $A\subset O$ is arbitrary then
$\capp(A,O)$ is the infimum of $\capp(U,O)$ over all open sets $U$ with $A\subset U\subset O$. This is the $p$-capacity of the condenser $(A,O)$ in \cite[Chapter 2]{HKM}.
For a compact set the two definitions give the same number
\cite[pp.~27--28]{HKM}. Thus if $1<p<n$ and $E$ is compact, then $\capp(E)=\capp(E,\rn{n})$.
We shall use the following facts.
\begin{align}
\label{eqn9.1}
\begin{split}
&(a) \hs{.1in} \capp(A,O)\leq \capp(A',O)\quad \mbox{whenever}\, \, A\subset A'\subset O.\\
&(b) \hs{.1in} \capp(A,O')\leq \capp(A,O)\quad \mbox{whenever}\, \, A\subset O\subset O' \, \, \mbox{and $O,O'$ are open}.\\
&(c) \hs{.1in} \capp(\rho A+x,\rho\, O+x)=\rho^{n-p}\, \capp(A,O)\quad \mbox{whenever}\, \, A\subset O, \, \, \rho>0, \, \, x\in \rn{n}.\\
&(d) \hs{.1in} \mbox{If $K\subset O$ is compact, $\psi$ is continuous on $O$, $\psi\in W_0^{1,p}(O)$, and $\psi\geq 1$ on $K$,}\\
&\hs{.3in}\mbox{then}\, \,  \capp(K,O)\leq \int_O|\nabla \psi|^p\, dx.\\
&(e)\hs{.1in} \mbox{If $0<r<R<\infty$ then}\, \, \capp(\bar B(x,r),B(x,R))=\\
&\hs{.3in} \si_{n-1}\left(\frac{|n-p|}{p-1}\right)^{p-1}\left|R^{\frac{p-n}{p-1}}-r^{\frac{p-n}{p-1}}\right|^{1-p}\, \, \mbox{when}\, \, p\neq n,\\
&\hs{.3in}\mbox{and}\, \, \mbox{Cap}_n(\bar B(x,r),B(x,R))=\si_{n-1}\left(\log (R/r)\right)^{1-n}.\\
&(f) \hs{.1in} \mbox{If $p>n$ and $r>0$ then}\, \, \capp(\{x\},B(x,r))=\si_{n-1}\left(\frac{p-n}{p-1}\right)^{p-1}r^{n-p}.
\end{split}
\end{align}
Here $(a), (b)$ are Theorem 2.2 $(i), (ii)$ in \cite{HKM}, and $(c)$ follows from a change of variables.
For $(d)$, the admissible functions in \eqref{eqn2.15} may be taken
in $W_0^{1,p}(O)\cap C(O)$ \cite[pp.~27--28]{HKM}.
Part $(e)$ is (2.13) in \cite[Example 2.12]{HKM}. Part $(f)$ follows from $(e)$ by letting the inner radius tend to $0$, since condenser capacity is continuous on decreasing compact sets \cite[Theorem~2.2 $(iv)$]{HKM}.

As in \cite[(6.17)]{HKM} we say that a set $A\subset \rn{n}$ is $p$-thick at $x_0\in\rn{n}$ if
\begin{align}
\label{eqn9.2}
\int_0^1\left(\frac{\capp(A\cap B(x_0,t),B(x_0,2t))}{\capp(B(x_0,t),B(x_0,2t))}\right)^{\frac{1}{p-1}}\, \frac{dt}{t}=\infty.
\end{align}
By \eqref{eqn9.1} $(a)$ the quotient in \eqref{eqn9.2} is at most $1$ and does not decrease when $A$ is replaced by a larger set.
By \eqref{eqn9.1} $(a),(e)$,
\begin{align}
\label{eqn9.8}
\capp(B(x_0,t),B(x_0,2t))\leq \capp(\bar B(x_0,t),B(x_0,2t))=c_1\, t^{n-p}
\end{align}
where $c_1$ depends only on $p$ and $n$.

The following lemma concerns the sets in \eqref{eqn2.4}.
\begin{lemma}
\label{lem4.3}
Let $E_1, E_2$ be nonempty compact sets in $\rn{n}$,  $0<\la<1$, and $E=\la E_1+(1-\la)E_2$. Then
\[
\la \hat E_1+(1-\la)\hat E_2\subset \hat E.
\]
If $\hat E_1$ and $\hat E_2$ are convex, this inclusion is an equality.
\end{lemma}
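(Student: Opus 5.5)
The plan is to prove the inclusion by working with the unbounded complementary component $\Om_E$ and a path argument, and then to get the equality in the convex case from the convexity of the sum.

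\textbf{Step 1: set-up.} Since $\hat E=\rn{n}\sem\Om_E$, it suffices to show the following. If $x=\la y+(1-\la)z$ with $y\in\hat E_1$ and $z\in\hat E_2$, then $x\notin\Om_E$. Suppose instead that $x\in\Om_E$. Then there is a continuous path $\ga:[0,\infty)\to\Om_E$ with $\ga(0)=x$ and $|\ga(t)|\to\infty$; this uses the fact that $\Om_E$ is an open connected unbounded set.

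\textbf{Step 2: the key trick.} Write $\ga(t)=\la(y+\ga(t)-x)+(1-\la)(z+\ga(t)-x)$ and put $\ga_1(t)=y+\ga(t)-x$, $\ga_2(t)=z+\ga(t)-x$. Both are translates of $\ga$, so both escape to infinity. Since $\ga_1(0)=y\in\hat E_1$ and $\hat E_1$ is compact, the path $\ga_1$ must meet $\ar\hat E_1\subset E_1$. Indeed, $\ga_1$ starts in $\hat E_1$ and is eventually in $\Om_{E_1}$, and the complement of $\Om_{E_1}$ is closed.

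The difficulty is that $\ga_1$ and $\ga_2$ hit $E_1$ and $E_2$ at possibly different times. I would handle this with a two-parameter construction instead. Consider the map $(s,t)\mapsto\la\ga_1(s)+(1-\la)\ga_2(t)$, or more simply use that $\hat E_1$ is the complement of the unbounded component. For each $t$ we need a point of $E_1$ and a point of $E_2$ whose combination is $\ga(t)$. This fails in general for single paths, so the pointwise approach must be replaced by a topological one.

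\textbf{Step 2 revisited: a degree/separation argument.} Fix $z\in\hat E_2$. Then $\la\hat E_1+(1-\la)z$ is a translate-dilate of $\hat E_1$. Its complement's unbounded component is $\la\Om_{E_1}+(1-\la)z$, and its boundary lies in $\la E_1+(1-\la)z$.

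The plan is to show the claim in two stages.

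(i) First, $\la E_1+(1-\la)\hat E_2\subset \hat E$, obtained by filling in the second factor. The argument is as follows. Fix $y\in E_1$. The set $\la y+(1-\la)\hat E_2$ is the filling of $\la y+(1-\la)E_2$. Since $\la y+(1-\la)E_2\subset E$, any bounded complementary component of the smaller set is either contained in $E$ or contained in a bounded component of $\rn{n}\sem E$. This is because a connected subset of $\rn{n}\sem E$ is contained in a component of $\rn{n}\sem E$, and if it meets $\Om_E$ then it lies in $\Om_E$. In the latter case it would be a bounded component of the complement of $\la y+(1-\la)E_2$ that lies in $\Om_E$. That is impossible, since $\Om_E$ is connected, unbounded, and disjoint from $\la y+(1-\la)E_2$, so $\Om_E$ lies in the unbounded component of the complement of that set. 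Hence the filling of any subset of $E$ lies in $\hat E$. This is the general principle: $A\subset B$ implies $\hat A\subset\hat B$, because $\Om_B\subset\Om_A$.

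(ii) Apply the same principle with $E':=\la E_1+(1-\la)\hat E_2\subset\hat E$. For fixed $z\in\hat E_2$, the filling of $\la E_1+(1-\la)z$ is $\la\hat E_1+(1-\la)z$, and this lies in $\widehat{E'}\subset\hat{\hat E}=\hat E$. Taking the union over $z$ gives the inclusion. The main point to check carefully is the monotonicity $\Om_B\subset\Om_A$ and the fact that $\hat{\hat E}=\hat E$; both are easy connectedness facts. The step I expect to need the most care is (i)–(ii), namely verifying that filling commutes with translation and dilation and is monotone.

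\textbf{Step 3: equality in the convex case.} If $\hat E_1$ and $\hat E_2$ are convex, then $K=\la\hat E_1+(1-\la)\hat E_2$ is compact and convex, and it contains $E$. Since $\rn{n}\sem K$ is connected and unbounded, and disjoint from $E$, it lies in $\Om_E$. Hence $\hat E\subset K$, which combined with the inclusion gives equality.
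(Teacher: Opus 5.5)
Your final argument (the second version of Step~2, together with Step~3) is correct, but it takes a different route from the paper.

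\textbf{The paper's route.} The paper keeps a single escaping path $\Ga\subset\Om_E$ from $x=\la y+(1-\la)z$. It resolves exactly the timing problem you ran into by freezing one factor at a time. First it follows the dilated copy $\la^{-1}(\Ga(t)-(1-\la)z)$ (rather than a translate) until this path first meets $E_1$ at some $y_1=\Ga_1(t_1)$. Then it freezes $y_1$ and follows $(1-\la)^{-1}(\Ga(t_1+t)-\la y_1)$ until it meets $E_2$ at some $z_2$. This gives $\Ga(t_1+t_2)=\la y_1+(1-\la)z_2\in E$, a contradiction. So your remark that the single-path approach ``fails in general'' is too pessimistic.

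\textbf{Your route.} You isolate three elementary properties of filling:
\begin{itemize}
\item monotonicity: $A\subset B$ gives $\Om_B\subset\Om_A$, hence $\hat A\subset\hat B$, because an unbounded connected subset of the complement of a compact set lies in its unique unbounded component;
\item equivariance under the affine homeomorphisms $x\mapsto\la y+(1-\la)x$;
\item idempotence $\widehat{\hat E}=\hat E$, since $\Om_E$ is connected.
\end{itemize}
You then fill one factor at a time: first $\la E_1+(1-\la)\hat E_2\subset\hat E$, and then $\la\hat E_1+(1-\la)z\subset\widehat{\hat E}=\hat E$ for each $z\in\hat E_2$.

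\textbf{Comparison.} The two proofs carry out the same two-stage idea. The paper's is a single self-contained construction. Yours avoids constructing paths and uses only facts the paper needs later anyway, e.g.\ $\Om_{E_\de}\subset\Om_E$ in Lemma~\ref{lem9.1}\,(c) and the sandwich $E_\de\subset(\hat E)_\de\subset\widehat{E_\de}$ in Lemma~\ref{lem:filling-invariance}.

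\textbf{For the write-up.} Drop the abandoned first Step~2. Also replace the sentence claiming that every bounded complementary component of $\la y+(1-\la)E_2$ lies either in $E$ or in a bounded component of $\rn{n}\sem E$: a single component can meet both, so the sentence is false as stated. The monotonicity statement, which is all you actually use, suffices. Step~3 coincides with the paper.
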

\begin{proof}
Let $y\in \hat E_1, z\in \hat E_2$,  and suppose that $x=\la y+(1-\la)z\in \Om_E$. Since $\Om_E$ is open, connected, and contains a neighbourhood of $\infty$, there exists a continuous function $\Ga:[0,\infty)\to \Om_E$
with $\Ga(0)=x$ and $|\Ga(t)|\to\infty$ as $t\to\infty$. Put $\Ga_1(t)=\la^{-1}(\Ga(t)-(1-\la)z)$. Then $\Ga_1(0)=y$ and $|\Ga_1(t)|\to\infty$. Some $t_1\geq 0$ satisfies $\Ga_1(t_1)\in E_1$, since
otherwise $\Ga_1([0,\infty))$ is an unbounded connected subset of $\rn{n}\sem E_1$ containing $y$, contrary to $y\in \hat E_1$.
Let $y_1=\Ga_1(t_1)$ and put $\Ga_2(t)=(1-\la)^{-1}(\Ga(t_1+t)-\la y_1)$. Then $\Ga_2(0)=z$, and as above
$z_2=\Ga_2(t_2)\in E_2$ for some $t_2\geq 0$. Thus
$\Ga(t_1+t_2)=\la y_1+(1-\la)z_2\in E$,
which contradicts $\Ga(t_1+t_2)\in \Om_E$. Hence $x\in \hat E$.
If the filled sets are convex, their Minkowski combination is a
compact convex set containing $E$, so it also contains $\hat E$.
This gives the reverse inclusion.
\end{proof}

We use boundary regularity in the sense of the Wiener criterion.
\begin{definition}
\label{defn9.5}
Let $1<p<\infty$. A nonempty compact set $E\subset\rn{n}$ is
called $p$-regular if $\hat E$ is $p$-thick at each point of
$\ar\hat E$, in the sense of \eqref{eqn9.2}. A point $x_0\in\partial\hat E$ is called regular if $\hat E$ is $p$-thick at $x_0$.
\end{definition}
\noindent For $p\leq n$, the Wiener test \cite{KM} shows that $x_0$ is
regular in this sense if and only if it is a regular boundary point
for the Dirichlet problem in $B(0,R)\setminus\hat E$, whenever
$\hat E\Subset B(0,R)$.
The sufficiency part of the Wiener criterion
\cite[Theorem~6.27]{HKM} says that the Sobolev Dirichlet solution
in $B(0,R)\setminus\hat E$ attains its continuous boundary datum
at each regular point $x_0$, whenever $\hat E\Subset B(0,R)$.
For $1<p\leq n$, the converse is proved in
\cite{KM}. For $p>n$, every boundary point is regular
by Lemma~\ref{lem9.6} $(b)$.
Since $\ar\hat E\subset E\subset \hat E$, the set $E$ is $p$-regular if $E$ is $p$-thick at each of its points.
The converse fails. For example, $E=\mathbb S^{n-1}\cup\{0\}$ has $\hat E=\bar B(0,1)$, so $E$ is $p$-regular. If $1<p\leq n$, however, $E$ is not $p$-thick at $0$, since $E\cap B(0,t)=\{0\}$ for $0<t<1$ and a point has zero relative $p$-capacity.

\begin{lemma}
\label{lem9.6}
Let $1<p<\infty$ and let $E$ be a nonempty compact set in $\rn{n}$.
\begin{align*}
&(a) \hs{.1in} \mbox{If $E$ satisfies the corkscrew condition in Definition \ref{defn2.4}, then $E$ is $p$-regular.}\\
&(b) \hs{.1in} \mbox{If $p>n$ then $E$ is $p$-regular.}\\
&(c) \hs{.1in} \mbox{If $p>n-1$ and each point of $\ar\hat E$ belongs to a connected subset of $\hat E$ which}\\
&\hs{.3in}\mbox{contains more than one point, then $E$ is $p$-regular. In particular, if $p>n-1$}\\
&\hs{.3in}\mbox{and no component of $E$ is a single point, then $E$ is $p$-regular.}\\
&(d) \hs{.1in} \mbox{If $E$ is $p$-regular, $E'$ is a nonempty compact set, and $0<\la\leq 1$, then}\\
&\hs{.3in} \mbox{$\la E+(1-\la)E'$ is $p$-regular. Also $\rho E+x$ is $p$-regular when $\rho>0$ and $x\in \rn{n}$.}\\
&(e) \hs{.1in} \mbox{Let $E$ be $p$-regular, $R>0$ with $E\subset B(0,R)$, and $a\in\re$. There exists a unique}\\
&\hs{.3in} \mbox{continuous function $v$ on $\bar B(0,R)$ such that $v$ is $p$-harmonic in $B(0,R)\sem E$,}\\
&\hs{.3in} \mbox{$v\equiv 0$ on $E$, and $v\equiv a$ on $\ar B(0,R)$. Moreover, $v\equiv0$ on $\hat E$ and $v\in W^{1,p}(B(0,R))$.}
\end{align*}
\end{lemma}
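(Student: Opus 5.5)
We prove the five parts in turn, working throughout with the Wiener-type thickness integral \eqref{eqn9.2} applied to $\hat E$ at points of $\ar\hat E$.

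\emph{Part $(a)$.} Let $w\in\ar\hat E\subset E$ and $0<t<r_E$. The corkscrew condition gives a ball $B(y,t/c_E)\subset E\cap B(w,t)$. By \eqref{eqn9.1} $(a)$ and $(b)$,
\[
\capp(\hat E\cap B(w,t),B(w,2t))\geq \capp(\bar B(y,t/(2c_E)),B(y,4t)).
\]
By \eqref{eqn9.1} $(c),(e)$ this is $c(p,n,c_E)\,t^{n-p}$. Comparing with \eqref{eqn9.8}, the integrand in \eqref{eqn9.2} is bounded below by a positive constant on $(0,r_E)$. Hence the integral diverges, and $E$ is $p$-regular.

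\emph{Part $(b)$.} Let $p>n$ and $w\in\ar\hat E$. Then $\{w\}\subset\hat E\cap B(w,t)$ and $B(w,2t)\subset B(w,2t)$. By \eqref{eqn9.1} $(a)$ and $(f)$, the numerator in \eqref{eqn9.2} is at least $c\,t^{n-p}$. The same argument as in $(a)$ then finishes.

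\emph{Part $(c)$.} Let $w\in\ar\hat E$ lie in a connected set $K\subset\hat E$ of diameter $d>0$. For $t<d/2$, the set $K\cap \bar B(w,t/2)$ contains a continuum $K_t$ from $w$ to $\ar B(w,t/2)$. This follows from the boundary bumping theorem applied to the component of $K\cap\bar B(w,t/2)$ containing $w$.

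We then need the standard estimate that, for $p>n-1$, a continuum of diameter at least $t/2$ inside $B(w,t)$ has $\capp(K_t,B(w,2t))\geq c\,t^{n-p}$. To prove it, project radially onto spheres centred at $w$, or onto a line. For $p>n-1$, functions in $W^{1,p}$ restricted to a.e. sphere $\ar B(w,s)$, with $t<s<2t$, are continuous by Sobolev embedding on the $(n-1)$-sphere. An admissible $v$ (equal to $1$ on $K_t$, vanishing near $\ar B(w,2t)$) must therefore have oscillation at least comparable to $1$ on many spheres, or across radial segments. Integrating the Morrey-type estimate $\operatorname{osc}_{S}v\leq c\,s^{1-(n-1)/p}\|\nabla v\|_{L^p(S)}$ over $s$ gives the bound.

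This capacity estimate is the main obstacle. I would try the version where $v$ restricted to each sphere $\ar B(x,s)$, for $x\in K_t$ near $w$, is continuous and must vary from $\geq1$ to $0$. That is not immediate. An alternative is a cleaner argument: take the spheres $\ar B(w,s)$ with $0<s<t/2$, each of which meets $K_t$ because $K_t$ is connected and reaches distance $t/2$. On each such sphere, $v$ takes the value $1$ somewhere. One then needs $v$ to be small somewhere on the sphere, which holds if the sphere is not mostly in $\{v\geq 1/2\}$. Otherwise $\int|\nabla v|^p$ is already large by the capacity of a large set. Thickness then follows as before. The statement for $E$ with no singleton components follows because $\ar\hat E\subset E$, and each of its points lies in a nondegenerate component of $E\subset\hat E$.

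\emph{Part $(d)$.} Let $F=\la E+(1-\la)E'$ and $x\in\ar\hat F$. Write $x=\la y+(1-\la)z$ with $y\in E$, $z\in E'$, choosing $y\in\ar\hat E$. Such a choice is possible: a path in $\Om_F$ to $x$, pulled back as in Lemma~\ref{lem4.3}, reaches $\Om_E$ near points of $E$. More precisely, first fix $z$ with $x-(1-\la)z\in \la\hat E$. Then $x$ is a limit of points of $\Om_F$, which forces $\la^{-1}(x-(1-\la)z)\notin \operatorname{int}\hat E$. Hence the translate-dilate $\la\hat E+(1-\la)z$ lies in $\hat F$ by Lemma~\ref{lem4.3} and contains $x$ as a boundary point of $\la\hat E+(1-\la)z$. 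Monotonicity of the quotient in \eqref{eqn9.2}, together with the scaling \eqref{eqn9.1} $(c)$, transfers thickness of $\hat E$ at $y$ to thickness of $\hat F$ at $x$. The statement for $\rho E+x$ is the case of pure scaling.

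\emph{Part $(e)$.} Take the Sobolev Dirichlet solution in $B(0,R)\sem\hat E$ with boundary data $\psi\in C^\infty$, $\psi=0$ near $\hat E$ and $\psi=a$ on $\ar B(0,R)$. By the Wiener sufficiency \cite[Theorem~6.27]{HKM}, it attains its values continuously at every point of $\ar\hat E$, which are all regular, and at the sphere. Extend it by $0$ on $\hat E$. Then $v$ is continuous, belongs to $W^{1,p}(B(0,R))$, and is $p$-harmonic in $B(0,R)\sem E$. The bounded components of the complement, where $v\equiv0$, are harmless.

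For uniqueness, suppose $v'$ is another such function. On each bounded component of $\rn{n}\sem E$, $v'$ is $p$-harmonic, continuous up to a boundary contained in $E$, and equal to $0$ there. The comparison principle forces $v'\equiv0$ on that component. In the remaining domain $B(0,R)\sem\hat E$, the functions $v$ and $v'$ agree on the boundary, so comparison gives $v=v'$.
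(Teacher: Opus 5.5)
Your proposal is correct and follows the paper's proof in all five parts, including the use of Lemma~\ref{lem4.3} in $(d)$ to force $y\in\ar\hat E$. In $(c)$, your final ``cleaner argument'' is the paper's dichotomy between the spherical Morrey inequality \eqref{eqn9.9} and radial integration, so the boundary-bumping detour is unnecessary: connectedness alone makes the set meet every sphere $\ar B(w,s)$ of small radius.

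Make explicit that this dichotomy is run over radii comparable to $t$, for example $s\in(t/4,t/2)$ as in the paper, not over all $0<s<t/2$. Since $v(w)\geq1$, the alternative $v>1/2$ on all of $\ar B(w,s)$ always occurs for tiny $s$, and there the capacity of that sphere relative to $B(w,2t)$ tends to zero when $p\leq n$.
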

\begin{proof}
For $(a)$, let $w\in E$ and $0<r<r_E$, and choose the ball
$B(y,r/c_E)\subset E\cap B(w,r)$ from Definition~\ref{defn2.4}.
Since $B(w,2r)\subset B(y,3r)$, \eqref{eqn9.1} $(a),(b),(e)$ give
\[
\capp(E\cap B(w,r),B(w,2r))
\geq\capp(\bar B(y,r/(2c_E)),B(y,3r))
=c^{-1}r^{n-p},
\]
where $c$ depends only on $p,n,c_E$.
At each
$x_0\in\partial\hat E\subset E$, the inclusion $E\subset\hat E$
and \eqref{eqn9.8} bound the quotient in \eqref{eqn9.2} for $A=\hat E$
below by a positive constant for $0<t<r_E$. Hence $\hat E$ is $p$-thick at $x_0$.

For $(b)$ let $p>n$, $x_0\in\partial\hat E\subset E$, and $0<t\leq 1$. From \eqref{eqn9.1} $(a), (f)$ we have $\capp(E\cap B(x_0,t),B(x_0,2t))\geq \capp(\{x_0\},B(x_0,2t))= c^{-1}t^{n-p}$. With \eqref{eqn9.8}, this bounds the quotient in \eqref{eqn9.2} below by a positive constant, so $E$, and hence $\hat E$, is $p$-thick at $x_0$.

The proof of $(c)$ uses the following spherical Morrey inequality, obtained in local coordinates and by scaling from \cite{GT}. If $p>n-1$, $r>0$, and $f$ is continuously differentiable near $\ar B(0,r)$, then
\begin{align}
\label{eqn9.9}
\left(\sup_{\ar B(0,r)}f-\inf_{\ar B(0,r)}f\right)^p
\leq c\,r^{p-n+1}\int_{\ar B(0,r)}|\nabla f|^p\,d\mathcal{H}^{n-1}.
\end{align}
Let $p>n-1$, $x_0\in \ar\hat E$, and let $C\subset \hat E$ be a connected set containing $x_0$ and a point $x_1\neq x_0$. We may assume that $x_0=0$. Let $0<t\leq \min\{1,|x_1|\}$ and put $K_t=\hat E\cap\bar B(0,t/2)$.
We claim that
\begin{align}
\label{eqn9.10}
\capp(K_t,B(0,2t))\geq c^{-1}\, t^{n-p}.
\end{align}
By \eqref{eqn9.1} $(a)$ and \eqref{eqn9.8}, this bounds the quotient in \eqref{eqn9.2} for $\hat E$ at $0$ below by a positive constant for $0<t\leq \min\{1,|x_1|\}$, so $\hat E$ is $p$-thick at $0$.
To prove \eqref{eqn9.10} let $0<r<t/2$. Since $C$ is connected and meets both $B(0,r)$ and $\rn{n}\sem \bar B(0,r)$, it meets $\ar B(0,r)$.
Hence $K_t\cap \ar B(0,r)\neq \es$. Thus if $\psi\in C_0^\infty(B(0,2t))$ with $\psi\geq 1$ on $K_t$, then
\begin{align}
\label{eqn9.11}
\sup_{\ar B(0,r)}\psi\geq 1\quad \mbox{whenever}\, \, 0<r<t/2.
\end{align}
Suppose first that $\inf_{\ar B(0,r)}\psi\leq 1/2$ whenever $t/4<r<t/2$. Then from \eqref{eqn9.11} and \eqref{eqn9.9},
\[
\int_{\rn{n}}|\nabla \psi|^p\, dx\geq \int_{t/4}^{t/2}\int_{\ar B(0,r)}|\nabla \psi|^p\, d\mathcal{H}^{n-1} dr\geq c^{-1}\int_{t/4}^{t/2}r^{n-1-p}\, dr\geq c^{-2}\, t^{n-p}.
\]
Otherwise $\psi>1/2$ on $\ar B(0,r)$ for some $r\in (t/4,t/2)$. Since $\psi=0$ on $\ar B(0,2t)$, H\"{o}lder's inequality gives for $\om\in\mathbb{S}^{n-1}$
\begin{align*}
\frac{1}{2}<\int_r^{2t}|\nabla\psi(s\om)|\, ds
\leq \left(\int_r^{2t}|\nabla \psi(s\om)|^p s^{n-1}ds\right)^{\frac{1}{p}}\left(\int_{t/4}^{2t}s^{-\frac{n-1}{p-1}}ds\right)^{1-\frac{1}{p}}.
\end{align*}
The last integral is $c\, t^{(p-n)/(p-1)}$. Raising to the $p$-th power and integrating over $\mathbb{S}^{n-1}$ gives
$\int_{\rn{n}}|\nabla \psi|^p\, dx\geq c^{-1}\, t^{n-p}$.
Taking the infimum over $\psi$ proves \eqref{eqn9.10} and the first statement in $(c)$. If no component of $E$ is a single point and $x_0\in \ar \hat E$, then $x_0\in E$ and the component of $E$ containing $x_0$ is a connected subset of $\hat E$ with more than one point.

To prove $(d)$ let $E''=\la E+(1-\la)E'$ and $x\in \ar\widehat{E''}$. Then $x=\la y+(1-\la)z$ for some $y\in E$ and $z\in E'$.
By Lemma \ref{lem4.3} when $\la<1$,
$\la \hat E+(1-\la)z\subset \widehat{E''}$,
and this is also true when $\la=1$. Hence $y$ is not an interior point of $\hat E$, since otherwise $x$ would be an interior point of $\widehat{E''}$. Thus $y\in\ar\hat E$, and $\hat E$ is $p$-thick at $y$ by assumption. If $t>0$ then
\begin{align*}
(\la \hat E+(1-\la)z)\cap B(x,t)=\la\, (\hat E\cap B(y,t/\la))+(1-\la)z,\quad
B(x,2t)=\la\,  B(y,2t/\la)+(1-\la)z.
\end{align*}
By \eqref{eqn9.1} $(a), (c)$, the quotient in \eqref{eqn9.2} for $\widehat{E''}$ at $x$ and $t$ is at least the quotient for $\hat E$ at $y$ and $t/\la$.
Changing variables in the integral and using $\la\leq 1$ we conclude that $\widehat{E''}$ is $p$-thick at $x$.
Here the integral in \eqref{eqn9.2} over $[a,1]$ is finite when $0<a<1$, since the quotient is at most $1$.
The same argument shows that $\rho E+x$ is $p$-regular.

For $(e)$, note that $\hat E\subset B(0,R)$. Choose a smooth datum $\psi_0$ equal to zero near
$\hat E$ and to $a$ near $\ar B(0,R)$. Theorem~3.17 of \cite{HKM} gives the variational
solution $v$ in $O=B(0,R)\sem\hat E$ with $v-\psi_0\in W_0^{1,p}(O)$.
By $p$-regularity at $\ar\hat E$ and the exterior ball condition at the outer sphere,
\cite[Theorem~6.27]{HKM} gives the continuous boundary values. Put $v=0$ on $\hat E$. Extending $v-\psi_0$ by zero gives $v\in W^{1,p}(B(0,R))$.
The open set $B(0,R)\sem E$ is the disjoint union of the open sets $O$ and $\hat E\sem E$, and $v=0$ on the second.
Hence $v$ is $p$-harmonic in $B(0,R)\sem E$. If $\ti v$ is another such function, comparison in each bounded component of $\rn{n}\sem E$ gives $\ti v=0$ on $\hat E$, and comparison in $O$ gives $\ti v=v$.
\end{proof}

\subsection{Capacitary functions and Green's functions}
\label{section4}

We construct the exterior functions for $p$-regular sets and prove their
uniqueness and asymptotics, adapting \cite{AGHLV,ALSV}. We begin with $1<p<n$.
If $E, E'$ are compact sets with $E\subset E'$, $\rho>0$, and $x\in \rn{n}$, then from \eqref{eqn1.4},
\begin{align}
\label{eqn4.1}
\capp(E)\leq \capp(E') \quad \mbox{and}\quad \capp(\rho E+x)=\rho^{n-p}\, \capp(E)
\end{align}
when $1<p<n$. Also $\capp(\bar B(x,\rho))>0$ (see Example 2.12 in \cite{HKM}).
For $R>0$ let $\chi_R(x)=\chi_1(x/R)$, where $\chi_1\in C_0^\infty(B(0,2))$ is radial, $0\leq\chi_1\leq1$, $\chi_1\equiv 1$ on $B(0,1)$, and $|\nabla \chi_1|\leq 2$. From H\"{o}lder's inequality and $1/p-1/p^*=1/n$, if $f\in L^{p^*}(\rn{n})$ then
\begin{align}
\label{eqn4.6}
R^{-1}\, \|f\|_{L^p(B(0,2R)\sem B(0,R))}\leq c\, \|f\|_{L^{p^*}(\rn{n}\sem B(0,R))}\to 0 \quad \mbox{as}\, \, R\to\infty.
\end{align}
\begin{lemma}
\label{lem4.1}
Let $1<p<n$ and let $E$ be a $p$-regular compact set. Then $\capp(E)>0$, and there exists a unique continuous function $u$ on $\rn{n}$ satisfying \eqref{eqn2.6}. Moreover,
\begin{align}
\label{eqn4.2}
\begin{split}
&(a)\hs{.1in} 0<u<1 \, \, \mbox{in}\, \, \Om_E, \, \, u\equiv 1 \, \, \mbox{on}\, \, \hat E, \, \, \mbox{and}\, \, u(x)\to 0 \, \, \mbox{as}\, \, x\to\infty.\\
&(b) \hs{.1in} \capp(E)=\int_{\rn{n}\sem E}|\nabla u|^p\, dy.\\
&(c)\hs{.1in} \lim_{x\to\infty} |x|^{\frac{n-p}{p-1}}\, u(x)=\frac{p-1}{n-p}\, \si_{n-1}^{-\frac{1}{p-1}}\,  \left[\capp(E)\right]^{\frac{1}{p-1}}.
\end{split}
\end{align}
\end{lemma}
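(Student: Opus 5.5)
We will construct $u$ by exhaustion with the bounded problems of Lemma~\ref{lem9.6}~(e), then identify its energy with the capacity and read off the asymptotics from Lemma~\ref{lem:radial-limits}~(a).

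\textbf{Construction and monotonicity.} Fix $R_0$ with $\hat E\subset B(0,R_0)$. For $R>R_0$, let $w_R$ be the function of Lemma~\ref{lem9.6}~(e) with $a=1$, and put $u_R=1-w_R$ on $\bar B(0,R)$ and $u_R=0$ outside. Then $u_R$ is continuous, $u_R\equiv1$ on $\hat E$, $u_R$ is $p$-harmonic in $B(0,R)\sem E$, and $u_R$ vanishes on $\ar B(0,R)$. For $R<R'$, comparison on $B(0,R)\sem\hat E$ gives $u_R\le u_{R'}$, so $u=\lim_{R\to\infty}u_R$ exists and $0\le u\le1$.

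\textbf{Limit equation and the energy identity.} Since $u_R-\psi_0$ extended by zero lies in $W^{1,p}_0$, $u_R$ minimizes energy among functions in $W^{1,p}_0(B(0,R))$ that are $\ge1$ on $\hat E$. By \eqref{eqn9.1}~(d), approximation in both directions, and the fact that $w_R=0$ on $\hat E$ (so $\capp(\hat E,B(0,R))=\capp(E,B(0,R))$), we get
\[
\int|\nabla u_R|^p\,dx=\capp(E,B(0,R)).
\]
This quantity decreases as $R$ increases. By the definition via smooth compactly supported functions, it tends to $\capp(E)$.
\begin{itemize}
\item The energies are uniformly bounded, so by Harnack and Lemma~\ref{lem3.1} the convergence is locally uniform in $\Om_E$, and $u$ is $p$-harmonic there.
\item Continuity of $u$ at $\ar\hat E$ follows because $u_{R_1}\le u_R\le1$ with $u_{R_1}$ continuous and equal to $1$ there.
\item Sobolev's inequality for $u_R\in W^{1,p}_0$, together with Fatou, gives $u\in L^{p^*}$ and $|\nabla u|\in L^p$, so \eqref{eqn2.6} holds.
\item Lower semicontinuity gives $\int|\nabla u|^p\le\capp(E)$. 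The reverse inequality comes from testing with the cutoffs $\chi_Ru$ suitably truncated near $1$, using \eqref{eqn4.6} to kill the cutoff error; this yields (b).
\end{itemize}

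\textbf{Positivity and (a).} $\capp(E)>0$: if it vanished, then $\nabla u=0$ and $u\in L^{p^*}$ would force $u\equiv0$, contradicting $u=1$ on $E$. Then $0<u<1$ in $\Om_E$ by the strong maximum principle. The decay $u\to0$ at infinity follows from $u\in L^{p^*}$ plus Harnack on balls $B(x,|x|/2)$.

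\textbf{Uniqueness.} Let $\tilde u$ be another continuous solution of \eqref{eqn2.6}. Comparison in the bounded components of $\rn{n}\sem E$ gives $\tilde u=1$ on $\hat E$. Both functions tend to $0$ at infinity: for $\tilde u$ this again uses $L^{p^*}$ and Harnack, after noting that $\tilde u>0$ fails only if $\tilde u$ takes negative values, which comparison excludes. Comparing $\tilde u$ with $u\pm\ep$ on $B(0,R)\sem\hat E$ for large $R$ and letting $\ep\to0$ gives $\tilde u=u$.

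\textbf{Asymptotics (c).} By Lemma~\ref{lem:radial-limits}~(a), $\ell=\lim|x|^{(n-p)/(p-1)}u$ exists and is positive. The expected main obstacle is identifying $\ell$ through the flux. Integrating the weak equation against $1-\chi$ for a cutoff $\chi$ that equals $1$ near $\hat E$ shows that the flux
\[
\int_{\ar B(0,r)}|\nabla u|^{p-2}\,\partial_\nu u
\]
is independent of $r$. Testing against $u$ itself shows that the flux equals $-\capp(E)$. Next, rescale $u_r(y)=r^{(n-p)/(p-1)}u(ry)$. By \eqref{eqn9.33} and Lemma~\ref{lem3.1}, $u_r\to\ell\,|y|^{(p-n)/(p-1)}$ in $C^1$ on annuli. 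The flux of the limit is $-\si_{n-1}\bigl(\ell\frac{n-p}{p-1}\bigr)^{p-1}$. Equating the two fluxes gives (c).
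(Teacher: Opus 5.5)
Apart from uniqueness, your route is the paper's. The shared steps are:
\begin{itemize}
\item the exhaustion by $u_R=1-w_R$;
\item the energy bound obtained by testing with truncated competitors;
\item Sobolev and Fatou for $\|u\|_{p^*}\le c\,\mathrm{Cap}_p(E)^{1/p}$, which gives positivity of the capacity;
\item the reverse inequality in (b) via $\chi_R u$ and \eqref{eqn4.6};
\item (c) by rescaling and matching fluxes.
\end{itemize}
The paper writes the flux as the mass $\mu(E)=\mathrm{Cap}_p(E)$ of the Riesz measure of $u$, tested against $\chi_R$. Your ``testing against $u$ itself'' is the same computation, provided you use a test function vanishing on $E$, such as $(1-u)\chi_R$.

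\textbf{The gap is in uniqueness.} Before comparing $\tilde u$ with $u\pm\epsilon$ on $B(0,R)\setminus\hat E$, you need $\tilde u\to0$ at infinity. You get this from Harnack, which requires $\tilde u\ge0$, and you assert that negative values are ``excluded by comparison''. That step does not work. Comparison is only available on bounded sets. On $B(0,R)\setminus\hat E$, where $\tilde u=1$ on $\partial\hat E$, it only tells you that $\min_{\bar B(0,R)}\tilde u$ is attained on $\partial B(0,R)$. So a negative value merely propagates to every large sphere. That contradicts nothing you have proved, since $\tilde u\in L^{p^*}$ alone does not prevent $\{\tilde u<0\}$ from being an unbounded set of finite measure. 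Control of $\tilde u$ at infinity is exactly what is at stake, so the step is circular.

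\textbf{How to repair it.} The repair uses the integrability in \eqref{eqn2.6} $(c)$. Since $\tilde u$ is continuous and equals $1$ on $E$, the function $\tilde u_-\chi_R$ is supported in a compact subset of $\mathbb{R}^n\setminus E$. Testing the equation with it gives
\[
\int\chi_R|\nabla\tilde u_-|^p\,dx\leq 2R^{-1}\|\nabla\tilde u\|_{p}^{p-1}\|\tilde u_-\|_{L^p(B(0,2R)\setminus B(0,R))}\longrightarrow0
\]
by \eqref{eqn4.6}. Hence $\tilde u_-$ is a constant in $L^{p^*}$, so it is zero. After that, your Harnack decay and your comparison with $u\pm\epsilon$ go through. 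Alternatively, the local sup estimate for the subsolutions $\tilde u^{\pm}$ gives the decay without any sign information.

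The paper avoids this detour altogether. It tests both equations with $\theta\chi_R$, where $\theta=(u-\tilde u-\epsilon)_+-(\tilde u-u-\epsilon)_+$. Strict monotonicity of $\eta\mapsto|\eta|^{p-2}\eta$ together with \eqref{eqn4.6} then gives $\nabla u=\nabla\tilde u$ almost everywhere directly. That argument needs no decay, no sign, and no prior knowledge of $\tilde u$ on $\hat E$.
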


\begin{proof}
For a positive integer $m$ with $E\subset B(0,m)$, let $u_m=1-v$ where $v$ is the function in Lemma \ref{lem9.6} $(e)$ with $R=m$ and $a=1$, and put $u_m\equiv 0$ on $\rn{n}\sem B(0,m)$.
Then $u_m$ is continuous on $\rn{n}$, $p$-harmonic in $B(0,m)\sem E$, $u_m\equiv 1$ on $E$, and $u_m\in W_0^{1,p}(B(0,m'))$ for $m'>m$. By comparison, $0\leq u_m\leq u_{m+1}\leq 1$ on $\rn{n}$. Let $u=\lim_{m\to\infty} u_m$.
Lemma \ref{lem3.1} for $u_m+1$ and Ascoli's theorem give
\begin{align}
\label{eqn4.3}
\{u_{m}, \nabla u_m\} \, \, \mbox{converges to}\, \, \{u, \nabla u\}\, \, \mbox{uniformly on compact subsets of}\, \, \rn{n}\sem E,
\end{align}
so $u$ is $p$-harmonic in $\rn{n}\sem E$. Also $u\equiv 1$ on $E$ and $u_m\leq u\leq 1$,  so $u$ is continuous on $\rn{n}$.
If $\psi\in C_0^\infty(B(0,m))$ with $\psi\geq 1$ on $E$, then $u_m-\min\{\psi,1\}$ is continuous on the closure of $B(0,m)\sem E$ and vanishes on its boundary, so it belongs to $W_0^{1,p}(B(0,m)\sem E)$ by \cite[Lemma~1.26]{HKM}.
Testing \eqref{eqn2.1} for $u_m$ with this function and using H\"{o}lder's inequality we get
\begin{align}
\label{eqn4.4}
\int_{\rn{n}}|\nabla u_m|^p\, dy\leq \inf\left\{\int_{\rn{n}}|\nabla \psi|^p dy: \,  \psi\in C_0^\infty(B(0,m)), \, \psi \geq 1 \, \, \mbox{on}\, \, E\right\}.
\end{align}
The right-hand side decreases to $\capp(E)$ as $m\to\infty$. Since $u$ is locally in $W^{1,p}(\rn{n})$, as a weak limit of $\{u_m\}$, and $\nabla u=0$ almost everywhere on $E$, Sobolev's theorem, \eqref{eqn4.3}, and Fatou's lemma give
\begin{align}
\label{eqn4.5}
\|u\|_{p^*}\leq c \, \left[\capp(E)\right]^{1/p} \quad \mbox{and}\quad \int_{\rn{n}}|\nabla u|^p\, dy=\int_{\rn{n}\sem E}|\nabla u|^p\, dy\leq \capp(E),
\end{align}
where the norm is relative to $L^{p^*}(\rn{n})$. Thus $u$ satisfies \eqref{eqn2.6}. If $\capp(E)=0$, then \eqref{eqn4.5} and continuity give $u\equiv0$, contrary to $u=1$ on $E$. Hence $\capp(E)>0$.

To prove uniqueness, suppose that $v$ is a continuous function on $\rn{n}$ which also satisfies \eqref{eqn2.6}. Given $\ep>0$ let
$\he=\max\{u-v-\ep,0\}-\max\{v-u-\ep,0\}$.
Since $u-v$ is continuous and vanishes on $E$, $\he\, \chi_R$ has compact support in $\rn{n}\sem E$. Testing \eqref{eqn2.1} for $u$ and for $v$ with this function and subtracting, we get
\begin{align*}
\int_{\{|u-v|>\ep\}}&\lan |\nabla u|^{p-2}\nabla u-|\nabla v|^{p-2}\nabla v,\nabla u-\nabla v\ran\, \chi_R\, dy\\
&=-\int_{\rn{n}}\lan |\nabla u|^{p-2}\nabla u-|\nabla v|^{p-2}\nabla v,\nabla \chi_R\ran\, \he\, dy.
\end{align*}
By H\"{o}lder's inequality the absolute value of the right-hand side is bounded by
\[
2R^{-1}(\|\, |\nabla u|\, \|_p^{p-1}+\|\, |\nabla v|\, \|_p^{p-1})\|u-v\|_{L^p(B(0,2R)\sem B(0,R))},
\]
which tends to $0$ as $R\to\infty$ by \eqref{eqn4.6} and \eqref{eqn2.6} $(c)$. The integrand on the left-hand side is non-negative, and is positive where $\nabla u\neq\nabla v$ and $\chi_R>0$.
Hence $\nabla u=\nabla v$ almost everywhere on $\{|u-v|>\ep\}$. Also $\nabla(u-v)=0$ almost everywhere on $\{u=v\}$ \cite[Corollary~1.21]{HKM}. Since $\ep>0$ is arbitrary, $u-v$ is constant on $\rn{n}$, and so $u\equiv v$ because $u=v$ on $E$.

Next, if $E\subset B(0,|x|/2)$, apply \eqref{eqn3.1} to $u+\eta$ with center $x$ and radius $|x|/8$, integrate over $B(x,|x|/8)$, and let $\eta\to0$. This gives
\begin{align}
\label{eqn4.7}
u(x)^{p^*}\leq c\, |x|^{-n}\int_{\rn{n}\sem B(0,|x|/2)} u^{p^*}\, dy.
\end{align}
The right-hand side of \eqref{eqn4.7} tends to $0$ as $x\to\infty$ by \eqref{eqn4.5}. This fact, $u\equiv 1$ on $E$,  comparison in the bounded components of $\rn{n}\sem E$, and the strong maximum principle in $\Om_E$ give \eqref{eqn4.2} $(a)$.

To prove \eqref{eqn4.2} $(b)$ let $E\subset B(0,R)$ and $\ep>0$.
The function $(1+\ep)u\chi_R$ is continuous with compact support, has distributional gradient in $L^p(\rn{n})$, and is larger than $1$ in a neighbourhood of $E$. Approximating it by
convolutions and using \eqref{eqn1.4}, \eqref{eqn4.6} we deduce that
\[
\left[\capp(E)\right]^{1/p}\leq (1+\ep)\left(\| \, |\nabla u|\, \|_p+ c\, \|u\|_{L^{p^*}(\rn{n}\sem B(0,R))}\right).
\]
Letting $R\to\infty$ and then $\ep\to 0$ we get $\capp(E)\leq \int_{\rn{n}}|\nabla u|^p dy$. With \eqref{eqn4.5} this gives \eqref{eqn4.2} $(b)$.

The continuous extension of $u$ is $p$-superharmonic on $\rn{n}$.
Indeed, if a $p$-harmonic function $h$ on a bounded open set $D$ satisfies $h\leq u$ on $\ar D$, then $h\leq1$ in $D$.
Thus $h\leq u$ on $D\cap E$, and comparison in $D\sem E$ gives $h\leq u$ in $D$.
Since $u$ is locally in $W^{1,p}(\rn{n})$, \cite[Theorem 21.2]{HKM} gives a positive Borel measure $\mu$, as in \cite[Lemma 3.4]{AGHLV}, with
\begin{align}
\label{eqn4.8}
\int_{\rn{n}}|\nabla u|^{p-2}\lan\nabla u,\nabla\ph\ran\,dy=\int\ph\,d\mu
\end{align}
for every $\ph\in C_0^\infty(\rn{n})$, and by mollification for continuous, compactly supported $\ph\in W^{1,p}(\rn{n})$. The support of $\mu$ is contained in $E$ because $u$ is $p$-harmonic off $E$.
Let $E\subset B(0,R)$. Using \eqref{eqn4.8} with $\ph=\chi_R u$ and $u\equiv 1$ on $E$ we obtain
\[
\mu(E)=\int_{\rn{n}}|\nabla u|^p\, \chi_R\, dy+\int_{\rn{n}}u\, |\nabla u|^{p-2}\lan \nabla u,\nabla \chi_R\ran\, dy.
\]
The last integral is at most $2R^{-1}\|\, |\nabla u|\, \|_p^{p-1}\|u\|_{L^p(B(0,2R)\sem B(0,R))}$ in absolute value, which tends to $0$ as $R\to\infty$ by \eqref{eqn4.6}. By \eqref{eqn4.2} $(b)$,
\begin{align}
\label{eqn4.9}
\mu(E)=\capp(E).
\end{align}

To prove \eqref{eqn4.2} $(c)$, choose $\rho_0$ with $E\subset B(0,\rho_0)$. By \eqref{eqn4.2} $(a)$, Lemma \ref{lem:radial-limits} $(a)$ applies to $u$ and gives the limit $\ell$ of $u/\mathbf g$ at $\infty$, with $\sup_{|y|\geq r}|u/\mathbf g-\ell|\leq\mathbf A(r)-\mathbf a(r)\to0$ as $r\to\infty$.
For $R>\rho_0$ put $\hat u_R(x)=u(Rx)/\mathbf g(R)$. Since $\mathbf g(Rx)=\mathbf g(R)\mathbf g(x)$, we get $\hat u_R\to\ell\mathbf g$ uniformly on $\bar B(0,4)\sem B(0,1/2)$ as $R\to\infty$.
Lemma \ref{lem3.1} gives uniform H\"older bounds for $\nabla\hat u_R$ on $\bar B(0,2)\sem B(0,1)$, so Ascoli's theorem gives $\nabla\hat u_R\to\ell\nabla\mathbf g$ uniformly there.
Since $\mathbf g(R)^{p-1}R^{n-p}=1$, a change of variables in \eqref{eqn4.8} with $\ph=\chi_R$, together with \eqref{eqn4.9}, gives
\[
\capp(E)=\mu(E)=\int_{\rn{n}}|\nabla\hat u_R|^{p-2}\lan\nabla\hat u_R,\nabla\chi_1\ran\,dx
\longrightarrow\ell^{p-1}\int_{\rn{n}}|\nabla\mathbf g|^{p-2}\lan\nabla\mathbf g,\nabla\chi_1\ran\,dx.
\]
Here $|\nabla\mathbf g|^{p-2}\nabla\mathbf g(x)=-\left(\frac{n-p}{p-1}\right)^{p-1}|x|^{-n}x$ and $\chi_1$ is radial, equal to $1$ on $\ar B(0,1)$ and to $0$ on $\ar B(0,2)$. Integration in polar coordinates shows that the last integral equals $\si_{n-1}\left(\frac{n-p}{p-1}\right)^{p-1}$.
Solving for $\ell$ gives \eqref{eqn4.2} $(c)$.
\end{proof}

Next we consider the case $p\geq n$. Recall the definition of $F$ in \eqref{eqn2.7}. Then $F$ is radial and increasing in $|x|$, and we write $F(s)$ for its value on $\ar B(0,s)$.
The next lemma compares the functions of Lemma~\ref{lem9.6} $(e)$ with capacitary functions of condensers.
\begin{lemma}
\label{lem9.9}
Let $n\leq p<\infty$ and let $E$ be a $p$-regular compact set. Let $0<\rho_0<r<R$ with $E\subset B(0,\rho_0)$ and $F(R)>\max\{0,F(\rho_0)\}$. 
Let $v$ be the function supplied by Lemma~\ref{lem9.6} $(e)$
with $a=F(R)$ and put
\[
\kappa_r=\capp(E,B(0,r)),\quad m'=\inf_{\ar B(0,r)}v,\quad M'=\sup_{\ar B(0,r)}v.
\]
Then $\kappa_r>0$,  $0<m'\leq M'\leq F(R)$, and
\begin{align}
\label{eqn9.18}
\begin{split}
&(a)\hs{.1in} (m')^{p-1}\, \kappa_r\leq \left(\frac{F(R)}{F(R)-F(\rho_0)}\right)^{p-1},\\
&(b)\hs{.1in} \left(1-\frac{M'}{F(R)}\right)^p\left(\frac{F(R)}{F(R)-F(r)}\right)^{p-1}\leq (M')^{p-1}\, \kappa_r.
\end{split}
\end{align}
\end{lemma}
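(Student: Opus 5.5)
Write $w=v/F(R)$. Then $w$ is continuous on $\bar B(0,R)$, $w=0$ on $\hat E$, $w=1$ on $\ar B(0,R)$, and $w$ is $p$-harmonic in $B(0,R)\sem E$. The plan is to compare $w$ with radial $p$-harmonic functions built from $F$, and to compare the Dirichlet energy of suitable truncations of $v$ with $\kappa_r$.

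\emph{Positivity.} First I show $\kappa_r>0$. For $p>n$ this follows from \eqref{eqn9.1} $(a),(f)$ with any point of $E$, together with \eqref{eqn9.1} $(b)$. For $p=n$ it follows from regularity: if $\kappa_r=0$, then $\hat E$ would be $n$-polar, hence not thick at any point of $\ar\hat E$. Alternatively, the Wiener integral \eqref{eqn9.2} would vanish identically, since capacities of subsets would vanish, contradicting Definition~\ref{defn9.5}.

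Next, $0\le v\le F(R)$ by comparison in $B(0,R)\sem\hat E$. The strong maximum principle in the component $\Om_E\cap B(0,R)$ gives $v>0$ on $\ar B(0,r)$, so $m'>0$.

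\emph{Proof of (a).} Compare $v$ with the radial function $h=F(R)\,(F-F(\rho_0))/(F(R)-F(\rho_0))$ on the annulus $B(0,R)\sem\bar B(0,\rho_0)$. We have $h\le0\le v$ on $\ar B(0,\rho_0)$ and $h=v$ on $\ar B(0,R)$. This gives $v\ge h$ there, but that is a lower bound and is not what (a) needs. Instead I use the test function $\psi=\min\{v/m',1\}$ on $B(0,r)$, modified to be admissible. Since $v\ge m'$ on $\ar B(0,r)$, the function $1-\min\{v/m',1\}$ equals $1$ on $E$, is continuous, and vanishes on $\ar B(0,r)$ only approximately, so I must use the set $\{v<m'\}$ instead. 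The set $U=\{x\in B(0,r):v(x)<m'\}$ need not lie in $B(0,r)$, so this route does not work cleanly.

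A cleaner route is to use the capacitary potential $u_r$ of the condenser $(E,B(0,r))$, whose energy is $\kappa_r$. I compare $v$ with $m'(1-u_r)$ on $B(0,r)\sem E$. The two functions agree on $E$ (both zero) and on $\ar B(0,r)$ we have $v\ge m'$, so $v\ge m'(1-u_r)$. Then I compute the flux. For $\eta$ radial cutoff between $\rho_0$ and $r$, the quantity
\[
\int|\nabla v|^{p-2}\lan\nabla v,\nabla\eta\ran
\]
is the total Riesz mass $\mu_v(E)$ of $v$, independent of the radii. For the radial comparison $h$ with $v\ge h$ on $\ar B(0,\rho_0)$... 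Precisely, I plan the following two facts.
\begin{itemize}
\item[(i)] By comparison with $m'(1-u_r)$ near $E$ and the monotone flux comparison for ordered functions agreeing on $E$, one gets $\mu_v(E)\ge (m')^{p-1}\kappa_r$.
\item[(ii)] By comparison with $h$ near $\ar B(0,R)$, where both equal $F(R)$ and $v\ge h$, one gets $\mu_v(E)\le \mu_h=(F(R)/(F(R)-F(\rho_0)))^{p-1}$, using that the flux of $F$ is $1$ from the normalization in \eqref{eqn2.7}.
\end{itemize}
Combining (i) and (ii) gives (a).

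\emph{Proof of (b).} Symmetrically, on $B(0,R)\sem\bar B(0,r)$ we have $v\le M'+(F(R)-M')(F-F(r))/(F(R)-F(r))$. The right side is a radial function $h'$ that equals $v$ on $\ar B(0,R)$, so $v\le h'$. Flux comparison at $\ar B(0,R)$ gives $\mu_v(E)\ge ((F(R)-M')/(F(R)-F(r)))^{p-1}$.

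For the other side, I use $\min\{v/M',1\}$. Since $v\le M'$ on $\ar B(0,r)$, the function $1-v/M'$ is $\ge0$ there and $=1$ on $E$. Thus $\kappa_r$ is at most the energy of $\min\{(1-v/M')_+,\,\cdot\}$ suitably cut, which is of order $(M')^{-p}\int_{\{v<M'\}}|\nabla v|^p=(M')^{-p}\,M'\mu_v(E)$. Here I test the equation with $\min\{v,M'\}$. This gives $\kappa_r\ge$ or $\le$ as needed, and the factor $(1-M'/F(R))^p$ arises from the ratio $(F(R)-M')^p/F(R)^p$ after normalizing by $F(R)$.

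\emph{Main obstacle.} The hardest step is making the flux-comparison and truncation arguments rigorous for merely regular, possibly irregular-looking, $E$. This requires two things: that $\mu_v(E)$ is well defined via \eqref{eqn4.8}-type identities, using that $v\in W^{1,p}$ and $-v$ extended is $p$-superharmonic near $E$; and that level-set truncations are admissible in \eqref{eqn9.1} $(d)$. The exact powers in (b) will need care, and I would verify them on the radial case $E=\bar B(0,\rho_0)$ first.
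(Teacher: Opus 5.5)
Your treatment of (a) is a legitimate variant of the paper's argument, but (b) has a genuine gap.

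\textbf{The gap in (b).} You need two bounds on the flux $\mu_v(E)$.

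The lower one, $\mu_v(E)\geq\bigl((F(R)-M')/(F(R)-F(r))\bigr)^{p-1}$, comes from $v\leq h'$ and flux comparison at $\partial B(0,R)$. It is correct, and it is even stronger than what the paper extracts by integrating along radial segments. Combined with the other bound it yields (b) directly, since $1-M'/F(R)\leq 1$, so you never need to produce a $p$-th power.

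The missing bound is $\mu_v(E)\leq (M')^{p-1}\kappa_r$. This is a \emph{lower} bound for $\kappa_r$, and your sketch cannot produce it, for two reasons. First, $1-\min\{v/M',1\}$ is not admissible for $\kappa_r=\mathrm{Cap}_p(E,B(0,r))$: since $v\leq M'$ on $\partial B(0,r)$, it does not vanish there. Second, even an admissible competitor only bounds $\kappa_r$ from \emph{above}. Your sentence ``this gives $\kappa_r\geq$ or $\leq$ as needed'' is exactly where the proof is missing.

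\textbf{How to repair it.} Within your own framework, mirror step (i). Since $v=0$ on $E$ and $v\leq M'$ on $\partial B(0,r)$, comparison gives $v\leq M'(1-u_r)$ in $B(0,r)$, with contact on $E$. The contact-measure comparison you used in (i), now with the roles reversed, gives $\mu_v(E)\leq (M')^{p-1}\kappa_r$.

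The paper proves this bound by duality instead. Put $\mathbf I=\int_{B(0,R)}|\nabla v|^p$, which equals $F(R)\,\mu_v(E)$. For $M'<t\leq F(R)$, test the equation for $v$ with $U-\min\{v,t\}/t$, where $U$ is the condenser potential of $(E,B(0,r))$ extended by $1$. Since $v\leq M'<t$ in $B(0,r)$, H\"older's inequality and $\int_{\{v<t\}}|\nabla v|^p=(t/F(R))\mathbf I$ give $\mathbf I/F(R)\leq t^{p-1}\kappa_r$. Then let $t\downarrow M'$.

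\textbf{Comparison for (a).} Your two flux comparisons, with $m'(1-u_r)$ on $E$ and with $h$ at $\partial B(0,R)$, give the same two inequalities as the paper. They need the contact-measure argument and a justification of flux comparison at the outer sphere; the same caveat applies to your lower bound in (b). The paper avoids both:
\begin{itemize}
\item[(1)] The Dirichlet principle with competitor $g=\max\{h,0\}$ gives $\mathbf I\leq\int|\nabla g|^p$.
\item[(2)] $1-\min\{v/m',1\}$ is an admissible competitor for $\kappa_r$.
\end{itemize}
You discarded the competitor in (2) for a wrong reason. It vanishes exactly on $\partial B(0,r)$, because $m'$ is the infimum of $v$ there. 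Comparison also gives $v\geq m'$ on all of $B(0,R)\setminus B(0,r)$, so $\{v<m'\}\subset B(0,r)$.

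Finally, for $p=n$ your claim that $\kappa_r=0$ would force $\hat E$ to be polar needs justification. It is simpler to note that $\kappa_r=\int|\nabla u_r|^p$, with $u_r$ continuous, equal to $1$ on $E$ and $0$ on $\partial B(0,r)$.
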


\begin{proof}
Put $\mathbf{I}=\int_{B(0,R)}|\nabla v|^p\,dy$. Comparison gives $0\leq v\leq F(R)$.
The test functions below are continuous Sobolev
functions with zero boundary values, so they belong to
$W_0^{1,p}(B(0,R)\sem E)$ by \cite[Lemma~1.26]{HKM}.
Testing with $\min\{v,t\}-(t/F(R))v$ gives
\begin{align}
\label{eqn9.19}
\int_{\{v<t\}}|\nabla v|^p\,dy=\frac{t}{F(R)}\mathbf{I}
\quad(0<t\leq F(R)).
\end{align}
Let $g=F(R)\max\{F-F(\rho_0),0\}/(F(R)-F(\rho_0))$, with $g(0)=0$.
Since $\int_{B(0,r_2)\sem B(0,r_1)}|\nabla F|^p\,dy=F(r_2)-F(r_1)$ for $0<r_1<r_2$, testing with $v-g$ and applying H\"older's inequality yields
\begin{align}
\label{eqn9.21}
\mathbf{I}\leq\int_{B(0,R)}|\nabla g|^p\,dy
=F(R)^p(F(R)-F(\rho_0))^{1-p}.
\end{align}
Comparison gives $v\geq g$ outside $B(0,\rho_0)$, hence $m'\geq g(r)>0$.
Let $U$ be the function from Lemma~\ref{lem9.6} $(e)$ in $B(0,r)$ with outer value $1$, extended by $1$ outside that ball.
As in \eqref{eqn4.4}, minimization and the admissible function $1-U$ give
\begin{align}
\label{eqn9.22}
\kappa_r=\int_{B(0,r)}|\nabla U|^p\,dy>0.
\end{align}
Positivity holds because $U$ is continuous and its values on $E$ and $\ar B(0,r)$ differ.
Comparison gives $v\geq m'$ outside $B(0,r)$, so $\psi=1-\min\{v/m',1\}$ is admissible for $\kappa_r$.
By \eqref{eqn9.19} and \eqref{eqn9.21},
\[
\kappa_r\leq (m')^{-p}\int_{\{v<m'\}}|\nabla v|^p\,dy
=(m')^{1-p}\frac{\mathbf{I}}{F(R)}
\leq (m')^{1-p}\left(\frac{F(R)}{F(R)-F(\rho_0)}\right)^{p-1}.
\]
This proves $(a)$. For $(b)$ the case $M'=F(R)$ is immediate.
Otherwise choose $M'<t\leq F(R)$ and test with $U-\min\{v,t\}/t$.
Since $v\leq M'$ in $B(0,r)$, H\"older's inequality and \eqref{eqn9.19}, \eqref{eqn9.22} give $\mathbf{I}/F(R)\leq t^{p-1}\kappa_r$.
Letting $t\downarrow M'$ gives
\begin{align}
\label{eqn9.23}
\frac{\mathbf{I}}{F(R)}\leq(M')^{p-1}\kappa_r.
\end{align}
For almost every $\om\in\mathbb S^{n-1}$, integration on a radial
segment and weighted H\"older's inequality give
\begin{align*}
F(R)-M'\leq\int_r^R|\nabla v(s\om)|\,ds
\leq\left(\int_r^R|\nabla v(s\om)|^p s^{n-1}\,ds\right)^{1/p}
\left(\int_r^R s^{-(n-1)/(p-1)}\,ds\right)^{(p-1)/p}.
\end{align*}
The last integral is
$\si_{n-1}^{1/(p-1)}(F(R)-F(r))$ by \eqref{eqn2.7}.
Raise to the $p$th power, integrate over $\mathbb S^{n-1}$, and cancel
$\si_{n-1}$ to obtain
$(F(R)-M')^p\leq
\mathbf I(F(R)-F(r))^{p-1}$.
Together with \eqref{eqn9.23}, this proves $(b)$.
\end{proof}

\begin{lemma}
\label{lem4.2}
Let $n\leq p<\infty$ and let $E$ be a $p$-regular compact set. There exists a unique $p$-harmonic Green's function $G$ for $\rn{n}\sem E$ with pole at $\infty$ in the sense of \eqref{eqn2.9}. Moreover, if $k=G-F$ then
\begin{align}
\label{eqn4.17}
\begin{split}
&(a) \hs{.1in} G>0 \, \, \mbox{in}\, \, \Om_E \, \, \mbox{and}\, \, G\equiv 0\, \, \mbox{on}\, \, \hat E.\\
&(b) \hs{.1in} \lim_{x\to\infty}k(x)=k(\infty) \, \, \mbox{exists finitely. If $p>n$ then $k(\infty)\leq 0$, with equality}\\
&\hs{.3in}\mbox{if and only if $E$ is a single point.}\\
&(c) \hs{.1in} \mbox{If $E'$ is a $p$-regular compact set with $E\subset E'$ and Green's function $G'$,}\\
&\hs{.3in}\mbox{then}\, \, G'\leq G, \, \, k'(\infty)\leq k(\infty) \, \, \mbox{for}\, \, k'=G'-F, \, \, \mbox{and}\, \,  \cp(E)\leq \cp(E').\\
&(d) \hs{.1in} \mbox{If $\rho>0$ and $x\in \rn{n}$ then}\, \, \cp(\rho E+x)=\rho \, \cp(E)\, \, \mbox{when}\, \, p=n\, \, \mbox{while} \\
&\hs{.3in} \cp(\rho E+x)=\rho^{p-n}\, \cp(E)\, \, \mbox{when}\, \, p>n.\\
&(e)\hs{.1in} \cp(E)>0, \, \, \mbox{unless $p>n$ and $E$ is a single point.}
\end{split}
\end{align}
\end{lemma}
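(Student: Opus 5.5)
Existence will follow the construction of Lemma~\ref{lem4.1}, with exhaustion by large balls. For each integer $m$ with $E\subset B(0,\rho_0)$, $\rho_0<m$ and $F(m)>\max\{0,F(\rho_0)\}$, let $v_m$ be the function of Lemma~\ref{lem9.6} $(e)$ with $R=m$ and $a=F(m)$. We set $k_m=v_m-F$ on $\bar B(0,m)\sem B(0,\rho_0)$. The key step is a two-sided bound on $k_m$ on a fixed sphere $\ar B(0,r)$, uniform in $m$. Lemma~\ref{lem9.9} gives it.

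\begin{itemize}
\item The quantity $m'=\inf_{\ar B(0,r)} v_m$ satisfies $(m')^{p-1}\kappa_r\leq (F(m)/(F(m)-F(\rho_0)))^{p-1}$. Since $F(m)\to\infty$, this bounds $m'$ above uniformly, with $\kappa_r>0$.
\item Comparison with $g$, as in that proof, gives $v_m\geq F(m)(F-F(\rho_0))/(F(m)-F(\rho_0))$. Hence $k_m$ is bounded below on $|x|\geq r$, uniformly in $m$, since $F(m)$ dominates.
\item Harnack-type control of the oscillation of $v_m$ on $\ar B(0,r)$ then bounds $M'=\sup_{\ar B(0,r)}v_m$ by $c\,m'$. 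For this I would use Lemma~\ref{lem3.1} applied to $v_m-\inf+\eta$ on a chain of balls, valid since $v_m\geq 0$.
\item Comparison on annuli $r<|x|<m$ with $F+M'-F(r)$ and with $F+m'-F(r)$ scaled as in Lemma~\ref{lem:radial-limits} $(b)$ gives $|k_m|\leq C$ on $r\leq |x|\leq m$, with $C$ independent of $m$.
\end{itemize}

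Since $v_m$ is monotone in $m$ after subtracting the constants $F(m)$, or at least locally uniformly bounded, Lemma~\ref{lem3.1} and Ascoli give a subsequence converging locally uniformly on $\rn{n}\sem E$, together with gradients. The limit $G$ is $p$-harmonic off $E$ and has $G-F$ bounded near $\infty$. To get continuity with $G=0$ on $E$, I would use an $m$-independent barrier near $\ar\hat E$. By the Wiener estimate underlying \cite[Theorem~6.27]{HKM}, the modulus of continuity of $v_m$ at a regular point depends only on $\sup v_m$ on a fixed ball, which is bounded uniformly. Hence the $v_m$ are equicontinuous on $\bar B(0,r)$ and vanish on $\hat E$.

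For uniqueness, let $G_1$ and $G_2$ both satisfy \eqref{eqn2.9}. Then $G_1-G_2=k_1-k_2$ is bounded near $\infty$, and $k_i(\infty)$ exist by Lemma~\ref{lem:radial-limits} $(b)$. Compare $G_1$ with $(1+\ep)G_2+\ep\,\text{const}$ on $B(0,R)\sem\hat E$ for large $R$. Because $\ep F\to\infty$, this gives $G_1\leq (1+\ep)G_2+\ep c$, and letting $\ep\to0$ yields $G_1\leq G_2$; the reverse inequality is symmetric. In the bounded components of $\rn{n}\sem E$, comparison gives $G=0$, and the strong maximum principle gives $G>0$ in $\Om_E$. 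This proves $(a)$, and $(b)$ follows from Lemma~\ref{lem:radial-limits} $(b)$.

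For the sign when $p>n$, let $x_0\in E$. Then $F(\cdot-x_0)$ is the Green's function for $\{x_0\}$, which is $p$-regular by Lemma~\ref{lem9.6} $(b)$. Comparison with $E\supset\{x_0\}$, as in $(c)$, gives $G\leq F(\cdot-x_0)$, and $F(x-x_0)-F(x)\to0$ because $\xi<1$. Hence $k(\infty)\leq0$.

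If equality holds, then $F(\cdot-x_0)-G\geq0$ is $p$-superharmonic with zero limit at $\infty$ for every $x_0\in E$. If $E$ contained a second point $x_1$, then $F(\cdot-x_0)>0=G$ at $x_1$. The difference would then be a nonnegative function, vanishing at infinity, that is strictly positive somewhere. Using Lemma~\ref{lem9.7} in large annuli, where both gradients are comparable to $|\nabla F|$, gives a Harnack bound $\sup\leq c\inf$ on spheres. A lower bound on a small sphere should then propagate to a contradiction with the limit being $0$. This last rigidity step is where I expect the main difficulty; an alternative is a strict comparison using that $\max\{F(\cdot-x_0),F(\cdot-x_1)\}$-type subsolutions beat $F$ by a definite constant at infinity.

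Part $(c)$ follows by comparing $G'$ with $(1+\ep)G+\ep c$ exactly as in uniqueness, since $G'=0\leq G$ on $E'$. This gives $G'\leq G$, hence $k'(\infty)\leq k(\infty)$, and monotonicity of $\cp$ follows from \eqref{eqn2.10}.

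For $(d)$, the function $x\mapsto\rho^{(p-n)/(p-1)}G((x-z)/\rho)$, adjusted by $\gamma\log\rho$ when $p=n$, is the Green's function of $\rho E+z$ by uniqueness. Reading off $k(\infty)$ gives the scaling laws.

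For $(e)$, when $p=n$, $\cp(E)=e^{-k(\infty)/\gamma}>0$ because $k(\infty)$ is finite. When $p>n$, $(b)$ gives $k(\infty)<0$ unless $E$ is a point.
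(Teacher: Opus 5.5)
Your existence, uniqueness, $(a)$, $(c)$, $(d)$, $(e)$ and the bound $k(\infty)\le0$ are sound and follow the paper's route. Two minor repairs. On the annulus the upper barrier should be $F+\max\{M'-F(r),0\}$, since $M'<F(r)$ can occur, for example for a ball. Continuity on $E$ also follows more simply from $0\le v_m\le M'U$, with $U$ the function of Lemma~\ref{lem9.6}~$(e)$ in $B(0,r)$ with $a=1$, than from the Wiener estimate.

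The genuine gap is the strictness in $(b)$, as you suspected: you do not prove $k(\infty)<0$ when $E$ has two points. The route you sketch would not reach it.
\begin{itemize}
\item You place the positivity of $h=F(\cdot-x_0)-G$ at a point $x_1\in E$. But Lemma~\ref{lem9.7} is usable only in annuli far from $E$. There $0\le G\le F$ and interior estimates give $|\nabla G|\le c|\nabla F|$, with $|\nabla F|$ bounded below.
\item Near $E$ the gradient of $G$ may vanish. Carrying positivity of a difference of two $p$-harmonic functions from $x_1$ out to such an annulus would need a strong comparison principle that is unavailable for $p\neq2$. Also, $h$ is not $p$-superharmonic when $p\neq2$.
\item Your alternative fails as well. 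Every translate $F(\cdot-z)$ equals $F+o(1)$ at infinity, so no maximum or minimum of translates beats $F$ by a fixed constant there.
\item Even with positivity on some sphere, the sphere-wise Harnack bound $\sup\le c\inf$ alone does not convert it into $k(\infty)<0$. What does that is the comparison bound \eqref{eqn9.16}~$(ii)$.
\end{itemize}

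The missing idea is to evaluate the inequality $G\le F(\cdot-z)$, valid for every $z\in E$, at a point far from $E$ rather than at a point of $E$.
\begin{itemize}
\item Translate so that $0\in E\subset B(0,\rho_0)$, choose $z_1\in E\setminus\{0\}$, and put $r_1=4\rho_0$ and $x_1=r_1z_1/|z_1|$.
\item Then $|x_1-z_1|=r_1-|z_1|<|x_1|$. Since $F$ is strictly increasing in $|x|$, the function $h=F-G\ge0$ satisfies $h(x_1)\ge F(x_1)-F(x_1-z_1)>0$. This positivity already lies on $\partial B(0,r_1)$, inside the annulus where the gradient bounds hold.
\item Apply Lemma~\ref{lem9.7} with $v_1=F$, $v_2=G$, $s=r_1$; it needs a lower gradient bound for $F$ only. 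It gives $\inf_{\partial B(0,r_1)}h\ge c^{-1}h(x_1)>0$, so $\sup_{\partial B(0,r_1)}(G-F)<0$.
\item Finally \eqref{eqn9.16}~$(ii)$, that is $k(\infty)\le\sup_{\partial B(0,r_1)}(G-F)$, gives $k(\infty)<0$.
\end{itemize}
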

\noindent When $E$ is a convex body, Lemma \ref{lem4.2} is contained in Lemmas 5.2, 5.3 and Remark 5.5 of \cite{ALSV}, whose proofs we follow.
\begin{proof}
We first prove existence of $G$ when
\begin{align}
\label{eqn4.18}
0\in E\subset B(0,\rho_0).
\end{align}
Let $R>4\rho_0$ be so large that $F(R)>\max\{0,2F(\rho_0)\}$, and let $v=v(\cdot, R)$ be the function in Lemma \ref{lem9.6} $(e)$ with $a=F(R)$. Comparison gives $v\geq 0$. We claim that
\begin{align}
\label{eqn4.19}
F(x)-c_1\leq v(x,R)\leq F(x)+c_1 \quad \mbox{whenever}\, \, x\in B(0,R)\sem B(0,\rho_0),
\end{align}
for all $R\geq R_0$, where $c_1,R_0$ depend only on $p, n, \rho_0$, and also on $\kappa=\mbox{Cap}_n(E,B(0,2\rho_0))$ when $p=n$. If $p>n$, then $F-F(\rho_0)$ and $F$ are $p$-harmonic in $\rn{n}\sem \{0\}$,
continuous on $\rn{n}$, and since $0\in E\subset B(0,\rho_0)$,
\[
F-F(\rho_0)\, \leq \, v\, \leq\,  F \quad \mbox{on}\, \, \ar (B(0,R)\sem E)\subset E\cup \ar B(0,R).
\]
Thus \eqref{eqn4.19} holds with $c_1=\max\{1,F(\rho_0)\}$ by comparison in $B(0,R)\sem E$. If $p=n$, put
\[
g_s(x)=F(R)\, \frac{\log(|x|/s)}{\log(R/s)}, \quad {0<s<R}.
\]
This function is $n$-harmonic in $\rn{n}\sem\{0\}$, vanishes on $\ar B(0,s)$, and equals $F(R)$ on $\ar B(0,R)$.
Since $F(R)=\ga\log R+1$, a calculation gives $|g_s(x)-\ga\log|x|\, |\leq c(s)$ for $s\leq |x|\leq R$ and large $R$.
Comparison in $B(0,R)\sem \bar B(0,\rho_0)$ gives $v\geq g_{\rho_0}>0$ there, which is the lower bound in \eqref{eqn4.19}.
For the upper bound, let $m',M'$ be as in Lemma~\ref{lem9.9} with $r=2\rho_0$, so $\kappa=\kappa_{2\rho_0}>0$.
A Harnack chain on $\ar B(0,2\rho_0)$ gives $M'\leq c\,m'$, and \eqref{eqn9.18} $(a)$ with $F(R)\geq 2F(\rho_0)$ gives $M'\leq c\,\kappa^{-1/(n-1)}$.
Comparison with $M'+g_{2\rho_0}$ in $B(0,R)\sem\bar B(0,2\rho_0)$ and with $M'$ in $B(0,2\rho_0)\sem E$ gives the upper bound in \eqref{eqn4.19}.

From $v\geq 0$, \eqref{eqn4.19} on $\ar B(0,2\rho_0)$, and comparison in $B(0,2\rho_0)\sem E$,
\begin{align}
\label{eqn4.20}
0\leq v(x,R)\leq F(2\rho_0)+c_1=: c_2 \quad \mbox{whenever}\, \, x\in \bar B(0,2\rho_0)\, \, \mbox{and}\, \, R\geq R_0.
\end{align}
Let $U$ be the function in Lemma \ref{lem9.6} $(e)$ with $R$ replaced by $2\rho_0$ and $a=1$. By \eqref{eqn4.20} and comparison,
\begin{align}
\label{eqn4.21}
0\leq v(x,R)\leq c_2\, U(x)  \quad \mbox{whenever}\, \, x\in \bar B(0,2\rho_0)\, \, \mbox{and}\, \, R\geq R_0.
\end{align}
By \eqref{eqn4.19}, \eqref{eqn4.20}, Lemma \ref{lem3.1}, and Ascoli's theorem there is a sequence $R_l\to\infty$ such that
\begin{align}
\label{eqn4.22}
\{v(\cdot,R_l), \nabla v(\cdot, R_l)\} \, \, \mbox{converges to}\, \, \{G, \nabla G\}\, \, \mbox{uniformly on compact subsets of}\, \, \rn{n}\sem E.
\end{align}
Then $G$ is $p$-harmonic in $\rn{n}\sem E$, and if we put $G\equiv 0$ on $E$, then $0\leq G\leq c_2 U$ in $\bar B(0,2\rho_0)$ by \eqref{eqn4.21}.
Since $U$ is continuous with $U\equiv 0$ on $E$, $G$ is continuous on $\rn{n}$. Finally, \eqref{eqn4.19} holds with $v$ replaced by $G$ whenever $|x|\geq \rho_0$. Thus $G$ satisfies \eqref{eqn2.9}.

For uniqueness, suppose $G_1$ also satisfies \eqref{eqn2.9}.
Comparison gives $G,G_1\geq0$, and their common leading term
$F\to\infty$ gives $G_1/G\to1$ uniformly at infinity.
For each $\epsilon>0$, compare $G_1$ with $(1+\epsilon)G$,
and $G$ with $(1+\epsilon)G_1$, on large truncated exteriors.
The inner boundary values are zero. Letting the outer radius
increase and then $\epsilon\downarrow0$ gives $G=G_1$.

For general $E$, choose $z_0\in E$, let $\hat G$ be the Green's function for $\rn{n}\sem (E-z_0)$, and put $G(x)=\hat G(x-z_0)$. Then $G$ satisfies \eqref{eqn2.9} $(a), (b)$, and if $\hat k=\hat G-F$, then
\begin{align}
\label{eqn4.23}
G(x)-F(x)=\hat k(x-z_0)+F(x-z_0)-F(x) \quad \mbox{and}\quad
\lim_{x\to\infty}[F(x-z_0)-F(x)]=0
\end{align}
by \eqref{eqn2.7}. Thus $G$ satisfies \eqref{eqn2.9} $(c)$, and $G$ is unique as above.

Now \eqref{eqn4.17} $(a)$ follows from $G\geq 0$, $G(x)\to \infty$ as $x\to\infty$, comparison in the bounded components of $\rn{n}\sem E$,  and the strong maximum principle in $\Om_E$.
The bound \eqref{eqn4.19} for $G$, \eqref{eqn4.23}, and Lemma \ref{lem:radial-limits} $(b)$ give the finite limit $k(\infty)$.

For \eqref{eqn4.17} $(c)$, note that $G'$ and $G$ are $p$-harmonic in $\rn{n}\sem E'$, $G'=0\leq G$ on $E'$, and $G'/G\to1$ at infinity. Comparing $G'$ with $(1+\ep)G$ on large truncated exteriors and letting $\ep\downarrow0$ gives $G'\leq G$, hence $k'(\infty)\leq k(\infty)$.
The capacity inequality follows from \eqref{eqn2.10}, using $k(\infty)\leq0$ when $p>n$, proved below from $G'\leq G$ alone.

We prove the remaining part of \eqref{eqn4.17} $(b)$. Let $p>n$ and $z\in \rn{n}$. The function $x\mapsto F(x-z)$ is continuous, vanishes at $z$, and is $p$-harmonic in $\rn{n}\sem\{z\}$.
By \eqref{eqn4.23} and uniqueness, it is the Green's function for $\rn{n}\sem\{z\}$ and $k(\infty)=0$ when $E=\{z\}$.
If $z\in E$ then \eqref{eqn4.17} $(c)$ with $E, E'$ replaced by $\{z\}, E$ gives
\begin{align}
\label{eqn9.26}
G(x)\leq F(x-z)\quad \mbox{whenever}\, \, x\in \rn{n}\, \, \mbox{and}\, \, z\in E.
\end{align}
Hence $k(\infty)\leq 0$. Now suppose that $E$ contains at least two points. By \eqref{eqn4.23} we may assume \eqref{eqn4.18}. Choose $z_1\in E\sem\{0\}$.
Put $h=F-G\geq0$. Let $r_1=4\rho_0$ and $x_1=r_1z_1/|z_1|$. Then $|x_1-z_1|<|x_1|$, and since $F$ is radial and strictly increasing in $|x|$, \eqref{eqn9.26} with $z=z_1$ gives
$h(x_1)\geq F(x_1)-F(x_1-z_1)>0$.
By \eqref{eqn9.26}, $0\leq G\leq F$.
For $|x|\geq2\rho_0$, apply the interior gradient estimate to $G+\eta$ in
$B(x,|x|/2)$ with radius $|x|/8$, and then let $\eta\downarrow0$. Since
$F$ is radial, this gives
\[
|\nabla G(x)|\leq c|x|^{-1}\sup_{B(x,|x|/2)}F
\leq c'|\nabla F(x)|,
\qquad
|\nabla F(x)|=\sigma_{n-1}^{-1/(p-1)}|x|^{(1-n)/(p-1)},
\]
with constants depending only on $p,n$.
Thus \eqref{eqn9.12} holds with $s=r_1$, $v_1=F$, $v_2=G$,
$\tau=r_1^{(1-n)/(p-1)}$ and a constant $\hat c$ independent of the scale.
Lemma \ref{lem9.7} gives
$\inf_{\ar B(0,r_1)} h\geq c^{-1}h(x_1)>0$.
Hence $\sup_{\ar B(0,r_1)}(G-F)<0$, and \eqref{eqn9.16} $(ii)$ gives $k(\infty)<0$.

To prove \eqref{eqn4.17} $(d)$, note from \eqref{eqn4.23} that $k(\infty)$, and hence $\cp(E)$, is invariant under translation. Let $\rho>0$ and put $\ti G(x)=G(x/\rho)$ when $p=n$ and $\ti G(x)=\rho^{\xi}G(x/\rho)$ when $p>n$.
Then $\ti G$ is $p$-harmonic in $\rn{n}\sem \rho E$, continuous on $\rn{n}$ with $\ti G\equiv 0$ on $\rho E$, and by \eqref{eqn2.7},
$\ti G(x)-F(x)=k(x/\rho)-\ga\log \rho$ when $p=n$, and $\ti G(x)-F(x)=\rho^{\xi}\, k(x/\rho)$ when $p>n$.
The set $\rho E$ is $p$-regular by Lemma \ref{lem9.6} $(d)$. By uniqueness, $\ti G$ is its Green's function, and \eqref{eqn2.10} with $\xi(p-1)=p-n$ gives \eqref{eqn4.17} $(d)$.
Part $(e)$ is immediate from \eqref{eqn2.10} when $p=n$ and follows from $(b)$ when $p>n$.

Finally, if $E=\bar B(0,\rho)$ then $G=F-F(\rho)$ in $\rn{n}\sem B(0,\rho)$ by uniqueness. Thus $k(\infty)=-F(\rho)$ and from \eqref{eqn2.7}, \eqref{eqn2.10},
\begin{align}
\label{eqn4.29}
\cp(\bar B(0,\rho))=e^{1/\ga}\, \rho \, \, \mbox{when}\, \, p=n,\qquad
\cp(\bar B(0,\rho))=\left(\xi^{-1}\si_{n-1}^{-\frac{1}{p-1}}\right)^{p-1}\rho^{p-n} \, \, \mbox{when}\, \, p>n.
\end{align}
\end{proof}

\section{Proof of the inequalities in Theorem A}
\label{section:proofA}
\label{section9}
We first prove a pointwise comparison for $p$-regular sets. It gives
parts $(b)$ and $(c)$ directly, and outer approximation gives part $(a)$
for arbitrary nonempty compact sets.
The comparison will also be used in the proof of
Theorem~\ref{thmC}.

\subsection{A supremal convolution}
\label{section5}
In this subsection and subsection~\ref{section7}, we assume that
\begin{align}
\label{eqn5.1}
\begin{split}
&E_1, E_2\, \, \mbox{are $p$-regular compact sets in $\rn{n}$,}\, \,
\la\in (0,1), \, \, \mbox{and}\, \, E=\la E_1+(1-\la)E_2.
\end{split}
\end{align}
Then $E$ is $p$-regular by Lemma \ref{lem9.6} $(d)$.
If $1<p<n$ let $u_1,u_2,u$ be the $p$-capacitary functions for $E_1,E_2,E$, and if $p\geq n$ let $G_1,G_2,G$ be the $p$-harmonic Green's functions for $\rn{n}\sem E_1$, $\rn{n}\sem E_2$, $\rn{n}\sem E$ with pole at $\infty$ (see Lemmas \ref{lem4.1}, \ref{lem4.2}). Put
\begin{align}
\label{eqn5.2}
\begin{split}
&w_1=u_1, \, \, w_2=u_2,\, \,  w=u,\, \,  \mbox{and}\, \,  \mathbf{m}=1 \quad \mbox{when}\, \, 1<p<n,\\
&w_1=-G_1,\, \, w_2=-G_2,\, \,  w=-G,\, \,  \mbox{and}\, \, \mathbf{m}=0 \quad \mbox{when}\, \, p\geq n.
\end{split}
\end{align}
By Lemmas \ref{lem4.1}, \ref{lem4.2},  and \ref{lem3.1}, if $\ti w\in \{w_1,w_2,w\}$ and $\ti E$ is the corresponding set in $\{E_1,E_2,E\}$ then
\begin{align}
\label{eqn5.3}
\begin{split}
&(a) \hs{.1in} \ti w \, \, \mbox{is continuous on $\rn{n}$}, \, \, \ti w<\mathbf{m}\, \, \mbox{in}\, \, \rn{n}\sem \hat{\ti E},\,\, \mbox{and}\, \, \ti w\equiv \mathbf{m}\, \, \mbox{on}\, \, \hat{\ti E},\\
&(b) \hs{.1in} \ti w \, \, \mbox{is $p$-harmonic in}\, \, \rn{n}\sem \ti E \, \, \mbox{and has continuous partial derivatives in}\, \, \rn{n}\sem \ti E,\\
&(c) \hs{.1in} \lim_{x\to\infty}\ti w(x)=0\, \, \mbox{when}\, \, 1<p<n\, \, \mbox{and} \, \, \lim_{x\to\infty}\ti w(x)=-\infty\, \, \mbox{when}\, \, p\geq n.
\end{split}
\end{align}
For $x\in \rn{n}$ we define
\begin{align}
\label{eqn5.4}
w^*(x)=\sup\left\{
\min\{w_1(y), w_2(z)\} :
x=\lambda y+(1-\lambda) z, \, \,  y,  z   \in \mathbb{R}^n
\right\}.
\end{align}
If $1<p<n$ then $w^*$ is the function $u^*$ in \eqref{eqn1.6}. If $p\geq n$ then $-w^*$
is the function $G^*$ in \cite[(6.11)]{ALSV}. Our goal is to prove that
\begin{align}
\label{eqn5.5}
w^*(x)\leq w(x) \quad \mbox{whenever}\, \, x\in \rn{n}.
\end{align}
We say that $(y,z)$ is an optimal pair for $x$ if $x=\la y+(1-\la)z$ and $w^*(x)=\min\{w_1(y),w_2(z)\}$.
\begin{lemma}
\label{lem5.1}
Let $E_1,E_2,\la,E,w_1,w_2,w,\mathbf{m}$, and $w^*$ be as in \eqref{eqn5.1}-\eqref{eqn5.4}. Then
\begin{align}
\label{eqn5.6}
\begin{split}
&(a) \hs{.1in} \mbox{For each $x\in \rn{n}$ there exists an optimal pair for $x$.}\\
&(b) \hs{.1in} w^*\, \, \mbox{is continuous on $\rn{n}$ and}\, \, \min\{w_1,w_2\}\leq w^*\leq \mathbf{m}.\\
&(c)\hs{.1in} \{x: w^*(x)=\mathbf{m}\}=\la\hat E_1+(1-\la)\hat E_2\subset \hat E. \, \, \mbox{Thus}\, \, w^*\leq w\, \, \mbox{on}\, \, \hat E.\\
&(d) \hs{.1in} \mbox{If $1<p<n$ then}\, \, w^*(x)\to 0\, \, \mbox{as}\, \, x\to\infty.\, \,\mbox{If $p\geq n$ and $0<\he<1$ then}\\
&\hs{.3in}  w^*(x)-\he\, w(x)\to -\infty\, \, \mbox{as}\, \, x\to\infty.
\end{split}
\end{align}
\end{lemma}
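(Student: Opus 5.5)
The plan is to reduce everything to a compactness statement about the maximization in \eqref{eqn5.4}, using \eqref{eqn5.3}. Write $y=y$ and $z=(x-\la y)/(1-\la)$, so that $w^*(x)=\sup_{y\in\rn{n}}\min\{w_1(y),w_2((x-\la y)/(1-\la))\}$.

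For $(a)$ and $(b)$, fix a compact set $K$ of points $x$. If $|y|\to\infty$ with $x\in K$, then $w_1(y)\to 0$ (or $-\infty$ when $p\geq n$) by \eqref{eqn5.3} $(c)$. Taking $y$ in $\hat E_1$ and $z=(x-\la y)/(1-\la)$ gives $\min\{w_1(y),w_2(z)\}\geq \min\{\mathbf m, w_2(z)\}$, which is bounded below on $K$. When $1<p<n$ we need the supremum to stay strictly away from $0$; this holds because $w_1,w_2>0$ everywhere for capacitary functions, by \eqref{eqn4.2} $(a)$ and since $u>0$ on $\Om_E$. Hence the supremum over $y$ may be restricted to a fixed compact set $\{|y|\leq R_K\}$, uniformly for $x\in K$. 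The function $(x,y)\mapsto\min\{w_1(y),w_2((x-\la y)/(1-\la))\}$ is continuous, so the maximum is attained, which proves $(a)$. The supremum over a compact set of a jointly continuous function is continuous in $x$, which gives continuity of $w^*$. The bound $w^*\leq\mathbf m$ follows from $w_i\leq\mathbf m$. The bound $\min\{w_1,w_2\}\leq w^*$ follows by choosing $y=z=x$.

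For $(c)$, $w^*(x)=\mathbf m$ holds exactly when some optimal pair has $w_1(y)=w_2(z)=\mathbf m$. By \eqref{eqn5.3} $(a)$ this means $y\in\hat E_1$ and $z\in\hat E_2$, which gives the equality of sets. The inclusion in $\hat E$ is Lemma~\ref{lem4.3}. On $\hat E$ we have $w=\mathbf m\geq w^*$.

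For $(d)$, first take $1<p<n$. If $x=\la y+(1-\la)z$ with $|x|$ large, then $\max\{|y|,|z|\}\geq|x|$, so one of $w_1(y),w_2(z)$ is at most $\sup_{|\zeta|\geq|x|}\max\{w_1,w_2\}(\zeta)\to0$. Together with $w^*\geq0$ this gives $w^*\to 0$.

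Now take $p\geq n$; this is the main step. By Lemma~\ref{lem4.2}, $G_i=F+k_i$ with $k_i$ bounded near infinity, and similarly $G=F+k$. Since $G_i\geq0$ and $G_i\geq F-C$ everywhere for a constant $C$ (the bound near infinity from $k_i$, plus $F$ bounded above on compacts), we get $-w^*(x)\geq\inf\max\{F(y),F(z)\}-C$. For $p>n$, $F$ is an increasing function of $|\cdot|$ and $\max\{|y|,|z|\}\geq|x|$, so $-w^*(x)\geq F(x)-C$. For $p=n$ the same holds because $F$ is increasing in $|x|$, and $F\leq0$ only on a bounded set, where $G_i\geq 0$ handles the bound. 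Hence $w^*(x)-\he w(x)\leq -F(x)+C+\he(F(x)+C')=-(1-\he)F(x)+C''\to-\infty$, since $F\to\infty$.

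The only subtle point is the uniform restriction of $y$ to a compact set when $1<p<n$. It relies on positivity of $u_i$ everywhere and on the continuity of $w_2$ at the other point, which I expect to handle as above.
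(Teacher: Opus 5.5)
Your proof is correct and follows the paper's argument essentially step for step: you restrict the supremum to a compact superlevel set of $w_1$ for (a)--(b), use optimal pairs and Lemma~\ref{lem4.3} for (c), and use $\max\{|y|,|z|\}\geq|x|$ together with the growth of $F$ for (d). The differences are cosmetic. You get the lower bound for $w^*$ on $K$ from a fixed point of $\hat E_1$ rather than from $y=x$. You also state $G_i\geq F-C$ globally, whereas the paper uses it only for $|\zeta|\geq|x|$ large.
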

\begin{proof}
For $x,y\in \rn{n}$ let $q(x,y)=\min\{w_1(y),w_2((x-\la y)/(1-\la))\}$. Then $q$ is continuous and $w^*(x)=\sup_{y}q(x,y)$.
Taking $y=x$ gives $w^*\geq \min\{w_1,w_2\}$, and $w^*\leq \mathbf{m}$ by \eqref{eqn5.3} $(a)$. Let $K\subset \rn{n}$ be compact and $\tau=\inf_K\min\{w_1,w_2\}$.
By \eqref{eqn5.3} $(a), (c)$, and \eqref{eqn4.2} $(a)$ when $1<p<n$, $\tau$ is larger than the limit of $w_1$ at $\infty$, so $K'=\{y: w_1(y)\geq \tau\}$ is compact. Since $q(x,y)\leq w_1(y)$,
$w^*(x)=\sup_{y\in K'}q(x,y)$ for $x\in K$. This proves \eqref{eqn5.6} $(a)$ and continuity of $w^*$ on $K$, hence \eqref{eqn5.6} $(b)$.
By \eqref{eqn5.6} $(a)$ and \eqref{eqn5.3} $(a)$, $w^*(x)=\mathbf{m}$ if and only if $x=\la y+(1-\la)z$ for some $y\in \hat E_1$ and $z\in \hat E_2$. This,  Lemma \ref{lem4.3}, and
\eqref{eqn5.3} $(a)$ for $w$ give \eqref{eqn5.6} $(c)$.

For \eqref{eqn5.6} $(d)$, if $x=\la y+(1-\la)z$ then $|y|\geq |x|$ or $|z|\geq |x|$. Hence
\begin{align}
\label{eqn5.7}
w^*(x)\leq \sup_{i\in\{1,2\}}\sup\{w_i(\ze): \, |\ze|\geq |x|\}.
\end{align}
If $1<p<n$ then \eqref{eqn5.7}, \eqref{eqn5.3} $(c)$, and $w^*\geq \min\{w_1,w_2\}> 0$ give the first part of \eqref{eqn5.6} $(d)$. If $p\geq n$, \eqref{eqn2.9} $(c)$ and the monotonicity of $F$ in $|x|$ give $c'\geq 1$ and $\rho'>0$ with
$w_i(\ze)\leq -F(\ze)+c'$ and $w(\ze)\geq -F(\ze)-c'$ for $|\ze|\geq \rho'$ and $i=1,2$.
With \eqref{eqn5.7} this gives
$w^*(x)-\he w(x)\leq -(1-\he)F(x)+2c'$
for $|x|\geq \rho'$, which proves \eqref{eqn5.6} $(d)$.
\end{proof}

The next lemma is the local calculus used in the viscosity treatment of
quasi-concave envelopes \cite{CSq,CuS} and Minkowski linear combinations
\cite{LS,Sa15}. The gradient and Hessian formulas under positive
curvature and unique support hypotheses are given in \cite{BLS}. See also \cite[(4.27)]{AGHLV}. We give the short
proof using test functions, without those geometric hypotheses.
\begin{lemma}
\label{lem5.2}
Let $\la\in(0,1)$, $y, z\in \rn{n}$,  and $x=\la y+(1-\la)z$. Let $v_1,v_2,\ph$ be real valued functions defined in neighbourhoods of $y,z,x$ respectively, such that $v_1(y)=v_2(z)=\ph(x)$ and
\begin{align}
\label{eqn5.8}
\ph(\la y'+(1-\la)z')\geq \min\{v_1(y'),v_2(z')\}
\end{align}
whenever $y'$ is in a neighbourhood of $y$ and $z'$ is in a neighbourhood of $z$.

\noindent $(a)$ If $v_1,v_2,\ph$ are differentiable at $y,z,x$ respectively and $\nabla v_1(y)\neq 0, \nabla v_2(z)\neq 0$, then there exist $\nu\in \mathbb{S}^{n-1}$ and positive numbers $a,b,d$ such that
\begin{align}
\label{eqn5.9}
\nabla v_1(y)=-a\, \nu,\quad \nabla v_2(z)=-b\, \nu, \quad \nabla \ph(x)=-d\, \nu,\quad \mbox{and}\quad \frac{1}{d}=\frac{\la}{a}+\frac{1-\la}{b}.
\end{align}

\noindent $(b)$ If in addition to the assumptions in $(a)$, $v_1,v_2,\ph$ have continuous second partial derivatives in neighbourhoods of $y,z,x$ respectively, then
\begin{align}
\label{eqn5.10}
D^2\ph(x)[e,e]\geq \la \left(\frac{d}{a}\right)^3D^2v_1(y)[e,e]+(1-\la)\left(\frac{d}{b}\right)^3D^2v_2(z)[e,e]
\end{align}
whenever $e\in \rn{n}$.
\end{lemma}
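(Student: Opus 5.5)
Both parts will be proved by first-order and second-order expansion of \eqref{eqn5.8} along suitably chosen curves, using only the one-sided information that $\ph\circ(\la y'+(1-\la)z')$ dominates the minimum.

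For $(a)$, I first fix directions $h,k\in\rn{n}$ and take $y'=y+th$, $z'=z+tk$ for small $t>0$. Since $v_1(y)=v_2(z)=\ph(x)$, differentiability turns \eqref{eqn5.8} into
\[
t\lan\nabla\ph(x),\la h+(1-\la)k\ran+o(t)\geq t\min\{\lan\nabla v_1(y),h\ran,\lan\nabla v_2(z),k\ran\}+o(t).
\]
Dividing by $t$ and letting $t\downarrow0$ gives
\[
\lan\nabla\ph(x),\la h+(1-\la)k\ran\geq\min\{\lan\nabla v_1(y),h\ran,\lan\nabla v_2(z),k\ran\}
\]
for all $h,k$.

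Taking $k=0$ and any $h$ with $\lan\nabla v_1(y),h\ran>0$ yields $\lan\nabla\ph(x),h\ran\geq0$ on that open half-space. Hence $\nabla\ph(x)=c\,\nabla v_1(y)$ with $c\geq0$. By symmetry $\nabla\ph(x)$ is also a nonnegative multiple of $\nabla v_2(z)$.

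Next I choose $h,k$ with $\lan\nabla v_1(y),h\ran=\lan\nabla v_2(z),k\ran=1$. This forces $\nabla\ph(x)\neq0$. So writing $\nabla v_1(y)=-a\nu$ with $a>0$, we get $\nabla v_2(z)=-b\nu$ and $\nabla\ph(x)=-d\nu$ with $b,d>0$.

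For the harmonic-mean identity I take $h=-\nu/a$ and $k=-\nu/b$. Both inner products on the right then equal $1$, so $d(\la/a+(1-\la)/b)\geq1$. With $h=\nu/a$ and $k=\nu/b$ both equal $-1$, giving $-d(\la/a+(1-\la)/b)\geq-1$. Together these give \eqref{eqn5.9}.

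For $(b)$, I fix $e$ and use second-order curves that keep the first-order terms equal. Put $y'=y+t\al e+\frac{t^2}{2}\be_1\nu$ and $z'=z+t\be e+\frac{t^2}{2}\be_2\nu$ with $\al=d/a$ and $\be=d/b$. Then $\la\al+(1-\la)\be=1$ by \eqref{eqn5.9}, so $x'=x+te+\frac{t^2}{2}(\la\be_1+(1-\la)\be_2)\nu$.

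The choice of $\al,\be$ makes the linear terms of $v_1,v_2,\ph$ all equal to $-td\lan\nu,e\ran$. Taylor expansion then gives
\[
v_1(y')=v_1(y)-td\lan\nu,e\ran+\tfrac{t^2}{2}\bigl(\al^2D^2v_1(y)[e,e]-a\be_1\bigr)+o(t^2),
\]
and similarly for $v_2$ and for $\ph$.

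I then choose $\be_1,\be_2$ so that the two second-order coefficients coincide, with common value $A$:
\[
\al^2D^2v_1[e,e]-a\be_1=\be^2D^2v_2[e,e]-b\be_2=A.
\]
Substituting $\be_1,\be_2$ into the $\ph$ expansion, \eqref{eqn5.8} gives
\[
D^2\ph(x)[e,e]-d\bigl(\la\be_1+(1-\la)\be_2\bigr)\geq A.
\]
The term $-d(\la\be_1+(1-\la)\be_2)$ equals
\[
\la\tfrac{d}{a}\bigl(A-\al^2D^2v_1[e,e]\bigr)+(1-\la)\tfrac{d}{b}\bigl(A-\be^2D^2v_2[e,e]\bigr),
\]
whose $A$-part is $A(\la\al+(1-\la)\be)=A$. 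This cancels the right side exactly and leaves
\[
D^2\ph(x)[e,e]\geq\la\al^3D^2v_1(y)[e,e]+(1-\la)\be^3D^2v_2(z)[e,e],
\]
which is \eqref{eqn5.10}.

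The main obstacle is the bookkeeping in $(b)$. One must pick $\al=d/a$ and $\be=d/b$ and use the identity $\la\al+(1-\la)\be=1$ so that the free parameter $A$ drops out. The $o(t^2)$ terms are harmless because each function is $C^2$ near its base point.
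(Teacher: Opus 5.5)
Your proposal is correct and follows essentially the same route as the paper's proof. Part $(a)$ uses the same linearized inequality and the test pairs $\pm(\nu/a,\nu/b)$, and part $(b)$ uses the same second-order curves with first-order speeds $d/a,d/b$ plus normal corrections that equalize the quadratic coefficients. The differences are cosmetic: you get parallelism of the gradients from the half-space condition with $k=0$ (resp.\ $h=0$) rather than from orthogonal perturbations, and you correct both curves with a free common value instead of correcting only $y_t$ and using the convex-combination lower bound for the minimum.
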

\begin{proof}
Let $s=v_1(y)=v_2(z)=\ph(x)$, $g_1=\nabla v_1(y)$, $g_2=\nabla v_2(z)$, and $g=\nabla \ph(x)$. If $\vartheta,\eta\in \rn{n}$, then \eqref{eqn5.8} with $y'=y+t\vartheta$, $z'=z+t\eta$, after division by $t>0$ and letting $t\to 0$, gives
\begin{align}
\label{eqn5.11}
\lan g, \la\vartheta+(1-\la)\eta\ran\geq \min\{\lan g_1,\vartheta\ran, \lan g_2,\eta\ran\}\quad \mbox{whenever}\, \, \vartheta,\eta\in \rn{n}.
\end{align}
Put $\vartheta_0=g_1/|g_1|^2$ and $\eta_0=g_2/|g_2|^2$.
In \eqref{eqn5.11}, take $\vartheta=\vartheta_0+\tau\ze$, $\eta=\eta_0$, with $\ze\perp g_1$ and arbitrary $\tau\in\re$. It follows that $g\perp\ze$.
Likewise, $\vartheta=\vartheta_0$ and
$\eta=\eta_0+\tau\ze$ with $\ze\perp g_2$ give $g\perp\ze$.
Hence $g=\al_1g_1=\al_2g_2$ for real $\al_1,\al_2$.
Taking $(\vartheta,\eta)=\pm(\vartheta_0,\eta_0)$ yields
\begin{align}
\label{eqn5.12}
\la\al_1+(1-\la)\al_2=1.
\end{align}
Taking $(\vartheta_0,T\eta_0)$ and $(T\vartheta_0,\eta_0)$, then letting $T\to\infty$, shows that $\al_1,\al_2\geq0$.
If either is zero then $g=0$, so both are zero, contrary to \eqref{eqn5.12}. Thus \eqref{eqn5.9} holds with
$d=|g|$, $\nu=-g/|g|$, $a=d/\al_1$, and $b=d/\al_2$.

To prove $(b)$ let $\al=d/a,\be=d/b$, and $\omega_0=\la d/a$. Then by \eqref{eqn5.9},
\begin{align}
\label{eqn5.13}
\al\, a=\be\, b=d, \quad \la\al+(1-\la)\be=1,\quad 0<\omega_0<1,\quad \mbox{and}\quad 1-\omega_0=(1-\la)d/b.
\end{align}
Fix $e\in \rn{n}$, let
$\kappa=(\al^2 D^2v_1(y)[e,e]-\be^2D^2v_2(z)[e,e])/(2a)$,
and for small $t>0$ put
$y_t=y+t\al e+t^2\kappa\nu$ and $z_t=z+t\be e$.
By Taylor's theorem, \eqref{eqn5.9}, and \eqref{eqn5.13}, as $t\to 0$,
\begin{align}
\label{eqn5.14}
\begin{split}
&v_1(y_t)=s-t\, d\, \lan \nu,e\ran+t^2\left(-a\kappa+\frac{\al^2}{2}D^2v_1(y)[e,e]\right)+o(t^2),\\
&v_2(z_t)=s-t\, d\, \lan \nu,e\ran+t^2\, \frac{\be^2}{2}D^2v_2(z)[e,e]+o(t^2).
\end{split}
\end{align}
By the choice of $\kappa$ the coefficients of $t^2$ in \eqref{eqn5.14} are equal, so $v_1(y_t)-v_2(z_t)=o(t^2)$. Using
$\min\{A,B\}\geq \omega_0 A+(1-\omega_0)B-|A-B|$, we get
\begin{align}
\label{eqn5.15}
\min\{v_1(y_t),v_2(z_t)\}\geq  s-t\, d\, \lan \nu,e\ran+t^2\, Q+o(t^2),
\end{align}
where
$Q=-\omega_0 a\kappa+\frac{\omega_0 \al^2}{2}D^2v_1(y)[e,e]+\frac{(1-\omega_0)\be^2}{2}D^2v_2(z)[e,e]$.
On the other hand, $\la y_t+(1-\la)z_t=x+te+t^2\la\kappa\nu$ by \eqref{eqn5.13},  so
\begin{align}
\label{eqn5.16}
\ph(\la y_t+(1-\la)z_t)=s-t\, d\, \lan \nu,e\ran+t^2\left(-\la d\kappa+\frac{1}{2}D^2\ph(x)[e,e]\right)+o(t^2).
\end{align}
Now $\omega_0 a=\la d$. Using \eqref{eqn5.15}, \eqref{eqn5.16} in \eqref{eqn5.8} with $(y',z')=(y_t,z_t)$, dividing by $t^2$,  and letting $t\to 0$ we conclude that
\[
D^2\ph(x)[e,e]\geq \omega_0\al^2D^2v_1(y)[e,e]+(1-\omega_0)\be^2D^2v_2(z)[e,e].
\]
This is \eqref{eqn5.10}, since $\omega_0\al^2=\la(d/a)^3$ and $(1-\omega_0)\be^2=(1-\la)(d/b)^3$ by \eqref{eqn5.13}.
\end{proof}

\begin{lemma}
\label{lem5.3}
Let $p\in(1,\infty)$ and let $\la,y,z,x$ be as in Lemma \ref{lem5.2}. Let $\ti w_1,\ti w_2$ be $p$-harmonic functions in neighbourhoods of $y,z$ respectively with $\ti w_1(y)=\ti w_2(z)=s$,
$\nabla \ti w_1(y)\neq 0$, and $\nabla \ti w_2(z)\neq 0$.
Suppose that $\ph$ has continuous second partial derivatives near $x$,
that $\ph(x)=s$, and that \eqref{eqn5.8} holds with
$v_1=\ti w_1$ and $v_2=\ti w_2$. Then $\nabla \ph(x)\neq0$ and
\begin{align}
\label{eqn5.17}
\De \ph (x)+(p-2)\, |\nabla \ph(x)|^{-2} \, D^2 \ph(x)[\nabla \ph(x), \nabla \ph(x)]\geq0.
\end{align}
\end{lemma}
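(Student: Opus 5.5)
We apply Lemma~\ref{lem5.2} with $v_1=\ti w_1$, $v_2=\ti w_2$. Since $\ti w_1,\ti w_2$ are $p$-harmonic with nonvanishing gradients at $y,z$, Lemma~\ref{lem3.1} shows that they have H\"older continuous second derivatives near $y,z$ and satisfy \eqref{eqn3.3} there. Since $\ph$ is $C^2$, Lemma~\ref{lem5.2} $(a)$ gives $\nu\in\mathbb S^{n-1}$ and $a,b,d>0$ with \eqref{eqn5.9}. In particular $\nabla\ph(x)=-d\nu\neq0$, which is the first claim. Lemma~\ref{lem5.2} $(b)$ then gives the Hessian inequality \eqref{eqn5.10} for every $e\in\rn{n}$.

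To obtain \eqref{eqn5.17}, rewrite it as in \eqref{eqn3.4}. Let $A=I+(p-2)\nu\otimes\nu$, a positive definite matrix with eigenvalues $1$ and $p-1$. Then \eqref{eqn5.17} reads $\mathrm{trace}(A\,D^2\ph(x))\geq0$. The same $\nu$ (up to sign, which is irrelevant in $\nu\otimes\nu$) is the normalized gradient direction for $\ti w_1$ at $y$ and for $\ti w_2$ at $z$. Hence \eqref{eqn3.4} gives $\mathrm{trace}(A\,D^2\ti w_1(y))=0$ and $\mathrm{trace}(A\,D^2\ti w_2(z))=0$.

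Diagonalize $A=\sum_{j}\mu_j e_j\otimes e_j$ with an orthonormal basis $e_1=\nu,e_2,\dots,e_n$, where $\mu_1=p-1$ and $\mu_j=1$ for $j\geq2$. Apply \eqref{eqn5.10} with $e=e_j$, multiply by $\mu_j>0$, and sum over $j$. This yields
\[
\mathrm{trace}(A D^2\ph(x))\geq \la\Big(\frac{d}{a}\Big)^3\mathrm{trace}(AD^2\ti w_1(y))+(1-\la)\Big(\frac{d}{b}\Big)^3\mathrm{trace}(AD^2\ti w_2(z))=0.
\]
Dividing the vector form by $|\nabla\ph(x)|^2=d^2$ and noting $D^2\ph(x)[\nabla\ph(x),\nabla\ph(x)]=d^2D^2\ph(x)[\nu,\nu]$ shows that this is exactly \eqref{eqn5.17}.

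The main point to check is that a single direction $\nu$ serves all three functions, and this is exactly what \eqref{eqn5.9} provides. The weights in \eqref{eqn5.10} are positive and independent of $e$, so summing against the positive eigenvalues of $A$ is legitimate. No serious obstacle is expected beyond verifying the regularity hypotheses of Lemma~\ref{lem5.2} $(b)$ via Lemma~\ref{lem3.1}.
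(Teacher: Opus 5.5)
Your proposal is correct and follows the paper's proof essentially step for step. Lemma~\ref{lem5.2} supplies the common direction $\nu$, the conclusion $\nabla\ph(x)=-d\nu\neq0$, and the weighted Hessian inequality \eqref{eqn5.10}; positive definiteness of $I+(p-2)\nu\otimes\nu$ transfers that inequality to the trace, which you justify explicitly by summing over an eigenbasis where the paper simply asserts it; and \eqref{eqn3.4} makes the right-hand side vanish. The only detail the paper adds is that Lemma~\ref{lem3.1} is stated for positive functions, so one adds a constant locally before invoking its $C^2$ regularity, as in the remark after \eqref{eqn3.4}.
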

\begin{proof}
By Lemma \ref{lem3.1} and the remark after \eqref{eqn3.4}, $\ti w_1,\ti w_2$ have continuous second partial derivatives near $y,z$.
Apply Lemma \ref{lem5.2} with $v_i=\ti w_i$, and let $\nu,a,b,d$ be as in \eqref{eqn5.9}. Then
$\nabla\ph(x)=-d\nu\neq0$. For a symmetric $n\times n$ matrix $X$ put
$N(X)=\mbox{trace}\left( \left( I+(p-2)\nu\otimes \nu\right) X\right)$.
Since $I+(p-2)\nu\otimes \nu$ is positive definite, \eqref{eqn5.10} gives
\begin{align}
\label{eqn5.18}
N(D^2\ph(x))\geq \la \left(\frac{d}{a}\right)^3N(D^2v_1(y))+(1-\la)\left(\frac{d}{b}\right)^3N(D^2v_2(z)).
\end{align}
Both gradients are parallel to $\nu$ by \eqref{eqn5.9}, so \eqref{eqn3.4} gives
$N(D^2v_1(y))=N(D^2v_2(z))=0$.
Thus \eqref{eqn5.18} proves \eqref{eqn5.17}, whose left-hand side is
$N(D^2\ph(x))$.
\end{proof}

We end this subsection by showing that \eqref{eqn5.5} implies \eqref{BMp}, \eqref{BMn}, and \eqref{BMpn}. For convex sets this is done in \cite{B1,CS,AGHLV,ALSV}.
\begin{lemma}
\label{lem5.4}
Let $E_1,E_2,\la,E,w_1,w_2,w$ be as in \eqref{eqn5.1}, \eqref{eqn5.2},  and suppose that \eqref{eqn5.5} holds. Then
\begin{align}
\label{eqn5.19}
\begin{split}
\capp(E)&\geq \min\{\capp(E_1),\capp(E_2)\}\, \, \mbox{when}\, \, 1<p<n,\\
\cp(E)&\geq \min\{\cp(E_1),\cp(E_2)\}\, \, \mbox{when}\, \, p\geq n.
\end{split}
\end{align}
\end{lemma}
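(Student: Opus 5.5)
We compare the behaviour of $w$ and $w^*$ at infinity, using \eqref{eqn5.5} together with the lower bound $w^*\geq\min\{w_1,w_2\}$ from \eqref{eqn5.6}~$(b)$. The idea is that the supremal convolution of two functions with prescribed asymptotics inherits a combined asymptotic, controlled from below by the worse of the two. In fact we only need something weaker. Choose $y=x$ in \eqref{eqn5.4}, or more effectively $y=x/\la'$-type rescalings. The simplest route is to use $w\geq w^*\geq\min\{w_1,w_2\}$ on all of $\rn{n}$ and then read off the leading coefficients at infinity.

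\emph{Case $1<p<n$.} By Lemma~\ref{lem4.1}~\eqref{eqn4.2}~$(c)$, each of $u_1,u_2,u$ satisfies $|x|^{(n-p)/(p-1)}\tilde u(x)\to c(p,n)\,[\capp(\tilde E)]^{1/(p-1)}$. Multiply the inequality $u\geq\min\{u_1,u_2\}$ by $|x|^{(n-p)/(p-1)}$ and let $x\to\infty$. The limit of the minimum is the minimum of the limits, so $\capp(E)^{1/(p-1)}\geq\min_i\capp(E_i)^{1/(p-1)}$. This gives the first line of \eqref{eqn5.19}.

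\emph{Case $p\geq n$.} Here $w=-G$ and $w_i=-G_i$, so \eqref{eqn5.5} combined with \eqref{eqn5.6}~$(b)$ gives $G\leq\max\{G_1,G_2\}$ on $\rn{n}$. Subtract $F$ and let $x\to\infty$. Lemma~\ref{lem4.2}~\eqref{eqn4.17}~$(b)$ provides the finite limits $k(\infty)$ and $k_i(\infty)$. We obtain $k(\infty)\leq\max\{k_1(\infty),k_2(\infty)\}$. Both maps $t\mapsto e^{-t/\ga}$ and $t\mapsto(-t)^{p-1}$ (on $t\leq0$, using $k(\infty)\leq0$ when $p>n$) are decreasing. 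Hence \eqref{eqn2.10} yields $\cp(E)\geq\min\{\cp(E_1),\cp(E_2)\}$.

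\emph{Expected obstacle.} The only delicate point is justifying the limits. For $p\geq n$ one might worry that a single fixed direction does not suffice, since $\max\{G_1,G_2\}-F$ could oscillate. However, each $k_i$ has a genuine limit by Lemma~\ref{lem:radial-limits}~$(b)$, so the maximum also converges, and no extra argument is needed. I expect the proof to be short.

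The sharp homogeneous form, with $\lambda$-weights as in Theorem~\ref{thmA}, is then obtained afterwards. One rescales $E_1,E_2$ to equal capacity via \eqref{eqn4.1} and \eqref{eqn4.17}~$(d)$ and adjusts $\la$. This step is not part of the present lemma.
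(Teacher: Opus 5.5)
Your proof is correct and follows the paper's argument: combine \eqref{eqn5.5} with $w^*\geq\min\{w_1,w_2\}$ from \eqref{eqn5.6} $(b)$, then compare the constants at infinity. For $1<p<n$ these come from \eqref{eqn4.2} $(c)$, and for $p\geq n$ from \eqref{eqn4.17} $(b)$ and \eqref{eqn2.10}, using $k_i(\infty)\leq 0$ when $p>n$. The aside about ``$y=x/\la'$-type rescalings'' is unnecessary, since only $y=z=x$ is used.
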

\begin{proof}
By \eqref{eqn5.5} and \eqref{eqn5.6} $(b)$, $w\geq\min\{w_1,w_2\}$.
For $1<p<n$, multiply by $|x|^{(n-p)/(p-1)}$ and use \eqref{eqn4.2} $(c)$ at infinity.
For $p\geq n$, the same inequality gives $G-F\leq\max\{G_1-F,G_2-F\}$. Take limits using \eqref{eqn4.17} $(b)$ and apply \eqref{eqn2.10}.
\end{proof}

\begin{lemma}
\label{lem5.5}
Let $\mathcal{F}$ be a family of $p$-regular compact sets with positive $\capp$ when $p<n$ and positive $\cp$ when $p\geq n$, such that $\rho E\in \mathcal{F}$ whenever $E\in \mathcal{F}$ and $\rho>0$.
If \eqref{eqn5.19} holds whenever $E_1,E_2\in \mathcal{F}$, $\la\in (0,1)$,  and $E=\la E_1+(1-\la)E_2$, then \eqref{BMp}, \eqref{BMn}, and \eqref{BMpn} hold whenever $E_1,E_2\in \mathcal{F}$ and $\la\in[0,1]$.
\end{lemma}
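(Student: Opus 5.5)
This is the standard rescaling trick that upgrades the weak (minimum) form of the inequality to the concave form, as in \cite{B1,CS,AGHLV,ALSV}. Write $\Psi(E)$ for the degree-one functional: $\Psi=\capp^{1/(n-p)}$ when $1<p<n$, $\Psi=\cp$ when $p=n$, and $\Psi=\cp^{1/(p-n)}$ when $p>n$. By \eqref{eqn4.1} and \eqref{eqn4.17}~$(d)$ we have $\Psi(\rho E)=\rho\,\Psi(E)$ for $\rho>0$. Since $t\mapsto t^{1/(n-p)}$ and $t\mapsto t^{1/(p-n)}$ are increasing, \eqref{eqn5.19} is equivalent to
\[
\Psi(\la E_1+(1-\la)E_2)\geq\min\{\Psi(E_1),\Psi(E_2)\}
\]
for all $E_1,E_2\in\mathcal F$ and $\la\in(0,1)$. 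The endpoint cases $\la\in\{0,1\}$ are trivial, since then $\la E_1+(1-\la)E_2$ is $E_2$ or $E_1$. Note that $0E_i=\{0\}$ is not in $\mathcal F$, but it is never needed.

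Fix $\la\in(0,1)$ and $E_1,E_2\in\mathcal F$, and put $a_i=\Psi(E_i)>0$, which is positive by hypothesis. Set $E_i'=a_i^{-1}E_i$. These sets lie in $\mathcal F$ by the dilation invariance of the family, and $\Psi(E_i')=1$ by homogeneity. Put
\[
\mu=\frac{\la a_1}{\la a_1+(1-\la)a_2}\in(0,1).
\]
Then
\[
\mu E_1'+(1-\mu)E_2'=\frac{\la E_1+(1-\la)E_2}{\la a_1+(1-\la)a_2}.
\]
This identity is just $\mu a_1^{-1}=\la/(\la a_1+(1-\la)a_2)$ together with the analogous identity for $1-\mu$. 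Applying the minimum form to $E_1',E_2',\mu$ gives $\Psi(\mu E_1'+(1-\mu)E_2')\geq 1$. Homogeneity of $\Psi$, applied with $\rho=\la a_1+(1-\la)a_2$, then yields
\[
\Psi(\la E_1+(1-\la)E_2)\geq\la a_1+(1-\la)a_2,
\]
which is \eqref{BMp}, \eqref{BMn} or \eqref{BMpn} according to the range of $p$.

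The only point needing care is that the hypothesis of the lemma is applied to a Minkowski combination of the rescaled sets $E_1',E_2'$ with the new parameter $\mu$, not the original $\la$. This is legitimate precisely because $\mathcal F$ is closed under dilations and the hypothesis is assumed for all $\la\in(0,1)$. Beyond this, I see no real obstacle; the positivity hypothesis is what allows division by $a_i$.
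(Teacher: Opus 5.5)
Your proof is correct and follows the paper's argument: you normalize each $E_i$ by $a_i=\Psi(E_i)$, apply \eqref{eqn5.19} with the new weight $\la a_1/(\la a_1+(1-\la)a_2)$, and scale back using \eqref{eqn4.1} and \eqref{eqn4.17}~$(d)$. You leave one point implicit, as the paper does: for $p\geq n$, $\la E_1+(1-\la)E_2$ is $p$-regular by Lemma~\ref{lem9.6}~$(d)$, so its $\cp$ is defined and the scaling law applies to it.
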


\begin{proof}
Let $0<\la<1$, and put $\mathbf a_i=\capp(E_i)^{1/(n-p)}$ for $p<n$, $\mathbf a_i=\cp(E_i)$ for $p=n$, and $\mathbf a_i=\cp(E_i)^{1/(p-n)}$ for $p>n$.
Set $A=\la\mathbf a_1+(1-\la)\mathbf a_2$, $E_i''=\mathbf a_i^{-1}E_i$, and $\la''=\la\mathbf a_1/A$. Then $E_i''\in\mathcal F$, $0<\la''<1$, and
$\la''E_1''+(1-\la'')E_2''=A^{-1}\big(\la E_1+(1-\la)E_2\big)$.
By the scaling laws \eqref{eqn4.1}, \eqref{eqn4.17} $(d)$, the two normalized sets have capacity one. Apply \eqref{eqn5.19} to them and scale back.
The endpoints $\la=0,1$ are immediate.
\end{proof}

\subsection{Comparison for p-regular sets}
\label{section7}

We prove \eqref{eqn5.5} under \eqref{eqn5.1} for every $1<p<\infty$.
The next lemma uses the strong maximum principle and Hopf's lemma to
show that a test function with nonzero gradient can touch $w^*<\mathbf m$
only at a pair with equal values and nonzero gradients of $w_1,w_2$.
Lemma \ref{lem5.3} then gives the viscosity inequality in
Definition \ref{defn3.6}.

\begin{lemma}
\label{lem7.1}
Let $E_1,E_2,\la,E,w_1,w_2,\mathbf{m}$, and $w^*$ be as in \eqref{eqn5.1}-\eqref{eqn5.4}. Let $x\in \rn{n}$ with $w^*(x)<\mathbf{m}$ and let $(y,z)$ be an optimal pair for $x$.
Let $\ph$ be a function with continuous second partial derivatives in a neighbourhood of $x$ such that $\ph\geq w^*$ in this neighbourhood and $\ph(x)=w^*(x)$. If either
\begin{align}
\label{eqn7.1}
\begin{split}
&(\al)\hs{.1in} w_1(y)\neq w_2(z), \, \, \mbox{or}\\
&(\be)\hs{.1in} w_1(y)= w_2(z)\, \, \mbox{and one of}\, \, \nabla w_1(y), \nabla w_2(z)\, \, \mbox{is zero},
\end{split}
\end{align}
then $\nabla \ph(x)=0$.
\end{lemma}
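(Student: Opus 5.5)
The plan is to exploit the one-sided inequality that $\ph$ inherits from the definition \eqref{eqn5.4} of $w^*$. Put $s=w^*(x)=\ph(x)<\mathbf m$. For $y'$ near $y$ and $z'$ near $z$ the point $\la y'+(1-\la)z'$ lies in the neighbourhood of $x$ where $\ph\geq w^*$. Hence, by \eqref{eqn5.4},
\begin{equation*}
\ph(\la y'+(1-\la)z')\geq w^*(\la y'+(1-\la)z')\geq \min\{w_1(y'),w_2(z')\},
\end{equation*}
which is \eqref{eqn5.8} with $v_i=w_i$. In each case I will choose one of $y',z'$ to be fixed and let the other vary. The aim is to show that $\ph$ restricted to a neighbourhood of $x$ satisfies $\ph\geq s+o(|x'-x|)$, that is, $x$ is a first-order local minimum, which forces $\nabla\ph(x)=0$.

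\emph{Case $(\al)$.} By symmetry assume $w_1(y)>w_2(z)$. Then $s=\min\{w_1(y),w_2(z)\}=w_2(z)$. By continuity of $w_1$ (\eqref{eqn5.3} $(a)$), $w_1(y')>s$ for all $y'$ near $y$. Take $z'=z$ in the displayed inequality. This gives $\ph(x+\la(y'-y))\geq \min\{w_1(y'),s\}=s=\ph(x)$ for all $y'$ near $y$. So $\ph$ has a local minimum at $x$, and since $\ph$ is $C^1$ we get $\nabla\ph(x)=0$. The case $w_2(z)>w_1(y)$ is identical with the roles of $y$ and $z$ exchanged, using $1-\la>0$.

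\emph{Case $(\be)$.} Now $w_1(y)=w_2(z)=s<\mathbf m$. First I must check that the gradients make sense. By \eqref{eqn5.3} $(a)$, $w_1<\mathbf m$ exactly off $\hat E_1$, so $y\in\Om_{E_1}\subset\rn{n}\sem E_1$; similarly $z\in\rn{n}\sem E_2$. Thus by \eqref{eqn5.3} $(b)$ both functions are $C^1$ near these points. Say $\nabla w_1(y)=0$, the other case being symmetric. Then $w_1(y+t\vartheta)=s+o(t)$ as $t\downarrow0$ for every $\vartheta\in\rn{n}$. Taking $y'=y+t\vartheta$ and $z'=z$ gives
\begin{equation*}
\ph(x+t\la\vartheta)\geq\min\{s+o(t),s\}=s+o(t).
\end{equation*}
Since $\ph(x+t\la\vartheta)=s+t\la\lan\nabla\ph(x),\vartheta\ran+o(t)$, dividing by $t$ and letting $t\downarrow0$ yields $\lan\nabla\ph(x),\vartheta\ran\geq0$ for all $\vartheta$. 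Replacing $\vartheta$ by $-\vartheta$ gives $\nabla\ph(x)=0$.

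The only delicate point is the regularity check in case $(\be)$: one needs the hypothesis $w^*(x)<\mathbf m$ to place $y,z$ off the sets, where $w_1,w_2$ are differentiable. In case $(\al)$ only continuity is used, and the strict inequality between the two values is what makes the minimum insensitive to perturbing $y$.
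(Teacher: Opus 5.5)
Your proof is correct, and in both cases it is more elementary than the paper's.

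In case $(\alpha)$ with $w_1(y)>w_2(z)=s$, the paper argues in three steps:
\begin{enumerate}
\item it moves $z$ while compensating in $y$, so that the pair still represents $x$, and uses optimality to get $w_2\leq s$ near $z$;
\item it invokes the strong maximum principle to conclude $w_2\equiv s$ near $z$ (this is where it needs $z\notin\hat E_2$, i.e.\ $s<\mathbf m$);
\item only then does it move $z$ with $y$ fixed.
\end{enumerate}
You keep fixed the point carrying the smaller value and move the other one. Continuity of $w_1$ alone then keeps the minimum equal to $s$.

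In case $(\beta)$ the paper applies Hopf's lemma (Lemma~\ref{lem3.3}) in a small ball tangent at $y$ on the side of $-\nabla\varphi(x)$. This produces $\vartheta\neq0$ with $w_1(y+\vartheta)\geq s$, and a second-order Taylor bound on $\varphi$ then contradicts $\nabla\varphi(x)\neq0$. You use the first-order expansion $w_1(y+t\vartheta)=s+o(t)$ directly. This is just the inequality \eqref{eqn5.11} from the proof of Lemma~\ref{lem5.2}, taken with $\nabla w_1(y)=0$ and $\eta=0$.

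Your version uses only continuity of $w_1,w_2$ and differentiability at $y$ or $z$. The hypothesis $w^*(x)<\mathbf m$ serves only to make the gradients in $(\beta)$ meaningful, so your argument works for any continuous $w_1,w_2$ with those properties. The paper's route draws on the $p$-harmonic structure (strong maximum principle, Hopf's lemma). For the lemma as stated this buys nothing essential, apart from the byproduct that $w_2$ is locally constant near $z$ in case $(\alpha)$.
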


\begin{proof}
Put $s=w^*(x)$. In case $(\al)$, suppose $w_1(y)>w_2(z)=s$.
For small $\eta$, the pair $\big(y-(1-\la)\eta/\la,z+\eta\big)$ still represents $x$, and its first value exceeds $s$ by continuity. Optimality therefore gives $w_2(z+\eta)\leq s$.
Since $w_2(z)=s<\mathbf m$, we have $z\notin\hat E_2$.
The strong maximum principle in a small ball gives $w_2\equiv s$ on that ball. Consequently
\[
\ph(x+(1-\la)\eta)\geq w^*(x+(1-\la)\eta)\geq\min\{w_1(y),w_2(z+\eta)\}=s=\ph(x)
\]
for small $\eta$. Hence $\nabla\ph(x)=0$.
If $w_2(z)>w_1(y)=s$, use the pair
$\big(y+\eta,z-\la\eta/(1-\la)\big)$ instead. It gives
$w_1\equiv s$ near $y$ and $\ph(x+\la\eta)\geq\ph(x)$,
so again $\nabla\ph(x)=0$.

In case $(\be)$, $w_1(y)=w_2(z)=s<\mathbf m$, so
$y\notin\hat E_1$ and $z\notin\hat E_2$.
Suppose $\nabla w_1(y)=0$.
For a unit vector $\hat e$ and small $\rho>0$ with $\bar B(y-\rho\hat e,\rho)\subset\rn{n}\sem\hat E_1$, we claim that
\begin{align}
\label{eqn7.2}
\text{there is }\vartheta\neq0\text{ with }y+\vartheta\in B(y-\rho\hat e,\rho)\text{ and }w_1(y+\vartheta)\geq s.
\end{align}
Otherwise $h=s-w_1$ is positive and $p$-harmonic in the ball, continuous on its closure, differentiable at $y$, and zero there.
Lemma \ref{lem3.3} contradicts $\nabla h(y)=0$, proving the claim. The ball condition gives
\begin{align}
\label{eqn7.3}
\lan\vartheta,\hat e\ran<-|\vartheta|^2/(2\rho),\qquad |\vartheta|<2\rho.
\end{align}
Since $w_2(z)=s$, the touching condition and \eqref{eqn5.4} give, for small $\rho$,
\begin{align}
\label{eqn7.4}
\ph(x+\la\vartheta)\geq w^*(x+\la\vartheta)\geq s=\ph(x).
\end{align}
If $g=\nabla\ph(x)\neq0$, choose $\hat e=g/|g|$ and a bound $\hat M\geq1$ for $|D^2\ph[e,e]|$, $|e|=1$, in a neighbourhood of $x$. Taylor's theorem and \eqref{eqn7.3} imply
\[
\ph(x+\la\vartheta)<\ph(x)-\frac{\la|\vartheta|^2}{2}\left(\frac{|g|}{\rho}-\la\hat M\right).
\]
Choose $\rho$ small enough that the ball and touching conditions hold and $\rho<|g|/\hat M$. This contradicts \eqref{eqn7.4}, so $g=0$.
If $\nabla w_2(z)=0$, apply the same argument in
$B(z-\rho\hat e,\rho)$, keeping $y$ fixed. The touching inequality
becomes $\ph(x+(1-\la)\vartheta)\geq\ph(x)$. The same Taylor
estimate, with $\la$ replaced by $1-\la$, gives
$\nabla\ph(x)=0$.
\end{proof}

\begin{lemma}
\label{lem7.2}
Let $E_1,E_2,\la,E,w_1,w_2,\mathbf{m}$, and $w^*$ be as in \eqref{eqn5.1}-\eqref{eqn5.4}. Then $w^*$ is a viscosity $p$-subsolution in $\rn{n}\sem \hat E$ in the sense of Definition \ref{defn3.6}.
\end{lemma}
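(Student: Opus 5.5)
We verify Definition~\ref{defn3.6} directly. First, $w^*$ is continuous by \eqref{eqn5.6} $(b)$, hence upper semicontinuous. Let $x_0\in\rn{n}\sem\hat E$, and let $\ph$ have continuous second partial derivatives near $x_0$, with $\ph(x_0)=w^*(x_0)$, $\ph>w^*$ elsewhere near $x_0$, and $\nabla\ph(x_0)\neq0$. We must prove \eqref{eqn3.8} at $x_0$.

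By \eqref{eqn5.6} $(c)$, $w^*(x_0)<\mathbf m$, since $x_0\notin\hat E\supset\{w^*=\mathbf m\}$. Lemma~\ref{lem5.1} $(a)$ supplies an optimal pair $(y,z)$ for $x_0$. Because $\nabla\ph(x_0)\neq0$, Lemma~\ref{lem7.1} excludes both alternatives $(\al)$ and $(\be)$ of \eqref{eqn7.1}. Therefore $w_1(y)=w_2(z)=s:=w^*(x_0)<\mathbf m$, with $\nabla w_1(y)\neq0$ and $\nabla w_2(z)\neq0$.

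Since $s<\mathbf m$, \eqref{eqn5.3} $(a)$ gives $y\notin\hat E_1$ and $z\notin\hat E_2$. In particular $y\notin E_1$ and $z\notin E_2$. By \eqref{eqn5.3} $(b)$, $w_1$ is $p$-harmonic in a neighbourhood of $y$, and $w_2$ is $p$-harmonic in a neighbourhood of $z$. These are genuine $p$-harmonic functions, not merely their negatives up to a constant: when $p\geq n$, $w_i=-G_i$ is still $p$-harmonic.

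Next we check hypothesis \eqref{eqn5.8} of Lemma~\ref{lem5.3} with $v_1=w_1$, $v_2=w_2$. For $y'$ near $y$ and $z'$ near $z$, the point $\la y'+(1-\la)z'$ lies near $x_0$, inside the neighbourhood where $\ph\geq w^*$. The definition \eqref{eqn5.4} then gives
\[
\ph(\la y'+(1-\la)z')\geq w^*(\la y'+(1-\la)z')\geq\min\{w_1(y'),w_2(z')\}.
\]
Also $\ph(x_0)=s=w_1(y)=w_2(z)$. So Lemma~\ref{lem5.3} applies with $\ti w_i=w_i$ and yields \eqref{eqn5.17} at $x_0$, which is exactly \eqref{eqn3.8}.

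Definition~\ref{defn3.6} asks for $\ph$ with continuous second derivatives in all of $O$, whereas Lemmas~\ref{lem7.1} and~\ref{lem5.3} only use local behaviour near $x_0$, so this causes no difficulty. The only substantive step is excluding the degenerate optimal pairs, and that is precisely the content of Lemma~\ref{lem7.1}. Once it is granted, the argument is assembly of earlier results.
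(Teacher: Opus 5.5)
Your proof is correct and follows the paper's argument essentially step by step. You get $w^*(x_0)<\mathbf m$ from \eqref{eqn5.6}~$(c)$, take an optimal pair from Lemma~\ref{lem5.1}, and use Lemma~\ref{lem7.1} to exclude the degenerate cases; after checking \eqref{eqn5.8} from \eqref{eqn5.4}, you apply Lemma~\ref{lem5.3} with $\ti w_i=w_i$, exactly as the paper does.
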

\begin{proof}
By \eqref{eqn5.6} $(b)$, $w^*$ is continuous. Let $x\in \rn{n}\sem\hat E$ and suppose $\ph$ has continuous second partial derivatives in $\rn{n}\sem\hat E$ with  $\ph\geq w^*$ in $\rn{n}\sem \hat E$,  $\ph(x)=w^*(x)$, and $\nabla \ph(x)\neq 0$.
Let $s=w^*(x)$. Then $s<\mathbf{m}$ by \eqref{eqn5.6} $(c)$. Let $(y,z)$ be an optimal pair for $x$. By Lemma \ref{lem7.1}, neither $(\al)$ nor $(\be)$ in \eqref{eqn7.1} holds. Thus
$w_1(y)=w_2(z)=s$, $\nabla w_1(y)\neq 0$, and $\nabla w_2(z)\neq 0$.
By \eqref{eqn5.3}, $w_1,w_2$ are $p$-harmonic in neighbourhoods of $y,z$, and \eqref{eqn5.4} gives \eqref{eqn5.8} with $v_1=w_1$ and $v_2=w_2$. Lemma \ref{lem5.3} with $\ti w_i=w_i$ gives \eqref{eqn3.8}
with $x_0$ replaced by $x$.
\end{proof}

\begin{proposition}
\label{prop7.3}
Under \eqref{eqn5.1}--\eqref{eqn5.4}, \eqref{eqn5.5} holds. Thus \eqref{BMp}, \eqref{BMn}, and \eqref{BMpn} hold in their respective ranges whenever $E_1,E_2$ are $p$-regular and $\la\in[0,1]$.
\end{proposition}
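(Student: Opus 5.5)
The plan is to prove $w^*\leq w$ by comparison on large truncated exteriors, using the viscosity property from Lemma~\ref{lem7.2} together with Lemma~\ref{lem3.7}. By \eqref{eqn5.6} $(c)$, $w^*\leq w$ already holds on $\hat E$, so it remains to treat $\Om_E=\rn{n}\sem\hat E$. By Lemma~\ref{lem7.2}, $w^*$ is a continuous viscosity $p$-subsolution in $\Om_E$. By Lemma~\ref{lem3.7}, i.e.\ by the equivalence theorem of \cite{JLM}, it therefore satisfies comparison from above by $p$-harmonic functions on subdomains $D\Subset\Om_E$.

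The difficulty is that $\ar\Om_E=\ar\hat E$ lies inside $\Om_E$'s boundary, not inside $\Om_E$, so Lemma~\ref{lem3.7} cannot be applied on $B(0,R)\sem\hat E$ directly. I would handle this with continuity. Fix $\ep>0$. Since $w^*$ and $w$ are continuous on $\rn{n}$ and $w^*\leq w$ on the compact set $\hat E$, there is $\delta>0$ with $w^*<w+\ep$ on the closed set $\{x:d(x,\hat E)\leq\delta\}$.

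For $1<p<n$, let $R$ be large. By \eqref{eqn5.6} $(d)$ and \eqref{eqn5.3} $(c)$, both $w^*$ and $w$ tend to $0$, so $w^*\leq w+\ep$ on $\ar B(0,R)$. Let $D$ be the bounded open set $B(0,R)\cap\{d(\cdot,\hat E)>\delta/2\}$. Apply Lemma~\ref{lem3.7} on each component of $D$, with the $p$-harmonic function $h=w+\ep$; this $h$ is continuous on $\bar D\subset\Om_E$ and satisfies $w^*\leq h$ on $\ar D$. This gives $w^*\leq w+\ep$ in $D$, and hence everywhere. Letting $\ep\to0$ gives \eqref{eqn5.5}.

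For $p\geq n$, the behaviour at infinity is the delicate point, because $w^*$ and $w$ both tend to $-\infty$. I would use \eqref{eqn5.6} $(d)$ for $0<\he<1$. The function $\he w+\ep$ is $p$-harmonic off $E$, since the $p$-Laplacian is homogeneous. Since $w^*-\he w\to-\infty$, we get $w^*\leq\he w+\ep$ on $\ar B(0,R)$ for large $R$.

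On $\hat E$ we have $w^*\leq w=0$. Near $\hat E$ we need $w^*<\he w+\ep$, and this holds by continuity, since both sides are close to values at most $0$ and at least $0$ respectively. Precisely, choose $\delta$ with $w^*<\ep/2$ and $\he w>-\ep/2$ on the $\delta$-neighbourhood. The same comparison in $D$ then gives $w^*\leq\he w+\ep$. Letting $\ep\to0$ and then $\he\to1$ gives \eqref{eqn5.5}.

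With \eqref{eqn5.5} established, the inequalities follow in three steps.
\begin{itemize}
\item Lemma~\ref{lem5.4} gives \eqref{eqn5.19} for $p$-regular sets with $\la\in(0,1)$.
\item Lemma~\ref{lem5.5} is applied to the family of $p$-regular sets of positive capacity, which is closed under dilation by Lemma~\ref{lem9.6} $(d)$. Positivity holds by Lemma~\ref{lem4.1} and Lemma~\ref{lem4.2} $(e)$, except for singletons when $p>n$. This yields \eqref{BMp}, \eqref{BMn} and \eqref{BMpn}.
\item For $p>n$ with a singleton among $E_1,E_2$, the capacity of that set is zero. The inequality then reduces to translation invariance plus monotonicity, using Lemma~\ref{lem4.2} $(c)$ and $(d)$, since the Minkowski combination contains a translate of $(1-\la)E_2$ or of $\la E_1$. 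The endpoints $\la=0,1$ are trivial.
\end{itemize}

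The main obstacle I expect is that comparison cannot be run up to $\ar\hat E$, and that the case $p\geq n$ has unbounded values at infinity. Both are handled by the $\ep$-neighbourhood argument and the $\he$-scaling above.
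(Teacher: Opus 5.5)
Your proposal is correct and follows essentially the paper's argument. Like the paper, you compare $w^*$ with $\theta w+\epsilon$ via Lemmas~\ref{lem7.2} and~\ref{lem3.7}, with $\theta=1$ for $p<n$ and $\theta\in(0,1)$ for $p\geq n$. You then finish with Lemmas~\ref{lem5.4} and~\ref{lem5.5} and treat singletons separately when $p>n$.

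The only difference is cosmetic. The paper runs the comparison on the components of the superlevel set $\{w^*-\theta w>\epsilon\}$ and derives a contradiction; that set is automatically bounded with closure away from $\hat E$. You build the same buffer by hand, using $B(0,R)\cap\{d(\cdot,\hat E)>\delta/2\}$ together with continuity near $\hat E$ and the behaviour at infinity.
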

\begin{proof}
Let $\he=1$ when $1<p<n$ and let $\he\in (0,1)$ when $p\geq n$. Given $\ep>0$ let
$O_\ep=\{x\in \rn{n}:\, w^*(x)-\he w(x)>\ep\}$.
By \eqref{eqn5.3}, \eqref{eqn5.6}, $O_\ep$ is a bounded open set whose closure does not intersect $\hat E$, and
\begin{align}
\label{eqn7.5}
w^*=\he w+\ep\quad \mbox{on}\, \, \ar O_\ep.
\end{align}
Suppose that $O_\ep$ is not empty and let $D$ be a component of $O_\ep$. Then $\bar D$ is a compact subset of the domain $\rn{n}\sem \hat E=\Om_E$,  and $\ar D\subset \ar O_\ep$.
Also $\he w+\ep$ is continuous on $\bar D$ and $p$-harmonic in $D$ by \eqref{eqn5.3}. By \eqref{eqn7.5} and Lemmas \ref{lem7.2} and \ref{lem3.7}, $w^*\leq \he w+\ep$ in $D$, which contradicts $D\subset O_\ep$.
Thus $O_\ep$ is empty. Since $\ep>0$ is arbitrary, $w^*\leq \he w$ on $\rn{n}$. If $p\geq n$ we let $\he\to 1$. This proves \eqref{eqn5.5}.

The $p$-regular sets with positive capacity form a family closed under dilations by Lemma \ref{lem9.6} $(d)$, \eqref{eqn4.1}, and \eqref{eqn4.17} $(d)$.
Lemmas \ref{lem5.4} and \ref{lem5.5} give the stated inequalities for this family, including the endpoints.
By Lemmas \ref{lem4.1} and \ref{lem4.2} $(e)$, the only other $p$-regular sets are single points when $p>n$. If $E_1=\{a\}$ then $\la E_1+(1-\la)E_2=\la a+(1-\la)E_2$, and \eqref{eqn4.17} $(d)$ gives \eqref{BMpn} with equality. The case $E_2=\{a\}$ is symmetric.
\end{proof}
\noindent By Remark \ref{rmk:Lipcorkscrew} and Lemma \ref{lem9.6} $(a)$, Proposition \ref{prop7.3} applies to closures of bounded Lipschitz domains.

\subsection{\texorpdfstring{Closed neighbourhoods and Theorem~A\,(a)--(c)}{Closed neighbourhoods and Theorem A (a)-(c)}}
For the rest of this section, $E, E_1, E_2$ denote nonempty compact sets in $\rn{n}$, $n\geq 2$, and $E_\de$ is the set in \eqref{eqn2.5} for $\de>0$.

\begin{lemma}
\label{lem9.1}
Let $E,E_1,E_2$ be nonempty compact sets in $\rn{n}$ and $\de>0$.
\begin{align*}
&(a)\hs{.1in} E_\de=E+\bar B(0,\de), \, \, \mbox{$E_\de$ is compact, and $E_\de$ satisfies the corkscrew condition}\\
&\hs{.3in}\mbox{in Definition \ref{defn2.4} with}\, \, c_{E_\de}=2\, \, \mbox{and}\, \, r_{E_\de}=\de.\\
&(b)\hs{.1in}  \la (E_1)_\de+(1-\la)(E_2)_\de=(\la E_1+(1-\la)E_2)_\de\, \, \mbox{when}\, \, 0\leq \la\leq 1. \, \, \mbox{Also}\\
&\hs{.3in} (\rho E+x)_\de=\rho\, E_{\de/\rho}+x\, \, \mbox{when}\, \, \rho>0\, \, \mbox{and}\, \, x\in\rn{n}.\\
&(c)\hs{.1in} E\subset E_{\de'}\subset E_\de \, \, \mbox{when}\, \, 0<\de'<\de,\, \,   \bigcap_{\de>0}E_\de=E,\, \, \mbox{and}\, \, \hat E\subset \widehat{E_\de}.\\
& (d) \hs{.1in}\mbox{If $O$ is an open set containing $E$ then $E_\de\subset O$ for all small $\de>0$.}
\end{align*}
\end{lemma}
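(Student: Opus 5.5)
Parts $(a)$--$(d)$ of Lemma~\ref{lem9.1} are elementary metric facts, and I would verify them in order.

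For $(a)$, the identity $E_\de=E+\bar B(0,\de)$ follows from compactness of $E$. If $d(x,E)\leq\de$, the distance is attained at some $e\in E$, so $x=e+(x-e)$ with $|x-e|\leq\de$. Conversely, $d(e+b,E)\leq|b|\leq\de$. Compactness of $E_\de$ holds because it is the image of the compact set $E\times\bar B(0,\de)$ under the continuous map $(e,b)\mapsto e+b$.

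For the corkscrew condition, let $w\in E_\de$ and $0<r<\de$, and write $w=e+b$ with $e\in E$ and $|b|\leq\de$. I would move from $w$ toward $e$ by distance $r/2$. That is, put $y=w-\tfrac r2\,b/|b|$ if $|b|\geq r/2$, and $y=e$ otherwise.
\begin{itemize}
\item In the first case $|y-e|=|b|-r/2\leq\de-r/2$, so $B(y,r/2)\subset B(e,\de)\subset E_\de$.
\item In the second case $B(e,r/2)\subset B(e,\de)\subset E_\de$.
\end{itemize}
In both cases $|y-w|\leq r/2$, so $B(y,r/2)\subset B(w,r)$. This gives $c_{E_\de}=2$ and $r_{E_\de}=\de$.

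For $(b)$, I would apply $(a)$ to both sides. The left side is $\la E_1+(1-\la)E_2+\la\bar B(0,\de)+(1-\la)\bar B(0,\de)$. Convexity of the ball gives $\la\bar B+(1-\la)\bar B=\bar B$, since each point $b$ of the ball is $\la b+(1-\la)b$, and the reverse inclusion follows from the triangle inequality. This equals the right side by $(a)$, and the endpoint cases $\la\in\{0,1\}$ are included. The scaling identity follows from $d(\rho^{-1}(y-x),E)=\rho^{-1}d(y,\rho E+x)$.

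For $(c)$:
\begin{itemize}
\item The inclusions $E\subset E_{\de'}\subset E_\de$ are immediate.
\item We have $\bigcap_\de E_\de=\{x:d(x,E)=0\}=E$, since $E$ is closed.
\item For $\hat E\subset\widehat{E_\de}$, note that $E\subset E_\de$ implies $\rn n\sem E_\de\subset\rn n\sem E$. The unbounded component $\Om_{E_\de}$ is connected, unbounded and lies in $\rn n\sem E$, so it lies in $\Om_E$. Taking complements gives the claim.
\end{itemize}

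For $(d)$, the function $x\mapsto d(x,\rn n\sem O)$ is continuous and positive on the compact set $E$, so it has a positive minimum $\eta$ there. Then $E_\de\subset O$ for $\de<\eta$.

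The only step needing any care is the corkscrew construction in $(a)$, specifically the case split according to whether $|b|<r/2$. Everything else is routine.
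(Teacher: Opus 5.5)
Your proof is correct and follows the paper's argument in all four parts: the Minkowski-sum description of $E_\delta$, convexity of the ball for $(b)$, the inclusion $\Omega_{E_\delta}\subset\Omega_E$ for $(c)$, and positivity of the distance from $E$ to $\mathbb{R}^n\setminus O$ for $(d)$, where you should set aside the trivial case $O=\mathbb{R}^n$. The only difference is cosmetic: in the corkscrew step the paper takes the proportional point $y=w+\frac{r}{2\delta}(e-w)$, which gives $|y-w|\leq r/2$ and $|y-e|\leq\delta-r/2$ at once and so avoids your case split on $|b|$.
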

\begin{proof}
Compactness gives $E_\de=E+\bar B(0,\de)$.
For $w\in E_\de$ and $0<r<\de$, choose $e\in E$ with
$|w-e|\leq\de$ and put $y=w+\frac{r}{2\de}(e-w)$. Then
$|y-w|\leq r/2$ and $|y-e|\leq\de-r/2$, so
$B(y,r/2)\subset E_\de\cap B(w,r)$.
This proves $(a)$. Part $(b)$ follows from $(a)$ and
$\la\bar B(0,\de)+(1-\la)\bar B(0,\de)=\bar B(0,\de)$, and from
$(\rho E+x)_\de=\rho(E+\bar B(0,\de/\rho))+x$.
In $(c)$, the intersection is $E$ because $E$ is closed, and
$\Om_{E_\de}\subset\Om_E$ gives the inclusion of filled sets.
Part $(d)$ holds because $d(E,\rn{n}\sem O)>0$ when $O\ne\rn{n}$.
\end{proof}

\begin{lemma}
\label{lem9.2}
Let $1<p<\infty$, let $O\subset \rn{n}$ be an open set, and let $E\subset O$ be a nonempty compact set. Then
\begin{align}
\label{eqn9.5}
\capp(E_\de,O)\, \, \mbox{decreases to}\, \,  \capp(E,O)\, \, \mbox{as $\de$ decreases to $0$}.
\end{align}
In particular, if $1<p<n$ then $\capp(E_\de)\to \capp(E)$ as $\de\to 0$.
\end{lemma}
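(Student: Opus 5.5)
The plan is to reduce the statement to the continuity of condenser capacity along decreasing sequences of compact sets, \cite[Theorem~2.2 $(iv)$]{HKM}, which was already used for \eqref{eqn9.1} $(f)$. By Lemma~\ref{lem9.1} $(d)$ there is $\de_0>0$ with $E_{\de}\subset O$ for $0<\de<\de_0$, so $\capp(E_\de,O)$ is defined for these $\de$. By Lemma~\ref{lem9.1} $(c)$ the sets $E_\de$ are compact and decrease as $\de\downarrow0$. Then \eqref{eqn9.1} $(a)$ shows that $\de\mapsto\capp(E_\de,O)$ is non-increasing as $\de$ decreases and bounded below by $\capp(E,O)$. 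Hence the limit $L=\lim_{\de\to0}\capp(E_\de,O)$ exists and satisfies $L\geq\capp(E,O)$.

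For the reverse inequality I would argue directly, in case the decreasing-sequence continuity is not available for unbounded $O$ in the form needed. Fix $\ep>0$. By the definition \eqref{eqn2.15}, choose $v\in C_0^\infty(O)$ with $v\geq1$ on $E$ and $\int_O|\nabla v|^p\,dx<\capp(E,O)+\ep$. Since $v$ is continuous and $v\geq1$ on $E$, the open set $U=\{v>1/(1+\ep)\}$ contains $E$. Lemma~\ref{lem9.1} $(d)$ then gives $E_\de\subset U$ for all small $\de$. On $E_\de$ we have $(1+\ep)v\geq1$, and $(1+\ep)v\in C_0^\infty(O)$, so $(1+\ep)v$ is admissible for $\capp(E_\de,O)$. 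This gives
\[
\capp(E_\de,O)\leq(1+\ep)^p\int_O|\nabla v|^p\,dx<(1+\ep)^p(\capp(E,O)+\ep).
\]
Letting $\de\to0$ and then $\ep\to0$ yields $L\leq\capp(E,O)$, which proves \eqref{eqn9.5}. The argument works equally for $O=\rn{n}$, where $C_0^\infty(\rn{n})$ functions are still compactly supported, and it uses no property of $p$ beyond the definition. In particular it covers $p\geq n$, where the relative capacities may be positive.

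For the final assertion, take $1<p<n$ and $O=\rn{n}$. Section~\ref{subsection:capacity-parallel} notes that $\capp(K)=\capp(K,\rn{n})$ for compact $K$ by \eqref{eqn1.4} and \eqref{eqn2.15}. Applying \eqref{eqn9.5} to the compact sets $E_\de$ gives $\capp(E_\de)\to\capp(E)$.

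The only point needing care is the admissibility of $(1+\ep)v$ on $E_\de$, that is, the inclusion $E_\de\subset U$ for small $\de$, and this is exactly Lemma~\ref{lem9.1} $(d)$. So there is no real obstacle: the key observation is that capacity of compact sets is defined through smooth test functions that are $\geq1$ on $E$, so they exceed $1/(1+\ep)$ on a neighbourhood of $E$.
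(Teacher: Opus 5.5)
Your proof is correct and follows the paper's approach: monotonicity from \eqref{eqn9.1} $(a)$ gives the existence of the limit and the lower bound, and continuity along the decreasing compact sets $E_\de\subset O$ gives the upper bound. The only difference is that you prove this continuity directly, by scaling a near-optimal test function by $1+\ep$ and using Lemma~\ref{lem9.1} $(d)$, whereas the paper cites \cite[Theorem~2.2 $(iv)$]{HKM}, whose proof is essentially your argument.
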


\begin{proof}
Choose $\de_i\downarrow0$ with $E_{\de_i}\subset O$, using Lemma~\ref{lem9.1} $(d)$.
These compact sets decrease to $E$, so \cite[Theorem~2.2 $(iv)$]{HKM} gives $\capp(E_{\de_i},O)\to\capp(E,O)$.
Monotonicity in \eqref{eqn9.1} $(a)$ gives the full limit as $\de\downarrow0$.
The last assertion is the case $O=\rn{n}$.
\end{proof}

\begin{proof}[Proof of Theorem \ref{thmA} $(a)$]
Let $1<p<n$, let $E_1$ and $E_2$ be nonempty compact sets in $\rn{n}$, and let $\la\in[0,1]$ and $\de>0$.
By Lemma \ref{lem9.1} $(a)$ and Lemma \ref{lem9.6} $(a)$, $(E_1)_\de$ and $(E_2)_\de$ are $p$-regular. Thus \eqref{BMp} holds for $(E_1)_\de, (E_2)_\de$ and $\la$ by Proposition \ref{prop7.3}.
With Lemma \ref{lem9.1} $(b)$ and \eqref{eqn4.1} this gives
\begin{align*}
\left[\capp((\lambda E_1+(1-\lambda) E_2)_\de)\right]^{\frac{1}{n-p}}\geq\lambda \left[\capp(E_{1})\right]^{\frac{1}{n-p}}+(1-\lambda)\left[\capp(E_2)\right]^{\frac{1}{n-p}}.
\end{align*}
Letting $\de\to 0$ and using Lemma \ref{lem9.2} for $\la E_1+(1-\la)E_2$ we obtain \eqref{BMp}.
\end{proof}

\begin{remark}
\label{cor9.4}
Let $1<p<n$. For an arbitrary set $A\subset\rn{n}$ put $\mbox{Cap}^{*}_{p}(A)=\sup\{\capp(K):\, K\subset A\ \mbox{compact}\}$.
If $K_i\subset A_i$ are compact, then $\la K_1+(1-\la)K_2\subset \la A_1+(1-\la)A_2$ is compact. Theorem \ref{thmA} $(a)$ and independent suprema over $K_1,K_2$ therefore give \eqref{BMp} for $\mbox{Cap}^{*}_{p}$ and arbitrary nonempty sets, with $0\cdot\infty=0$.
Analytic sets, in particular Borel sets, are capacitable \cite[Theorem~2.5]{HKM}, and $\la A_1+(1-\la)A_2$ is analytic when $A_1,A_2$ are, as a continuous image of $A_1\times A_2$. Thus \eqref{BMp} holds for nonempty analytic sets $A_1,A_2$ with $\capp(A)=\capp(A,\rn{n})$.
\end{remark}

\begin{proof}[Proof of Theorem \ref{thmA} $(b), (c)$]
Let $p\geq n$. If $p=n$, the sets $E_1,E_2$ are $n$-regular by hypothesis.
If $p>n$, they are $p$-regular by Lemma~\ref{lem9.6} $(b)$.
Proposition \ref{prop7.3} gives \eqref{BMn} when $p=n$ and \eqref{BMpn} when $p>n$.
\end{proof}

\subsection{Convergence as delta tends to zero}
\begin{lemma}
\label{lem9.13}
Let $n\leq p<\infty$ and let $E$ be a nonempty compact set for which there exists a function $G$ satisfying \eqref{eqn2.9}. Then $G$ is unique, $k(\infty)=\lim_{x\to\infty}(G-F)(x)$ exists finitely, and if $G_\de$ denotes the $p$-harmonic Green's function for
$\rn{n}\sem E_\de$ with pole at $\infty$ and $k_\de=G_\de-F$, then
\begin{align}
\label{eqn9.32}
\begin{split}
&(a)\hs{.1in} G_\de\leq G_{\de'}\leq G\, \, \mbox{on}\, \, \rn{n}\, \, \mbox{when}\, \, 0<\de'<\de,\\
&(b)\hs{.1in} G_\de\to G\, \, \mbox{uniformly on compact subsets of $\rn{n}$ as}\, \, \de\to 0,\\
&(c)\hs{.1in} k_\de(\infty)\to k(\infty)\, \, \mbox{as}\, \, \de\to 0.
\end{split}
\end{align}
In particular, this is the case if $E$ is $p$-regular, and then $\cp(E_\de)\to\cp(E)$ as $\de\to 0$.
\end{lemma}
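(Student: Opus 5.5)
Uniqueness and the existence of $k(\infty)$ need no regularity. Suppose $G$ and $G_1$ both satisfy \eqref{eqn2.9}. Comparison on large truncated exteriors gives $G,G_1\geq0$. Both have leading term $F\to\infty$, so $G_1/G\to1$ uniformly at infinity. Comparing $G_1$ with $(1+\ep)G$ and $G$ with $(1+\ep)G_1$ in $B(0,R)\sem E$, where the inner boundary values vanish, and letting $R\to\infty$ and then $\ep\to0$, gives $G=G_1$, exactly as in the proof of Lemma~\ref{lem4.2}. Choosing $\rho_0$ with $E\subset B(0,\rho_0)$, the function $k=G-F$ is bounded near $\infty$ by \eqref{eqn2.9} $(c)$. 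Hence Lemma~\ref{lem:radial-limits} $(b)$ applies on $\rn{n}\sem B(0,\rho_0)$ and gives the finite limit $k(\infty)$.

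For $(a)$: each $E_\de$ is $p$-regular by Lemma~\ref{lem9.1} $(a)$ and Lemma~\ref{lem9.6} $(a)$, so $G_\de$ exists by Lemma~\ref{lem4.2}. The inclusions $E\subset E_{\de'}\subset E_\de$ hold, and $G_\de=0\leq G_{\de'}$ on $E_\de$. Lemma~\ref{lem4.2} $(c)$ gives $G_\de\leq G_{\de'}$; its proof used only comparison on $\rn{n}\sem E_\de$, zero values on $E_\de$ and the common asymptotics. The same argument with $E_{\de'}$ replaced by $E$ applies verbatim to the given continuous $G$, since it is $p$-harmonic off $E$, nonnegative, and satisfies $G/F\to1$. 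Hence $G_{\de'}\leq G$. Thus $G_\de$ increases as $\de\downarrow0$ to a limit $\tilde G\leq G$. It also follows that $k_\de\leq k_{\de'}\leq k$ and $k_\de(\infty)\leq k_{\de'}(\infty)\leq k(\infty)$.

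For $(b)$, which I expect to be the main step, I would show $\tilde G=G$ and then upgrade to uniform convergence by Dini's theorem, since $G$ is continuous and the $G_\de$ are continuous and increasing.

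\emph{Identifying the limit.} Fix $\ep>0$. Since $G$ is continuous and vanishes on $E$, the set $U=\{G<\ep\}$ is an open neighbourhood of $E$. By Lemma~\ref{lem9.1} $(d)$ we have $E_\de\subset U$ for small $\de$, so $G\leq\ep$ on $E_\de$ while $G_\de=0$ there. Both functions are $p$-harmonic in $\rn{n}\sem E_\de$, and $G-\ep$ and $G_\de$ have the same leading term at infinity. Comparing $G-\ep$ with $(1+\eta)G_\de$ on large truncated exteriors, and letting the outer radius grow and then $\eta\to0$, gives $G-\ep\leq G_\de$ on $\rn{n}\sem E_\de$. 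On $E_\de$ we have $G-\ep\leq0=G_\de$. Hence $G-\ep\leq G_\de\leq G$ on all of $\rn{n}$, which already gives uniform convergence on $\rn{n}$, even without Dini.

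There is one subtlety in this comparison. On the outer sphere we need $G-\ep\leq(1+\eta)G_\de$ for large $|x|$, which requires $G_\de>0$ there with ratio tending to one. This follows from \eqref{eqn2.9} $(c)$ for both functions, since $F\to\infty$.

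For $(c)$: the bound $G-\ep\leq G_\de\leq G$ gives $k-\ep\leq k_\de\leq k$ near infinity, hence $k(\infty)-\ep\leq k_\de(\infty)\leq k(\infty)$. Finally, if $E$ is $p$-regular, then $G$ exists by Lemma~\ref{lem4.2}, and \eqref{eqn2.10} together with continuity of $t\mapsto e^{-t/\ga}$ and of $t\mapsto(-t)^{p-1}$ on $t\leq0$ gives $\cp(E_\de)\to\cp(E)$.
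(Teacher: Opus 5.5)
Your proof is correct. For parts $(b)$ and $(c)$ it takes a different and shorter route than the paper.

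The paper proceeds by compactness:
\begin{itemize}
\item It forms the monotone limit $G_{\mathrm{lim}}=\lim_{\de\to0}G_\de$.
\item It uses the interior estimates of Lemma~\ref{lem3.1} and Ascoli's theorem to show that $G_{\mathrm{lim}}$ is $p$-harmonic off $E$, with $\nabla G_\de\to\nabla G_{\mathrm{lim}}$ locally uniformly there.
\item It gets continuity at $E$ and boundedness of $G_{\mathrm{lim}}-F$ from the sandwich $G_\de\leq G_{\mathrm{lim}}\leq G$.
\item It identifies $G_{\mathrm{lim}}=G$ by uniqueness, and Dini's theorem upgrades this to local uniform convergence.
\item It proves part $(c)$ separately from the spherical bound \eqref{eqn9.16} $(ii)$, letting $\de\to0$ on a fixed sphere and then $r\to\infty$.
\end{itemize}

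You instead use the continuity of the given $G$ directly. Since $E_\de\subset\{G<\ep\}$ for small $\de$, a single comparison of $G-\ep$ with $(1+\eta)G_\de$ in $B(0,R)\sem E_\de$ gives the quantitative two-sided bound $G-\ep\leq G_\de\leq G$ on all of $\rn{n}$. This yields uniform convergence on the whole space, which is stronger than $(b)$, and gives $(c)$ at once. It needs no interior estimates, compactness, uniqueness step, or Dini's theorem. It is the additive counterpart of the multiplicative trick the paper itself uses in Lemma~\ref{lem9.15} $(d)$.

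The paper's compactness route has two advantages. It does not need a continuous target function in advance, and it also gives gradient convergence off $E$. That is why the same template is reused in Lemma~\ref{lem9.15} $(b)$ and for the outer limits $\bar u_E$, $\bar G_E$ of sets that need not be regular. Your argument depends essentially on $G$ existing and being continuous, which is exactly the hypothesis available here.
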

\begin{proof}
The uniqueness proof in Lemma \ref{lem4.2} uses only \eqref{eqn2.9}. Thus $G$ is unique, $G\geq 0$, and $k(\infty)$ exists by Lemma \ref{lem:radial-limits}. The sets $E_\de$ are $p$-regular by Lemmas \ref{lem9.1} $(a)$ and \ref{lem9.6} $(a)$, so $G_\de$ exists.
The proof of \eqref{eqn4.17} $(c)$ uses only that $G'\geq 0$ and $G\geq 0$ satisfy \eqref{eqn2.9} for $E'\supset E$. Thus \eqref{eqn9.32} $(a)$ holds and $k_\de(\infty)\leq k_{\de'}(\infty)\leq k(\infty)$ when $0<\de'<\de$.
Let $G_{\mathrm{lim}}=\lim_{\de\to 0}G_\de$. Then $0\leq G_{\mathrm{lim}}\leq G$, so $G_{\mathrm{lim}}\equiv 0$ on $E$ and $G_{\mathrm{lim}}$ is continuous at each point of $E$.
Each compact subset of $\rn{n}\sem E$ has a neighbourhood which does not meet $E_\de$ for small $\de>0$.
Hence \eqref{eqn9.32} $(a)$, Lemma \ref{lem3.1} for $G_\de+1$, and Ascoli's theorem give, as in \eqref{eqn4.3}, that $\{G_\de,\nabla G_\de\}$ converges to $\{G_{\mathrm{lim}},\nabla G_{\mathrm{lim}}\}$ uniformly on compact subsets of $\rn{n}\sem E$.
Thus $G_{\mathrm{lim}}$ is continuous on $\rn{n}$ and $p$-harmonic in $\rn{n}\sem E$. Also $G_\de-F\leq G_{\mathrm{lim}}-F\leq G-F$, so $G_{\mathrm{lim}}-F$ is bounded near $\infty$. Uniqueness gives $G_{\mathrm{lim}}=G$. By Dini's theorem the
monotone convergence of the continuous functions $G_{\de_j}$ to $G$ along any sequence $\de_j\downarrow0$ is uniform on compact sets. Monotonicity then
gives \eqref{eqn9.32} $(b)$.

Let $\rho_0>0$ be such that $E_\de\subset B(0,\rho_0)$ for $0<\de\leq 1$ and let $r\geq \rho_0$. By \eqref{eqn9.16} $(ii)$ for $G_\de$, $k_\de(\infty)\geq \inf_{\ar B(0,r)}(G_\de-F)$. Letting $\de\to 0$ and using \eqref{eqn9.32} $(b)$ we get
$k(\infty)\geq \lim_{\de\to 0}k_\de(\infty)\geq \inf_{\ar B(0,r)}(G-F)$.
The right-hand side tends to $k(\infty)$ as $r\to\infty$, which proves \eqref{eqn9.32} $(c)$. The last sentence follows from Lemma \ref{lem4.2} and \eqref{eqn2.10}.
\end{proof}

\subsection{Properties of \texorpdfstring{$\Phi_p$}{Phi\_p}}
\begin{lemma}\label{lem:outer-capacity}
Let $p=n$ and let $E\subset E'$ be nonempty compact sets, $\rho>0$, and $x\in\rn{n}$. Then the limit in \eqref{eqn2.14} exists and is finite, $\bar{\mathcal C}_n(E)\leq\bar{\mathcal C}_n(E')$, and $\bar{\mathcal C}_n(\rho E+x)=\rho\,\bar{\mathcal C}_n(E)$.
\end{lemma}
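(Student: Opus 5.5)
The plan is to reduce everything to the monotonicity and scaling statements for $\mathcal C_n$ on $n$-regular sets in Lemma~\ref{lem4.2} $(c),(d)$. These apply to each $E_\delta$, which is $n$-regular by Lemma~\ref{lem9.1} $(a)$ and Lemma~\ref{lem9.6} $(a)$. Throughout, write $\mathcal C_n(E_\delta)=e^{-k_\delta(\infty)/\gamma}$ with $k_\delta=G_\delta-F$.

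\emph{Existence and finiteness of the limit.} For $0<\delta'<\delta$ we have $E_{\delta'}\subset E_\delta$ by Lemma~\ref{lem9.1} $(c)$, so Lemma~\ref{lem4.2} $(c)$ gives $\mathcal C_n(E_{\delta'})\leq\mathcal C_n(E_\delta)$. Thus $\delta\mapsto\mathcal C_n(E_\delta)$ is non-decreasing in $\delta$, and the limit as $\delta\downarrow0$ exists in $[0,\infty)$. It is finite because it is bounded above by $\mathcal C_n(E_1)<\infty$. It is nonnegative because each term is positive, by Lemma~\ref{lem4.2} $(e)$. No lower bound is needed: the limit may be $0$, for example when $E$ is a point. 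This case is allowed, since the definition only asks for a finite limit.

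\emph{Monotonicity.} If $E\subset E'$, then $E_\delta\subset E'_\delta$ for every $\delta$. Lemma~\ref{lem4.2} $(c)$ gives $\mathcal C_n(E_\delta)\leq\mathcal C_n(E'_\delta)$, and letting $\delta\to0$ gives $\bar{\mathcal C}_n(E)\leq\bar{\mathcal C}_n(E')$.

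\emph{Scaling and translation.} By Lemma~\ref{lem9.1} $(b)$, $(\rho E+x)_\delta=\rho E_{\delta/\rho}+x$. Lemma~\ref{lem4.2} $(d)$ with $p=n$ then gives
\[
\mathcal C_n((\rho E+x)_\delta)=\rho\,\mathcal C_n(E_{\delta/\rho}).
\]
As $\delta\to0$ we also have $\delta/\rho\to0$, so both sides converge, and the limit is $\bar{\mathcal C}_n(\rho E+x)=\rho\,\bar{\mathcal C}_n(E)$.

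The only point that needs care is that Lemma~\ref{lem4.2} $(c)$ is applied to pairs of sets that are both $n$-regular. Here all the sets involved are closed $\delta$-neighbourhoods, which are $n$-regular, so the lemma applies in each use above. I do not expect a genuine obstacle: the argument is a direct combination of the monotone-limit fact with Lemmas~\ref{lem9.1} and~\ref{lem4.2}.
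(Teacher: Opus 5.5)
Your proposal is correct and follows the paper's proof essentially step for step: monotonicity in $\delta$ via Lemma~\ref{lem9.1} $(c)$ and \eqref{eqn4.17} $(c)$, monotonicity in the set via $E_\delta\subset E'_\delta$, and scaling via Lemma~\ref{lem9.1} $(b)$ with \eqref{eqn4.17} $(d)$. The only cosmetic difference is the finite upper bound on the monotone limit. You use $\mathcal C_n(E_1)$, while the paper uses the explicit ball value $e^{1/\gamma}\rho_0$ from \eqref{eqn4.29}; both are valid, since any single finite value $\mathcal C_n(E_{\delta_0})$ suffices.
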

\begin{proof}
By Lemma \ref{lem9.1} $(a), (c)$, Lemma \ref{lem9.6} $(a)$, and \eqref{eqn4.17} $(c), (e)$, $\de\mapsto \mathcal{C}_n(E_\de)$ is non-decreasing and positive on $(0,\infty)$.
If $E+\bar B(0,1)\subset B(0,\rho_0)$, then for $0<\de\leq1$
monotonicity and \eqref{eqn4.29} give
$0<\mathcal C_n(E_\de)\leq
\mathcal C_n(\bar B(0,\rho_0))=e^{1/\ga}\rho_0$.
Thus the limit in \eqref{eqn2.14} exists and is finite.
If $E\subset E'$ then $E_\de\subset E'_\de$, and monotonicity follows from \eqref{eqn4.17} $(c)$. By Lemma \ref{lem9.1} $(b)$ and \eqref{eqn4.17} $(d)$, $\mathcal{C}_n((\rho E+x)_\de)=\rho\, \mathcal{C}_n(E_{\de/\rho})$. Letting $\de\to 0$ gives the homogeneity and translation invariance.
\end{proof}

We now prove the basic properties of the functional $\Phi_p$ in \eqref{eq:degree-one}, which are used in Section~\ref{section:proofEquality}.
\begin{lemma}\label{lem:filling-invariance}
The functional $\Phi_p$ is translation invariant, monotone, and
homogeneous of degree one. For every nonempty compact set $E$,
\begin{equation}\label{eq:filling-invariance}
\Phi_p(E)=\Phi_p(\hat E).
\end{equation}
For every $\delta>0$,
\begin{equation}\label{eq:filled-neighbourhoods}
\widehat{E_\delta}=\widehat{(\hat E)_\delta}.
\end{equation}
\end{lemma}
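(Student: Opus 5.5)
The proof has four parts: the two elementary properties, homogeneity by scaling, the set identity \eqref{eq:filled-neighbourhoods}, and finally the filling invariance \eqref{eq:filling-invariance}, which follows from the set identity.

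\emph{Translation invariance, monotonicity, homogeneity.} For $1<p<n$ these are \eqref{eqn4.1}; raising to the power $1/(n-p)$ turns $\rho^{n-p}$ into $\rho$. For $p=n$ they are Lemma~\ref{lem:outer-capacity}. For $p>n$, every compact set is $p$-regular by Lemma~\ref{lem9.6} $(b)$. Then \eqref{eqn4.17} $(c),(d)$ give monotonicity and $\cp(\rho E+x)=\rho^{p-n}\cp(E)$, and the power $1/(p-n)$ again gives degree one.

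\emph{The identity \eqref{eq:filled-neighbourhoods}.} Since $E\subset\hat E$, we have $E_\de\subset(\hat E)_\de$, hence $\widehat{E_\de}\subset\widehat{(\hat E)_\de}$. For the reverse inclusion it suffices to show $(\hat E)_\de\subset\widehat{E_\de}$, because then $\Om_{E_\de}\subset\rn{n}\sem(\hat E)_\de$. Being connected, open and unbounded, $\Om_{E_\de}$ then lies in $\Om_{(\hat E)_\de}$.

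To prove $(\hat E)_\de\subset\widehat{E_\de}$, let $x=a+b$ with $a\in\hat E$ and $|b|\leq\de$. Suppose $x\in\Om_{E_\de}$, and take a path $\Ga$ in $\Om_{E_\de}$ from $x$ to $\infty$. The translated path $\Ga-b$ starts at $a\in\hat E$ and goes to $\infty$, so it must meet $E$ (as in the proof of Lemma~\ref{lem4.3}). If $\Ga(t)-b\in E$, then $\Ga(t)\in E+\bar B(0,\de)=E_\de$, a contradiction. This is essentially Lemma~\ref{lem4.3} with $E_2=\bar B(0,\de)$, using $\hat{\bar B}=\bar B$ convex.

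\emph{Filling invariance for $p>n$ and $p=n$.} For $p=n$, the set identity gives $\Om_{E_\de}=\Om_{(\hat E)_\de}$. The Green's function of a $p$-regular set $K$ vanishes on $\hat K$ by \eqref{eqn4.17} $(a)$ and depends only on $\Om_K$. Indeed, the Green's function of $K$ is also a function satisfying \eqref{eqn2.9} for $\hat K$, and uniqueness in Lemma~\ref{lem4.2}/Lemma~\ref{lem9.13} applies. Hence $\mathcal C_p(K)=\mathcal C_p(\hat K)$ whenever $K$ is $p$-regular (note $\hat K$ is then $p$-regular too, since Definition~\ref{defn9.5} only involves $\hat K$). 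Applying this to $K=E_\de$ and to $K=(\hat E)_\de$, which have the same filling, gives $\mathcal C_n(E_\de)=\mathcal C_n((\hat E)_\de)$; letting $\de\to0$ gives \eqref{eq:filling-invariance}. For $p>n$, apply the same fact directly to $K=E$, which is $p$-regular.

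\emph{Filling invariance for $1<p<n$: the main step.} Here one cannot use capacitary functions of $E$ directly, since $E$ need not be regular. Instead I would pass through $E_\de$. The sets $E_\de$ and $(\hat E)_\de$ are $p$-regular by Lemma~\ref{lem9.1} $(a)$ and Lemma~\ref{lem9.6} $(a)$, so their capacitary functions exist. The capacitary function $u$ of a regular $K$ equals $1$ on $\hat K$ by \eqref{eqn4.2} $(a)$. It is therefore also a function satisfying \eqref{eqn2.6} for $\hat K$, and uniqueness in Lemma~\ref{lem4.1} together with \eqref{eqn4.2} $(b)$ gives $\capp(K)=\capp(\hat K)$. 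By \eqref{eq:filled-neighbourhoods},
\[\capp(E_\de)=\capp(\widehat{E_\de})=\capp((\hat E)_\de).\]
Letting $\de\to0$ and using Lemma~\ref{lem9.2} for both $E$ and $\hat E$ yields $\capp(E)=\capp(\hat E)$.

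The point I expect needs care is the uniqueness claim: that the capacitary (or Green's) function of $K$ still qualifies for $\hat K$. The issue is $p$-harmonicity only in $\rn{n}\sem\hat K$, with the constant value on $\hat K$. This holds because the conditions \eqref{eqn2.6} and \eqref{eqn2.9} are weaker for the larger set, and the existence/uniqueness lemmas apply to $\hat K$, which is regular exactly when $K$ is.
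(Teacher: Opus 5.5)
Your proof is correct and follows the paper's argument. The three elementary properties come from \eqref{eqn4.1}, Lemma~\ref{lem:outer-capacity} and \eqref{eqn4.17}~$(c),(d)$. The set identity is the path argument of Lemma~\ref{lem4.3} with a ball. Filling invariance comes from the exterior function of a $p$-regular set being constant on its filling plus uniqueness, followed by $\delta\downarrow0$ (directly for $p>n$). The only difference is cosmetic: you pass through the intermediate set $\widehat{E_\delta}$ and use the equality of capacities of $K$ and $\hat K$ twice, whereas the paper identifies the exterior function of $(\hat E)_\delta$ with that of $E_\delta$ in one step.
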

\begin{proof}
For $p<n$, the first three assertions follow from \eqref{eqn4.1},
and for $p=n$ they follow from Lemma~\ref{lem:outer-capacity}.
For $p>n$, they follow from \eqref{eqn4.17} $(c)$, $(d)$, which hold for every nonempty compact
set by Lemmas~\ref{lem9.6} $(b)$ and~\ref{lem4.2}.

Lemma~\ref{lem4.3}, applied to $E$ and a ball and then rescaled,
gives
$E_\delta\subset(\hat E)_\delta\subset\widehat{E_\delta}$.
Taking fillings proves \eqref{eq:filled-neighbourhoods}.
By Lemmas~\ref{lem9.1} $(a)$ and \ref{lem9.6} $(a)$, both neighbourhoods are
$p$-regular. Their exterior functions are constant on their
common filling by \eqref{eqn4.2} $(a)$ and \eqref{eqn4.17} $(a)$, so the function for $(\hat E)_\delta$ also solves
the exterior problem for $E_\delta$. Uniqueness in Lemmas~\ref{lem4.1} and~\ref{lem4.2} identifies the two functions,
so their capacities are equal. Letting
$\delta\downarrow0$ proves \eqref{eq:filling-invariance} for
$p<n$ by Lemma~\ref{lem9.2}, and for $p=n$ by \eqref{eqn2.14}.
For $p>n$, every nonempty compact set is $p$-regular. The Green's
function $G_E$ vanishes on $\hat E$ and therefore satisfies
\eqref{eqn2.9} for $\hat E$. Uniqueness gives
$G_E=G_{\hat E}$ and the same constant at infinity.
\end{proof}

\section{Proofs of Theorems B and C}
\label{section:proofB}
For Theorem~\ref{thmB} we express the constant at infinity through
relative capacities and then pass from $E_\delta$ to $E$.

\subsection{The condenser formula}
For $p=n$, the formula below is the condenser characterization in
\cite{X}, with the normalization in
Theorem~\ref{thmB} $(d)$. We give the argument for all $p\geq n$,
since the comparison estimate is also used below.
If $E\subset B(0,r)$ is a nonempty compact set and $p\geq n$ we put
\begin{align}
\label{eqn9.28}
\La_E(r)=\left[\capp(E,B(0,r))\right]^{-\frac{1}{p-1}}-F(r)\in(-\infty,\infty],
\end{align}
where $\La_E(r)=\infty$ when the capacity is zero.
\begin{lemma}
\label{lem9.12}
Let $n\leq p<\infty$ and let $E$ be a nonempty compact set with $E\subset B(0,\rho_0)$.
\begin{itemize}
\item[(a)] $\La_E(\cdot)$ is non-decreasing on $[\rho_0,\infty)$. Also $\La_E(r)\geq \La_{E'}(r)$ when $E'$ is compact and $E\subset E'\subset B(0,r)$.
\item[(b)] If $E$ is $p$-regular and $G$ is the $p$-harmonic Green's function for $\rn{n}\sem E$ with pole at $\infty$, then
\begin{align}
\label{eqn9.29}
\inf_{\ar B(0,r)}G\leq \left[\capp(E,B(0,r))\right]^{-\frac{1}{p-1}}\leq \sup_{\ar B(0,r)}G \quad \mbox{whenever}\, \, r>\rho_0.
\end{align}
Consequently, if $k=G-F$ then
\begin{align}
\label{eqn9.30}
k(\infty)=\lim_{r\to\infty}\La_E(r)=\sup_{r\geq \rho_0}\La_E(r).
\end{align}
\end{itemize}
\end{lemma}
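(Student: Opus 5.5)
For part $(a)$, I would compare condenser capacities directly. For $\rho_0\leq r<s$, I would use the facts about relative capacity and radial $p$-harmonic functions in \eqref{eqn9.1}. The key input is the series inequality for condensers: if $E\subset B(0,r)\subset B(0,s)$, then
\[
\capp(E,B(0,s))^{-\frac{1}{p-1}}\geq \capp(E,B(0,r))^{-\frac{1}{p-1}}+\capp(\bar B(0,r),B(0,s))^{-\frac{1}{p-1}}.
\]
By \eqref{eqn9.1} $(e)$ and \eqref{eqn2.7}, the last term equals $F(s)-F(r)$, which gives $\La_E(s)\geq\La_E(r)$.

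To prove the series inequality, I would take an admissible $\psi$ for $(E,B(0,s))$, continuous and in $W^{1,p}_0$ as in \eqref{eqn9.1} $(d)$. Set $t=\min_{\ar B(0,r)}$-type level: more precisely, take the level $t\in(0,1)$ with $\psi\geq t$ somewhere on each sphere $\ar B(0,\rho)$, $\rho\le r$. Then split the energy into the part where $\psi>t$ and the part where $\psi<t$. The truncation $\min\{\psi,t\}/t$ is admissible for a condenser containing $\bar B(0,r)$ modulo the sets where $\psi<t$, which is awkward. A cleaner route is the radial Hölder argument of Lemma~\ref{lem9.9} $(b)$, but with an optimization: apply $(\psi-t)^+/(1-t)$ to $(E,B(0,r))$ and $\min\{\psi,t\}/t$ to a spherical-symmetrization bound, then optimize over $t$.

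Since that is delicate, I would instead prefer a proof of the series inequality via the $p$-regular case. Suppose first that $E$ is $p$-regular. Let $U_r$ be the capacitary potential of Lemma~\ref{lem9.6} $(e)$ in $B(0,r)$, with $0$ on $E$ and $1$ on $\ar B(0,r)$. Extend $\min\{U_r,1\}$ outside $B(0,r)$ by the radial function $1+(F-F(r))\kappa_r^{1/(p-1)}$, capped at its value on $\ar B(0,s)$, and normalize. The flux of $U_r$ is $\kappa_r$, and matching this flux across $\ar B(0,r)$ gives an admissible function whose energy realizes the right-hand side. For general $E$, monotone approximation by $E_\de$ via Lemma~\ref{lem9.2} passes the inequality to the limit. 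The second claim of $(a)$ is just \eqref{eqn9.1} $(a)$.

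For part $(b)$, I would use Lemma~\ref{lem9.9} with $R\to\infty$. Here $v(\cdot,R)\to G$ along the sequence \eqref{eqn4.22}; by uniqueness the full family converges. Hence $m'\to\inf_{\ar B(0,r)}G$ and $M'\to\sup_{\ar B(0,r)}G$. The ratios $F(R)/(F(R)-F(\rho_0))$ and $F(R)/(F(R)-F(r))$ tend to $1$, and $M'/F(R)\to0$. Since $\rho_0$ in Lemma~\ref{lem9.9} may be replaced by any radius $<r$, inequality \eqref{eqn9.18} $(a)$ gives $(\inf G)^{p-1}\kappa_r\leq1$ and $(b)$ gives $1\leq(\sup G)^{p-1}\kappa_r$. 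This is \eqref{eqn9.29}.

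Subtracting $F(r)$ then gives $m(r)\leq\La_E(r)\leq M(r)$ in the notation of Lemma~\ref{lem:radial-limits} $(b)$. By \eqref{eqn9.16} $(ii)$, both $m(r)$ and $M(r)$ tend to $k(\infty)$. The monotonicity from $(a)$ turns the limit into a supremum. The main obstacle is the series inequality in $(a)$, specifically justifying that the glued function is admissible and that the flux identity holds. I would obtain the flux identity by testing the weak equation for $U_r$ with radial cutoffs, as in the computation of \eqref{eqn4.9}.
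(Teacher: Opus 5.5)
Your proof is correct and is, in substance, the paper's argument; it differs only in how two steps are justified.

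In $(a)$, your glued competitor $1-\Psi/N$, with $N=1+(F(s)-F(r))\kappa_r^{1/(p-1)}$, is exactly the paper's $(1-\theta)v_1+\theta v_2$ at the optimal $\theta=b/(a+b)$, with the exact potential $1-U_r$ in place of $v_1$. The ``flux matching'' is just the optimality condition for the coefficient. The energy computation needs only three things: $\int_{B(0,r)}|\nabla U_r|^p=\kappa_r$ (the paper's \eqref{eqn9.22}), admissibility via \eqref{eqn9.1} $(d)$, and disjointness of the gradient supports. So the obstacle you flag is not a real one.

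The paper instead uses a smooth near-minimizer $v\in C_0^\infty(B(0,r))$ truncated to $[0,1]$. This works for every compact $E$ directly, with the case $\kappa_1=0$ handled by monotonicity. It therefore avoids both the $p$-regularity assumption and your passage through $E_\de$ and Lemma~\ref{lem9.2}. Your detour is valid but longer, and the abandoned level-$t$ and radial H\"older attempts can simply be dropped.

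In $(b)$, the paper gets $v(\cdot,R)\to G$ from the direct comparison $a_RG\leq v(\cdot,R)\leq b_RG$ in $B(0,R)\sem E$. Here $a_R=F(R)/\max_{\ar B(0,R)}G$ and $b_R=F(R)/\min_{\ar B(0,R)}G$ both tend to $1$. Your compactness-plus-uniqueness route through \eqref{eqn4.22} also works, but it rests on the bounds \eqref{eqn4.19}--\eqref{eqn4.21}. Those were proved under the normalization $0\in E$ of \eqref{eqn4.18}, which cannot be arranged here by translation, since the balls $B(0,r)$ are fixed. For $p>n$ the upper barrier $F$ is then not $p$-harmonic in $B(0,R)\sem E$, so that step needs an adjustment, which the paper's comparison avoids.
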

\begin{proof}
To prove $(a)$ let $\rho_0\leq r<r'$ and put
\[
\kappa_1=\capp(E,B(0,r)),\qquad
\kappa_2=\capp(\bar B(0,r),B(0,r')).
\]
By \eqref{eqn9.1} $(e)$ and \eqref{eqn2.7}, $\kappa_2^{-1/(p-1)}=F(r')-F(r)$. We claim that
\begin{align}
\label{eqn9.31}
\left[\capp(E,B(0,r'))\right]^{-\frac{1}{p-1}}\geq \kappa_1^{-\frac{1}{p-1}}+\kappa_2^{-\frac{1}{p-1}}.
\end{align}
If $\kappa_1=0$, monotonicity gives $\capp(E,B(0,r'))=0$, so
\eqref{eqn9.31} holds with infinite inverse powers. Suppose that
$\kappa_1>0$. Given $\ep>0$, choose $v\in C_0^\infty(B(0,r))$
with $v\geq1$ on $E$ and
$\int|\nabla v|^p\,dx\leq\kappa_1+\ep$, and set
$v_1=\min\{\max\{v,0\},1\}$. Let $v_2$ be the radial capacitary
function of the condenser $(\bar B(0,r),B(0,r'))$, extended by $1$
on $\bar B(0,r)$ and by $0$ outside $B(0,r')$. Its energy is
$\kappa_2$. For $0\leq\theta\leq1$, the continuous function
$(1-\theta)v_1+\theta v_2$ belongs to
$W_0^{1,p}(B(0,r'))$ and equals $1$ on $E$.
The gradients of $v_1$ and $v_2$ have disjoint supports up to null sets, so \eqref{eqn9.1} $(d)$ gives
$\capp(E,B(0,r'))\leq
(1-\theta)^p(\kappa_1+\ep)+\theta^p\kappa_2$.
With $a=(\kappa_1+\ep)^{-1/(p-1)}$ and
$b=\kappa_2^{-1/(p-1)}$, the right-hand side is minimized at
$\theta=b/(a+b)$, with minimum $(a+b)^{1-p}$.
Letting $\ep\downarrow0$ proves
\eqref{eqn9.31}, which with \eqref{eqn9.28} gives $\La_E(r')\geq\La_E(r)$.
The second statement in $(a)$ follows from \eqref{eqn9.1} $(a)$.

To prove $(b)$ let $v(\cdot,R)$ be the function in Lemma~\ref{lem9.9}. For large $R$ put
\[
a_R=F(R)/\max_{\ar B(0,R)}G,\qquad
b_R=F(R)/\min_{\ar B(0,R)}G.
\]
Since $G-F$ is bounded near infinity, both tend to $1$.
Comparison in $B(0,R)\sem E$ gives
$a_R G\leq v(\cdot,R)\leq b_R G$,
so $v(\cdot,R)\to G$ uniformly on compact subsets of $\rn{n}$.
Fix $r>\rho_0$ and let $m'(R)$, $M'(R)$, $\kappa_r$ be as in Lemma \ref{lem9.9}. Then $m'(R)\to \inf_{\ar B(0,r)}G$, $M'(R)\to \sup_{\ar B(0,r)}G$, and $F(R)\to\infty$ as $R\to\infty$.
Letting $R\to\infty$ in \eqref{eqn9.18} $(a), (b)$ we obtain
\[
\left(\inf_{\ar B(0,r)}G\right)^{p-1}\kappa_r\leq 1\leq \left(\sup_{\ar B(0,r)}G\right)^{p-1}\kappa_r,
\]
which is \eqref{eqn9.29}. Since $F$ is constant on $\ar B(0,r)$, \eqref{eqn9.29} reads $m(r)\leq \La_E(r)\leq M(r)$, with $m(r),M(r)$ as in Lemma \ref{lem:radial-limits}.
By that lemma $m(r)$ and $M(r)$ tend to $k(\infty)$ as $r\to\infty$. With $(a)$ this gives \eqref{eqn9.30}.
\end{proof}

\subsection{Proof of Theorem B}
\label{subsection:outer-capacity}
\begin{proof}[Proof of Theorem \ref{thmB}]
Part $(a)$ is Lemma~\ref{lem:outer-capacity}.
The first statement in $(b)$ follows from Lemma \ref{lem9.13} and \eqref{eqn2.10}. If $E$ is $n$-regular, then $G$ exists by Lemma~\ref{lem4.2}. To prove $(c)$ let $0<\la<1$ and $\de>0$. By Lemmas \ref{lem9.1} $(a)$, \ref{lem9.6} $(a)$ and Proposition \ref{prop7.3}, \eqref{BMn} holds for $(E_1)_\de,(E_2)_\de$. By Lemma \ref{lem9.1} $(b)$,
\[
\mathcal{C}_n((\lambda E_1+(1-\lambda)E_2)_\de)\geq \lambda\,  \mathcal{C}_n((E_1)_\de) +(1-\lambda)\, \mathcal{C}_n((E_2)_\de).
\]
Letting $\de\to 0$ gives \eqref{eqn2.16}, which is trivial when $\la\in\{0,1\}$.

To prove $(d)$ first assume that $z_0=0\in E$. Choose $\rho_0>0$ with $E_\de\subset B(0,\rho_0)$ for $0<\de\leq 1$, and let $G_\de$ be the $n$-harmonic Green's function for $\rn{n}\sem E_\de$ with pole at $\infty$ and $k_\de=G_\de-F$. The sets $E_\de$ are $n$-regular by Lemma \ref{lem9.6} $(a)$.
By \eqref{eqn9.30} for $E_\de$ and Lemma \ref{lem9.12} $(a)$,
\[
\lim_{\de\to 0}k_\de(\infty)=\sup_{0<\de\leq 1}\, \sup_{r\geq \rho_0}\La_{E_\de}(r)=\sup_{r\geq \rho_0}\, \sup_{0<\de\leq 1}\La_{E_\de}(r)=\sup_{r\geq \rho_0}\La_E(r)=\lim_{r\to\infty}\La_E(r).
\]
Here $k_\de(\infty)$ and $\La_{E_\de}(r)$ do not decrease as $\de$ decreases, by \eqref{eqn4.17} $(c)$ and Lemma \ref{lem9.12} $(a)$, and $\La_{E_\de}(r)\to \La_E(r)$ by Lemma \ref{lem9.2}. By \eqref{eqn2.14}, \eqref{eqn2.10}, \eqref{eqn9.28}, and $F(r)=\ga\log r+1$, this is \eqref{eqn2.17} when $z_0=0\in E$.
If $z_0\in E$ we apply this to $E-z_0$ and use $(a)$ and \eqref{eqn9.1} $(c)$. For arbitrary $z_0\in \rn{n}$, choose $z\in E$ and put $\mathbf{d}=|z-z_0|$. If $r>\mathbf{d}$ then $B(z,r-\mathbf{d})\subset B(z_0,r)\subset B(z,r+\mathbf{d})$, so by \eqref{eqn9.1} $(b)$,
\[
\mbox{Cap}_n(E,B(z,r+\mathbf{d}))\leq \mbox{Cap}_n(E,B(z_0,r))\leq \mbox{Cap}_n(E,B(z,r-\mathbf{d}))
\]
for large $r$. Since $(r\pm \mathbf{d})/r\to 1$, \eqref{eqn2.17} holds for $z_0$.

For the last sentence in $(d)$, \cite[Section~2.7]{HKM} and \cite[Lemma~2.9]{HKM} show that if
 $\mbox{Cap}_n(E,O)=0$ for one bounded open set $O$ containing $E$, then this is true for every such $O$. Thus if $\mbox{Cap}_n(E,B(z_0,r))=0$ for some $r$ with $E\subset B(z_0,r)$ then
$\bar{\mathcal{C}}_n(E)=0$ by \eqref{eqn2.17}. Otherwise we may assume by $(a)$ that $0\in E$. Let $\rho_0$, $G_\de$, $k_\de$ be as above, so that $\kappa:=\mbox{Cap}_n(E,B(0,2\rho_0))>0$, and let $0<\de\leq 1$.
Then $G_\de$ is a positive $n$-harmonic function in $\rn{n}\sem \bar B(0,\rho_0)$. Harnack's inequality \eqref{eqn3.1} in balls $B(y,\rho_0/4)$ with $y\in \ar B(0,2\rho_0)$,  \eqref{eqn9.29} for $E_\de$, and \eqref{eqn9.1} $(a)$ give
\[
\sup_{\ar B(0,2\rho_0)}G_\de\leq c\inf_{\ar B(0,2\rho_0)}G_\de\leq c\left[\mbox{Cap}_n(E_\de,B(0,2\rho_0))\right]^{-\frac{1}{n-1}}\leq c\, \kappa^{-\frac{1}{n-1}}
\]
with $c$ depending only on $n$. By \eqref{eqn9.16} $(ii)$, $k_\de(\infty)\leq c\, \kappa^{-1/(n-1)}-F(2\rho_0)$ for $0<\de\leq 1$, and so $\bar{\mathcal{C}}_n(E)>0$ by \eqref{eqn2.10}, \eqref{eqn2.14}.

For $(e)$ we use \cite{AG}. If $K\subset \rn{2}$ is a compact set which is not polar, there is a unique non-negative harmonic function $h_K$ on $\rn{2}\sem K$ which has limit $0$ quasi-everywhere on $\ar K$, is bounded near $\ar K$,
and such that $h_K(x)-\log|x|$ has a finite limit $r(K)$ at $\infty$. The logarithmic capacity of $K$ is $e^{-r(K)}$.
Let $n=2$, so $\ga=1/(2\pi)$, and let $\de>0$. The set $E_\de$ contains a ball, so it is not polar.
The function $2\pi G_\de$ is non-negative and harmonic on $\rn{2}\sem E_\de$, continuous with value $0$ on $\ar E_\de$, and $2\pi G_\de(x)-\log|x|\to 2\pi(1+k_\de(\infty))$ as $x\to\infty$.
Hence $h_{E_\de}=2\pi G_\de$, and by \eqref{eqn2.10},
$\mbox{cap}(E_\de)=e^{-2\pi(1+k_\de(\infty))}=e^{-2\pi}\, \mathcal{C}_2(E_\de)$.
By \cite{AG}, compact sets are log-capacitable and logarithmic capacity is monotone. Since each open set containing $E$ contains $E_\de$ for small $\de$, $\mbox{cap}(E_\de)\to \mbox{cap}(E)$. Thus $\bar{\mathcal{C}}_2(E)=e^{2\pi}\, \mbox{cap}(E)$, and \eqref{eqn2.16} gives \eqref{eqn2.18}.
\end{proof}
\noindent From \eqref{eqn2.18} with $\la=1/2$ and the homogeneity of logarithmic capacity, which follows from Theorem \ref{thmB} $(a), (e)$, we see that \eqref{eqn1.5} holds for all nonempty compact sets $E_1,E_2$ in the plane, connected or not.

\subsection{Pointwise comparisons}
\label{section:proofC}
For arbitrary compact $E$, the outer limits $\bar u_E$, $\bar G_E$ and the regularization $\underline u_E$ are defined in and after \eqref{eqn2.19}.

\begin{mytheorem}
\label{thmC}
Let $1<p<\infty$, let $E_1,E_2$ be nonempty compact sets in $\rn{n}$, $n\geq 2$, let $0<\la<1$, and $E=\la E_1+(1-\la)E_2$.

\begin{itemize}
\item[(a)] Suppose that $E_1,E_2$ are $p$-regular.
If $1<p<n$ let $u_1,u_2,u$ be the $p$-capacitary functions for $E_1,E_2,E$, and if $p\geq n$ let $G_1,G_2,G$ be the $p$-harmonic Green's functions for $\rn{n}\sem E_1$, $\rn{n}\sem E_2$, $\rn{n}\sem E$ with pole at $\infty$.
Then for all $y,z\in\rn{n}$,
\begin{align}
\label{eqn2.20}
\begin{split}
u(\la y+(1-\la)z)&\geq \min\{u_1(y),u_2(z)\}\quad \mbox{when}\, \, 1<p<n,\\
G(\la y+(1-\la)z)&\leq \max\{G_1(y),G_2(z)\}\quad \mbox{when}\, \, p\geq n.
\end{split}
\end{align}
\item[(b)] If $1<p<n$ and $\bar u_{E_1},\bar u_{E_2},\bar u_E$ are as in \eqref{eqn2.19}, then
\begin{align}
\label{eqn2.21}
\bar u_E(\la y+(1-\la)z)\geq \min\{\bar u_{E_1}(y),\bar u_{E_2}(z)\}\quad \mbox{whenever}\, \, y,z\in\rn{n}.
\end{align}
The same inequality holds for the lower semicontinuous regularizations:
\begin{align}
\label{eqn2.22}
\underline{u}_E(\la y+(1-\la)z)\geq \min\{\underline{u}_{E_1}(y),\underline{u}_{E_2}(z)\}\quad \mbox{whenever}\, \, y,z\in\rn{n}.
\end{align}
\item[(c)] If $p=n$ and $\bar G_{E_1},\bar G_{E_2},\bar G_E$ are as in \eqref{eqn2.19}, then
\begin{align}
\label{eqn2.23}
\bar G_E(\la y+(1-\la)z)\leq \max\{\bar G_{E_1}(y),\bar G_{E_2}(z)\}\quad \mbox{whenever}\, \, y,z\in\rn{n}.
\end{align}
\end{itemize}
\end{mytheorem}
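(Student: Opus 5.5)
Part $(a)$ is a restatement of Proposition~\ref{prop7.3}. Under \eqref{eqn5.1} that proposition gives \eqref{eqn5.5}, that is $w^*\leq w$ on $\rn{n}$. For any $y,z\in\rn{n}$ the pair $(y,z)$ is admissible in \eqref{eqn5.4} for $x=\la y+(1-\la)z$, so
\[
\min\{w_1(y),w_2(z)\}\leq w^*(x)\leq w(x).
\]
With $w_i=u_i$ when $1<p<n$, this is the first line of \eqref{eqn2.20}. When $p\geq n$ we have $w_i=-G_i$ and $w=-G$, and multiplying by $-1$ turns the minimum into a maximum, giving the second line.

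For $(b)$ and $(c)$ we pass to closed neighbourhoods. Fix $\de>0$. By Lemma~\ref{lem9.1} $(a)$ and Lemma~\ref{lem9.6} $(a)$, the sets $(E_1)_\de$ and $(E_2)_\de$ are $p$-regular. By Lemma~\ref{lem9.1} $(b)$, $\la(E_1)_\de+(1-\la)(E_2)_\de=E_\de$. Applying $(a)$ to these sets gives, for all $y,z$,
\[
u_{E,\de}(\la y+(1-\la)z)\geq\min\{u_{E_1,\de}(y),u_{E_2,\de}(z)\},
\]
and the analogous inequality for $G_\de$ when $p=n$. The outer functions are monotone in $\de$, as stated after \eqref{eqn2.19}. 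For $u$ this follows from comparison of capacitary functions of nested $p$-regular sets, via the construction in Lemma~\ref{lem4.1} and comparison of the approximants $u_m$. For $G$ it is \eqref{eqn4.17} $(c)$. So we may let $\de\downarrow0$ pointwise on both sides. The minimum and maximum of two monotone limits is the limit of the minima and maxima, which yields \eqref{eqn2.21} and \eqref{eqn2.23}. In \eqref{eqn2.23} the values $+\infty$ are allowed, with the obvious conventions.

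The step I expect to be the main obstacle is \eqref{eqn2.22}, because lower semicontinuous regularization does not in general commute with taking $\min$ along the Minkowski parametrization. I would argue directly. Fix $y,z$, put $x=\la y+(1-\la)z$, and let $t<\min\{\underline u_{E_1}(y),\underline u_{E_2}(z)\}$. By the definition of $\underline u$ there is $r>0$ with $\bar u_{E_1}>t$ on $B(y,r)$ and $\bar u_{E_2}>t$ on $B(z,r)$. Every $x'\in B(x,r)$ can be written as $\la y'+(1-\la)z'$ with $y'=y+(x'-x)\in B(y,r)$ and $z'=z+(x'-x)\in B(z,r)$. Then \eqref{eqn2.21} gives $\bar u_E(x')\geq\min\{\bar u_{E_1}(y'),\bar u_{E_2}(z')\}>t$. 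Hence $\inf_{B(x,r)}\bar u_E\geq t$, so $\underline u_E(x)\geq t$, and letting $t$ increase proves \eqref{eqn2.22}.
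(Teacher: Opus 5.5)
Your proposal is correct and follows the paper's proof essentially step for step. Part $(a)$ is read off from \eqref{eqn5.5} via Proposition~\ref{prop7.3}. Parts $(b)$ and $(c)$ come from applying $(a)$ to $(E_1)_\delta,(E_2)_\delta$ (Lemmas~\ref{lem9.1} and~\ref{lem9.6}~$(a)$) and letting $\delta\to0$. Your $t$-threshold argument for \eqref{eqn2.22} is a rewording of the paper's step, which uses the same common translation $x+\vartheta=\lambda(y+\vartheta)+(1-\lambda)(z+\vartheta)$ together with \eqref{eqn2.21} and an infimum over $\vartheta\in B(0,r)$.
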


By Lemma~\ref{lem9.6}, part $(a)$ includes all nonempty compact sets
when $p>n$. For $p\leq n$, it suffices that each $E_i$ satisfies the
corkscrew condition or, when $p>n-1$, has
no singleton component. Part $(c)$ is understood in the extended real sense. The pointwise comparisons
for convex bodies are \cite[(5.13)]{AGHLV} and \cite[(6.11)]{ALSV}.

\begin{proof}[Proof of Theorem \ref{thmC} $(a)$]
By \eqref{eqn5.2} and \eqref{eqn5.4}, \eqref{eqn2.20} is \eqref{eqn5.5}, which holds by Proposition~\ref{prop7.3}.
\end{proof}

If $1<p<n$ and $E$ is not $p$-regular, a continuous function satisfying \eqref{eqn2.6} need not exist. We replace it by a limit of capacitary functions. The next lemma identifies the limit as a reduced function and determines
its coefficient at infinity.

\begin{lemma}
\label{lem9.15}
Let $1<p<n$, let $E$ be a nonempty compact set in $\rn{n}$, and for $\de>0$ let $u_\de$ be the $p$-capacitary function for $E_\de$. Then $u_{\de'}\leq u_\de$ on $\rn{n}$ when $0<\de'<\de$, and the function $\bar u_E$ in \eqref{eqn2.19} has the following properties.
\begin{align*}
&(a)\hs{.1in} \bar u_E\, \, \mbox{is upper semicontinuous on $\rn{n}$,}\, \, 0\leq \bar u_E\leq 1, \, \, \mbox{and}\, \, \bar u_E\equiv 1\, \, \mbox{on}\, \, \hat E.\\
&(b)\hs{.1in} \bar u_E\, \, \mbox{is $p$-harmonic in $\rn{n}\sem E$, and}\, \, \{u_\de,\nabla u_\de\}\, \, \mbox{converges to}\, \, \{\bar u_E,\nabla \bar u_E\}\\
&\hs{.3in} \mbox{uniformly on compact subsets of $\rn{n}\sem E$ as $\de\to 0$.}\\
&(c)\hs{.1in} \lim_{x\to\infty} |x|^{\frac{n-p}{p-1}}\, \bar u_E(x)=\frac{p-1}{n-p}\, \si_{n-1}^{-\frac{1}{p-1}}\,  \left[\capp(E)\right]^{\frac{1}{p-1}}.\\
&(d)\hs{.1in} \mbox{If $E$ is $p$-regular then $\bar u_E$ is the $p$-capacitary function for $E$.}\\
&(e)\hs{.1in} \bar u_E(x)=\inf\{\ti v(x):\, \ti v\, \, \mbox{is $p$-superharmonic in $\rn{n}$,}\, \, \ti v\geq 0, \, \, \ti v\geq 1\, \, \mbox{on}\, \, E\}\, \, \mbox{for}\, \, x\in\rn{n}.
\end{align*}
\end{lemma}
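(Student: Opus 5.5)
The plan is to work with the monotone family $u_\de$ and read off each property from the results already proved for the $p$-regular sets $E_\de$.

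\emph{Monotonicity and (a).} If $0<\de'<\de$ then $E_{\de'}\subset E_\de$, so $u_\de\equiv1\geq u_{\de'}$ on $E_{\de'}$. Both functions are $p$-harmonic off $E_\de$ and tend to $0$ at infinity by \eqref{eqn4.2} $(a)$. Comparing $u_{\de'}$ with $u_\de+\ep$ on $B(0,R)\sem E_\de$ and letting $R\to\infty$, then $\ep\to0$, gives $u_{\de'}\leq u_\de$. A decreasing limit of continuous functions is upper semicontinuous, and clearly $0\leq\bar u_E\leq1$. By Lemma~\ref{lem9.1} $(c)$, $\hat E\subset\widehat{E_\de}$, and $u_\de\equiv1$ there by \eqref{eqn4.2} $(a)$, so $\bar u_E\equiv1$ on $\hat E$.

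\emph{(b) and (d).} A compact $K\subset\rn{n}\sem E$ has a neighbourhood disjoint from $E_\de$ for small $\de$. There Lemma~\ref{lem3.1}, applied to $u_\de+1$, gives uniform $C^{1,\be}$ bounds. Ascoli's theorem and monotonicity then give the convergence of $\{u_\de,\nabla u_\de\}$ to $\{\bar u_E,\nabla\bar u_E\}$ uniformly on $K$, as in \eqref{eqn4.3}, so $\bar u_E$ is $p$-harmonic off $E$. For (d), let $E$ be $p$-regular with capacitary function $u$. The same comparison as above gives $u\leq u_\de$, hence $u\leq\bar u_E\leq1$. Since $u\equiv1$ on $E$, $\bar u_E$ is continuous at points of $E$. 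It is also continuous off $E$ by (b), so it is continuous on $\rn{n}$. Moreover $\bar u_E\in L^{p^*}$ and $|\nabla\bar u_E|\in L^p$ by Fatou's lemma and the uniform bounds \eqref{eqn4.5}. Since $\capp(E_\de)\leq\capp(E_1)$ for $\de\le1$, this verifies \eqref{eqn2.6}, and uniqueness in Lemma~\ref{lem4.1} gives $\bar u_E=u$.

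\emph{(c).} Write $\ell_\de$ for the limit in \eqref{eqn4.2} $(c)$ for $E_\de$; it tends to the right side of (c) by Lemma~\ref{lem9.2}. The task is to interchange the limits in $x$ and $\de$, and I expect this to be the main technical step. For the upper bound, fix $r$ with $E_1\subset B(0,r)$. By Lemma~\ref{lem:radial-limits} $(a)$, $\bar u_E/\mathbf g\leq u_\de/\mathbf g\leq\ell_\de+(\mathbf A_\de(r)-\mathbf a_\de(r))$ on $|x|\geq r$. To control this error uniformly in $\de$, I would use the Harnack-type estimate from the proof of that lemma, $\mathbf A(2r)-\mathbf a(r)\leq c[\mathbf a(2r)-\mathbf a(r)]$, whose constant depends only on $\mathbf A(\rho_1)/\mathbf a(\rho_1)$. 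This ratio is bounded uniformly in $\de$ by the Harnack inequality on a fixed sphere. The lower bound follows by passing to the limit via (b) on spheres.

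A cleaner alternative is to apply Lemma~\ref{lem:radial-limits} $(a)$ directly to $\bar u_E$. This function is positive on $\Om_E$ when $\capp(E)>0$, and it is dominated by $u_{\de}$ at infinity. The coefficient $\ell$ is then identified through the flux identity $\mu_\de(E_\de)=\capp(E_\de)$ of \eqref{eqn4.9}, computed on a fixed sphere via (b), and the limit $\de\to0$ is taken using Lemma~\ref{lem9.2}. The case $\capp(E)=0$ is handled by the upper bound alone.

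\emph{(e).} Let $\ti v$ be admissible. For $\ep>0$, the set $\{\ti v>1-\ep\}$ is open by lower semicontinuity and contains $E$, so it contains $E_\de$ for small $\de$ by Lemma~\ref{lem9.1} $(d)$. Comparing the $p$-superharmonic function $\ti v/(1-\ep)$ with $u_\de$ on $B(0,R)\sem E_\de$ (using $u_\de\to0$ at infinity and $\ti v\geq0$) gives $u_\de\leq\ti v/(1-\ep)$. Letting $\de\to0$ and then $\ep\to0$ shows $\bar u_E\leq\ti v$. Conversely, each $u_\de$ is admissible, since it is $p$-superharmonic by the argument in the proof of Lemma~\ref{lem4.1}, and $\inf_\de u_\de=\bar u_E$. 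Together these give equality.
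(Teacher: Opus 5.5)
Your proof is correct. The monotonicity and parts (a), (b) and (e) follow the paper's argument essentially verbatim, including the device of subtracting $\sup_{\ar B(0,R)}u_\de$ in the comparison for (e). The genuine differences are in (c) and (d).

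\textbf{Part (d).} The paper avoids energy considerations. Given $\ep\in(0,1)$, the open set $\{u>1-\ep\}$ contains $E_\de$ for small $\de$. Comparison then gives $u_\de\le u/(1-\ep)$, hence $\bar u_E\le u/(1-\ep)$, and with $u\le\bar u_E$ this finishes. Your route through uniqueness in Lemma~\ref{lem4.1} also works, but one step needs tightening. That uniqueness argument ends by deducing that $u-v$ is constant on $\rn{n}$ from $\nabla(u-v)=0$ a.e. It therefore needs $\bar u_E\in W^{1,p}_{\mathrm{loc}}(\rn{n})$, with the $L^p$ gradient bound holding across $E$. Fatou's lemma applied to the classical gradients off $E$ does not give this. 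Instead, extract a weakly convergent subsequence of $u_\de$ in $W^{1,p}_{\mathrm{loc}}$. This is possible because $0\le u_\de\le 1$ and $\int|\nabla u_\de|^p=\capp(E_\de)\le\capp(E_1)$, exactly as in the proof of Lemma~\ref{lem4.1}.

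\textbf{Part (c), the paper's route.} The paper passes the sphere bounds $\inf_{\ar B(0,r)}u_\de/\mathbf g\le\ell_\de\le\sup_{\ar B(0,r)}u_\de/\mathbf g$ to the limit via (b) and Lemma~\ref{lem9.2}, obtaining \eqref{eqn9.34}. It then splits into two cases. Either $\bar u_E$ vanishes somewhere outside $\bar B(0,\rho_0)$, so $\bar u_E\equiv0$ there and \eqref{eqn9.34} forces $\ell_0=0$. Or Lemma~\ref{lem:radial-limits}$(a)$ applies to $\bar u_E$ itself, and the two sides of \eqref{eqn9.34} squeeze its limit to $\ell_0$.

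\textbf{Part (c), your flux alternative.} This is equally valid. Letting $\de\to0$ in $\capp(E_\de)=\int|\nabla u_\de|^{p-2}\lan\nabla u_\de,\nabla\chi_R\ran\,dx$ on a fixed annulus gives the same identity for $\bar u_E$, with $\capp(E)$ on the left. That identity also justifies your claim that $\bar u_E>0$ on $\Om_E$ when $\capp(E)>0$: otherwise the strong minimum principle makes $\bar u_E\equiv0$ on $\Om_E$ and the flux vanishes. The rescaling from the proof of \eqref{eqn4.2}$(c)$ then identifies $\ell$. The cost is redoing that flux computation, whereas the paper only reuses the sphere bounds of Lemma~\ref{lem:radial-limits}.

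\textbf{Part (c), your first route.} You have placed the uniformity in the wrong half. The upper bound needs none: $\bar u_E\le u_\de$ gives $\limsup_{x\to\infty}\bar u_E/\mathbf g\le\ell_\de$ for each fixed $\de$. The lower bound on spheres is what needs uniform-in-$\de$ decay of $\mathbf A_\de(r)-\mathbf a_\de(r)$; passing to the limit via (b) alone only bounds the supremum over spheres from below. That decay does follow from your Harnack-type estimate. Indeed $\mathbf A(2r)-\mathbf a(r)\le c[\mathbf a(2r)-\mathbf a(r)]$ implies $\mathbf A(2r)-\mathbf a(2r)\le(1-1/c)[\mathbf A(r)-\mathbf a(r)]$. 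Here $c$ is uniform in $\de$, and $\mathbf A_\de(\rho_1)\le\rho_1^{(n-p)/(p-1)}$ since $u_\de\le1$. It then gives $\mathbf a_\de(r)\ge\ell_0-o(1)$ as $r\to\infty$, uniformly in $\de$.
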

\noindent Here $p$-superharmonicity is as in
Definition~\ref{def:p-superharmonic}. Part $(e)$ identifies $\bar u_E$ with the reduced function
$R_E^1(\rn{n})$ in \cite[pp.~157--158]{HKM}.
\begin{proof}
Let $0<\de'<\de$. Then $u_{\de'}\leq 1=u_\de$ on $E_\de$. Since $u_\de,u_{\de'}$ are $p$-harmonic in $\rn{n}\sem E_\de$ and tend to $0$ at $\infty$ by \eqref{eqn4.2} $(a)$,
comparison in $B(0,s)\sem E_\de$ and a limit as $s\to\infty$ give $u_{\de'}\leq u_\de$ on $\rn{n}$. Thus the limit in \eqref{eqn2.19} exists, $0\leq \bar u_E\leq 1$, and $\bar u_E$ is upper semicontinuous as a decreasing limit of continuous functions.
Also $u_\de\equiv 1$ on $\widehat{E_\de}\supset\hat E$ by \eqref{eqn4.2} $(a)$ and Lemma \ref{lem9.1} $(c)$. Hence $(a)$ is true. Part $(b)$ follows as in the proof of \eqref{eqn9.32} $(b)$, using Lemma \ref{lem3.1} for $u_\de+1$ and Ascoli's theorem.

To prove $(c)$ choose $\rho_0>0$ with $E_\de\subset B(0,\rho_0)$ for $0<\de\leq 1$, and let $r>\rho_0$, $0<\de\leq 1$. Let $\mathbf{g}$ be as in Lemma \ref{lem:radial-limits}, let $\ell_\de$ be the right-hand side of \eqref{eqn4.2} $(c)$ for $E_\de$, and let $\ell_0$ be the right-hand side of $(c)$.
Since $u_\de$ is positive and $p$-harmonic in $\rn{n}\sem \bar B(0,\rho_0)$, and $u_\de/\mathbf{g}\to \ell_\de$ at $\infty$ by \eqref{eqn4.2}, Lemma \ref{lem:radial-limits} gives
$\inf_{\ar B(0,r)}u_\de/\mathbf{g}\leq \ell_\de\leq \sup_{\ar B(0,r)}u_\de/\mathbf{g}$.
Letting $\de\to 0$ and using $(b)$ and Lemma \ref{lem9.2}, we get
\begin{align}
\label{eqn9.34}
\inf_{\ar B(0,r)}\bar u_E/\mathbf{g}\leq \ell_0\leq \sup_{\ar B(0,r)}\bar u_E/\mathbf{g}\quad \mbox{whenever}\, \, r>\rho_0.
\end{align}
If $\bar u_E$ vanishes at some point of $\rn{n}\sem \bar B(0,\rho_0)$, then $\bar u_E\equiv 0$ there by the strong maximum principle, and $\ell_0=0$ by \eqref{eqn9.34}, so $(c)$ is true.
Otherwise $\bar u_E$ is positive and $p$-harmonic in $\rn{n}\sem \bar B(0,\rho_0)$, and $\bar u_E(x)\to 0$ as $x\to\infty$ since $\bar u_E\leq u_\de$. By Lemma \ref{lem:radial-limits}, $\bar u_E/\mathbf{g}$ has a limit $\ell$ at $\infty$,
and the two sides of \eqref{eqn9.34} tend to $\ell$ as $r\to\infty$. Hence $\ell=\ell_0$, which is $(c)$.

To prove $(d)$ let $E$ be $p$-regular and let $u$ be its $p$-capacitary function (Lemma \ref{lem4.1}). As above, $u\leq u_\de$ for $\de>0$, so $u\leq \bar u_E$. Given $\ep\in(0,1)$ the open set $\{u>1-\ep\}$ contains $E$,
so by Lemma \ref{lem9.1} $(d)$ it contains $E_\de$ for small $\de>0$. For such $\de$, $u_\de=1\leq u/(1-\ep)$ on $E_\de$, and then $u_\de\leq u/(1-\ep)$ on $\rn{n}$ by comparison. Hence $\bar u_E\leq u/(1-\ep)$. Letting $\ep\to 0$ we get $\bar u_E=u$.

To prove $(e)$ let $\mathcal V$ denote the family of functions $\ti v$ in $(e)$.
The comparison argument in the proof of Lemma~\ref{lem4.1} makes
$u_\de$ $p$-superharmonic on $\rn{n}$, so $u_\de\in\mathcal V$ and
$\inf_{\mathcal V}\ti v\leq\bar u_E$.
Now let $\ti v\in \mathcal V$ and $\ep\in(0,1)$. Since $\ti v$ is lower semicontinuous, $\{\ti v>1-\ep\}$ is open and contains $E$. Choose $\de>0$ so small that $\ti v>1-\ep$ on $E_\de$.
For $s>0$ with $E_\de\subset B(0,s)$ let $\ep_s=\sup_{\ar B(0,s)}u_\de$ and $D=B(0,s)\sem E_\de$. Then $h=u_\de-\ep_s$ is continuous on $\bar D$, $p$-harmonic in $D$, $h\leq 1\leq \ti v/(1-\ep)$ on $\ar E_\de$, and $h\leq 0\leq \ti v/(1-\ep)$ on $\ar B(0,s)$.
Since $\ti v/(1-\ep)$ is $p$-superharmonic, $u_\de-\ep_s\leq \ti v/(1-\ep)$ in $D$. Letting $s\to\infty$ gives $u_\de\leq \ti v/(1-\ep)$ in $\rn{n}\sem E_\de$, and this also holds on $E_\de$.
Thus $\bar u_E\leq \ti v/(1-\ep)$. Letting $\ep\to 0$ and taking the infimum over $\ti v\in \mathcal V$ proves $(e)$.
\end{proof}

For arbitrary compact $E$, $p=n$, and $0<\de'<\de$, both closed neighbourhoods $E_\de$ and $E_{\de'}$
are $n$-regular. Exterior comparison as in
the proof of \eqref{eqn4.17} $(c)$ gives $G_\de\leq G_{\de'}$,
without assuming that a Green's function for $E$ exists.
Thus $\bar G_E$ is defined in $[0,\infty]$.
It vanishes on $\hat E$ by Lemma~\ref{lem9.1} $(c)$ and \eqref{eqn4.17} $(a)$, and equals $G$ when the continuous Green's function exists, by \eqref{eqn9.32} $(b)$.

\begin{proof}[Proof of Theorem \ref{thmC} $(b), (c)$]
Let $\de>0$. The sets $(E_1)_\de,(E_2)_\de$ are $p$-regular and $\la (E_1)_\de+(1-\la)(E_2)_\de=E_\de$ by Lemmas \ref{lem9.1} and \ref{lem9.6} $(a)$. Thus \eqref{eqn5.5} holds for these sets by Proposition \ref{prop7.3}.
If $1<p<n$ and $u_{1,\de},u_{2,\de},u_\de$ denote the $p$-capacitary functions for $(E_1)_\de,(E_2)_\de,E_\de$, then \eqref{eqn5.5} and \eqref{eqn5.4} give
\[
u_\de(\la y+(1-\la)z)\geq \min\{u_{1,\de}(y),u_{2,\de}(z)\}\quad \mbox{whenever}\, \, y,z\in\rn{n}.
\]
Letting $\de\to 0$ we get \eqref{eqn2.21}. To prove \eqref{eqn2.22} let $y,z\in\rn{n}$, $x=\la y+(1-\la)z$,  and $r>0$. If $\vartheta\in B(0,r)$ then $x+\vartheta=\la(y+\vartheta)+(1-\la)(z+\vartheta)$, so by \eqref{eqn2.21},
\[
\bar u_E(x+\vartheta)\geq \min\{\bar u_{E_1}(y+\vartheta),\bar u_{E_2}(z+\vartheta)\}\geq \min\{\inf_{B(y,r)}\bar u_{E_1},\inf_{B(z,r)}\bar u_{E_2}\}.
\]
Taking the infimum over $\vartheta\in B(0,r)$ and letting $r\to 0$ we obtain \eqref{eqn2.22}. If $p=n$ then \eqref{eqn5.5} for the Green's functions of $(E_1)_\de,(E_2)_\de,E_\de$, together with \eqref{eqn5.2} and \eqref{eqn5.4}, gives
\eqref{eqn2.23} with $\bar G_{E_1},\bar G_{E_2},\bar G_E$ replaced by these functions. Letting $\de\to 0$ we get \eqref{eqn2.23}.
\end{proof}
\noindent The inequality obtained by putting $\underline u_E$ on the left of \eqref{eqn2.21} and keeping $\bar u_{E_1},\bar u_{E_2}$ on the right can fail. For example, let $1<p<n$, $E_1=E_2=\bar B(0,1)\cup\{x_0\}$ with $|x_0|=3$ and $\la=1/2$. Then $x_0$ is an isolated point of $E$ and $\rn{n}\sem E$ is connected. Since $\{x_0\}$ has zero $p$-capacity, $\bar u_E$ extends $p$-harmonically across $x_0$ \cite[Theorem~7.36]{HKM}, and the strong maximum principle and Lemma~\ref{lem9.15} $(c)$ give $\underline u_E(x_0)<1=\bar u_{E_i}(x_0)$.

\section{Proof of equality in Theorem A}
\label{section:proofEquality}

We use Theorem~\ref{thmB}, the condenser estimate \eqref{eqn9.29},
Lemma~\ref{lem9.15} and Theorem~\ref{thmC}.

\subsection{Exterior functions and level sets}\label{subsec:levels}

Write $\Delta_p v=\nabla\cdot(|\nabla v|^{p-2}\nabla v)$.
Equations with measure data are understood weakly:
\[
-\Delta_p v=\mu\quad\Longleftrightarrow\quad
\int |\nabla v|^{p-2}\langle\nabla v,\nabla\varphi\rangle\,dy
=\int\varphi\,d\mu\quad(\varphi\in C_0^\infty).
\]
A set is $p$-polar if its relative $p$-capacity is zero locally,
and quasi-everywhere means outside a $p$-polar set. We use
quasi-continuous representatives $\tilde v$ for Sobolev functions
and lower semicontinuous representatives for $p$-superharmonic
functions, whose value at $x$ is
$\operatorname*{ess\,liminf}_{y\to x}v(y)$
\cite[Chapters~4 and~7, Theorem~7.22]{HKM}.
For a bounded open set $O$ and a compact set $K\subset O$,
\eqref{eqn2.15} has the equivalent Sobolev form
\cite[Chapters~2 and~4]{HKM}, \cite{KM}
\begin{equation}\label{eq:sobolev-capacity}
\operatorname{Cap}_p(K,O)
=\inf\Bigl\{\int_O|\nabla v|^p\,dx:
 v\in W^{1,p}_0(O),\ \tilde v\geq1
 \text{ quasi-everywhere on }K\Bigr\}.
\end{equation}
For $1<p<n$, the infimum in \eqref{eqn1.4} is unchanged if the
admissible class is replaced by the functions
$v\in L^{p^*}(\rn{n})$ with $\nabla v\in L^p(\rn{n})$ and
$\tilde v\geq1$ quasi-everywhere on $E$. Indeed, with the cutoffs
$\chi_R$ chosen before \eqref{eqn4.6}, $\chi_Rv$ is admissible in
\eqref{eq:sobolev-capacity} with $O=B(0,2R)$ and
$\nabla(\chi_Rv)\to\nabla v$ in $L^p$ by \eqref{eqn4.6}.
Smooth competitors give the reverse inequality.
For a compact set $E$ with $\Phi_p(E)>0$, put
\[
(W_E,m)=
\begin{cases}
(\bar u_E,1),&p<n,\\
(-\bar G_E,0),&p=n,\\
(-G_E,0),&p>n,
\end{cases}
\]
where the outer limits are those in \eqref{eqn2.19}.
Write $w_E$ for the lower semicontinuous regularization of $W_E$.

\begin{lemma}\label{lem:equality-functions}
The function $w_E$ is locally bounded and $p$-superharmonic on
$\rn{n}$, belongs to $W^{1,p}_{\mathrm{loc}}(\rn{n})$, and is
$p$-harmonic off $E$. Moreover, $W_E\geq w_E$, the two functions
agree quasi-everywhere, $W_E=m$ on $\hat E$, and $w_E=m$
quasi-everywhere on $E$. The positive Riesz measure
$\mu_E=-\Delta_p w_E$ is supported on $E$.
\begin{enumerate}
\item[(a)] If $1<p<n$, then $w_E\in L^{p^*}(\rn{n})$, $\nabla w_E\in L^p(\rn{n})$,
$\int|\nabla w_E|^p\,dx=\capp(E)$, and $w_E$ is the unique function
with these integrability properties that minimizes the $p$-energy among
functions at least one quasi-everywhere on $E$.
\item[(b)] If $p=n$, $E_\delta\subset B(0,\rho_0)$ for $0<\delta\leq1$, and
$L\geq2\rho_0$, then $0\leq G_\delta\leq \Lambda_L$ on $B(0,L)$ for
$0<\delta\leq1$, where $G_\delta$ is the Green's function of
$E_\delta$ and $\Lambda_L$ is independent of $\delta$.
\item[(c)] For $p\geq n$,
\begin{equation}\label{eq:equality-green-asymptotic}
-w_E(x)=F(x)+k_E+o(1)\quad\hbox{as }|x|\to\infty,
\end{equation}
where $k_E=-\gamma\log\overline{\mathcal C}_n(E)$ if $p=n$,
and $k_E=k(\infty)$ in \eqref{eqn2.10} if $p>n$.
\end{enumerate}
\end{lemma}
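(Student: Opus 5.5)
The plan is to treat $W_E$ in every case as a monotone limit of the continuous exterior functions of the $p$-regular sets $E_\delta$, and to read off the superharmonic and Sobolev properties from standard limit theorems in \cite{HKM}. For $p<n$, $W_E=\bar u_E$ is the decreasing limit of $u_\delta$. Each $u_\delta$ is $p$-superharmonic on $\rn{n}$ (proof of Lemma~\ref{lem4.1}) and has energy $\capp(E_\delta)\to\capp(E)$ (Lemma~\ref{lem9.2}). For $p=n$, $-W_E=\bar G_E$ is the increasing limit of $G_\delta$, so $W_E$ is the decreasing limit of the $n$-superharmonic functions $-G_\delta$. This holds because $G_\delta$ is $n$-subharmonic on $\rn{n}$, by the same comparison argument with $G_\delta=0$ on $E_\delta$ and $G_\delta\geq0$. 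For $p>n$, $E$ itself is $p$-regular and $W_E=-G_E$ is continuous. By Lemma~\ref{lem9.15}(e) and the $p>n$ analogue, $W_E$ is a reduced function, that is, an infimum of a family of $p$-superharmonic functions. Then \cite[Chapters~7--8]{HKM} (the fundamental convergence theorem) gives that its lsc regularization $w_E$ is $p$-superharmonic, satisfies $w_E\leq W_E$, and agrees with $W_E$ outside a $p$-polar set. Off $E$, the convergence is locally uniform in $C^1$ by Lemma~\ref{lem9.15}(b) and the analogous argument of Lemma~\ref{lem9.13}. Hence $W_E$ is already continuous and $p$-harmonic there, so $w_E=W_E$ off $E$. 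It follows that the Riesz measure $\mu_E=-\Delta_p w_E$, which exists by \cite[Theorem~21.2]{HKM}, is supported on $E$. The identity $W_E=m$ on $\hat E$ comes from Lemma~\ref{lem9.15}(a) and the remark after its proof.

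Local boundedness for $p<n$ is the bound $0\leq w_E\leq1$. For $p=n$ it is part (b), which I would prove first. By \eqref{eqn9.29} applied to $E_\delta$ and the Harnack chain used in the proof of Theorem~\ref{thmB}(d), $\sup_{\partial B(0,2\rho_0)}G_\delta\leq c\,\kappa^{-1/(n-1)}$ with $\kappa=\mathrm{Cap}_n(E,B(0,2\rho_0))$. Here $\kappa>0$ because $\Phi_n(E)>0$, by Theorem~\ref{thmB}(d). Comparison with $c\,\kappa^{-1/(n-1)}+F-F(2\rho_0)$ outside $B(0,2\rho_0)$, together with the maximum principle inside, then gives the $\delta$-independent bound $\Lambda_L$. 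Membership in $W^{1,p}_{\mathrm{loc}}$ follows because a locally bounded $p$-superharmonic function is a Sobolev supersolution \cite[Corollary~7.20]{HKM}. The statement $w_E=m$ quasi-everywhere on $E$ then follows from $W_E=m$ on $E$ and quasi-everywhere equality.

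For (a), uniform bounds $\|\nabla u_\delta\|_p^p=\capp(E_\delta)$ and $\|u_\delta\|_{p^*}\leq c$ (from \eqref{eqn4.5}) give weak limits. Combined with Fatou's lemma they give $\int|\nabla w_E|^p\leq\capp(E)$ and $w_E\in L^{p^*}$. For the reverse inequality, $w_E$ is admissible in the Sobolev form of capacity stated before the lemma, since it equals $1$ quasi-everywhere on $E$. Uniqueness of the minimizer follows from strict convexity of $\int|\nabla v|^p$ on this admissible class, which is convex, combined with vanishing at infinity in $L^{p^*}$. I expect the delicate point to be that admissibility requires the quasi-continuous representative to be at least $1$ quasi-everywhere. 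This is where the quasi-everywhere identification of $w_E$ with $W_E$ is essential.

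For (c), the case $p>n$ is \eqref{eqn2.9}(c) together with Lemma~\ref{lem4.2}(b). For $p=n$, let $k_\delta=G_\delta-F$. By Lemma~\ref{lem:radial-limits}(b), $m_\delta(r)\leq k_\delta(\infty)\leq M_\delta(r)$, and $k_\delta(\infty)\to-\gamma\log\bar{\mathcal C}_n(E)$ by \eqref{eqn2.14}. Letting $\delta\to0$ with locally uniform convergence on $\partial B(0,r)$ gives $m(r)\leq k_E\leq M(r)$ for $\bar G_E-F$. Here $\bar G_E$ is finite and $n$-harmonic outside $B(0,\rho_0)$ by (b), and $\bar G_E-F$ is bounded. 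Lemma~\ref{lem:radial-limits}(b) then shows that $m(r)$ and $M(r)$ converge to the same limit as $r\to\infty$, and this limit must be $k_E$, which gives \eqref{eq:equality-green-asymptotic}. The main obstacle overall is the first step: invoking the fundamental convergence theorem correctly for decreasing limits, and showing that $w_E$ loses nothing of $W_E$ off a polar set.
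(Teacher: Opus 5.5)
Your proposal is correct and follows the paper's proof essentially step for step: the fundamental convergence theorem applied to the monotone limits of the exterior functions of $E_\delta$ (after securing the uniform bound (b) for $p=n$ via \eqref{eqn9.29} and a Harnack chain), weak lower semicontinuity plus admissibility and strict convexity for (a), and \eqref{eqn9.16} for the constant at infinity in (c). The only differences are minor simplifications. For $p=n$ you obtain $W^{1,n}_{\mathrm{loc}}$ directly from \cite[Corollary~7.20]{HKM} once (b) gives local boundedness, instead of the paper's Caccioppoli argument. You also extend the sphere bound in (b) by the comparison \eqref{eqn9.16} $(i)$ rather than by further Harnack chains; both variants are valid.
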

\begin{proof}
For $p<n$, Lemma~\ref{lem9.15} identifies $W_E$ with the reduced
function. The fundamental convergence theorem
\cite[Theorem~8.2]{HKM}, applied to $u_{\delta_j}$ with
$\delta_j\downarrow0$, gives a $p$-superharmonic regularization
$w_E=\underline u_E$ with $w_E=W_E$ quasi-everywhere, so $w_E=1$
quasi-everywhere on $E$. By Lemma~\ref{lem9.2} the energies of
$u_{\delta_j}$ decrease to $\capp(E)$, so a subsequence converges
weakly, with gradients in $L^p$ and functions in $L^{p^*}$, and the
limit equals $w_E$ almost everywhere since $0\leq u_{\delta_j}\leq1$ decrease to $W_E$.
Weak lower semicontinuity and the admissibility of $w_E$ give
$\int|\nabla w_E|^p=\capp(E)$, so $w_E$ is the variational
capacitary function \cite[Theorem~9.35]{HKM}. Strict convexity of
the energy makes the minimizing gradient unique, and $L^{p^*}$
integrability fixes the constant. Being locally bounded, $w_E$ is a
weak supersolution \cite[Corollary~7.20]{HKM}, and it is
$p$-harmonic off $E$ by Lemma~\ref{lem9.15} $(b)$.
For $p>n$, Lemmas~\ref{lem9.6} $(b)$ and~\ref{lem4.2} give the
continuous Green's function $G_E$. Its negative, extended by zero on
$E$, is $p$-superharmonic on $\rn{n}$ by the comparison argument in
Lemma~\ref{lem4.1}, with maximum zero, and \cite[Corollary~7.20]{HKM}
gives $-G_E\in W^{1,p}_{\mathrm{loc}}(\rn{n})$. The remaining
assertions follow from Lemma~\ref{lem4.2} and \eqref{eqn9.32}.

Let $p=n$ and choose $\rho_0$ with $E_\delta\subset B(0,\rho_0)$
for $0<\delta\leq1$. Theorem~\ref{thmB} $(d)$ gives
$\kappa=\capp(E,B(0,2\rho_0))>0$. A finite Harnack chain outside
$B(0,\rho_0)$ \cite[Theorem~6.2]{HKM} and \eqref{eqn9.29} give
\[
\sup_{\partial B(0,2\rho_0)}G_\delta
\leq c\inf_{\partial B(0,2\rho_0)}G_\delta
\leq c\,\operatorname{Cap}_n(E_\delta,B(0,2\rho_0))^{-1/(n-1)}
\leq c\,\kappa^{-1/(n-1)}.
\]
For fixed $L\geq2\rho_0$, further Harnack chains in
$\{3\rho_0/2<|x|<2L\}$ bound $G_\delta$ on $\partial B(0,L)$
uniformly in $\delta$, and the maximum principle gives
$0\leq G_\delta\leq \Lambda_L$ on $B(0,L)$.

As for $p>n$, $-G_\delta$ is $n$-superharmonic on $\rn{n}$ and a
weak supersolution. Caccioppoli's inequality
\cite[Lemma~3.27]{HKM}, applied to $-G_\delta$, gives, for
$0\leq\zeta\in C_c^\infty(B(0,2L))$,
\[
\int\zeta^n|\nabla G_\delta|^n\,dx
\leq c\int G_\delta^n|\nabla\zeta|^n\,dx
\leq c\Lambda_{2L}^n\int|\nabla\zeta|^n\,dx.
\]
For $\delta_j\downarrow0$, the functions $-G_{\delta_j}$ decrease to
$W_E$ and are locally uniformly bounded below. By
\cite[Theorem~8.2]{HKM} their infimum has an $n$-superharmonic
regularization $w_E$ with $w_E=W_E$ quasi-everywhere, so $w_E=0$
quasi-everywhere on $E$. The bounded pointwise convergence is strong
in $L^n$ on balls, so the local energy bounds give
$w_E\in W^{1,n}_{\mathrm{loc}}$. Local uniform convergence off $E$
gives $n$-harmonicity there.

Put $r=2\rho_0$. By \eqref{eqn9.16} $(i)$ and the sphere bound,
$-F(r)\leq G_\delta(x)-F(x)\leq \Lambda_r-F(r)$ for $|x|\geq r$,
uniformly in $\delta$. Thus $\bar G_E-F$ is bounded near infinity,
and Lemma~\ref{lem:radial-limits} gives a finite limit $k_E$.
For each large fixed sphere, \eqref{eqn9.16} $(ii)$ gives
$\inf_{\partial B(0,R)}(G_\delta-F)\leq k_\delta(\infty)\leq k_E$.
Letting first $\delta\downarrow0$, using local uniform convergence
off $E$, and then $R\to\infty$, we get $k_\delta(\infty)\to k_E$,
which identifies $k_E$ by \eqref{eqn2.10} and \eqref{eqn2.14}.
The assertions about the Riesz measures in all ranges follow from
\cite[Theorem~21.2]{HKM}.
\end{proof}

We use the translation invariance, homogeneity of degree one,
monotonicity, and filling invariance of $\Phi_p$ established in
Lemma~\ref{lem:filling-invariance}.
For $0<t<1$ if $p<n$, and for $t<0$ if $p\geq n$, put
$K_E(t)=\{x:W_E(x)\geq t\}$. These compact level sets contain
$E$ and decrease with $t$.

\begin{lemma}\label{lem:regular-point-limit}
Let $\Phi_p(E)>0$ and let $x_0\in\partial\hat E$ be regular
in the sense of Definition~\ref{defn9.5}. Then
$\lim_{x\to x_0}W_E(x)=m$. Consequently, for every $T$ in the level range defined above,
a neighbourhood of $x_0$ lies in $K_E(T)$.
\end{lemma}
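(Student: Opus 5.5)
Since $W_E\leq m$ everywhere and $W_E=m$ on $\hat E$, it suffices to show $\liminf_{x\to x_0}W_E(x)\geq m$. The natural route is a comparison with a local Dirichlet problem in which $x_0$ is a regular boundary point. Fix $R$ with $\hat E_\delta\Subset B(0,R)$ for $0<\delta\leq1$, and take $v$ to be the Sobolev (Perron) solution in $O=B(0,R)\setminus\hat E$ with boundary datum $0$ on $\partial\hat E$ and a constant on $\partial B(0,R)$. The lower bound for $W_E$ will then follow by comparing $v$ with the approximants $u_\delta$ or $G_\delta$ on $B(0,R)\setminus E_\delta$ and letting $\delta\downarrow0$.

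\emph{Case $1<p<n$.} On $\partial B(0,R)$ the approximants satisfy $u_\delta\geq \inf_{\partial B(0,R)}\bar u_E=:\beta$. I expect $\beta\geq0$, and if $\beta=0$ we may replace it by any $\beta'<\beta$, so the sign of $\beta$ is harmless. Let $h$ be the solution in $O$ with datum $1$ on $\partial\hat E$ and $\beta$ on $\partial B(0,R)$. By the sufficiency part of the Wiener criterion \cite[Theorem~6.27]{HKM}, as recalled after Definition~\ref{defn9.5}, $h(x)\to1$ as $x\to x_0$ within $O$. We then compare $u_\delta$ with $h$ in $O\setminus E_\delta$. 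This is the delicate point: $h$ is not continuous up to all of $\partial\hat E$, so the continuous comparison principle cannot be applied directly. The cleaner route is through Lemma~\ref{lem9.15}(e). Since $u_\delta$ is $p$-superharmonic, $u_\delta\geq1$ on $\hat E$ (indeed $u_\delta=1$ on $\widehat{E_\delta}\supset\hat E$) and $u_\delta\geq\beta$ on $\partial B(0,R)$, the function $u_\delta$ is an admissible upper Perron function for the datum of $h$, so $u_\delta\geq h$ in $O$. Letting $\delta\to0$ gives $\bar u_E\geq h$ in $O$, and $\bar u_E=1$ on $\hat E$. Hence $\liminf_{x\to x_0}W_E\geq1=m$.

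\emph{Case $p\geq n$.} When $p>n$, $G_E$ is continuous and vanishes on $E\supset\partial\hat E$, so the claim is immediate. When $p=n$, Lemma~\ref{lem:equality-functions}(b) gives $0\leq G_\delta\leq\Lambda_R$ on $B(0,R)$ uniformly in $\delta$. The function $-G_\delta$ is then $n$-superharmonic, equals $0$ on $\hat E$, and is at least $-\Lambda_R$ on $\partial B(0,R)$. So $-G_\delta$ is an upper Perron function for the datum $0$ on $\partial\hat E$ and $-\Lambda_R$ on $\partial B(0,R)$, and the Perron solution $h$ tends to $0$ at the regular point $x_0$. Therefore $-\bar G_E\geq h$ in $O$, and $\liminf_{x\to x_0}W_E\geq0=m$.

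The main obstacle is justifying the Perron/Wiener step: that regularity in the sense of Definition~\ref{defn9.5} gives $h(x)\to$ boundary value at $x_0$ for the Perron solution in $O$, and that each $u_\delta$ (respectively $-G_\delta$) belongs to the upper class. For the latter, lower semicontinuity and the boundary inequalities $\liminf\geq$ datum at every boundary point of $O$ must hold. This is clear at points of $\partial\hat E$, where $u_\delta\equiv1$ on a neighbourhood of $\hat E$ inside $\widehat{E_\delta}$, and on the outer sphere by continuity. Resolutivity of continuous data \cite[Chapter~9]{HKM} then identifies the Perron solution with the Sobolev solution, to which the Wiener criterion applies.

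For the consequence, given $T<m$ in the level range, choose $r>0$ with $W_E>T$ on $B(x_0,r)\setminus\hat E$. Since $W_E=m>T$ on $\hat E$, the whole ball $B(x_0,r)$ lies in $K_E(T)$.
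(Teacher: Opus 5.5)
Your proof is correct and follows the paper's argument. Like the paper, you take a Dirichlet solution $h$ in $B(0,R)\setminus\hat E$ that attains its boundary value at the regular point $x_0$ by the Wiener criterion, compare it with $u_\delta$ (respectively $-G_\delta$, using the uniform bound $\Lambda$ from Lemma~\ref{lem:equality-functions} when $p=n$), and let $\delta\downarrow0$. The only difference is that the paper avoids Perron classes: it compares on $B(0,R)\setminus\widehat{E_\delta}$, whose inner boundary lies at distance at least $\delta$ from $\hat E$ because $(\hat E)_\delta\subset\widehat{E_\delta}$, so $h$ is continuous there and the ordinary comparison principle suffices. The same works on your $O\setminus E_\delta$, since $\partial\hat E\subset E\subset\operatorname{int}E_\delta$, so the ``delicate point'' you flag does not actually arise.
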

\begin{proof}
For $p>n$, this follows from continuity of $G_E$.
Let $1<p\leq n$, choose $\rho_0$ with
$\widehat{E_\delta}\subset B(0,\rho_0)$ for $0<\delta\leq1$, put
$R=2\rho_0$, and put
$D=B(0,R)\setminus\hat E$. Let $h$ be the variational Dirichlet
solution in $D$ with $h-\psi\in W^{1,p}_0(D)$, where $\psi$ is
smooth, zero near $\hat E$ and one near $\partial B(0,R)$. Then
$0\leq h\leq1$, $h\to1$ at the outer sphere, and $h(x)\to0$ as
$D\ni x\to x_0$ by the Wiener criterion \cite[Theorem~6.27]{HKM}.
For $p<n$ put $v_\delta=1-u_\delta$ and $\Lambda=1$, and for $p=n$
put $v_\delta=G_\delta$ with the bound $v_\delta\leq\Lambda$ on
$B(0,R)$ from Lemma~\ref{lem:equality-functions}. In
$D_\delta=B(0,R)\setminus\widehat{E_\delta}$ we have
$v_\delta=0\leq\Lambda h$ on the inner boundary and
$v_\delta\leq\Lambda=\Lambda h$ on the outer sphere. By
\eqref{eq:filled-neighbourhoods}, $(\hat E)_\delta\subset\widehat{E_\delta}$,
so $h$ is continuous on the inner boundary and the comparison gives
$v_\delta\leq\Lambda h$ in $D_\delta$, and trivially on
$D\cap\widehat{E_\delta}$. The outer limit gives
$0\leq m-W_E\leq\Lambda h$ in $D$, which with $W_E=m$ on $\hat E$
proves the claim.
\end{proof}

The following is a special case of
\cite[Lemma~7.37]{HKM}.
Let $D$ be bounded, let $u$ be bounded and $p$-superharmonic
in $D$, and let $v$ be bounded and $p$-subharmonic there. If
$\limsup_{y\to x}v(y)\leq\liminf_{y\to x}u(y)$ for
$x\in\partial D\setminus P$, where $P$ is $p$-polar and at least one function belongs to
$W^{1,p}(D)$, then $v\leq u$ in $D$. Indeed, a $p$-polar set has
zero capacity relative to $D$ in the sense of \cite[p.~144]{HKM}.
In the applications below, both functions are restrictions to the
bounded set $D$ of locally bounded $W^{1,p}_{\mathrm{loc}}(\rn{n})$
functions, so both belong to $W^{1,p}(D)$.

\begin{lemma}\label{lem:levels}
If $1<p<n$, then
\begin{equation}\label{eq:levelcapacity}
\capp(K_E(t))=t^{1-p}\capp(E),\qquad
\underline u_{K_E(t)}=\min\{\underline u_E/t,1\}
\end{equation}
for the lower semicontinuous representatives. Each level set $K_E(t)$ is filled.
For $p\geq n$, suppose that
$\partial K_E(t)\cap E=\varnothing$. Then $K_E(t)$ is filled,
and its normalized Green's function is $t-w_E$ outside it,
extended by zero on it. Thus
\begin{equation}\label{eq:levelgreen}
k_{K_E(t)}=k_E+t.
\end{equation}
\end{lemma}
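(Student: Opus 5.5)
For $1<p<n$ the plan is to identify the capacitary function of $K=K_E(t)$ by a variational argument. Put $v=\min\{w_E/t,1\}$. By Lemma~\ref{lem:equality-functions}$(a)$ it lies in $L^{p^*}$ with gradient in $L^p$. Since $W_E\geq t$ on $K$ and $W_E=w_E$ quasi-everywhere, we have $\tilde v=1$ quasi-everywhere on $K$.

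Its energy is $t^{-p}\int_{\{w_E<t\}}|\nabla w_E|^p$. Testing the equation $-\Delta_p w_E=\mu_E$ with $\min\{w_E,t\}-t\,w_E$ (cut off by $\chi_R$, with the tails handled by \eqref{eqn4.6}) mimics \eqref{eqn9.19}. Because $\mu_E$ lives on $E$, where $w_E=1$ quasi-everywhere, this gives $\int_{\{w_E<t\}}|\nabla w_E|^p=t\,\capp(E)$. Hence the energy of $v$ is $t^{1-p}\capp(E)$, and $\capp(K)\leq t^{1-p}\capp(E)$.

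For the reverse inequality, take any admissible competitor $\psi$ for $K$. Then $\min\{t\psi,1\}$ on $K$, glued with $w_E$ outside $K$, is admissible for $E$. More cleanly: $t\,\underline u_K$ is $p$-superharmonic, nonnegative, and at least $t$ on $K$. By the minimum principle in $\rn{n}\sem K$, where $w_E<t$, comparison gives $w_E\leq t\,\underline u_K$ off $K$. Combining this with the uniqueness of the minimizer in Lemma~\ref{lem:equality-functions}$(a)$, applied to $K$, shows that $v$ is the minimizer for $K$. That yields both the capacity identity and $\underline u_{K}=\min\{\underline u_E/t,1\}$. Here one uses that the lower semicontinuous regularization commutes with $\min\{\cdot/t,1\}$.

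Filledness follows from the maximum principle. In a bounded component $U$ of $\rn{n}\sem K$, the function $W_E<t$ is $p$-harmonic off $E\subset K$, so it is $p$-harmonic in $U$. Its boundary values on $\partial U\subset K$ are $\geq t$ in the liminf sense; this uses the upper semicontinuity of $W_E$ and $W_E\geq t$ on $K$. The comparison principle quoted from \cite[Lemma~7.37]{HKM} then gives $W_E\geq t$ in $U$, a contradiction. Hence $U$ is empty.

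For $p\geq n$, assume $\partial K\cap E=\varnothing$. Then $W_E=-\bar G_E$ (or $-G_E$) is continuous near $\partial K$ and $p$-harmonic there, with $W_E=t$ on $\partial K$. This holds because $W_E$ is continuous off $E$, by local uniform convergence, and because of the definition of $K$. Filledness follows exactly as above, using $W_E\to-\infty$ at infinity so that only bounded components need to be treated.

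Define $G_K=t-w_E$ on $\rn{n}\sem K$ and $G_K=0$ on $K$. This function is continuous: at points of $\partial K$ we have $w_E=W_E=t$ by continuity there. It is $p$-harmonic in $\rn{n}\sem K$, since that set avoids $E$. By \eqref{eq:equality-green-asymptotic} it equals $F+k_E+t+o(1)$ at infinity. So $G_K$ satisfies \eqref{eqn2.9}, and uniqueness (Lemma~\ref{lem9.13}) identifies it as the Green's function of $K$ with $k_K(\infty)=k_E+t$. This is \eqref{eq:levelgreen}.

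The main obstacle I expect is the $p<n$ identification. Two technical points need care there: showing $\mu_E$ integrates $\min\{w_E,t\}$ as $t\,\mu_E(E)$, which requires quasi-continuity of $w_E$ and the fact that $\mu_E$ charges no $p$-polar sets; and the truncation at infinity. Together with $\mu_E(E)=\capp(E)$, which is proved as in \eqref{eqn4.9}, these yield the exact energy identity needed.
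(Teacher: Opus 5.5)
Your $p\geq n$ argument and, for $p<n$, the energy identity $\int_{\{w_E<t\}}|\nabla w_E|^p\,dx=t\,\capp(E)$ with the resulting bound $\capp(K)\leq t^{1-p}\capp(E)$ are essentially the paper's. For $p\geq n$, filling is easy because $\partial U\subset\partial K$ misses $E$. Here $K=K_E(t)$ and $U$ is a bounded component of $\rn{n}\setminus K$.

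The filling argument for $p<n$, however, has a genuine gap. Upper semicontinuity of $W_E$ controls $\limsup$, not $\liminf$. So it gives no lower bound for $\liminf_{U\ni y\to x}W_E(y)$ at points $x\in\partial U\cap E$. The lower semicontinuous representative does not help either, since $w_E(x)<t$ is possible for $x\in E$ (e.g.\ at an isolated point of $E$ when $t$ is close to $1$). Because \cite[Lemma~7.37]{HKM} tolerates only a $p$-polar exceptional set, you must show that the boundary points where the inequality may fail form a polar set. The missing step runs as follows:
\begin{itemize}
\item $\hat E\subset K$ gives $U\subset\Omega_E$, hence $\partial U\cap E\subset\partial\Omega_E$.
\item By Lemma~\ref{lem:regular-point-limit}, a regular point of $\partial\Omega_E$ has a whole neighbourhood inside $K$, so it cannot lie on $\partial U$.
\item Thus $\partial U\cap E$ consists of irregular points, which form a polar set by the Kellogg property \cite[Theorem~9.11]{HKM}.
\end{itemize}
Only with this, together with continuity at $\partial U\setminus E$, is the comparison with the constant $t$ justified.

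The reverse inequality $\capp(K)\geq t^{1-p}\capp(E)$ is also not established.
\begin{itemize}
\item Your glued competitor is not admissible for $E$, since on $E$ it is only at least $t$.
\item In the comparison, $t\,\underline u_K\geq t$ can fail at irregular points of $\partial\Omega_K$, for instance at an isolated point of $K$. So that step again needs the Kellogg property and the polar-exception comparison on $B(0,R)\setminus K$.
\item More importantly, $w_E\leq t\,\underline u_K$ off $K$ does not by itself make $v$ a minimizer. Uniqueness of the minimizer can be invoked only after minimality is known.
\end{itemize}
You could close this by comparing coefficients at infinity via Lemma~\ref{lem9.15} $(c)$. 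That turns the pointwise bound into $\capp(E)^{1/(p-1)}\leq t\,\capp(K)^{1/(p-1)}$.

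The paper avoids comparison altogether. The function $v=\min\{w_E/t,1\}$ is $p$-superharmonic and $p$-harmonic off $K$, so its Riesz measure lives on $K$. Any admissible competitor truncated to $0\leq z\leq1$ equals $v$ quasi-everywhere on $K$. So \eqref{eq:whole-space-test} gives $\int|\nabla v|^{p-2}\langle\nabla v,\nabla(z-v)\rangle\,dx=0$, and convexity of the energy gives $\int|\nabla z|^p\,dx\geq\int|\nabla v|^p\,dx$. Hence $v$ is the minimizer, and uniqueness yields both identities in \eqref{eq:levelcapacity}.
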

\begin{proof}
Let $p<n$. If $f$ is $p$-superharmonic with $\nabla f\in L^p$ and
Riesz measure $\mu_f$ supported in $B(0,R_0)$, and $\psi\in L^{p^*}$
has $\nabla\psi\in L^p$, then
\begin{equation}\label{eq:whole-space-test}
\int|\nabla f|^{p-2}\langle\nabla f,\nabla\psi\rangle\,dx
=\int\tilde\psi\,d\mu_f.
\end{equation}
Indeed, $\mu_f$ is a continuous functional on the energy space and
does not charge polar sets, so we may test with $\chi_R\psi$, where
$\chi_R$ is the cutoff chosen before \eqref{eqn4.6} and
$R\geq R_0$. With $A_R=B(0,2R)\setminus B(0,R)$, H\"older's
inequality and \eqref{eqn4.6} bound the cutoff term by
$c\|\nabla f\|_{L^p(A_R)}^{p-1}\|\psi\|_{L^{p^*}(A_R)}\to0$.

Write $u=w_E$ and $\mu=-\Delta_pu$. Applying
\eqref{eq:whole-space-test} with $\psi=u$ and with
$\psi=\min(u,t)$, both one or $t$ quasi-everywhere on $E$, gives
$\mu(E)=\capp(E)$ and
\[
\int_{\{u<t\}}|\nabla u|^p\,dx
=\int\min(u,t)\,d\mu=t\,\operatorname{Cap}_p(E).
\]
The sets $K_E(t)$ and $\{u\geq t\}$ differ only on the polar
subset of $E$ where $W_E\ne u$. Hence $v=\min(u/t,1)$ is admissible
for $K_E(t)$ in the class of functions on $\rn{n}$ specified above. It is
$p$-superharmonic and $p$-harmonic outside $K_E(t)$. Truncate any
admissible competitor to a function $z$ with $0\leq z\leq1$. Then
$z-v=0$ quasi-everywhere on $K_E(t)$, and
\eqref{eq:whole-space-test} with $f=v$ and $\psi=z-v$ gives
$\int|\nabla v|^{p-2}\nabla v\cdot\nabla(z-v)\,dx=0$. Convexity of
the energy gives $\int|\nabla z|^p\geq\int|\nabla v|^p$, so $v$ is the
capacitary minimizer for $K_E(t)$, and
\[
\operatorname{Cap}_p(K_E(t))
=\int|\nabla v|^p\,dx
=t^{-p}\int_{\{u<t\}}|\nabla u|^p\,dx
=t^{1-p}\operatorname{Cap}_p(E).
\]
This proves \eqref{eq:levelcapacity}.

For filling, let $D$ be a bounded component of the complement of
$K_E(t)$. Since $\hat E\subset K_E(t)$, $D\subset\Omega_E$ and
$\partial D\cap E\subset\partial\Omega_E$. At points of
$\partial D\setminus E$, $w_E$ has boundary lower limit at least $t$
by continuity. A point of $\partial D\cap E$ regular for $\Omega_E$
would be interior to $K_E(t)$ by Lemma~\ref{lem:regular-point-limit},
and the irregular points form a polar set by the Kellogg property
\cite[Theorem~9.11]{HKM}. The comparison above, with the constant
$t\in W^{1,p}(D)$, gives $w_E\geq t$ in
$D$, contradicting $W_E=w_E<t$ there.

For $p\geq n$, under the stated separation, $w_E=t$ on
$\partial K_E(t)$ by continuity off $E$. On a bounded component of the
complement, $t-w_E$ would be positive, $p$-harmonic, continuous and
zero on the boundary, contrary to the maximum principle. Hence
$K_E(t)$ is filled. In its exterior $t-w_E$ is positive and
$p$-harmonic, continuous and zero on the boundary, and equals
$F+k_E+t+o(1)$ at infinity. Its zero extension satisfies
\eqref{eqn2.9}, so by exterior uniqueness it is the normalized Green's
function, and its constant term gives \eqref{eq:levelgreen}.
\end{proof}

\begin{lemma}\label{lem:convex-trace}
Suppose $K_E(T)=C$ is convex with nonempty interior and $T<m$.
Then $w_E=T$ at every point of $\partial C$, its exterior values
tend continuously to $T$ there, and $w_E\geq T$ throughout
$\operatorname{int}C$. In the exterior of $C$, $w_E=Tu_C$ if $p<n$
and $w_E=T-G_C$ if $p\geq n$.
\end{lemma}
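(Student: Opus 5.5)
The plan is to read off the boundary behaviour from the definition $K_E(T)=\{W_E\geq T\}$, then identify $w_E$ in the exterior of $C$ by exterior uniqueness (Lemma~\ref{lem4.1} for $p<n$, Lemma~\ref{lem4.2} and \eqref{eqn2.9} for $p\geq n$). Since $C$ is convex with nonempty interior, it satisfies the corkscrew condition and is therefore $p$-regular by Lemma~\ref{lem9.6} $(a)$. Hence $u_C$ and $G_C$ exist and are continuous.

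First I handle the interior and the exterior. Because $E\subset K_E(T)=C$, we have $\rn{n}\sem C\subset\rn{n}\sem E$, where $W_E=w_E$ is $p$-harmonic and continuous by Lemma~\ref{lem:equality-functions}. So $w_E<T$ on the open set $\rn{n}\sem C$. On $\operatorname{int}C$, $W_E\geq T$ pointwise, and $w_E$ is the lower semicontinuous regularization; the essential liminf over an open set on which $W_E\geq T$ gives $w_E\geq T$ there.

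Next I treat a point $x\in\partial C$. Points of $\partial C$ off $E$ lie in the open set where $W_E$ is continuous. There $W_E(x)\geq T$ because $x\in C$, while approach from outside gives $W_E(x)\leq T$. Hence $W_E=w_E=T$ at such $x$, with continuous exterior limits.

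The main obstacle is a point $x_0\in\partial C\cap E$. Such a point lies in $\partial\hat E$, since $\hat E\subset C$ by Lemma~\ref{lem:levels} or directly. I would argue as follows.
\begin{itemize}
\item If $x_0$ is regular, Lemma~\ref{lem:regular-point-limit} gives $W_E\to m>T$ near $x_0$. Then a neighbourhood of $x_0$ lies in $K_E(T')$ for some $T'>T$, so $x_0$ is interior to $C$, which is a contradiction. Thus such points are irregular, which is impossible when $p>n$.
\item For the values at irregular points I would use convexity. The point $x_0$ is a limit of exterior points $x_j\to x_0$ with $w_E(x_j)<T$, and of interior points where $w_E\geq T$, since $C$ is the closure of its interior.
\item Lower semicontinuity gives $w_E(x_0)\leq\liminf w_E(x_j)\leq T$.
\item For the reverse inequality I use \eqref{eq:superharmonic-representative}: $w_E(x_0)=\operatorname{ess\,liminf}$. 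Near a boundary point of a convex body the interior cone has positive density. The exterior also has positive density, and there I need the exterior values to tend to $T$.
\end{itemize}

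To get those exterior limits I would first identify the exterior function and then use regularity of $C$. Define $v=T u_C$ for $p<n$, or $v=T-G_C$ for $p\geq n$. Compare $w_E$ with $v$ in $\rn{n}\sem C$, using the comparison principle \cite[Lemma~7.37]{HKM} stated before Lemma~\ref{lem:levels}, with the exceptional polar set $P$ consisting of the irregular points of $E$ on $\partial C$, which are polar by Kellogg.
\begin{itemize}
\item $w_E\leq T$ everywhere near $C$, so $\limsup w_E\leq T=v$ at $\partial C$. This gives $w_E\leq v$ at infinity by the asymptotics.
\item At boundary points not in $P$, the previous step gives $\liminf w_E\geq T=v$, which gives $w_E\geq v$.
\item At infinity I use truncated domains $B(0,R)\sem C$ and the decay $w_E\to0$ for $p<n$. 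For $p\geq n$ I use the $\theta$-trick of Proposition~\ref{prop7.3} together with the asymptotics \eqref{eq:equality-green-asymptotic}.
\end{itemize}
This yields $w_E=v$ off $C$. Continuity of $v$ up to $\partial C$ then gives that the exterior values tend continuously to $T$ at every boundary point, including those in $E$.

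Combining the exterior limit $T$ with $w_E\geq T$ on the interior cone, the essential liminf at $x_0$ equals $T$. So $w_E(x_0)=T$ on all of $\partial C$, which completes the proof.
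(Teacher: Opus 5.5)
Your proposal is correct and follows the paper's proof in all essentials. You exclude regular points of $E\cap\partial C$ via Lemma~\ref{lem:regular-point-limit} and the Kellogg property, identify the exterior function by the polar-boundary comparison in $B(0,R)\setminus C$ (with the $(1\pm\varepsilon)$ argument at infinity for $p\geq n$), and then obtain the value $T$ on $\partial C$ from \eqref{eq:superharmonic-representative}, using the exterior limit together with $w_E\geq T$ inside $C$. The only deviation is for $p<n$: the paper reads off $w_E=Tu_C$ from \eqref{eq:levelcapacity}, whereas your direct two-sided comparison using decay at infinity works equally well and treats all $p$ uniformly.
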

\begin{proof}
At points of $\partial C\setminus E$, continuity gives boundary
value $T$. Since $\hat E\subset C$, points of $E\cap\partial C\subset\partial\Omega_E$ are
irregular for $\Omega_E$ by Lemma~\ref{lem:regular-point-limit}, so
they form a polar set \cite[Theorem~9.11]{HKM}.

For $p<n$, \eqref{eq:levelcapacity} gives $w_E=Tu_C$
outside $C$, since $C$ is regular and $u_C<1$ there.

For $p\geq n$, the positive functions $H=T-w_E$ and $G_C$ have the
same leading term $F$ at infinity, so for $0<\varepsilon<1$,
$(1-\varepsilon)G_C\leq H\leq(1+\varepsilon)G_C$ on every large
sphere $\partial B(0,R)$. The polar boundary comparison above in
$B(0,R)\setminus C$, in both directions and with
$G_C\in W^{1,p}(B(0,R)\setminus C)$, and the limits
$R\to\infty$, $\varepsilon\downarrow0$ give $H=G_C$.
Convex bodies are regular, so these identifications give the
continuous exterior boundary value $T$. Finally, $W_E\geq T$ in $C$
and $w_E=W_E$ quasi-everywhere, so \eqref{eq:superharmonic-representative}
gives $w_E\geq T$ in $\operatorname{int}C$. At each point of
$\partial C$ a supporting half-ball lies in the exterior, so the same
property, the exterior limit $T$ and the interior inequality give
$w_E=T$ there.
\end{proof}

\subsection{Common convex level sets}\label{subsec:common}

Let $a_i=\Phi_p(E_i)>0$, and suppose equality holds in \eqref{eq:equality}. Put
\[
\varrho=\lambda a_1+(1-\lambda)a_2,\qquad
\theta=\frac{\lambda a_1}{\varrho},\qquad A=E_1/a_1,\quad B=E_2/a_2.
\]
Set $M=(\lambda E_1+(1-\lambda)E_2)/\varrho$. Then
\begin{equation}\label{eq:normalization}
M=\theta A+(1-\theta)B,\qquad
\Phi_p(A)=\Phi_p(B)=\Phi_p(M)=1.
\end{equation}
Translation of $B$ by $b$ translates $M$ by $(1-\theta)b$
and preserves this relation. Theorem~\ref{thmC} applied to these
normalized sets gives
\begin{equation}\label{eq:pointwise}
W_M(\theta y+(1-\theta)z)
\geq\min\{W_A(y),W_B(z)\}\qquad(y,z\in\rn{n}).
\end{equation}

\begin{lemma}\label{lem:strict-convex-monotonicity}
If $K\subsetneq L$ are compact convex sets with nonempty
interiors, then $\Phi_p(K)<\Phi_p(L)$.
\end{lemma}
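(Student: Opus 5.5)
Since $K\subset L$, monotonicity in Lemma~\ref{lem:filling-invariance} already gives $\Phi_p(K)\leq\Phi_p(L)$. The task is to rule out equality. I would compare exterior functions and read off their behaviour at infinity. Convex bodies satisfy the corkscrew condition, since their boundaries are Lipschitz (Remark~\ref{rmk:Lipcorkscrew}). So $K$ and $L$ are $p$-regular by Lemma~\ref{lem9.6}~$(a)$. The continuous capacitary functions $u_K,u_L$ exist for $1<p<n$ (Lemma~\ref{lem4.1}), and the Green's functions $G_K,G_L$ exist for $p\geq n$ (Lemma~\ref{lem4.2}). By Theorem~\ref{thmB}~$(b)$ we also have $\Phi_n=\mathcal C_n$ on these sets.

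\emph{Step 1 (strict pointwise ordering).} Because $K\subsetneq L$ are compact convex sets with nonempty interior, $L\setminus K$ contains an open set $U$. In the case $1<p<n$, put $h=u_L-u_K$. By comparison in $B(0,s)\setminus L$ and letting $s\to\infty$, as in the proof of Lemma~\ref{lem9.15}, we get $h\geq0$ on $\rn{n}$. Moreover $h$ is continuous, and $h=1-u_K>0$ on $U$ by \eqref{eqn4.2}~$(a)$. The set $\Omega_L$ is connected. If $h$ vanished at some point of $\Omega_L$, we would not have a linear equation, so the strong maximum principle cannot be applied to $h$ directly. Instead I would argue as follows. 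Near $\partial L$ we have $h>0$ by continuity, since $h=1-u_K>0$ on $\partial L\setminus K$; this set is nonempty because $L\neq K$ are convex bodies. This gives a sphere $\partial B(0,r)$, with $r$ large, on which I want $\inf h>0$. Here I would use Lemma~\ref{lem9.7} on annuli: the gradient of $u_L$ is comparable to $|\nabla\mathbf g|$ at large scales, by the interior estimates and the comparison in Lemma~\ref{lem:radial-limits}. Harnack then propagates positivity of $h$ from one point to whole spheres. Alternatively, since both functions are $p$-harmonic with nonvanishing gradient outside convex sets \cite{L}, $h$ solves a linear uniformly elliptic equation locally, as in the proof of Lemma~\ref{lem9.7}, on all of $\Omega_L$. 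The linear strong maximum principle then gives $h>0$ throughout $\Omega_L$.

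\emph{Step 2 (passing to infinity).} Once $h>0$ on a sphere $\partial B(0,r)$ containing $L$, I would compare $u_L$ with $u_K+\eta\,\mathbf g$ for small $\eta>0$, or rather obtain $u_L\geq u_K+\eta\mathbf g$ for $|x|\geq r$. This does not follow from direct additivity, so I would instead use Lemma~\ref{lem9.7} on dyadic annuli. The Harnack inequality for $h$ with uniformly comparable constants gives $\inf_{\partial B(0,2^j r)}h\geq c^{-1}\sup_{\partial B(0,2^jr)}h$. Lemma~\ref{lem:radial-limits}~$(a)$ then yields $\lim h/\mathbf g=\ell_L-\ell_K$. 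To see that this limit is positive, I would compare $u_L$ with $(1-\varepsilon)^{-1}$-scaled barriers. The cleanest route is to run the iteration from the proof of Lemma~\ref{lem:radial-limits}: the quantity $h/\mathbf g$ has spherical infimum that is non-decreasing up to Harnack constants, hence bounded below by $c^{-1}\inf_{\partial B(0,r)}h/\mathbf g>0$. Then \eqref{eqn4.2}~$(c)$ gives $\capp(L)>\capp(K)$. For $p\geq n$, apply the same argument to $h=G_K-G_L\geq0$ (by \eqref{eqn4.17}~$(c)$) and $\tau=r^{(1-n)/(p-1)}$. This gives $\liminf(k_K-k_L)>0$ via Lemma~\ref{lem:radial-limits}~$(b)$, so $k_L(\infty)<k_K(\infty)$, and \eqref{eqn2.10} yields the strict inequality.

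\emph{Main obstacle.} The delicate point is uniform Harnack control of $h$ at all large scales, which is needed so that positivity does not decay relative to $\mathbf g$ or to $|\nabla F|$. This is exactly what Lemma~\ref{lem9.7} provides, with $\hat c$ independent of scale, once the two-sided gradient bounds $|\nabla u_L|\approx|\nabla\mathbf g|$ (respectively $|\nabla G_L|\approx|\nabla F|$) on annuli are verified. Those bounds follow as in the proof of Lemma~\ref{lem:radial-limits}. I expect the lower gradient bound to be the step needing care. For convex $L$ I would take it from \cite{L,ALSV}; otherwise I would use the variant of Lemma~\ref{lem9.7} in which only $\mathbf g$ (respectively $F$) needs the lower bound. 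In that variant, $v_2$ is a scaled copy of $\mathbf g$ sandwiched below $u_L$, exactly as in Lemma~\ref{lem:radial-limits}.
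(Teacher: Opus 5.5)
Your Step~1 works in its second form. With the nonvanishing gradients from \cite{L,ALSV}, the coefficient calculation of Lemma~\ref{lem9.7} makes $h$ solve a linear equation that is uniformly elliptic on compact subsets of $\Omega_L$. Since $h\geq0$ and $h$ is positive near a point of $\partial L\setminus K$, Harnack chains give $h>0$ in $\Omega_L$; this is exactly how the paper begins for $p\geq n$. The first form does not work on its own, because applying Lemma~\ref{lem9.7} only on large annuli cannot carry positivity from near $\partial L$ out to a large sphere.

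The genuine gap is in Step~2. Lemma~\ref{lem9.7} gives a scale-invariant Harnack inequality on one sphere at a time, namely $\sup_{\partial B(0,s)}h\leq c\inf_{\partial B(0,s)}h$. It gives no comparison between spheres of very different radii with a constant independent of their ratio. Chaining it across $j$ dyadic annuli gives only $\inf_{\partial B(0,2^jr)}h\geq c^{-j}\inf_{\partial B(0,r)}h$. For $p<n$ this need not dominate $\mathbf g(2^jr)/\mathbf g(r)=2^{-j(n-p)/(p-1)}$, and for $p\geq n$ it need not stay bounded below. In Lemma~\ref{lem:radial-limits} the cross-scale monotonicity of $\mathbf a(r)$ and $m(r)$ does not come from Harnack. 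It comes from the comparison principle against the $p$-harmonic functions $\mathbf a(r)\mathbf g$ and $F+m(r)$; Harnack is used there only for oscillation decay. Your $h$ is not $p$-harmonic, and $u_K+\eta\mathbf g$ is not $p$-harmonic for $p\neq2$, so no such comparison is available. The claim that the spherical infimum of $h/\mathbf g$ is ``non-decreasing up to Harnack constants'' is therefore unjustified. As written, Step~2 does not exclude $\ell_L=\ell_K$, nor $k_K(\infty)=k_L(\infty)$, and the hint about ``scaled barriers'' is too vague to close this.

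The missing idea is to compare one exterior function with a $p$-harmonic function built from the other, namely a constant multiple or an affine image of it. For $p\geq n$ the paper does the following.
\begin{itemize}
\item Set $c_0=\min_{\partial B(0,R)}(G_K-G_L)>0$ and compare $G_K$ with $(1-\epsilon)G_L+c_0$ on large annuli.
\item The inner inequality holds because $G_L\geq0$. The outer one holds because the difference is $\epsilon F+O(1)\to+\infty$.
\item Letting $\epsilon\downarrow0$ gives $G_K\geq G_L+c_0$ outside $B(0,R)$, hence $k_K(\infty)-k_L(\infty)\geq c_0>0$.
\end{itemize}
For $p<n$ the analogue is multiplicative.
\begin{itemize}
\item Set $c_0=\min_{\partial B(0,R)}(u_L-u_K)$. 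Since $u_K\leq1$, we get $u_L\geq u_K+c_0\geq(1+c_0)u_K$ on $\partial B(0,R)$.
\item Both sides tend to $0$ at infinity, so comparison gives $u_L\geq(1+c_0)u_K$ for $|x|\geq R$.
\item Then \eqref{eqn4.2}~$(c)$ together with $\capp(K)>0$ gives the strict inequality.
\end{itemize}
The paper treats $p<n$ more cheaply still. If $\capp(K)=\capp(L)$, then $u_L$ is admissible for $K$ with minimal energy, so $u_K=u_L$ quasi-everywhere by uniqueness of the minimizer. Then $u_K=1$ on a ball in $\operatorname{int}L\setminus K$, which contradicts the strong maximum principle in $\Omega_K$. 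This avoids the nonvanishing-gradient theorem of \cite{L} altogether.
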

\begin{proof}
Let $p<n$ and suppose the capacities are equal. Then $u_L$ is
admissible for $K$ with minimal energy, so $u_K=u_L$
quasi-everywhere by uniqueness, and $u_K=1$ on a ball in
$\operatorname{int}L\setminus K$, contrary to the strong maximum
principle in the connected exterior of $K$.

For $p\geq n$, monotonicity gives $h=G_K-G_L\geq0$ in $\Omega_L$.
By \cite[Lemma~6.1]{ALSV} the gradients of the Green's functions do not vanish
in the exterior, so on each relatively compact ball in $\Omega_L$ the
coefficient calculation in the proof of Lemma~\ref{lem9.7}, which
needs a lower bound on only one of the two gradients, makes $h$
a solution of a uniformly elliptic linear equation. At a point of
$\partial L\setminus K$, $G_K>0=G_L$, so continuity and Harnack
chains give $h>0$ in $\Omega_L$. Choose $R$ with $L\subset B(0,R)$
and put $c_0=\min_{\partial B(0,R)}h>0$. For $0<\epsilon<1$,
compare $G_K$ with $(1-\epsilon)G_L+c_0$ in large exterior annuli.
The inequality on the inner sphere holds by the choice of $c_0$, and on
large outer spheres the difference is
$\epsilon F+k_K-(1-\epsilon)k_L-c_0+o(1)\to+\infty$. Letting the
outer radius increase and then $\epsilon\downarrow0$ gives
$G_K\geq G_L+c_0$ outside $B(0,R)$, so $k_K-k_L\geq c_0>0$, and the
definitions of $\Phi_p$ give the claim for $p\geq n$.
\end{proof}

\begin{lemma}\label{lem:farlevels}
There is $b\in\rn{n}$ such that, after replacing $B$ by $B+b$
and $M$ by $M+(1-\theta)b$, the functions $W_A,W_B,W_M$
agree near infinity. For every sufficiently small $t>0$ when $p<n$,
and every sufficiently negative $t$ when $p\geq n$, the three level
sets coincide and have smooth strictly convex boundaries.
\end{lemma}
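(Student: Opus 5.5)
The plan is to reduce the statement to the known Brunn--Minkowski inequality and its equality case for convex bodies. The reduction is carried out on level sets $K_A(t),K_B(t),K_M(t)$ taken far from the sets, where convexity can be proved directly.

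\emph{Step 1 (far level sets are smooth and strictly convex).} Choose $\rho_0$ with $\hat A,\hat B,\hat M\subset B(0,\rho_0)$. For $p<n$, Lemma~\ref{lem9.15} $(c)$ and the proof of \eqref{eqn4.2} $(c)$ show that the rescalings $W_E(Rx)/\mathbf g(R)$ converge to $\ell\,\mathbf g$ in $C^{1,\beta}$ on the annulus $\bar B(0,2)\sem B(0,1)$, by Lemma~\ref{lem:radial-limits} $(a)$ and Lemma~\ref{lem3.1}. For $p\geq n$, the rescalings of $-w_E-k_E$ converge to $F$ in the same way, using \eqref{eq:equality-green-asymptotic}. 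Since $|\nabla\mathbf g|$ and $|\nabla F|$ are bounded below on the annulus, the last part of Lemma~\ref{lem3.1} makes the rescaled functions solve the uniformly elliptic equation \eqref{eqn3.4}, with $C^{\beta}$ coefficients. Schauder estimates then upgrade the convergence to $C^{2}$. The level sets of the limit are round spheres, whose second fundamental form is positive. Hence for $t$ small (respectively, very negative) each of $\partial K_A(t),\partial K_B(t),\partial K_M(t)$ is a smooth, strictly convex hypersurface in $\Omega_E$. Each level set $K_E(t)$ contains $\hat E$ and lies inside the region bounded by this hypersurface, so it is the convex body that hypersurface bounds. In particular $\partial K_E(t)\cap E=\varnothing$ and $K_E(t)$ has nonempty interior.

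\emph{Step 2 (equal capacities and the inclusion).} By \eqref{eq:levelcapacity}, or by \eqref{eq:levelgreen} together with the definition of $\Phi_p$, and by \eqref{eq:normalization}, the three level sets at a common level $t$ have the same value of $\Phi_p$. For $p\geq n$ this uses $k_A=k_B=k_M$, which holds because all three sets have $\Phi_p=1$. The pointwise inequality \eqref{eq:pointwise} gives
\[
\theta K_A(t)+(1-\theta)K_B(t)\subset K_M(t).
\]
The left side is a convex body. By Theorem~\ref{thmA} $(a)$--$(c)$ for $p$-regular convex sets, or Theorem~\ref{thmB} $(c)$ when $p=n$, its $\Phi_p$ is at least $\theta\Phi_p(K_A(t))+(1-\theta)\Phi_p(K_B(t))$. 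This common value equals $\Phi_p(K_M(t))$. Lemma~\ref{lem:strict-convex-monotonicity} then forces the inclusion to be an equality. So equality holds in the Brunn--Minkowski inequality for the convex bodies $K_A(t)$ and $K_B(t)$. The convex equality characterization \cite{CS,CC,AGHLV,ALSV} makes them homothetic, and equal $\Phi_p$ together with homogeneity (Lemma~\ref{lem:filling-invariance}) makes them translates: $K_B(t)=K_A(t)+b_t$.

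\emph{Step 3 (one translation and agreement near infinity).} Fix such a level $t_0$ and put $b=-b_{t_0}$, replacing $B$ by $B+b$ and $M$ by $M+(1-\theta)b$. Then $K_A(t_0)=K_B(t_0)=C$, and by Step 2 also $K_M(t_0)=C$. Lemma~\ref{lem:convex-trace} gives $W_A=W_B=W_M$ in the exterior of $C$: the common value is $t_0u_C$ if $p<n$ and $t_0-G_C$ if $p\geq n$. Consequently every level set at a level beyond $t_0$ is the same set for all three functions, and by Step 1 it has smooth strictly convex boundary.

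\emph{Main obstacle.} I expect the main obstacle to be Step 1. It needs the $C^2$ convergence of the rescaled functions, not just $C^{1,\beta}$, so that the curvature of the level sets is controlled. It also needs the argument that the whole level set, not just its outer boundary, is convex; this uses that the level set is filled and contains $\hat E$. A secondary point is checking that the convex equality theorems apply in the normalization used for $\Phi_p$ when $p=n$; for $n$-regular sets this follows from Theorem~\ref{thmB} $(b)$.
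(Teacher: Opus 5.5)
Your proposal follows the paper's route. You rescale at infinity and upgrade $C^1$ to $C^2$ convergence by Schauder estimates where the limit has nonvanishing gradient, which gives strictly convex far level sets. You get equal $\Phi_p$ from \eqref{eq:levelcapacity} or \eqref{eq:levelgreen} with \eqref{eq:normalization}, and the inclusion from \eqref{eq:pointwise}. The convex Brunn--Minkowski inequality, its equality case and Lemma~\ref{lem:strict-convex-monotonicity} then give translates whose combination is $K_M(t)$. One translation and exterior uniqueness (Lemma~\ref{lem:convex-trace}) finish the argument. Applying strict monotonicity before rather than after the convex equality theorem makes no difference.

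\textbf{The gap.} The step that fails as written is the identification of $K_E(t)$ with the whole body enclosed by the hypersurface $S=\{W_E=t\}\cap\{R/2<|x|<2R\}$. Knowing $\hat E\subset K_E(t)$ does not exclude points inside $S$ where $W_E<t$.

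Your fallback, filledness, is unconditional in Lemma~\ref{lem:levels} only for $p<n$. There it does close the argument, since $S\subset K_E(t)$ separates its inside from infinity. For $p\geq n$, that lemma assumes $\partial K_E(t)\cap E=\varnothing$. This is exactly what you are trying to establish, and you also need it before invoking \eqref{eq:levelgreen} in Step 2. So that route is circular.

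\textbf{The fix.} The paper uses the minimum principle. By Lemma~\ref{lem:equality-functions}, $w_E$ is $p$-superharmonic on all of $\rn{n}$, and it is $>t$ on $\partial B(0,R/2)$. Hence $w_E>t$ in $B(0,R/2)$, and $W_E\geq w_E$ gives $B(0,R/2)\subset K_E(t)$. Combine this with $W_E<t$ for $|x|\geq 2R$ and the single crossing on each ray in the annulus. Then $\partial K_E(t)=S$ is disjoint from $E$, and $K_E(t)$ is the convex body bounded by $S$. This holds for all three sets and all levels $t_R$ with $R\geq R_0$, after which your Steps 2 and 3 go through unchanged.
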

\begin{proof}
Let $E$ be one of $A,B,M$. Set
\[
\beta=\frac{n-p}{p-1},\quad
\ell=\beta^{-1}\sigma_{n-1}^{-1/(p-1)}\quad(p<n),\qquad
 a_p=\xi^{-1}\sigma_{n-1}^{-1/(p-1)}\quad(p>n),
\]
where $\xi=(p-n)/(p-1)$ as in \eqref{eqn2.7}.
The expansions in Lemma~\ref{lem9.15} $(c)$ and
\eqref{eq:equality-green-asymptotic} give uniform convergence on
every fixed annulus away from the origin,
\[
f_{E,R}(z):=
\begin{cases}
 R^\beta w_E(Rz)/\ell,&p<n,\\
 [-w_E(Rz)-\gamma\log R-1-k_E]/\gamma,&p=n,\\
 [-w_E(Rz)-k_E]/(a_pR^\xi),&p>n
\end{cases}
\ \longrightarrow\ 
\begin{cases}
 |z|^{-\beta},&p<n,\\
 \log|z|,&p=n,\\
 |z|^\xi,&p>n.
\end{cases}
\]
The normalization \eqref{eq:normalization} makes the constants
$k_E$ equal for $E=A,B,M$ when $p\geq n$.
Interior $C^{1,\alpha}$ estimates give $C^1$ convergence on
$\{1/3\leq|z|\leq3\}$. The limits have nonzero gradient there, so
for large $R$ the equations are uniformly elliptic, and Schauder
estimates give $C^2$ convergence on $\{2/5\leq|z|\leq5/2\}$.
Where the gradient does not vanish, \eqref{eqn3.3} is uniformly
elliptic with coefficients analytic in the gradient, so
$p$-harmonic functions are real analytic there.

Use level $1$ for $f_{E,R}$ when $p\ne n$ and level $0$ when
$p=n$. The corresponding common level of the unscaled functions is
\[
t_R=\begin{cases}
\ell R^{-\beta},&p<n,\\
-\gamma\log R-1-k_E,&p=n,\\
-a_pR^\xi-k_E,&p>n.
\end{cases}
\]
For sufficiently large $R_0$, the continuous decreasing parameter
$t_R$ covers $(0,t_{R_0}]$ when $p<n$ and
$(-\infty,t_{R_0}]$ when $p\geq n$ as $R$ ranges over
$[R_0,\infty)$.
The expansions at infinity hold uniformly on $|x|\geq2R$ and give
$w_E<t_R$ there for large $R$, while $w_E>t_R$ on
$\partial B(0,R/2)$. The minimum principle for $w_E$ in
$B(0,R/2)$ and $W_E\geq w_E$ give
$B(0,R/2)\subset K_E(t_R)\subset B(0,2R)$.
On the annulus between, $W_E=w_E$ and the radial derivative has a
fixed sign for large $R$, so each ray meets the level once. By the
implicit function theorem and the $C^2$ convergence, the boundary
is a radial graph converging in $C^2$ to the unit sphere after
rescaling, so it has positive principal curvatures for large $R$.
These level sets are regular and their boundaries miss $E$, so
Lemma~\ref{lem:levels} applies in every range of $p$.

By \eqref{eq:levelcapacity} or \eqref{eq:levelgreen}, the three level sets for these values of $t$ have equal capacities. By \eqref{eq:pointwise} their Minkowski combination lies in the level set of $M$. Monotonicity and the convex Brunn--Minkowski inequality give
\[
\Phi_p(K_M(t))\geq
\Phi_p(\theta K_A(t)+(1-\theta)K_B(t))
\geq\theta\Phi_p(K_A(t))+(1-\theta)\Phi_p(K_B(t))
=\Phi_p(K_M(t)).
\]
For $1<p<n$, the convex equality theorem in \cite{CS} implies that $K_A(t)$ and $K_B(t)$ are homothetic. For $p\geq n$, use \cite[Theorem~A]{ALSV} for the $p$-Laplace operator.
Equal capacities make them translates, and Lemma~\ref{lem:strict-convex-monotonicity} makes their Minkowski combination equal to $K_M(t)$.

Choose one such level and translate $B$ by $b$, and $M$ by
$(1-\theta)b$, so that the three level sets agree. By exterior
uniqueness, each exterior function is $t$ times the capacitary
function of that body if $p<n$, and $t-w_E$ is its normalized
Green's function if $p\geq n$. Thus the three functions agree in
its exterior, and all level sets with smaller level values coincide, as in the
Newtonian case \cite{CJL}.
\end{proof}

Let $I=(0,1)$ if $p<n$, and let $I=(-\infty,0)$ if $p\geq n$.
After the translation in Lemma~\ref{lem:farlevels}, define
\begin{equation}\label{eq:continuation-supremum}
T_*:=\sup\left\{T\in I:
\begin{array}{l}
 K_A(t)=K_B(t)=K_M(t)\text{ is convex}\\
 \text{for every }t\in I\text{ with }t\leq T
\end{array}\right\}.
\end{equation}
The defining set is nonempty by Lemma~\ref{lem:farlevels},
and $T_*\leq m$.

\begin{lemma}\label{lem:bootstrap}
Let $T\in I$, and suppose that the three level sets coincide
and are convex for every $t\in I$ with $t\leq T$.
Suppose their common set $C$ at $T$ has nonempty interior. If
\begin{equation}\label{eq:no-contact}
(A\cup B\cup M)\cap\partial C=\varnothing,
\qquad C=K_A(T)=K_B(T)=K_M(T),
\end{equation}
then the common convex level sets continue to an interval of levels above $T$.
\end{lemma}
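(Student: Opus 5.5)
The plan is to work in a thin closed collar $N$ of $\partial C$ that misses $A\cup B\cup M$. Such a collar exists because \eqref{eq:no-contact} holds and $A\cup B\cup M$ is compact. In $N$ we show that $w_A,w_B,w_M$ coincide and have nonvanishing gradient. Then the implicit function theorem and a continuity argument give convex common level sets for $t$ slightly above $T$.

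First, by Lemma~\ref{lem:convex-trace} applied to each of $E=A,B,M$ with the common set $C$, the three functions agree in the exterior of $C$: they equal $Tu_C$ if $p<n$ and $T-G_C$ if $p\geq n$. On $\partial C$ each takes the value $T$ continuously from outside. Since $\partial C$ misses the three sets, each $w_E$ is $p$-harmonic in a neighbourhood $U$ of $\partial C$, so it is $C^{1,\beta}$ there by Lemma~\ref{lem3.1}.

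Next, Hopf's lemma applied at $\partial C$ gives $|\nabla w_E|\geq c>0$ on $\partial C$. Since $C$ is convex, the radial barrier argument of Lemma~\ref{lem3.3} gives a nonzero normal derivative at each point of $\partial C$. We apply it on a small exterior ball tangent at that point to the positive function $T-w_E$ (or $w_E-Tu_C\cdot$ normalized). For a convex body the exterior function has nonvanishing gradient up to the boundary (cf.\ \cite[Lemma~6.1]{ALSV}, \cite{L}). By continuity and compactness, $\nabla w_E\neq 0$ on a collar $N\subset U$.

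In $N$, equation \eqref{eqn3.3} is uniformly elliptic with coefficients analytic in the gradient. Hence each $w_E$ is real analytic there. The three functions agree on the open part $N\setminus C$, which meets every component of $N$ because $\partial C$ is connected. So analytic continuation makes $w_A=w_B=w_M$ on all of $N$, including the part inside $C$.

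It remains to identify the level sets. For $t\in(T,T+\eta)$ with $\eta$ small, the set $\{w_E=t\}\cap N$ is a $C^1$ hypersurface inside $C$, close to $\partial C$. We must show that $K_E(t)$ equals the region it bounds, i.e.\ that $W_E\geq t$ throughout the inner region $C\setminus N$. This follows from the minimum principle for the $p$-superharmonic $w_E$ on $\operatorname{int}C$ minus the collar, since $w_E>t$ on the inner boundary of $N$ for $\eta$ small, together with $W_E\geq w_E$. Thus $K_A(t)=K_B(t)=K_M(t)$ for these $t$.

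The main obstacle is convexity of these new level sets. We argue by continuity in $t$ with the $C^2$ control supplied by analyticity. First we need $C$ to be strictly convex with positive curvature, obtained from the level sets at $t<T$ approaching $\partial C$ in $C^2$ inside $N$. Then the boundaries of $K_E(t)$ for $t$ slightly above $T$ are $C^2$-close to $\partial C$ and keep positive curvature.

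If positive curvature of $\partial C$ is not available, I would try the following alternative. Apply the pointwise inequality \eqref{eq:pointwise} together with equality of the three functions near $\partial C$. This makes the superlevel function quasiconcave near $\partial C$ in the sense of Lemma~\ref{lem5.2}. In that case \eqref{eq:pointwise} for $A=B=M$ locally forces $K_M(t)\supset\theta K_A(t)+(1-\theta)K_A(t)$. Filling, together with Lemma~\ref{lem:levels} and Corollary-type reasoning via Lemma~\ref{lem:strict-convex-monotonicity}, would then give convexity. I expect that this second route needs care and would try the curvature route first.
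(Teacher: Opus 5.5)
Your first steps follow the paper's proof closely. You get agreement of the three functions outside $C$, then Hopf's lemma (Lemma~\ref{lem3.3}) on a tangent exterior ball gives $\nabla w_E\neq0$ on $\partial C$. Real analyticity and the identity theorem then make $w_A=w_B=w_M$ near $\partial C$. Finally, a minimum-principle argument shows that for $t$ slightly above $T$ the whole level sets $K_A(t),K_B(t),K_M(t)$ coincide. Your comparison with a constant on the inner region works once you note, via the implicit function theorem, that near $\partial C$ the set $\{w_E=T\}$ is exactly $\partial C$, with $w_E>T$ on its inner side; the paper uses the strong minimum principle instead.

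The gap is in the convexity step. Your main route needs $\partial C$ to have strictly positive principal curvatures, and you propose to get this as a $C^2$ limit of the convex level surfaces at levels $t<T$. A $C^2$ limit of convex hypersurfaces only has a positive semidefinite second fundamental form. Moreover, for levels near $T$ you do not know strict positivity even for the approximants, since Lemma~\ref{lem:farlevels} gives it only far out. Nonnegative curvature is not stable under $C^2$-small perturbations, so near a flat or degenerate point of $\partial C$ nothing in your argument stops the new level surfaces inside $C$ from being nonconvex. Ruling this out would need a substantial extra input, such as a constant rank theorem for level sets of $p$-harmonic functions.

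Your fallback route has the right inclusion but does not finish the argument. The inclusion is global and immediate, not local. If $K=K_A(t)=K_B(t)=K_M(t)$, then \eqref{eq:pointwise} gives $W_M(\theta y+(1-\theta)z)\geq\min\{W_A(y),W_B(z)\}\geq t$ for $y,z\in K$, so $\theta K+(1-\theta)K\subset K$.

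The capacity arguments you suggest for concluding do not apply. Lemma~\ref{lem:strict-convex-monotonicity} presupposes convexity. Corollary~\ref{cor:self-average} is deduced from Theorem~\ref{thmA}~$(d)$, which this lemma is being used to prove.

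What is missing is the elementary fact the paper uses: a closed set closed under one fixed weighted average with $0<\theta<1$ is convex. For $a,b\in K$, the set $S=\{s\in[0,1]:(1-s)a+sb\in K\}$ is closed, contains $0$ and $1$, and is closed under $(s,s')\mapsto\theta s+(1-\theta)s'$. If $(\alpha,\beta)$ were a component of $[0,1]\setminus S$, then $\alpha,\beta\in S$ while $\theta\alpha+(1-\theta)\beta\in S\cap(\alpha,\beta)$, a contradiction. With this, no curvature information is needed, and the curvature route can be dropped.
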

\begin{proof}
Equality of all the levels below $T$ gives equality of the
three functions outside $C$, and by \eqref{eq:no-contact} they are
$p$-harmonic near $\partial C$.
Let $z\in\partial C$, let $\nu$ be a supporting outward unit normal,
and choose $\rho>0$ so small that the tangent ball
$B(z+\rho\nu,\rho)$ lies in this neighbourhood.
Hopf's lemma (Lemma~\ref{lem3.3}) for $T-w_E$ in this ball gives each
$w_E$ a nonzero gradient at $z$, and these gradients coincide since
the functions agree outside $C$ and are $C^1$ near $z$. Hence in a
ball $B(z,r_z)$ each function is $p$-harmonic with nonzero gradient,
and so real analytic (see the proof of Lemma~\ref{lem:farlevels}). They agree on the open set
$B(z,r_z)\setminus C$, so by the identity theorem they agree on
$B(z,r_z)$. Their common gradient points inward, since the exterior
values are less than $T$. By the implicit function theorem
$\partial C$ is locally a smooth level surface with values greater
than $T$ on its inner side. A finite cover gives a neighbourhood of
$\partial C$ in which the functions agree, their common gradient is
nonzero, and all nearby level surfaces are smooth.

By Lemma~\ref{lem:convex-trace}, each function is at least $T$ in
$\operatorname{int}C$. The strong minimum principle, which follows
from the weak Harnack inequality \cite[Theorem~3.59]{HKM}, and the
values in the part of this neighbourhood inside $C$ make it greater
than $T$ throughout the connected set $\operatorname{int}C$.
Lower semicontinuity then gives a lower bound $T+\varepsilon$, for some
$\varepsilon>0$, on the part of $C$ outside a smaller neighbourhood
of $\partial C$.

Hence for $s>T$ close to $T$ the three entire level sets coincide.
Call the common set $K$. By \eqref{eq:pointwise},
$\theta K+(1-\theta)K\subset K$.
A closed set closed under one fixed nontrivial weighted average is convex, since a gap in its intersection with a segment would have endpoints in the set whose weighted average lies strictly inside the gap.
\end{proof}

\begin{lemma}\label{lem:continuation-endpoint}
If $T_*<m$, then
$K_A(T_*)=K_B(T_*)=K_M(T_*)=:C$ is a convex compact set
with nonempty interior. The three functions agree outside $C$,
and $(A\cup B\cup M)\cap\partial C\ne\varnothing$.
If $T_*=m$, then
\begin{equation}\label{eq:intersection}
\widehat A=\widehat B=\widehat M=\bigcap_{t\in I}K_A(t),
\end{equation}
and this set is convex.
\end{lemma}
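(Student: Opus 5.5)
We split into the cases $T_*<m$ and $T_*=m$. In both we use that the defining set in \eqref{eq:continuation-supremum} is an interval $I\cap(-\infty,T_*)$, or that interval with $T_*$ added. For every $t<T_*$ in $I$ the three level sets $K_A(t),K_B(t),K_M(t)$ coincide and are convex. The sets $K_E(t)$ are compact and decrease as $t$ increases.

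\emph{Case $T_*<m$.} First we show that the three level sets at $T_*$ agree. Since $W_E$ is upper semicontinuous, $K_E(T_*)=\bigcap_{t<T_*}K_E(t)$. Hence $K_A(T_*)=K_B(T_*)=K_M(T_*)=:C$, and $C$ is convex and compact as a decreasing intersection of convex compact sets. It contains $\hat A$. Since $T_*\in I$, the value $T_*$ then also lies in the defining set.

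Next we show that $C$ has nonempty interior. Suppose instead that $C$ lies in a hyperplane. Since $C\supset A$ and $\Phi_p(A)=1>0$, the level capacity formulas say that $C$ has positive capacity. These are \eqref{eq:levelcapacity} for $p<n$, and for $p\geq n$ the bound $\Phi_p(K_E(t))\geq\Phi_p(E)$ from monotonicity. We then obtain a contradiction from the fact that $\hat C=C$, so $\Omega_C$ is the complement of $C$. The functions $W_E$ agree on $\rn{n}\setminus C=\bigcup_{t<T_*}(\rn{n}\setminus K_E(t))$, because on each such exterior the three functions coincide by exterior uniqueness (as at the end of the proof of Lemma~\ref{lem:farlevels}). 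Thus $W_A=W_B=W_M<T_*$ off $C$ and $W\ge T_*$ on $C$. If $C$ had empty interior, then $A\subset C\subset$ a hyperplane $H$, and $w_A$ would be $p$-harmonic off $A$ with $w_A<T_*$ off $C$. Near a relative interior point of $C$ in $H$ not in $A$, $w_A$ is continuous with value $\geq T_*$ on $C$ and $<T_*$ off $C$. This contradicts the strong maximum principle unless $C\subset A$. In that case $C=\hat A$ is convex, and the claim is instead handled by the second alternative. I expect this degenerate-interior step to be the delicate point. If it resists, I would instead note that flat convex sets can still be treated, since \cite{ALSV,AGHLV} allow empty interior, and then argue that $T_*<m$ forces $C\neq\hat A$.

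With nonempty interior established, the three functions agree outside $C$, as just explained. Finally, if $(A\cup B\cup M)\cap\partial C=\varnothing$, then Lemma~\ref{lem:bootstrap} applies at $T=T_*$. It continues the common convex level sets to levels above $T_*$, contradicting the definition of $T_*$. Hence $\partial C$ meets $A\cup B\cup M$.

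\emph{Case $T_*=m$.} All level sets $K_E(t)$ with $t\in I$ coincide and are convex. Their intersection $\bigcap_{t\in I}K_E(t)=\{W_E\geq m\}=\{W_E=m\}$ is therefore the same compact convex set for $E=A,B,M$. It contains $\hat E$ by Lemma~\ref{lem:equality-functions}. For the reverse inclusion we use $W_E<m$ on $\Omega_E$. For $p>n$ this is \eqref{eqn4.17}(a). For $p\le n$ it follows from $W_E\le W_{E_\delta}$-type bounds and the strong maximum principle: $W_E=m$ at a point of $\Omega_E$ would force $w_E\equiv m$ on the component by the minimum principle, contradicting the asymptotics in Lemma~\ref{lem9.15}(c) and \eqref{eq:equality-green-asymptotic}. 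Hence $\{W_E=m\}=\hat E$, which gives \eqref{eq:intersection}.
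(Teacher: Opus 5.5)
There is a genuine gap in the case $T_*<m$, at the step showing that $C$ has nonempty interior. Your treatment of the contact assertion (via Lemma~\ref{lem:bootstrap}) and of the case $T_*=m$ follows the paper. Your maximum-principle argument is sound as far as it goes, and in fact works at any point $y\in C\setminus A$, not only at relative interior points in $H$. If $\operatorname{int}C=\varnothing$, then $W_A$ is $p$-harmonic near $y$, satisfies $W_A\le T_*$ there with equality at $y$, and the strong maximum principle forces $C=A=\hat A$.

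You then dispose of this configuration by saying it is ``handled by the second alternative''. That is a non sequitur. You are under the hypothesis $T_*<m$, and the lemma asserts nonempty interior in exactly this situation. So a flat $K_A(T_*)=\hat A$ must be shown impossible, not reassigned to the case $T_*=m$. Nor can you proceed with a flat $C$: Lemma~\ref{lem:bootstrap}, which you need for the contact statement, and Lemma~\ref{lem:convex-trace} both assume nonempty interior. Your fallback (``argue that $T_*<m$ forces $C\neq\hat A$'') names what is needed but does not supply it.

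The missing idea is to show directly that $K_A(T_*)$ contains a ball, which makes the hyperplane detour unnecessary. This is the paper's route:
\begin{itemize}
\item Since $\widehat{\partial\hat A}=\hat A$, filling invariance and $\Phi_p(A)>0$ show that $\partial\hat A$ is not $p$-polar.
\item The Kellogg property then gives a regular point $x_0\in\partial\hat A$.
\item Lemma~\ref{lem:regular-point-limit} gives $W_A(x)\to m>T_*$ as $x\to x_0$, so a neighbourhood of $x_0$ lies in $C$.
\end{itemize}

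Alternatively, you can close your own reduction case by case:
\begin{itemize}
\item For $p<n$, \eqref{eq:levelcapacity} gives $\capp(K_A(T_*))=T_*^{1-p}\capp(A)>\capp(A)=\capp(\hat A)$, so $C\neq\hat A$.
\item For $p>n$, continuity of $G_A$ puts a neighbourhood of $\hat A$ inside $K_A(T_*)$.
\item For $p=n$ you need boundary regularity, for instance Lemma~\ref{lem9.6}~$(c)$. A convex $\hat A$ with more than one point makes $A$ $n$-regular, and then the same continuity argument applies.
\end{itemize}
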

\begin{proof}
For $t\in I$ with $t<T_*$ the three level sets are common and
convex. Suppose $T_*<m$. For $E=A,B,M$,
$K_E(T_*)=\bigcap_{t\in I,\,t<T_*}K_E(t)$, so the common value $C$
is a compact convex set containing $A\cup B\cup M$. Since
$\widehat{\partial\widehat A}=\widehat A$, filling invariance and
$\Phi_p(A)>0$ show that $\partial\widehat A$ is not $p$-polar (for $p<n$
since capacity in $\rn{n}$ is at most relative capacity, for
$p=n$ by Theorem~\ref{thmB} $(d)$, and for $p>n$ trivially). The
Kellogg property \cite[Theorem~9.11]{HKM} gives a regular point
$x_0\in\partial\widehat A$, and Lemma~\ref{lem:regular-point-limit}
puts a neighbourhood of $x_0$ in $C$. Every point outside $C$ lies
outside some common level with $t<T_*$, so the three functions agree
there. If the contact condition failed, Lemma~\ref{lem:bootstrap}
would continue the common convex levels beyond $T_*$, contrary to
\eqref{eq:continuation-supremum}.

If $T_*=m$, all levels in $I$ are common and convex.
For each $E=A,B,M$, their intersection is $\{W_E=m\}=\hat E$.
The first equality uses $W_E\leq m$, and the second follows
from $W_E=m$ on $\hat E$ and the strict maximum principle in
its unbounded exterior. This proves \eqref{eq:intersection}.
\end{proof}

In the contact arguments below, put $T=T_*<m$ and
\[
C:=K_A(T)=K_B(T)=K_M(T),\qquad
W:=W_A|_{\Omega_C}=W_B|_{\Omega_C}=W_M|_{\Omega_C}.
\]
The set $C$ is the common level set at $T_*$ in
Lemma~\ref{lem:continuation-endpoint}, and $W$ is its common
exterior function. A \emph{contact point} is a point of
$(A\cup B\cup M)\cap\partial C$.

\begin{lemma}\label{lem:regular-contact}
Every contact point lies in $M\cap\partial C$ and is an
irregular point of $\partial\Omega_M$. In particular, contact
is impossible if $M$ is $p$-regular.
\end{lemma}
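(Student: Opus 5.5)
The plan is to prove two things in order. First, every contact point is an irregular boundary point of the corresponding exterior domain. Second, contact with $A$ or $B$ can be moved onto $M$. The first part is soft and uses only Lemma~\ref{lem:regular-point-limit}; the second uses the pointwise inequality \eqref{eq:pointwise} together with convexity of $C$.

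\emph{Step 1 (irregularity).} Let $z\in E\cap\partial C$ with $E\in\{A,B,M\}$. Since $\hat E\subset C$, we have $\Omega_C\subset\Omega_E$, and $z$ is a limit of points of $\Omega_C$. Hence $z\in\partial\Omega_E=\partial\hat E$. If $z$ were regular in the sense of Definition~\ref{defn9.5}, Lemma~\ref{lem:regular-point-limit} applied with the level $T=T_*<m$ would put a neighbourhood of $z$ inside $K_E(T)=C$. That contradicts $z\in\partial C$. So $z$ is irregular for $\Omega_E$. In particular contact is impossible when $p>n$, by Lemma~\ref{lem9.6}~$(b)$, and also when $E$ is $p$-regular. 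This already gives the final sentence of the lemma once Step 2 is done.

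\emph{Step 2 (contact lies in $M$).} Suppose $z\in A\cap\partial C$; the case of $B$ is symmetric with $\theta$ replaced by $1-\theta$. Let $H=\{\langle x,\nu\rangle=\langle z,\nu\rangle\}$ be a supporting hyperplane of $C$ at $z$, so that $A,B,M\subset C$ lie in the closed half-space $\{\langle x,\nu\rangle\leq\langle z,\nu\rangle\}$. I would use \eqref{eq:pointwise} with $y=z$, where $W_A(z)=m$:
\[
W_M(\theta z+(1-\theta)z')\geq W_B(z')\qquad(z'\in\rn{n}).
\]
The aim is to show that $W_B$ cannot stay strictly below $m$ on $H\cap C$. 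If it did, one would take $z'$ just outside $C$ near $H$, where $W_B=W_M=W<T$, and compare the two sides using the asymptotics of $W$ near $\partial C$ given by Lemma~\ref{lem:convex-trace} (continuous exterior boundary value $T$). The goal would be a contradiction with $K_M(t)=C$ for $t<T$. If that works, it yields some $b\in\hat B\cap H$. Then $\theta z+(1-\theta)b\in\theta\hat A+(1-\theta)\hat B\subset\hat M$ by Lemma~\ref{lem4.3}, and this point lies in $H\cap C\subset\partial C$. Identifying this point with $z$ itself, and not merely finding some contact point of $M$, would use the freedom to choose $b$ together with the equality $\theta C+(1-\theta)C=C$ and the strict convexity gained from the level sets in Lemma~\ref{lem:farlevels}.

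\emph{Main obstacle.} The hard step is Step 2. The naive use of \eqref{eq:pointwise}, with $y=z$ and $z'$ outside $C$, only says that $W$ increases along homotheties towards $z$. For convex levels that is automatic, so the contradiction must come from the quantitative behaviour of $W$ at $\partial C$. Concretely, I would first try to show that an irregular point of $A$ alone cannot affect $W_M$. Since the irregular set is $p$-polar by the Kellogg property \cite[Theorem~9.11]{HKM}, one could remove it and argue that the common exterior function, and hence $C$, is unchanged. This would force the contact to be produced by $M$, whose Riesz measure $\mu_M$ must charge a neighbourhood of $z$ in $\partial C$.
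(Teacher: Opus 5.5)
Your Step~1 is correct and matches how the paper concludes: a contact point already known to lie in $M$ lies in $M\cap\partial C\subset\partial\Omega_M$, and if it were regular, Lemma~\ref{lem:regular-point-limit} would place a neighbourhood of it inside $C$. The gap is Step~2, which is not yet an argument. Suppose you found $b\in\hat B\cap H$. The point $\theta z+(1-\theta)b$ would then be \emph{some} contact point of $M$, but not $z$. Identifying it with $z$ via strict convexity is not possible, because Lemma~\ref{lem:farlevels} gives strict convexity only for the far levels, and the terminal set $C$ at $T_*$ is merely convex. Your Kellogg idea, removing the polar irregular part of $A$, throws away exactly the information $W_A(z)=m$ that makes \eqref{eq:pointwise} useful at $z$.

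The missing idea is a first-order argument at $z$, made possible by analytic continuation across $\partial C$. Argue by contradiction. Suppose $z\in A\cap\partial C$ and $z\notin M$. Then also $z\notin B$, since $A\cap B\subset M$: each $a\in A\cap B$ equals $\theta a+(1-\theta)a$. Hence $w_B$ and $w_M$ are $p$-harmonic near $z$, agree outside $C$, and equal $T$ at $z$ by Lemma~\ref{lem:convex-trace}. Hopf's lemma (Lemma~\ref{lem3.3}) for $T-w_B$ and $T-w_M$ in an exterior tangent ball gives both functions nonzero gradients at $z$. These gradients coincide, since the functions agree outside $C$ and are $C^1$ near $z$. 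So both functions are real analytic near $z$. Since they agree on $B(z,r)\setminus C$, they coincide with a single function $v$ on the whole ball $B(z,r)$, exactly as in the first paragraph of the proof of Lemma~\ref{lem:bootstrap}. Now \eqref{eq:pointwise} with $W_A(z)=m$ gives $v(\theta z+(1-\theta)y)\geq v(y)$ for all $y$ near $z$, with equality at $y=z$. Thus $y=z$ is a local minimum of the difference, and its gradient there, $-\theta\nabla v(z)$, must vanish, which is a contradiction. Your remark that the inequality is automatic for convex levels is true only for $y$ outside $C$. The contradiction comes from $y$ inside $C$, for instance $y=z+\epsilon\nabla v(z)$. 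There the two sides of \eqref{eq:pointwise} involve the a priori different functions $w_M$ and $w_B$. The inequality becomes contradictory only once $w_B=w_M$ has been continued across $\partial C$, and that is possible precisely because $z\notin B\cup M$. The case $z\in B$ is symmetric: fix $W_B(z)=m$, and the gradient of the difference is $-(1-\theta)\nabla v(z)$.
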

\begin{proof}
Suppose $x\in A\cap\partial C$ and $x\notin M$. Since
$A\cap B\subset M$, also $x\notin B$, so $w_B,w_M$ are $p$-harmonic
near $x$, agree outside $C$, and equal $T$ at $x$ by
Lemma~\ref{lem:convex-trace}. The argument in the first paragraph of
the proof of Lemma~\ref{lem:bootstrap}, applied to them, gives a ball $B(x,\rho)$ on which they
agree with a common function $v$ satisfying $v(x)=T$ and
$\nabla v(x)\ne0$. Keeping $W_A(x)=m$ fixed in \eqref{eq:pointwise}
gives $v(\theta x+(1-\theta)y)-v(y)\geq0$ for $y$ near $x$, with
equality at $y=x$. Differentiation at $y=x$ gives
$-\theta\nabla v(x)=0$, a contradiction. For
$x\in B\cap\partial C\setminus M$, fix $W_B(x)=m$ instead, which gives
$-(1-\theta)\nabla v(x)=0$. Thus every contact point lies in
$M\cap\partial C\subset\partial\hat M=\partial\Omega_M$. At a regular
point, Lemma~\ref{lem:regular-point-limit} would put a neighbourhood
in $C$, which is impossible.
\end{proof}

\subsection{Local estimates}\label{subsec:local}

\begin{lemma}\label{lem:source-capacity}
Let $w\in W^{1,p}_{\mathrm{loc}}(U)$ be a locally bounded $p$-superharmonic function, and let $\mu=-\Delta_p w$. If $0\leq w\leq L$ in $U$, then for compact $K\subset U$,
\begin{equation}\label{eq:measure-capacity}
\mu(K)\leq p^p L^{p-1}\capp(K,U).
\end{equation}
One may replace $L$ by the oscillation of $w$ after subtracting a constant.
\end{lemma}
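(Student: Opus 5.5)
The plan is to test the Riesz measure against a truncated, scaled capacitary competitor and to absorb the gradient of $w$ by a Caccioppoli-type estimate. Fix a compact $K\subset U$ with $\capp(K,U)<\infty$ (otherwise there is nothing to prove), and take $\psi\in C_0^\infty(U)$ with $\psi\geq1$ on $K$ and $\int|\nabla\psi|^p\leq\capp(K,U)+\epsilon$. After truncation we may assume $0\leq\psi\leq1$; this is Lipschitz with compact support, hence admissible by \eqref{eqn9.1} $(d)$. The weak form of $-\Delta_p w=\mu$, extended to nonnegative compactly supported $W^{1,p}$ test functions that are continuous and bounded, gives
\[
\mu(K)\leq\int\psi^p\,d\mu=\int|\nabla w|^{p-2}\langle\nabla w,\nabla(\psi^p)\rangle\,dx
= p\int\psi^{p-1}|\nabla w|^{p-2}\langle\nabla w,\nabla\psi\rangle\,dx .
\]
The right-hand side is then bounded by Hölder's inequality as
\[
p\Bigl(\int\psi^p|\nabla w|^p\Bigr)^{(p-1)/p}\Bigl(\int|\nabla\psi|^p\Bigr)^{1/p}.
\]

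The main step, and the expected obstacle, is the weighted energy $\int\psi^p|\nabla w|^p$. I would not use the weak supersolution inequality directly, since it has the wrong sign for an upper bound. Instead I would use the test function $\psi^p(L-w)\geq0$. Its weak supersolution inequality is
\[
\int|\nabla w|^{p-2}\langle\nabla w,\nabla(\psi^p(L-w))\rangle=\int\psi^p(L-w)\,d\mu\geq0 .
\]
Expanding this gives
\[
\int\psi^p|\nabla w|^p\leq p\int(L-w)\psi^{p-1}|\nabla w|^{p-1}|\nabla\psi|
\leq pL\Bigl(\int\psi^p|\nabla w|^p\Bigr)^{(p-1)/p}\Bigl(\int|\nabla\psi|^p\Bigr)^{1/p},
\]
so that $\int\psi^p|\nabla w|^p\leq p^pL^p\int|\nabla\psi|^p$. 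This is legitimate because $w$ is bounded and in $W^{1,p}_{\mathrm{loc}}$, so $\psi^p(L-w)$ is a nonnegative compactly supported $W^{1,p}$ function. The pairing with $\mu$ uses the quasicontinuous representative, which is justified since $\mu$ does not charge $p$-polar sets (\cite[Theorem~21.2]{HKM}). If the left side is infinite a priori we would first note it is finite by $w\in W^{1,p}_{\mathrm{loc}}$ and the compact support of $\psi$, so the absorption is valid.

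Inserting this bound into the Hölder estimate gives $\mu(K)\leq p\,(pL)^{p-1}\int|\nabla\psi|^p=p^pL^{p-1}\int|\nabla\psi|^p$. Letting $\epsilon\downarrow0$ yields \eqref{eq:measure-capacity}. For the final sentence of the lemma, replace $w$ by $w-\inf_U w$. This leaves $\mu$ unchanged and puts the function in $[0,\operatorname{osc}_U w]$, so $L$ may be taken to be the oscillation. A mild technical point is the approximation of the Lipschitz test functions by smooth ones in the definition of $\mu$. This follows by mollification together with weak-$*$ convergence of $\mu$ against uniformly convergent continuous test functions, as in the passage after \eqref{eqn4.8}.
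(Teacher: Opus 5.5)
Your proposal is correct and follows essentially the paper's argument: you test with $(L-w)\psi^p$ to get $\int\psi^p|\nabla w|^p\leq p^pL^p\int|\nabla\psi|^p$, then test with $\psi^p$ and apply H\"older to obtain $p^pL^{p-1}\int|\nabla\psi|^p$, and finally take the infimum over admissible $\psi$. This matches the paper's proof, including its use of quasi-continuous representatives and its handling of test-function admissibility.
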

\begin{proof}
The bounded Sobolev tests below are admissible in the weak
equation by nonnegative approximation and truncation
\cite[Lemma~3.11]{HKM}, with quasi-continuous representatives in
the measure integrals. For nonnegative $\phi\in C_c^\infty(U)$, testing with $(L-w)\phi^p$ and using the sign of $\mu$ gives
$X:=\int\phi^p|\nabla w|^p\leq pL X^{(p-1)/p}(\int|\nabla\phi|^p)^{1/p}$,
so $X\leq p^pL^p\int|\nabla\phi|^p$. Testing with $\phi^p$ and using H\"older's inequality gives
\[
\int\phi^p\,d\mu
\leq pX^{(p-1)/p}\left(\int|\nabla\phi|^p\right)^{1/p}
\leq p^pL^{p-1}\int|\nabla\phi|^p.
\]
Take the infimum over $\phi\geq1$ on $K$.
\end{proof}

\begin{lemma}\label{lem:exterior-oscillation}
Let $1<p\leq n$ and use the common convex set $C$,
level $T=T_*<m$, and exterior function $W$ fixed after
Lemma~\ref{lem:continuation-endpoint}.
Fix $x\in\partial C$, put $v=T-W$ in $\Om_C$ and $v=0$ on $C$, and
let $\eta(r)=\sup_{B(x,8r)}v$. For small $r$, $\eta(r)>0$ and $\eta(r)\to0$. If $\nu$ is a
supporting outward unit normal to $C$ at $x$, then for every fixed
$0<b<1$,
\begin{equation}\label{eq:osc-carleson}
\eta(r)\leq c_b v(x+br\nu).
\end{equation}
For $i=A,B,M$, set $V_i=w_i-T+2\eta(r)$. Then
\begin{equation}\label{eq:osc-weak-harnack}
\left(\frac1{|B(x,r)|}\int_{B(x,r)}V_i^q\,dy\right)^{1/q}
\leq c_q\eta(r),\qquad 0<q<q_H,
\quad q_H=\begin{cases}n(p-1)/(n-p),&p<n,\\+\infty,&p=n.
\end{cases}
\end{equation}
\end{lemma}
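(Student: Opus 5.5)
The plan is to treat the four assertions separately. The first two are soft. The Carleson estimate \eqref{eq:osc-carleson} is a boundary estimate for the single function $v$. The weak Harnack bound \eqref{eq:osc-weak-harnack} follows from the weak Harnack inequality for supersolutions once we know $V_i$ is positive on $B(x,8r)$.

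\emph{Positivity and decay of $\eta$.} By Lemma~\ref{lem:convex-trace}, $W=w_E$ in $\Om_C$ for each $E\in\{A,B,M\}$. This function tends continuously to $T$ at $\partial C$ and is $<T$ in $\Om_C$, since $\Om_C$ lies outside the level set $C=K_E(T)$. Hence $v$ is continuous near $x$ and $v(x)=0$. Also $v$ is positive and $p$-harmonic in $\Om_C$, because $C\supset A\cup B\cup M$. Every ball $B(x,8r)$ meets $\Om_C$, so $\eta(r)>0$. Continuity of $v$ at $x$ gives $\eta(r)\to0$.

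\emph{Carleson estimate.} Since $C$ is convex with nonempty interior, near $x$ the set $\Om_C$ is the complement of a convex body, hence locally a Lipschitz domain. The half-space $\{\lan y-x,\nu\ran>0\}$ lies in $\Om_C$, so $B(x+br\nu,br)\subset\Om_C$ and $x+br\nu$ is a corkscrew point at scale $br$. I would invoke the Carleson estimate for positive $p$-harmonic functions vanishing continuously on a Lipschitz boundary portion (Lewis--Nystr\"om type, or the classical Caffarelli--Fabes--Mortola--Salsa iteration). The iteration combines boundary H\"older decay, which comes from the uniform $p$-thickness of the convex set $C$ at its boundary points (the cone condition in \eqref{eqn9.2}), with interior Harnack chains (Lemma~\ref{lem3.1}). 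It gives $\sup_{B(x,8r)\cap\Om_C}v\leq c\,v(a_r)$ for a corkscrew point $a_r$. A bounded Harnack chain from $a_r$ to $x+br\nu$ inside $\Om_C$, with length depending on $b$ and the local Lipschitz character of $C$, then yields \eqref{eq:osc-carleson} for small $r$. This is the step I expect to be the main obstacle. The constants must be controlled only by $p,n,b$ and the fixed geometry of $C$. If a citation is not acceptable, I would write out the iteration using Lemma~\ref{lem3.3} and the thickness of the cone.

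\emph{Weak Harnack.} By Lemma~\ref{lem:equality-functions}, each $w_i$ is locally bounded, in $W^{1,p}_{\mathrm{loc}}$ and $p$-superharmonic, so $V_i$ is a weak supersolution. In $B(x,8r)\cap\Om_C$ we have $w_i=T-v\geq T-\eta(r)$. In $C$, Lemma~\ref{lem:convex-trace} and \eqref{eq:superharmonic-representative} give $w_i\geq T$, since $W_i\geq T$ on $C$ and $w_i=T$ on $\partial C$. Hence $V_i\geq\eta(r)>0$ on $B(x,8r)$. The weak Harnack inequality \cite[Theorem~3.59]{HKM} for nonnegative supersolutions bounds the $L^q$ average over $B(x,r)$, for every $0<q<q_H$, by $c_q\inf_{B(x,2r)}V_i$. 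Finally I evaluate at $y=x+\tfrac r2\nu\in\Om_C\cap B(x,2r)$, where $V_i(y)=2\eta(r)-v(y)\leq2\eta(r)$. This gives \eqref{eq:osc-weak-harnack}; the Carleson step is not needed for this part.
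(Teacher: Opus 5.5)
Your proposal is correct and matches the paper's proof essentially step for step. The paper also uses Lemma~\ref{lem:convex-trace} to identify $v$ as a positive $p$-harmonic function vanishing continuously on the locally Lipschitz boundary of the convex set $C$. It then applies the Lewis--Nystr\"om boundary H\"older and Carleson estimates, followed by a Harnack chain along the normal (using $\operatorname{dist}(x+t\nu,C)=t$), to obtain \eqref{eq:osc-carleson}. Finally it applies the weak Harnack inequality to the positive supersolution $V_i\geq\eta(r)$ on $B(x,8r)$, bounding the infimum by the value $<2\eta(r)$ at an exterior point; as you observe, the Carleson estimate plays no role in \eqref{eq:osc-weak-harnack}.
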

\begin{proof}
Lemma~\ref{lem:convex-trace} identifies $v$ with $T(1-u_C)$ if
$p<n$ and with $G_C$ if $p=n$. It is positive and $p$-harmonic
outside $C$, continuous and zero on $\partial C$, and its zero
extension belongs locally to $W^{1,p}$. Since $\partial C$ is
locally Lipschitz, the boundary H\"older and Carleson estimates
\cite{LN10}, in the local graph form
\cite[Lemma~8.1, (8.4)]{ALSV} whose constants depend only
on the equation data and the Lipschitz constant, apply on a ball
containing $B(x,8r)$. The Carleson estimate bounds $\eta(r)$ by the
value at an exterior corkscrew point, and since
$\operatorname{dist}(x+t\nu,C)=t$, a Harnack chain along the normal
compares that value with $v(x+br\nu)$. This gives
\eqref{eq:osc-carleson}, and continuity gives $\eta(r)\to0$.

Inside $C$, $w_i\geq T$ by Lemma~\ref{lem:convex-trace}, and
outside $C$, $w_i=W\geq T-\eta(r)$ on $B(x,8r)$. Thus
$V_i\geq\eta(r)$ is a positive weak supersolution there, less than
$2\eta(r)$ on an exterior open subset of $B(x,r)$. The weak Harnack
inequality \cite[Theorem~3.59]{HKM}, with $B(x,r)$ and its
concentric ball $B(x,4r)$, proves \eqref{eq:osc-weak-harnack} for
every $0<q<q_H$, including $q<1$.
\end{proof}

Write $\mathcal A(\mathbf v)=|\mathbf v|^{p-2}\mathbf v$ for
$\mathbf v\in\rn{n}$ and $p'=p/(p-1)$. Whenever the distributional $p$-Laplacian of $v$ is a measure,
write $\mu_v=-\Delta_pv$. If $v$ is $p$-superharmonic and locally
bounded below, each truncation $\min(v,L)$ belongs to
$W^{1,p}_{\mathrm{loc}}$, the generalized gradient is defined almost
everywhere by these truncations, $\mathcal A(\nabla v)$ is locally
integrable, and $-\operatorname{div}\mathcal A(\nabla v)$ is the
positive Riesz measure, although $v$ need not belong to
$W^{1,1}_{\mathrm{loc}}$ for small $p$. The superharmonic
representative agrees quasi-everywhere with the quasi-continuous
one when $v$ is Sobolev \cite[Chapter~21]{HKM}.

\begin{lemma}\label{lem:contact-measures}
Let $O$ be open and let $a,b\in W^{1,p}_{\mathrm{loc}}(O)$ be locally bounded, with $a\geq b$ quasi-everywhere. Suppose $-\Delta_p a=\mu_a$ and $-\Delta_p b=\mu_b$, where $\mu_a,\mu_b$ are locally finite signed Radon measures whose positive and negative parts locally belong to $W^{-1,p'}$. Then, with quasi-continuous representatives,
\begin{equation}\label{eq:contact-measure-order}
\mu_a\big|_{\{a=b\}}\leq\mu_b\big|_{\{a=b\}}.
\end{equation}
In particular, locally bounded weak supersolutions satisfy the
measure hypotheses.
\end{lemma}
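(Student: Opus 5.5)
The plan is to test the two weak equations with a function concentrated on the contact set $\{a=b\}$ and then pass to a limit. Set $h=a-b\geq0$ quasi-everywhere. For $\varepsilon>0$ put $\psi_\varepsilon=(1-h/\varepsilon)^+=\max\{1-h/\varepsilon,0\}$. Then $\psi_\varepsilon$ lies in $W^{1,p}_{\mathrm{loc}}(O)$, satisfies $0\leq\psi_\varepsilon\leq1$, and its quasi-continuous representative decreases to the indicator of $\{a=b\}$ quasi-everywhere as $\varepsilon\downarrow0$. Fix $0\leq\phi\in C_c^\infty(O)$ and use the test function $\phi\,\psi_\varepsilon$. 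This function is bounded, Sobolev and compactly supported. Because the positive and negative parts of $\mu_a,\mu_b$ lie locally in $W^{-1,p'}$, they do not charge $p$-polar sets, and the weak formulation extends from $C_c^\infty$ to bounded compactly supported $W^{1,p}$ functions with quasi-continuous representatives. The extension goes by mollification and truncation, as in \cite[Lemma~3.11]{HKM}.

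Subtracting the two tested equations gives
\[
\int\phi\,\psi_\varepsilon\,d(\mu_a-\mu_b)
=\int\psi_\varepsilon\lan\mathcal A(\nabla a)-\mathcal A(\nabla b),\nabla\phi\ran\,dy
-\frac1\varepsilon\int_{\{0<h<\varepsilon\}}\phi\lan\mathcal A(\nabla a)-\mathcal A(\nabla b),\nabla a-\nabla b\ran\,dy .
\]
The last integrand is nonnegative by the monotonicity of $\mathcal A$, so that term only helps: the left side is at most the first term on the right.

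Now let $\varepsilon\to0$. On the left, dominated convergence applies to the total variations, since $\psi_\varepsilon\phi$ is bounded and converges quasi-everywhere, hence $|\mu_a|+|\mu_b|$-almost everywhere. The limit is $\int_{\{a=b\}}\phi\,d(\mu_a-\mu_b)$. On the right, $\psi_\varepsilon\to\mathbf 1_{\{h=0\}}$ almost everywhere. Moreover $\nabla a=\nabla b$ almost everywhere on $\{h=0\}$ by \cite[Corollary~1.21]{HKM}, so $\mathcal A(\nabla a)-\mathcal A(\nabla b)=0$ there. Dominated convergence with the majorant $(|\nabla a|^{p-1}+|\nabla b|^{p-1})|\nabla\phi|\in L^1$ therefore sends the first term to $0$. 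We conclude $\int_{\{a=b\}}\phi\,d(\mu_a-\mu_b)\leq0$ for every $\phi\geq0$, which is \eqref{eq:contact-measure-order}.

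The main obstacle is justifying the test function and the limit on the measure side. One must check three things: the tests are admissible, the quasi-continuous representatives of $\psi_\varepsilon$ converge pointwise quasi-everywhere to the indicator of $\{a=b\}$ defined through quasi-continuous $a,b$, and the measures ignore the exceptional polar set. The hypothesis that both parts of each measure lie in $W^{-1,p'}$ is exactly what supplies the last point. Since $\{a=b\}$ is then only quasi-closed, I would take the restriction in the statement to mean restriction to the set $\{\tilde a=\tilde b\}$ defined by quasi-continuous representatives.

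For the final sentence, a locally bounded weak supersolution has a nonnegative Riesz measure. That measure is locally in $W^{-1,p'}$ because its action $\phi\mapsto\int\mathcal A(\nabla v)\cdot\nabla\phi$ is bounded on $W_0^{1,p}$, by Hölder's inequality with $\mathcal A(\nabla v)\in L^{p'}_{\mathrm{loc}}$ (compare Lemma~\ref{lem:source-capacity}). Its negative part is zero, so the measure hypotheses hold.
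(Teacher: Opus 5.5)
Your proof is correct and is essentially the paper's argument. It uses the same test function $\phi\,(1-(a-b)/\varepsilon)_+$, with admissibility justified by mollification and the fact that the parts of $\mu_a,\mu_b$ in $W^{-1,p'}$ do not charge polar sets, the monotonicity of $\mathcal A$ to drop the term on $\{0<a-b<\varepsilon\}$, and dominated convergence using $\nabla a=\nabla b$ almost everywhere on $\{a=b\}$ (your reading of the contact set through quasi-continuous representatives and your check of the final sentence via $\mathcal A(\nabla v)\in L^{p'}_{\mathrm{loc}}$ match how the paper uses the lemma).
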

\begin{proof}
Put $f=a-b\geq0$ and take $0\leq\phi\in C_c^\infty(O)$. For $\varepsilon>0$, set $\chi_\varepsilon(t)=(1-t/\varepsilon)_+$ for $t\geq0$. The bounded Sobolev test $\phi\chi_\varepsilon(f)$ is admissible in both weak equations. Indeed, if $0\leq\mu\in W^{-1,p'}$ and $g\in W^{1,p}_0$ is bounded with compact support, then mollifications of $g$ are uniformly bounded with supports in a fixed compact set, converge to $g$ in $W^{1,p}$, and along a subsequence converge to $\tilde g$ quasi-everywhere \cite[Chapter~4]{HKM}. Since $\mu$ does not charge polar sets, dominated convergence gives $\langle\mu,g\rangle=\int\tilde g\,d\mu$. Applying this to the positive and negative parts of $\mu_a$ and $\mu_b$ and subtracting, we get
\begin{align*}
\int\phi\chi_\varepsilon(f)\,d(\mu_a-\mu_b)
&=\int\chi_\varepsilon(f)
 \bigl(\mathcal A(\nabla a)-\mathcal A(\nabla b)\bigr)\cdot\nabla\phi\,dx\\
&\quad-\frac1\varepsilon\int_{0<f<\varepsilon}\phi
 \bigl(\mathcal A(\nabla a)-\mathcal A(\nabla b)\bigr)
 \cdot(\nabla a-\nabla b)\,dx\\
&\leq\int\chi_\varepsilon(f)
 \bigl(\mathcal A(\nabla a)-\mathcal A(\nabla b)\bigr)\cdot\nabla\phi\,dx.
\end{align*}
The inequality is the monotonicity of $\mathcal A$. On $\{f=0\}$ the gradients of $a,b$ agree almost everywhere, so by dominated convergence the last integral tends to zero, while the left side tends to $\int_{\{a=b\}}\phi\,d(\mu_a-\mu_b)$, since the pointwise limits hold outside a polar set, which no measure part charges. This proves \eqref{eq:contact-measure-order}.
\end{proof}

\begin{lemma}\label{lem:truncation-locality}
Let $v$ be $p$-superharmonic and locally bounded below in $O$, and let $\mu_v=-\Delta_p v$. If $v_L=\min(v,L)$, then
\begin{equation}\label{eq:truncation-locality}
\mu_{v_L}\big|_{\{v<L\}}=\mu_v\big|_{\{v<L\}}.
\end{equation}
In particular, for $L>0$ the two measures agree on $\{v=0\}$.
\end{lemma}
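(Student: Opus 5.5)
The plan is to compare the two weak equations on the open set $\{v<L\}$, where $v$ and $v_L$ coincide, and to check that the measures of the two functions agree there. First, $v_L=\min(v,L)$ is $p$-superharmonic, as the minimum of two $p$-superharmonic functions. It is locally bounded, so by \cite[Corollary~7.20]{HKM} it lies in $W^{1,p}_{\mathrm{loc}}(O)$ and is a weak supersolution, and its Riesz measure $\mu_{v_L}$ is a positive Radon measure. By the definition of the generalized gradient recalled before Lemma~\ref{lem:contact-measures}, $\nabla v=\nabla v_L$ almost everywhere on $\{v<L\}$, and $\nabla v_L=0$ almost everywhere on $\{v\geq L\}$.

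The difficulty is that $\{v<L\}$ need not be open, since $v$ is only lower semicontinuous. I therefore do not localize to an open set. Instead I take $0\leq\phi\in C_c^\infty(O)$ and use the test functions $\phi\,\chi_\varepsilon$, where $\chi_\varepsilon=\min\{1,(L-v_{L'})_+/\varepsilon\}$ for a fixed level $L'>L$. These functions are bounded Sobolev functions that vanish on $\{v\geq L\}$. They are admissible in the weak equation for $v_{L'}$, by the same approximation argument as in the proof of Lemma~\ref{lem:contact-measures}, and also in the equation for $v_L$.

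I would first record the standard fact (\cite[Chapter~21]{HKM}) that $\mu_{v_{L'}}$ and $\mu_v$ agree on $\{v<L'\}$ for truncations at a large level. More precisely, $\mu_v$ is the weak limit of $\mu_{v_{L'}}$ as $L'\to\infty$, with $\mathcal A(\nabla v_{L'})\to\mathcal A(\nabla v)$ in $L^1_{\mathrm{loc}}$. It therefore suffices to compare $v_L$ with $v_{L'}$ and then let $L'\to\infty$ inside a set where both truncations equal $v$. Testing both equations with $\phi\chi_\varepsilon$ and subtracting gives
\[
\int\phi\chi_\varepsilon\,d(\mu_{v_{L'}}-\mu_{v_L})
=\int\chi_\varepsilon\bigl(\mathcal A(\nabla v_{L'})-\mathcal A(\nabla v_L)\bigr)\cdot\nabla\phi\,dx
+\frac1\varepsilon\int_{\{L-\varepsilon<v<L\}}\phi\bigl(\mathcal A(\nabla v_{L'})-\mathcal A(\nabla v_L)\bigr)\cdot\nabla v\,dx .
\]
On the support of $\chi_\varepsilon$ the two gradients coincide almost everywhere, so both integrals on the right vanish identically. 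Hence $\int\phi\chi_\varepsilon\,d\mu_{v_{L'}}=\int\phi\chi_\varepsilon\,d\mu_{v_L}$.

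Letting $\varepsilon\downarrow0$, the functions $\chi_\varepsilon$ increase quasi-everywhere to the indicator of $\{v<L\}$, where quasi-continuous representatives are used and coincide with the superharmonic ones quasi-everywhere. Both measures are positive and do not charge polar sets, since each lies in $W^{-1,p'}_{\mathrm{loc}}$. Monotone convergence then gives equality of the restrictions to $\{v<L\}$. Letting $L'\to\infty$, and using that $\mu_{v_{L'}}=\mu_v$ on $\{v<L\}$ for all $L'>L$ (this is the same statement applied with the pair $L'<L''$), yields \eqref{eq:truncation-locality}.

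I expect the main obstacle to be this last identification, namely defining $\mu_v$ for a non-Sobolev $v$ and showing that it coincides with the limit of the truncated measures. I would rely on \cite[Theorem~21.2]{HKM} together with the $L^1_{\mathrm{loc}}$ convergence of $\mathcal A(\nabla v_{L'})$, rather than prove it from scratch. The final sentence of the lemma is immediate, because for $L>0$ we have $\{v=0\}\subset\{v<L\}$.
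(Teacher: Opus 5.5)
Your comparison of the two Sobolev truncations is correct. For $L<L'$, testing the equations of $v_L$ and $v_{L'}$ with $\phi\chi_\varepsilon$ does give $\mu_{v_{L'}}|_{\{v<L\}}=\mu_{v_L}|_{\{v<L\}}$. (The $1/\varepsilon$ term should carry a minus sign, but it vanishes anyway.) The vague convergence $\mu_{v_{L'}}\rightharpoonup\mu_v$, obtained from $\mathcal A(\nabla v_{L'})\to\mathcal A(\nabla v)$ in $L^1_{\mathrm{loc}}$, is also fine. In particular your argument proves the lemma when $v$ is locally bounded.

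The gap is the final passage $L'\to\infty$. Write $\mu_{v_{L'}}=\mu_{v_L}|_{\{v<L\}}+\rho_{L'}$ with $\rho_{L'}=\mu_{v_{L'}}|_{\{v\geq L\}}$. Since the first summand does not depend on $L'$, $\rho_{L'}\rightharpoonup\rho:=\mu_v-\mu_{v_L}|_{\{v<L\}}\geq0$. What you need is $\rho(\{v<L\})=0$.

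Vague convergence does not commute with restriction to $\{v<L\}$, which is only an $F_\sigma$ set. The mass of $\rho_{L'}$ can a priori accumulate at points of $\{v<L\}$ that lie in the closure of $\{v\geq L\}$, and lower semicontinuity of $v$ does not exclude such points. Your justification, the same statement for a pair $L'<L''$, only relates truncations to one another and never reaches $\mu_v$, so it is circular. The step you flag as the main obstacle, identifying $\mu_v$ as the vague limit of the truncated measures, is not the real difficulty.

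The missing ingredient is that the limit of the mass carried by $\{v\geq L'\}$, the singular part of $\mu_v$, is concentrated on $\{v=+\infty\}$. Equivalently, the equation for $v$ itself may be tested with $h(v)\phi$ for Lipschitz $h$ with compact support in $(-\infty,L)$. This is the renormalized-solution theorem of \cite{KKT}, which the paper invokes. With it, testing both $v$ and $v_L$ with $h(v)\phi$ gives $h(v)\mu_v=h(v)\mu_{v_L}$ directly, and letting $h\uparrow1$ finishes the proof with no limit in the truncation level.

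Without such an input your proof covers only locally bounded $v$. The lemma, however, is needed in Lemma~\ref{lem:cone-comparison} for $U=H_M(s\,\cdot)$ and $V=H_B$, which are finite only outside $p$-polar sets and need not be locally bounded.
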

\begin{proof}
Replacing $(v,L)$ by $(v+c,L+c)$ changes nothing, so locally we may
assume $v\geq0$. Write $\mu_v=\mu_0+\mu_s$, where $\mu_0$ is
absolutely continuous with respect to $p$-capacity and $\mu_s$ is
concentrated on a set of zero $p$-capacity. By the theorem of \cite{KKT}
that superharmonic functions are locally renormalized solutions, $v$ is a
local renormalized solution, that is, for every $\phi\in C_c^\infty(O)$ and every Lipschitz
function $h$ on $\re$ whose derivative has compact support,
\begin{equation}\label{eq:renormalized-test}
\int\mathcal A(\nabla v)\cdot\nabla[h(v)\phi]\,dx
=\int h(v)\phi\,d\mu_0
+h(+\infty)\int\phi\,d\mu_s,
\end{equation}
with the superharmonic representative of $v$ in $h(v)$.
The same work \cite{KKT} shows that $\mu_s$ is concentrated on $\{v=+\infty\}$.

Let $h$ be Lipschitz with compact support in $(-\infty,L)$. Then
$h(+\infty)=0$ and $h(v)=0$ on $\{v=+\infty\}$, so the right side
of \eqref{eq:renormalized-test} is $\int h(v)\phi\,d\mu_v$.
Since $v_L$ is locally bounded, $\mu_{v_L}$ has no singular part and
\eqref{eq:renormalized-test} for $v_L$ has right side
$\int h(v_L)\phi\,d\mu_{v_L}$. As $h$ and $h'$ vanish on
$[L-\varepsilon,\infty)$ for some $\varepsilon>0$, we have
$h(v_L)=h(v)$, and the left sides agree, since the gradients agree on
$\{v<L\}$ and both integrands vanish on $\{v\geq L\}$. Hence
$h(v)\mu_{v_L}=h(v)\mu_v$. Letting nonnegative such $h$ increase to
$1$ on $[0,L)$ gives \eqref{eq:truncation-locality} by monotone
convergence.
\end{proof}

\begin{lemma}\label{lem:riesz-capacity-limit}
Let $O_0\subset\rn{n}$ be open and let $k_j>0$ tend to infinity. Suppose $v_j\in W^{1,p}_{\mathrm{loc}}(O_0)$ are locally bounded weak supersolutions with $-\Delta_p v_j=\omega_j\geq0$. Let $E_j$ be relatively closed in $O_0$, with $\operatorname{supp}\omega_j\subset E_j$ and $v_j=k_j$ quasi-everywhere on $E_j$. Suppose that on every relatively compact open subset of $O_0$,
\begin{equation}\label{eq:capacity-limit-hypotheses}
-c\leq v_j\leq k_j,\qquad
\sup_j\int|v_j|^q\,dx<\infty
\end{equation}
for some $p-1<q<p$, where $c,q$ may depend on the subset.
If $\omega_j$ converge locally weakly to $\omega$, then for bounded open
sets $D\Subset O\Subset O_0$ with $\omega(\partial D)=0$,
\begin{equation}\label{eq:riesz-capacity-limit}
\lim_{j\to\infty}k_j^{p-1}
\capp(E_j\cap\overline D,O)=\omega(D).
\end{equation}
The limit is independent of the fixed outer domain $O$.
\end{lemma}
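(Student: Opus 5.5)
The plan is to normalize and then compare each $\omega_j$ with the $p$-capacitary potential of $E_j\cap\overline D$. Put $\tilde v_j=v_j/k_j$. Then $\tilde v_j=1$ quasi-everywhere on $E_j$, $-c/k_j\leq\tilde v_j\leq1$ on compact subsets, and $-\Delta_p\tilde v_j=k_j^{1-p}\omega_j$. The claim \eqref{eq:riesz-capacity-limit} becomes
\[
\capp(E_j\cap\overline D,O)=k_j^{1-p}\omega(D)+o(k_j^{1-p}).
\]
I will prove an upper and a lower bound separately, with errors coming only from a thin collar around $\partial D$. The hypothesis $\omega(\partial D)=0$ makes the collar contribution negligible.

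\textbf{Lower bound for the capacity.} Let $U_j$ be the capacitary potential of $K_j=E_j\cap\overline D$ in $O$. It satisfies $0\leq U_j\leq1$, $U_j=1$ q.e.\ on $K_j$, and its Riesz measure $\nu_j$ lives on $K_j$ with $\nu_j(K_j)=\capp(K_j,O)$.

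\emph{Step 1 (splitting $v_j$).} For a cutoff $\phi\in C_c^\infty(O)$ with $0\le\phi\le1$, write
\[
\int\phi\,d\omega_j=\int\mathcal A(\nabla v_j)\cdot\nabla\phi\,dx .
\]
The identity I aim for is $\int_{D'}d\omega_j\approx k_j^{p-1}\nu_j(K_j)$ for $D'$ slightly smaller or larger than $D$. To get it, apply Lemma~\ref{lem:contact-measures} on the contact set $E_j$ to the pair $k_jU_j$ and a truncation of $v_j$; Lemma~\ref{lem:truncation-locality} handles the truncation.

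\emph{Step 2 (energy bound on the far part).} More robustly, I will test the weak equation for $v_j$ with $\psi=\phi(k_j-v_j)$ and with $\phi$ itself. Since $k_j-v_j=0$ q.e.\ on $\operatorname{supp}\omega_j$, testing with $\psi$ gives
\[
\int\phi|\nabla v_j|^p=\int(k_j-v_j)\mathcal A(\nabla v_j)\cdot\nabla\phi .
\]
The right side is controlled by the $L^q$ bound in \eqref{eq:capacity-limit-hypotheses} together with a Caccioppoli/weak-Harnack type estimate. The restriction $p-1<q<p$ is exactly what makes $|\nabla v_j|^{p-1}$ integrable on compacts, uniformly in $j$, by the standard Kilpel\"ainen--Mal\'y gradient estimates for superharmonic functions. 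Hence $\mathcal A(\nabla v_j)$ is bounded in $L^1_{\mathrm{loc}}$ independently of $j$. The term $k_j\mathcal A(\nabla v_j)$ still grows, though, so this step must be combined with Step 3.

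\emph{Step 3 (two-sided comparison).} Rather than estimating gradients directly, I will compare $v_j$ with $k_jU_j+$(a harmonic correction) using the polar boundary comparison principle stated before Lemma~\ref{lem:levels}.

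For the upper bound on $\omega_j(D)$: take $D\Subset D''\Subset O$ with $\omega(\overline{D''}\setminus D)<\varepsilon$. Lemma~\ref{lem:source-capacity}, applied to $v_j+c$ on $O$, gives only $\omega_j(K)\le p^p(k_j+c)^{p-1}\capp(K,O)$, which is off by a factor $p^p$. So the constant must be sharpened by the capacitary-potential comparison: on $O\setminus K_j$ one has $v_j\le k_jU_j+$ (the maximal value of $v_j$ on $\partial O$). Using Step 2 to see that the boundary term is $o(k_j)$ in an averaged sense, the comparison of Riesz measures on $K_j$ (Lemma~\ref{lem:contact-measures} with $a=k_jU_j+\text{err}$ and $b=v_j$, equal on $K_j$) yields
\[
\omega_j(D)\le k_j^{p-1}(1+o(1))\,\capp(K_j,O)+\omega_j(\text{collar}).
\]

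The reverse inequality comes from testing the energy characterization \eqref{eq:sobolev-capacity} with $\phi\,v_j/k_j$ suitably truncated, where $\phi=1$ on $D$. Expanding,
\[
\int|\nabla(\phi v_j)|^p=k_j^{p}\cdot k_j^{-1}\omega_j(\phi)+\text{lower order},
\]
using the identity from Step 2. Dividing by $k_j^p$ gives
\[
\capp(K_j,O)\le k_j^{1-p}\bigl(\omega_j(\overline{D''})+o(1)\bigr).
\]

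\emph{Step 4 (limit).} Finally let $j\to\infty$ using weak convergence, $\omega(\partial D)=0$, and $\varepsilon\to0$. Independence of $O$ follows because the argument never used $O$ beyond $D\Subset O$; alternatively, compare $O\subset O'$ via the same bounds.

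\textbf{Main obstacle.} The key difficulty is obtaining the sharp constant $1$ rather than $p^p$. This means showing that the error terms involving $k_j\mathcal A(\nabla v_j)$ on the collar are $o(k_j^{p-1})$, i.e.\ that away from $E_j$ the potential is essentially $k_j$ times a capacitary potential. This is where the $L^q$ bound with $q>p-1$ has to be used carefully.
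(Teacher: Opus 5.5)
\textbf{Lower bound on the capacity.} This half has a genuine gap: the bound $\omega_j(\overline D)\le k_j^{p-1}(1+o(1))\capp(K_j,O)$ does not follow from your Step 3. The inequality $v_j\le k_jU_j+\max_{\partial O}v_j$ on $O\setminus K_j$ is false in general. A supersolution lies \emph{above} a solution with smaller boundary data, not below. Also, $v_j$ is not $p$-harmonic in $O\setminus K_j$, since $\omega_j$ may charge $E_j\setminus\overline D$, where $v_j=k_j>k_jU_j$. Finally, $E_j$ may reach $\partial O$, so $\max_{\partial O}v_j$ need not be $o(k_j)$.

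Even granting that inequality, Lemma~\ref{lem:contact-measures} with $a=k_jU_j+\mathrm{err}\geq b=v_j$ gives $\mu_a\le\mu_b$ on the contact set. That is $k_j^{p-1}\capp(K_j,O)\le\omega_j(K_j)$, the opposite of what you need.

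The missing idea is to compare from \emph{below}, using the hypothesis $v_j\geq -c_O$ on $O$. Put $z_j=-c_O+(k_j+c_O)U_j$. It is $p$-harmonic off $K_j$, equals $k_j$ q.e.\ on $K_j$, and $z_j+c_O\in W^{1,p}_0(O)$. Hence $(z_j-v_j)_+\in W^{1,p}_0(O\setminus K_j)$, and the supersolution inequality gives $z_j\le v_j$. Now Lemma~\ref{lem:contact-measures} with $a=v_j\geq b=z_j$ on $\{a=b\}\supset K_j$ yields
$\omega_j(\overline D)=\omega_j(K_j)\le(k_j+c_O)^{p-1}\capp(K_j,O)$.
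Since $(k_j/(k_j+c_O))^{p-1}\to1$, this gives the sharp constant directly. No collar analysis and no $L^q$ bound are needed for this half.

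\textbf{Upper bound on the capacity.} Testing with a truncation of $\phi v_j/k_j$ is the paper's route, but you have misplaced the difficulty. The term $k_j\mathcal A(\nabla v_j)\cdot\nabla\phi$ is not an error: by the weak equation it integrates to $k_j\int\phi\,d\omega_j$, which is the main term. The genuine error terms are $k_j^{-1}\int v_j\mathcal A(\nabla v_j)\cdot\nabla\phi$ and the cutoff terms in $|\nabla(\zeta(v_j)_+)|^p/k_j$. A uniform $L^1_{\mathrm{loc}}$ bound on $|\nabla v_j|^{p-1}$ together with $|v_j|\le k_j$ only makes these $O(1)$, the same order as $\int\phi\,d\omega_j$.

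What $q>p-1$ actually provides is $\int|v_j|^p\le c(1+k_j^{p-q})=o(k_j)$ locally, because $-c\le v_j\le k_j$ and $p-q<1$. The argument then runs as follows:
\begin{itemize}
\item Testing with $\zeta^pv_j$ and using Young's inequality gives the energy bound $\int\zeta^p|\nabla v_j|^p=O(k_j)$.
\item H\"older's inequality then makes the cross term $o(1)$, so $k_j^{-1}|\nabla v_j|^p\,dx\rightharpoonup\omega$.
\item The same limit holds for $(v_j)_+$, after testing with $-\zeta^p(v_j)_-$ to bound the negative part.
\end{itemize}
With these estimates, $\zeta(v_j)_+/k_j$ is admissible and gives $\limsup_j k_j^{p-1}\capp(E_j\cap\overline D,O)\le\int\zeta^p\,d\omega$. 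Then $\omega(\partial D)=0$ finishes the proof.

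Also, in your expansion the leading term of $\int|\nabla(\phi v_j)|^p$ should be $k_j\int\phi^p\,d\omega_j$, not $k_j^{p-1}\omega_j(\phi)$.
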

\begin{proof}
We use quasi-continuous representatives in Sobolev tests and capacity admissibility. The measures $\omega_j$ are locally continuous functionals on $W^{1,p}_0$, so $v_j=k_j$ also $\omega_j$-almost everywhere. On each compact neighbourhood, since $p-q<1$,
\begin{equation}\label{eq:capacity-small-function-energy}
\int|v_j|^p\,dx\leq c(1+k_j^{p-q})=o(k_j).
\end{equation}
For a nonnegative smooth cutoff $\zeta$, testing with $\zeta^p v_j$ gives
\[
X_j:=\int\zeta^p|\nabla v_j|^p\,dx
=k_j\int\zeta^p\,d\omega_j
-p\int\zeta^{p-1}v_j\mathcal A(\nabla v_j)\cdot\nabla\zeta\,dx
\leq ck_j+\tfrac12X_j+c\int|v_j\nabla\zeta|^p\,dx
\]
by the local weak convergence, H\"older and Young, so $X_j=O(k_j)$. Testing with $\phi v_j$ for fixed smooth compactly supported $\phi$ gives
\[
\frac1{k_j}\int\phi|\nabla v_j|^p\,dx
=\int\phi\,d\omega_j
-\frac1{k_j}\int v_j\mathcal A(\nabla v_j)\cdot\nabla\phi\,dx,
\]
and by the energy bound on a larger cutoff the last term is at most $c k_j^{-1} k_j^{(p-1)/p}(1+k_j^{p-q})^{1/p}\to0$, since $q>p-1$. Thus, locally weakly as measures,
\begin{equation}\label{eq:energy-riesz-limit}
\frac{|\nabla v_j|^p}{k_j}\,dx\rightharpoonup\omega.
\end{equation}
Testing with $-\zeta^p(v_j)_-$ gives no measure term, since $v_j=k_j>0$ on the support of $\omega_j$, and yields $\int\zeta^p|\nabla(v_j)_-|^p\leq c\int(v_j)_-^p|\nabla\zeta|^p\leq c$. Hence \eqref{eq:energy-riesz-limit} also holds with $\nabla(v_j)_+$.

For the upper bound choose $0\leq\zeta\leq1$ in $C_c^\infty(O)$ equal to one near $\overline D$. The function $f_j=\zeta(v_j)_+/k_j\in W^{1,p}_0(O)$ equals one quasi-everywhere on $E_j\cap\overline D$. The inequality $\bigl||a+b|^p-|a|^p\bigr|\leq c_p(|a|^{p-1}|b|+|b|^p)$ with $a=\zeta\nabla(v_j)_+$ and $b=(v_j)_+\nabla\zeta$, \eqref{eq:capacity-small-function-energy} and the $O(k_j)$ energy bound show that the cutoff error divided by $k_j$ tends to zero. Hence $\limsup_j k_j^{p-1}\capp(E_j\cap\overline D,O)\leq\int\zeta^p\,d\omega$. Letting the supports decrease to $\overline D$ and using $\omega(\partial D)=0$ gives the upper bound $\omega(D)$.

For the reverse bound let $e_j$ be the capacitary function of $K_j=E_j\cap\overline D$ relative to $O$. It is $p$-harmonic off $K_j$, equals one quasi-everywhere on $K_j$, and its Riesz measure is supported on $K_j$ with total mass $\capp(K_j,O)$. Choose $c_O$ with $v_j\geq-c_O$ on $O$ and put $z_j=-c_O+(k_j+c_O)e_j$. Then $(z_j-v_j)_+\in W^{1,p}_0(O)$ vanishes quasi-everywhere on $K_j$, so it belongs to $W^{1,p}_0(O\setminus K_j)$ \cite[Theorem~4.5]{HKM}. Testing the equation for $z_j$ off $K_j$ and the supersolution inequality for $v_j$ with it, strict monotonicity gives $z_j\leq v_j$ almost everywhere, hence quasi-everywhere \cite[Theorem~4.12]{HKM}. Lemma~\ref{lem:contact-measures} on $\{z_j=v_j\}\supset K_j$ (up to a polar set), with $\mu_{z_j}=(k_j+c_O)^{p-1}\mu_{e_j}$ and $\operatorname{supp}\omega_j\subset E_j$, gives $\omega_j(\overline D)=\omega_j(K_j)\leq(k_j+c_O)^{p-1}\capp(K_j,O)$. Since $\omega_j(\overline D)\to\omega(D)$, multiplying by $(k_j/(k_j+c_O))^{p-1}$ gives the lower bound in \eqref{eq:riesz-capacity-limit}.
\end{proof}

\begin{lemma}\label{lem:cone-comparison}
Let $\mathfrak C\subsetneq\rn{n}$ be a closed convex cone with nonempty interior. Suppose $Q\in C(\rn{n})\cap W^{1,p}_{\mathrm{loc}}(\rn{n})$, $Q\leq0$, $Q=0$ on $\mathfrak C$, $Q\not\equiv0$, and $Q$ is $p$-harmonic outside $\mathfrak C$ with $\sigma=-\Delta_p Q\geq0$ supported on $\partial\mathfrak C$. For $0<s<1$, there are no locally bounded below $p$-superharmonic functions $U,V$ satisfying
\begin{equation}\label{eq:cone-comparison-hypotheses}
U\geq V,\qquad \mu_U\geq\mu_V,\qquad
U=Q(s\,\cdot),\ V=Q\text{ outside }\mathfrak C,
\end{equation}
and $U,V\geq0$ in $\operatorname{int}\mathfrak C$.
\end{lemma}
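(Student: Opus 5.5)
The plan is to argue by contradiction and reduce everything to the identity $Q(s\,\cdot)=Q$ outside $\mathfrak C$. That identity is absurd: iterating it gives $Q(x)=Q(s^kx)$ for every $k$, and continuity of $Q$ at $0\in\mathfrak C$ with $Q(0)=0$ forces $Q\equiv0$, contrary to $Q\not\equiv0$. So suppose $U,V$ satisfy \eqref{eq:cone-comparison-hypotheses}. Put $f=U-V\geq0$. Outside $\mathfrak C$ we have $f=Q(s\,\cdot)-Q$, and $f=0$ on $\partial\mathfrak C$ by continuity, since both terms vanish there. The goal is to show $f\equiv0$ outside $\mathfrak C$.

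\emph{Step 1 (energy inequality).} Take $0\leq\zeta\in C_c^\infty(\rn{n})$, $\varepsilon>0$, and $T_\varepsilon(f)=\min\{f,\varepsilon\}/\varepsilon$. Test the Riesz equations of $U$ and $V$ with $\zeta T_\varepsilon(f)$, working with truncations $\min(U,L),\min(V,L)$ and Lemma~\ref{lem:truncation-locality} so that the tests are admissible. Subtracting and using $\mu_U\geq\mu_V$ together with $\zeta T_\varepsilon(f)\geq0$ quasi-everywhere gives
\[
\frac1\varepsilon\int_{\{0<f<\varepsilon\}}\zeta\,\langle\mathcal A(\nabla U)-\mathcal A(\nabla V),\nabla f\rangle\,dx
\;\geq\;-\int T_\varepsilon(f)\,\langle\mathcal A(\nabla U)-\mathcal A(\nabla V),\nabla\zeta\rangle\,dx .
\]
This inequality is the reverse of the one used in the proof of Lemma~\ref{lem:contact-measures}. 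Testing instead with $\zeta(1-T_\varepsilon(f))$ and with $\zeta f$ truncated at a level, I expect to obtain
\[
\int\zeta\,\langle\mathcal A(\nabla U)-\mathcal A(\nabla V),\nabla f\rangle\,dx\leq \int f\,|\mathcal A(\nabla U)-\mathcal A(\nabla V)|\,|\nabla\zeta|\,dx .
\]
The left integrand is nonnegative and vanishes only where $\nabla U=\nabla V$.

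\emph{Step 2 (removing the cutoff).} Choose $\zeta$ supported in $B(0,2R)$ and equal to one on $B(0,R)$, and send $R\to\infty$. The error term lives on the annulus, where outside $\mathfrak C$ everything is explicit in terms of $Q$ and $Q(s\,\cdot)$. The cone structure suggests a scaling argument: compare the annulus $R<|x|<2R$ with its image under $x\mapsto sx$, using Caccioppoli's inequality for $Q$ on annuli. The aim is to show the right side is $o(1)$ along a sequence $R_j\to\infty$, or at least bounded by a quantity that telescopes under dilation. Inside $\mathfrak C$, the bounds $U,V\geq0$ and superharmonicity give local energy control through Lemma~\ref{lem:source-capacity}. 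If this works, $\nabla U=\nabla V$ almost everywhere, so $f$ is constant on $\rn{n}$ (it is Sobolev). Then $f=0$ because it vanishes on $\partial\mathfrak C$, and hence $Q(s\,\cdot)=Q$ outside $\mathfrak C$, which gives the contradiction described above.

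\emph{Main obstacle.} The hard part is Step 2: controlling the cutoff term at infinity with no growth assumption on $Q$. My first attempt is to use the dilation structure, replacing $\zeta$ by the family $\zeta(s^k\cdot)$ and summing, together with the sign information $Q\leq0$ and $f\geq0$. If a global Liouville argument fails, the fallback is a local argument near a point of $\partial\mathfrak C\setminus\{0\}$ where $f>0$ nearby. There, Lemma~\ref{lem:contact-measures} applied on the contact set $\{U=V\}\supset\partial\mathfrak C$, combined with $\mu_U\geq\mu_V$, forces $\mu_U=\mu_V$ on $\partial\mathfrak C$. The scaling $\sigma_s=s^{p}\,\sigma(s\,\cdot)$ of the Riesz measure of $Q(s\,\cdot)$ then gives, on balls $B(0,r)$, the relation $\sigma(B(0,sr))\,s^{p-n}=\sigma(B(0,r))$. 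Iterating this relation as $r\to0$ should contradict the local boundedness of $Q$ via Lemma~\ref{lem:source-capacity}.
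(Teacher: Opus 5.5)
\textbf{There is a genuine gap: your main route fails, and your fallback omits the step that produces the contradiction.}

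\textbf{Steps 1--2 cannot work.} Test with $\zeta\min(f,k)$ for $k>0$. This gives
\[
\int\zeta\,\langle\mathcal A(\nabla U)-\mathcal A(\nabla V),\nabla\min(f,k)\rangle\,dx
=\int\zeta\min(f,k)\,d(\mu_U-\mu_V)-\int\min(f,k)\,\langle\mathcal A(\nabla U)-\mathcal A(\nabla V),\nabla\zeta\rangle\,dx .
\]
The hypothesis $\mu_U\geq\mu_V$ makes the measure term nonnegative. So you only get a lower bound on the energy term, which monotonicity of $\mathcal A$ already gives. The upper bound you ``expect'' would need $\mu_U\leq\mu_V$ on $\{f>0\}$, and nothing gives you that.

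More fundamentally, in $\operatorname{int}\mathfrak C$ the hypotheses say only $U\geq V\geq0$ and $\mu_U\geq\mu_V$. Nothing there forces $\nabla U=\nabla V$, and no cutoff or dilation trick at infinity will change this. The ordering of the measures reverses only on the contact set $\{U=V\}\supset\partial\mathfrak C$. So the contradiction must come from the boundary measure at the vertex, not from proving $Q(s\,\cdot)=Q$.

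\textbf{The fallback has the right endgame but skips the key step.} Write $Q_s=Q(s\,\cdot)$ and $\sigma_s=-\Delta_pQ_s$; as measures, $\sigma_s(D)=s^{p-n}\sigma(sD)$.

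Knowing $\mu_U=\mu_V$ on $\partial\mathfrak C$ says nothing by itself about $\sigma_s$ versus $\sigma$. The reason is that $\mu_U|_{\partial\mathfrak C}$ and $\mu_V|_{\partial\mathfrak C}$ also carry the flux of $U$ and $V$ from inside the cone. Your claimed identity $s^{p-n}\sigma(B(0,sr))=\sigma(B(0,r))$ is therefore unjustified. Your framing near a point of $\partial\mathfrak C\setminus\{0\}$ is also beside the point, since the balls that matter are centred at the vertex.

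The missing idea is to compare $U$ with the pasted function $P=V+Q_s-Q$. It equals $V$ in $\mathfrak C$ and $Q_s$ outside. Almost everywhere $\mathcal A(\nabla P)=\mathcal A(\nabla V)+\mathcal A(\nabla Q_s)-\mathcal A(\nabla Q)$, because inside the cone $\nabla Q_s=\nabla Q=0$, and outside it $V=Q$. Hence $\mu_P=\mu_V+\sigma_s-\sigma$.

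Work with truncations at a level $L>0$; by Lemma~\ref{lem:truncation-locality} this changes neither measure on $\partial\mathfrak C$. Then $U\geq P$, with $U=P=0$ on $\partial\mathfrak C$. Lemma~\ref{lem:contact-measures} gives $\mu_U|_{\partial\mathfrak C}\leq\mu_V|_{\partial\mathfrak C}+\sigma_s-\sigma$. Combined with $\mu_U\geq\mu_V$, this yields only the inequality $\sigma_s\geq\sigma$, that is, $\sigma(B(0,sr))\geq s^{n-p}\sigma(B(0,r))$. That inequality is enough to iterate.

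\textbf{The final contradiction needs continuity of $Q$ at $0$, not local boundedness.} Lemma~\ref{lem:source-capacity} gives $\sigma(B(0,r))\leq c\,(\operatorname{osc}_{B(0,2r)}Q)^{p-1}r^{n-p}$. Boundedness alone gives only $O(r^{n-p})$, which is consistent with the iterated lower bound. You need $\operatorname{osc}_{B(0,2r)}Q\to0$. You also need $\sigma\neq0$, which follows from $Q\not\equiv0$ and the strong maximum principle.
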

\begin{proof}
The functions $U,V$ are zero at every point of $\partial \mathfrak C$, since their essential lower limits there are zero: outside $\mathfrak C$ their values tend continuously to zero, and inside they are nonnegative. Also $\partial\mathfrak C$ is Lebesgue null.

Set $Q_s(z)=Q(sz)$ and $\sigma_s=-\Delta_p Q_s$. Since $\mathfrak C$ is a cone, $s\mathfrak C=\mathfrak C$, and
\begin{equation}\label{eq:cone-measure-scaling}
\sigma_s(D)=s^{p-n}\sigma(sD)
\end{equation}
for Borel $D$. Fix $L>0$ and put $U_L=\min(U,L)$, $V_L=\min(V,L)$. These are locally bounded Sobolev weak supersolutions. The Sobolev function $P_L=V_L+Q_s-Q$ equals $V_L$ in $\mathfrak C$ and $Q_s=U_L$ outside, so $U_L\geq P_L$ almost everywhere, hence quasi-everywhere by \cite[Theorem~4.12]{HKM} applied to $(P_L-U_L)_+$. The Sobolev representatives of $U_L,V_L$ agree quasi-everywhere with their superharmonic representatives, which vanish on $\partial\mathfrak C$, and $Q_s-Q=0$ there, so $U_L$ and $P_L$ are zero quasi-everywhere on $\partial\mathfrak C$.

Almost everywhere,
$\mathcal A(\nabla P_L)=\mathcal A(\nabla V_L)+\mathcal A(\nabla Q_s)-\mathcal A(\nabla Q)$.
Inside $\mathfrak C$ the last two gradients vanish. Outside $\mathfrak C$,
$V_L=Q\leq0$, so $\mathcal A(\nabla V_L)$ and $\mathcal A(\nabla Q)$ cancel. These regions cover
$\rn{n}$ except for a set of measure zero. Consequently
\begin{equation}\label{eq:pasted-measure}
\mu_{P_L}=\mu_{V_L}+\sigma_s-\sigma.
\end{equation}
Since $\mu_{P_L}^+\leq\mu_{V_L}+\sigma_s$ and $\mu_{P_L}^-\leq\sigma$, where the right sides are locally continuous Sobolev functionals, Lemma~\ref{lem:contact-measures} applies to $U_L\geq P_L$, and restricted to $\partial\mathfrak C$ it gives
$\mu_{U_L}|_{\partial \mathfrak C}\leq\mu_{V_L}|_{\partial \mathfrak C}+\sigma_s-\sigma$.
By Lemma~\ref{lem:truncation-locality}, truncation at $L>0$ does not change either measure on its zero set $\partial \mathfrak C$. Since $\mu_U\geq\mu_V$, and $\sigma_s,\sigma$ are both supported there, we obtain the global measure inequality
\begin{equation}\label{eq:cone-boundary-order}
\sigma_s\geq\sigma.
\end{equation}
By \eqref{eq:cone-measure-scaling}, $\sigma(B(0,sr))\geq s^{n-p}\sigma(B(0,r))$. The measure $\sigma$ is nonzero, since otherwise $Q\leq0$ would be $p$-harmonic and zero on $\operatorname{int}\mathfrak C$, hence $Q\equiv0$ by the strong maximum principle. Choosing $R>0$ with $\sigma(B(0,R))>0$ and iterating gives
\begin{equation}\label{eq:cone-iteration}
\frac{\sigma(B(0,s^jR))}{(s^jR)^{n-p}}
\geq\frac{\sigma(B(0,R))}{R^{n-p}}>0\qquad(j\geq0).
\end{equation}
On the other hand, shift $Q$ by its minimum on $\overline B(0,2r)$ and apply Lemma~\ref{lem:source-capacity} on $B(0,2r)$. For compact subsets of $B(0,r)$, the relative capacity is at most that of $\overline B(0,r)$ in $B(0,2r)$, which equals a constant times $r^{n-p}$, also when $p=n$.  It follows that
\begin{equation}\label{eq:cone-small-density}
\sigma(B(0,r))
\leq c\bigl(\operatorname{osc}_{B(0,2r)}Q\bigr)^{p-1}r^{n-p}
=o(r^{n-p}),
\end{equation}
since $Q$ is continuous and $Q(0)=0$. This contradicts
\eqref{eq:cone-iteration}, also for $p=n$, where the denominators
equal one.
\end{proof}

\subsection{Rescaling at a contact point}\label{subsec:rescaling}

\begin{lemma}\label{lem:contact-compactness}
Let $1<p\leq n$ and use the common convex set $C$ and
level $T=T_*<m$ fixed after Lemma~\ref{lem:continuation-endpoint}.
Let $x\in\partial C$.
Choose a strictly decreasing sequence $r_j\downarrow0$ and put
\[
\begin{gathered}
 C_j=(C-x)/r_j,\qquad \mathfrak C=\overline{\bigcup_j C_j},\\
 \eta_j=\eta(r_j),\qquad k_j=(m-T)/\eta_j,\\
 v_{i,j}(z)=\frac{w_i(x+r_jz)-T}{\eta_j}\qquad(i=A,B,M),
\end{gathered}
\]
where $\eta$ is defined in Lemma~\ref{lem:exterior-oscillation}
and $\mu_i=-\Delta_p w_i$.
Write $\omega_{i,j}=-\Delta_pv_{i,j}$ for the rescaled Riesz measures.
Fix $q\in(p-1,\min\{p,q_H\})$, where $q_H$ is as in
\eqref{eq:osc-weak-harnack}.
After passing to one subsequence, there are
$p$-superharmonic functions $H_A,H_B,H_M:\mathbb R^n\to
\mathbb R\cup\{+\infty\}$, finite outside $p$-polar sets, such that
\begin{equation}\label{eq:contact-compactness}
\begin{aligned}
 v_{i,j}&\longrightarrow H_i\quad\hbox{almost everywhere},\\
 \omega_{i,j}&\rightharpoonup\mu_{H_i}\quad\hbox{locally as Radon measures},\\
 \omega_{i,j}(D)&=r_j^{p-n}\eta_j^{1-p}\mu_i(x+r_jD).
\end{aligned}
\end{equation}
The cone $\mathfrak C$ is closed, proper, convex, and has nonempty interior.
There is a function
\[
 Q\in C(\mathbb R^n)\cap W^{1,p}_{\mathrm{loc}}(\mathbb R^n),
 \qquad Q\leq0,\quad Q\not\equiv0,
\]
which is zero on $\mathfrak C$, is $p$-harmonic on
$\mathbb R^n\setminus \mathfrak C$, and has positive Riesz measure
$\sigma=-\Delta_pQ$ supported on $\partial \mathfrak C$.
Moreover,
\begin{equation}\label{eq:contact-cone-signs}
\begin{aligned}
 H_i&=Q\quad\hbox{on }\mathbb R^n\setminus \mathfrak C,\\
 H_i&\geq0\quad\hbox{in }\operatorname{int}\mathfrak C,\qquad
 H_i=0\quad\hbox{on }\partial \mathfrak C.
\end{aligned}
\end{equation}
For every fixed bounded open set $O$, the sequences $v_{i,j}$ have
a common lower bound on $O$ and a common $L^q(O)$ bound for the
chosen $q$, and $v_{i,j}\leq k_j\to\infty$.
\end{lemma}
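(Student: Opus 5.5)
The plan is a blow-up compactness argument built on Lemma~\ref{lem:exterior-oscillation}. First I fix the geometric objects. Translating $x$ to the origin, the sets $C_j=(C-x)/r_j$ are convex and contain $0$. Since $r_j$ decreases, they increase, so $\mathfrak C=\overline{\bigcup_jC_j}$ is the closure of the tangent cone of $C$ at $x$. It is convex and closed. It has nonempty interior because $C$ does. It is proper because it lies in the half-space $\{\langle z,\nu\rangle\leq0\}$ for a supporting normal $\nu$. The identity for $\omega_{i,j}(D)$ in \eqref{eq:contact-compactness} is the change of variables $z=(y-x)/r_j$ in the weak $p$-Laplacian. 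Each rescaling contributes the factor $r_j^{p-n}$, and dividing by $\eta_j$ contributes $\eta_j^{1-p}$.

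Next come the uniform bounds. Each $w_i$ is defined on all of $\rn{n}$ as a $p$-superharmonic function. In the rescaled exterior region it equals $T-v$, with $v$ the function of Lemma~\ref{lem:exterior-oscillation}. Therefore $v_{i,j}\geq -\eta(r_j\rho/8)/\eta_j$ on $B(0,\rho)$ wherever we are outside $C_j$, and $v_{i,j}\geq0$ inside $C_j$ by Lemma~\ref{lem:convex-trace}. The upper bound $v_{i,j}\leq (m-T)/\eta_j=k_j$ holds because $w_i\leq m$.

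A uniform lower bound on $B(0,\rho)$ requires a doubling-type estimate $\eta(\rho r)\leq c_\rho\eta(r)$. I obtain it by applying the Carleson estimate \eqref{eq:osc-carleson} at scale $\rho r$ and then a Harnack chain back to scale $r$ along the normal ray. The $L^q$ bound on $B(0,\rho)$ follows from \eqref{eq:osc-weak-harnack} at radius $\rho r_j$, again combined with the same doubling. Since $k_j\to\infty$, the $v_{i,j}$ then fit the hypotheses \eqref{eq:capacity-limit-hypotheses}.

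The exterior limit is handled next. The rescaled exterior function $Q_j(z)=-v(x+r_jz)/\eta_j$ is a common function: $v_{i,j}=Q_j$ outside $C_j$ for all three $i$. The $Q_j$ are normalized by $\sup_{B(0,8)}|Q_j|=1$ and are uniformly bounded on compacts by doubling. By the boundary H\"older estimates of \cite{LN10} and \cite[Lemma~8.1]{ALSV} (uniform Lipschitz constants for the convex $C_j$ near $0$), the $Q_j$ are equicontinuous, including up to $\partial C_j$. A subsequence therefore converges locally uniformly to some $Q\leq 0$ that is continuous, vanishes on $\mathfrak C$, and is $p$-harmonic off $\mathfrak C$. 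Caccioppoli's inequality gives local $W^{1,p}$ bounds, so $Q\in W^{1,p}_{\mathrm{loc}}$. From $\sup_{B(0,8)}|Q|=1$ we get $Q\not\equiv0$. The Riesz measure of $Q$ is supported on $\partial\mathfrak C$ by \cite[Theorem~21.2]{HKM}.

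Finally, for the interior limits, the functions $v_{i,j}+c$ are nonnegative $p$-superharmonic functions with uniform $L^q$ bounds. By the compactness theorem for $p$-superharmonic functions (\cite[Theorem~8.2]{HKM}, used as in \cite{KKT}), a subsequence converges a.e. to a $p$-superharmonic $H_i$ that is finite outside a polar set. The measures converge weakly to $\mu_{H_i}$, since a.e.\ convergence with the $L^q$ bounds ($q>p-1$) gives convergence of $\mathcal A(\nabla v_{i,j})$ in $L^1_{\mathrm{loc}}$ (\cite{KKT}-type stability). I take a common subsequence for $i=A,B,M$ and for $Q_j$.

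Each $H_i$ equals $Q$ off $\mathfrak C$ because $C_j\uparrow\mathfrak C$ minus a null boundary. It is nonnegative in $\operatorname{int}\mathfrak C$ as an a.e.\ limit of nonnegative functions. It vanishes on $\partial\mathfrak C$ by the essential liminf representative \eqref{eq:superharmonic-representative}, using the exterior values tending to $0$ and the nonnegative interior values.

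I expect the main obstacle to be the measure convergence $\omega_{i,j}\rightharpoonup\mu_{H_i}$ without uniform $W^{1,p}$ bounds, since $v_{i,j}$ may blow up like $k_j$ inside. I would first handle it by truncation: apply the weak convergence to $\min(v_{i,j},L)$, then use Lemma~\ref{lem:truncation-locality}, letting $L\to\infty$. The doubling estimate for $\eta$ is a secondary difficulty.
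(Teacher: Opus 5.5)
Most of your outline follows the paper's proof:
\begin{itemize}
\item the increasing $C_j$ and the tangent cone lying in a half-space;
\item the change of variables for $\omega_{i,j}$;
\item the doubling of $\eta$ from \eqref{eq:osc-carleson} (the paper takes $b=1/(2R)$ at radius $Rr$, so no separate Harnack chain is needed);
\item the $L^q$ bound from \eqref{eq:osc-weak-harnack};
\item the exterior limit $Q$ via the boundary H\"older estimate;
\item the identification of $H_i$ on and off $\mathfrak C$.
\end{itemize}
Your normalization $\sup_{B(0,8)}|Q_j|=1$ is a fine substitute for the paper's Carleson lower bound at $\nu/2$. You should, however, first show that $Q$ is $p$-superharmonic, for instance as a locally uniform limit of the $p$-superharmonic $Q_j$, before \cite[Theorem~21.2]{HKM} gives a positive Riesz measure.

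The gap is the compactness step, which you flag but do not close. First, \cite[Theorem~8.2]{HKM} is the fundamental convergence theorem for the regularized infimum of a family that is locally bounded below. It does not extract an almost everywhere convergent subsequence. The paper builds such a subsequence by hand:
\begin{itemize}
\item the truncations $\min(v_{i,j},L)$ are bounded in $W^{1,p}$ on balls by Caccioppoli's inequality;
\item Rellich compactness and a diagonal argument give a.e.\ limits $f_{i,L}$;
\item $f_i=\sup_Lf_{i,L}$ is finite a.e.\ by Fatou's lemma and the moment bound;
\item $H_i$ is the increasing limit of the regularizations of $\inf_{j\geq N}v_{i,j}$, which is $p$-superharmonic by Theorem~8.2 and Lemma~7.3 of \cite{HKM}.
\end{itemize}

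Second, and more seriously, you do not justify $\omega_{i,j}\rightharpoonup\mu_{H_i}$. Almost everywhere convergence of $v_{i,j}$ together with $L^q$ bounds does not give $\mathcal A(\nabla v_{i,j})\to\mathcal A(\nabla H_i)$ in $L^1_{\mathrm{loc}}$. That needs a.e.\ convergence of the gradients and uniform integrability of $|\nabla v_{i,j}|^{p-1}$, and you prove neither.

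Your truncation fallback fails as stated. The measure $\omega_{i,j}$ is carried by $E_{i,j}$, where $v_{i,j}=k_j\to\infty$ quasi-everywhere. So once $k_j>L$, Lemma~\ref{lem:truncation-locality} only tells you that $\omega_{i,j}$ vanishes on $\{v_{i,j}<L\}$. Meanwhile the measure of $\min(v_{i,j},L)$ is concentrated on the level set $\{v_{i,j}=L\}$. Tested against $\phi$, the two measures differ by $\int_{\{v_{i,j}>L\}}\mathcal A(\nabla v_{i,j})\cdot\nabla\phi\,dz$. You would need this to be small uniformly in $j$, and that is a gradient estimate you have not supplied.

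The paper gets the needed control as follows:
\begin{itemize}
\item Test with $\zeta^pV_{i,j}^{-\tau}$, where $V_{i,j}=v_{i,j}+c\geq1$, $0<\tau<p-1$, and also $\tau<p/(n-p)$ if $p<n$.
\item H\"older's inequality then involves only the powers $p-1-\tau$ and $(p-1)(1+\tau)$ of $V_{i,j}$, both below $q_H$.
\item This yields \eqref{eq:normalized-flux-bound} and the local mass bound $\omega_{i,j}(B(0,R))\leq c_R$.
\item The limit measure is then identified by the weak continuity theorem of \cite{TW}, which needs only a.e.\ convergence and common local lower bounds.
\end{itemize}
The same test with a gradient exponent slightly above $p-1$ would give the uniform integrability your route needs. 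You would still need a theorem giving a.e.\ gradient convergence, or weak continuity of the truncated measures, to pass to the limit.
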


\begin{proof}
Let $\nu$ be a supporting outward unit normal to $C$ at $x$.
The sets $C_j$ increase because $C$ is convex and $x\in C$, and the
closure of their union is the tangent cone at $x$. It is dilation
invariant, since for $t>0$ and $r_\ell\leq r_j/t$ convexity and
$0\in C-x$ give $tC_j\subset C_\ell$. It lies in
$\{z:z\cdot\nu\leq0\}$, so it is proper, and it has nonempty interior
since $C$ does. Each compact subset of $\operatorname{int}\mathfrak C$
lies in $C_j$ for all large $j$.

For $R\geq1$, \eqref{eq:osc-carleson} at radius $Rr$ with
$b=1/(2R)$ gives, for small $r$,
\begin{equation}\label{eq:eta-fixed-scales}
 \eta(Rr)\leq c_R v(x+r\nu/2)\leq c_R\eta(r),
\end{equation}
where $v=T-W$ is as in Lemma~\ref{lem:exterior-oscillation}.
Since $w_i\geq T$ in $C$ and $w_i=W$ outside $C$, this gives
$v_{i,j}\geq-c_R$ on $B(0,R)$, and $w_i\leq m$ gives
$v_{i,j}\leq k_j$, where $k_j\to\infty$ since $T<m$ and $\eta_j\to0$.
The weak Harnack estimate \eqref{eq:osc-weak-harnack} at radius
$Rr_j$ and \eqref{eq:eta-fixed-scales} give
\begin{equation}\label{eq:normalized-moment}
 \int_{B(0,R)}|v_{i,j}|^q\leq c_{R,q}
 \qquad(0<q<q_H).
\end{equation}

On a slightly larger ball, adding a constant to $v_{i,j}$ gives a
weak supersolution $V_{i,j}\geq1$ with the same gradient and the
bounds \eqref{eq:normalized-moment}. Choose $0<\tau<p-1$, with also
$\tau<p/(n-p)$ if $p<n$. Testing with $\zeta^p V_{i,j}^{-\tau}$, for
a smooth cutoff $\zeta$, gives
\[
 \int\zeta^p V_{i,j}^{-1-\tau}|\nabla V_{i,j}|^p
 \leq c\int V_{i,j}^{p-1-\tau}|\nabla\zeta|^p.
\]
H\"older's inequality with gradient exponent $p-1$ involves the
powers $p-1-\tau$ and $(p-1)(1+\tau)$ of $V_{i,j}$, both less than
$q_H$, so \eqref{eq:normalized-moment} yields
\begin{equation}\label{eq:normalized-flux-bound}
 \int_{B(0,R)}|\nabla v_{i,j}|^{p-1}\leq c_R.
\end{equation}
If $\zeta\geq0$ equals one on $B(0,R)$ and has support in a larger
fixed ball, the weak equation gives
$\omega_{i,j}(B(0,R))\leq\int|\nabla v_{i,j}|^{p-2}\nabla v_{i,j}\cdot\nabla\zeta\leq c_R$.
The scaling in \eqref{eq:contact-compactness} follows from the weak
equation.

Since $v_{i,j}$ need not be bounded in $W^{1,p}_{\mathrm{loc}}$, we
use the $p$-superharmonic truncations $t_{i,j,L}=\min(v_{i,j},L)$,
$L\in\mathbb N$, which are bounded on each fixed ball uniformly in
$j$. The estimate \cite[Lemma~3.27(ii), $q=0$]{HKM} for
$t_{i,j,L}-L\leq0$ gives uniform $W^{1,p}$ bounds on smaller balls.
Rellich compactness and a diagonal argument give almost everywhere
limits $f_{i,L}$ for every $L$ and every ball of integer radius, and
outside one null set $f_{i,L}=\min(f_{i,N},L)$ for $N>L$. Put
$f_i=\sup_L f_{i,L}$. If $f_i(z)<\infty$ and $L>f_i(z)$ is an
integer, then $f_{i,L}(z)=f_i(z)<L$, so $v_{i,j}(z)<L$ eventually and
$v_{i,j}(z)\to f_i(z)$. If $f_i(z)=+\infty$, then $f_{i,L}(z)=L$ for
every $L$ and $v_{i,j}(z)\to+\infty$. By Fatou's lemma and
\eqref{eq:normalized-moment}, the last case occurs only on a null set.

For each $N$, let $s_{i,N}$ be the lower semicontinuous
regularization of $\inf_{j\geq N}v_{i,j}$. By the fundamental
convergence theorem \cite[Theorem~8.2]{HKM}, which applies since the
families $\{v_{i,j}:j\geq N\}$ have common local lower bounds, $s_{i,N}$ is
$p$-superharmonic and equals that infimum outside a polar set
$P_{i,N}$. Outside $\bigcup_NP_{i,N}$ and the preceding null set,
$s_{i,N}\uparrow\liminf_jv_{i,j}=f_i$. By \cite[Lemma~7.3]{HKM}, the
increasing limit $H_i=\lim_Ns_{i,N}$ is $p$-superharmonic, since it
is finite almost everywhere, and $H_i=f_i$ almost everywhere.

We use the weak continuity theorem of \cite{TW}, after a change of
sign: if $p$-superharmonic functions $v_j,v$ have common local lower
bounds and $v_j\to v$ almost everywhere, then their Riesz measures
converge locally weakly. Applied to $v_{i,j}$ and $H_i$, it
identifies the weak limits of $\omega_{i,j}$ with $\mu_{H_i}$.

It remains to identify the common exterior function. Put
\[
 Q_j=\min(v_{A,j},0)=\min(v_{B,j},0)=\min(v_{M,j},0).
\]
These are zero on $C_j$ and equal $(W(x+r_jz)-T)/\eta_j$ outside
$C_j$, and zero on $\partial C_j$ by \eqref{eq:superharmonic-representative}
and the continuous exterior boundary value in
Lemma~\ref{lem:convex-trace}. Thus $Q_j$ is continuous,
$p$-superharmonic, locally in $W^{1,p}$, $p$-harmonic outside $C_j$,
and uniformly bounded on fixed balls by \eqref{eq:eta-fixed-scales}.
In coordinates near $x$ in which the exterior of $C$ is the upper
side of a Lipschitz graph $y_n=\varphi(y')$, the dilated graphs
$\varphi_j(z')=(\varphi(x'+r_jz')-\varphi(x'))/r_j$ have the same
Lipschitz bound and chart radii tending to infinity. Hence the
boundary H\"older estimate \cite{LN10}, in the local graph form
\cite[Lemma~8.1, (8.4)]{ALSV}, for the positive functions $-Q_j$, and
the interior H\"older estimate give a uniform modulus of continuity
on each fixed ball, which the zero extension into $C_j$ preserves.
After a further subsequence, $Q_j\to Q$ locally uniformly. The
result on locally uniform convergence in \cite[Lemma~7.3]{HKM} and
\cite[Corollary~7.20]{HKM} give $Q\in C\cap W^{1,p}_{\mathrm{loc}}$
and $p$-superharmonicity. It is zero in $\mathfrak C$ and $p$-harmonic
outside $\mathfrak C$ by interior compactness. The Carleson bound at
$b=1/2$ gives $-Q_j(\nu/2)\geq c>0$, hence $Q\not\equiv0$, and its
Riesz measure is positive and supported on $\partial\mathfrak C$.

Since $v_{i,j}\geq Q_j$, we get $H_i\geq Q$ almost everywhere, hence
everywhere since $Q$ is continuous. On compact subsets of
$\mathbb R^n\setminus\mathfrak C$, $v_{i,j}=Q_j$ for large $j$, so
$H_i=Q$ there, and on compact subsets of $\operatorname{int}\mathfrak C$
the functions $v_{i,j}$ are eventually nonnegative, so $H_i\geq0$.
At $z\in\partial\mathfrak C$, $H_i\geq Q$ gives essential lower limit
at least zero, and approach through the exterior, where $H_i=Q\to0$,
gives the reverse inequality. Since $H_i(z)$ is this essential lower
limit, \eqref{eq:contact-cone-signs} follows.
\end{proof}

\begin{proposition}\label{prop:unrestricted-contact}
Let $1<p\leq n$. There is no contact point in
$(A\cup B)\cap M\cap\partial C$, regular or irregular.
\end{proposition}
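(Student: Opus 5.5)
The plan is to argue by contradiction at a contact point $x\in(A\cup B)\cap M\cap\partial C$, blow up at $x$ with Lemma~\ref{lem:contact-compactness}, and obtain two limit functions that Lemma~\ref{lem:cone-comparison} forbids. Suppose first that $x\in A\cap M\cap\partial C$; the case $x\in B$ is symmetric, with $\theta$ and $1-\theta$ exchanged. Fix $r_j\downarrow0$ and take the subsequence, the cone $\mathfrak C$, the function $Q$ and the limits $H_A,H_B,H_M$ given by Lemma~\ref{lem:contact-compactness}. Put $s=1-\theta\in(0,1)$, $U(z)=H_M(sz)$ and $V=H_B$. The goal is to verify the hypotheses \eqref{eq:cone-comparison-hypotheses} for this pair, which gives the contradiction. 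By \eqref{eq:contact-cone-signs} and $s\mathfrak C=\mathfrak C$, both $U$ and $V$ are $p$-superharmonic and locally bounded below. Outside $\mathfrak C$ we have $U=Q(s\,\cdot)$ and $V=Q$, and both functions are nonnegative in $\operatorname{int}\mathfrak C$.

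\emph{Pointwise order.} Since $x\in A$, we have $W_A(x)=m\geq W_B$. Keeping $y=x$ in \eqref{eq:pointwise} gives $W_M(x+s(z'-x))\geq W_B(z')$ for all $z'$. With $z'=x+r_jz$, and since $W=w$ quasi-everywhere, this yields $v_{M,j}(sz)\geq v_{B,j}(z)$ almost everywhere. The almost everywhere convergence in \eqref{eq:contact-compactness} then gives $U\geq V$ almost everywhere. Because $U$ and $V$ are $p$-superharmonic, \eqref{eq:superharmonic-representative} upgrades this to $U\geq V$ everywhere.

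\emph{Measure order.} This is the step I expect to be the main obstacle; the plan is to compute both measures through capacities using Lemma~\ref{lem:riesz-capacity-limit}. Set $E_{i,j}=(E_i-x)/r_j$ for $E_i\in\{B,M\}$. By Lemma~\ref{lem:equality-functions}, $v_{i,j}=k_j$ quasi-everywhere on $E_{i,j}$ and $\operatorname{supp}\omega_{i,j}\subset E_{i,j}$. Lemma~\ref{lem:contact-compactness} supplies the lower bound $v_{i,j}\geq -c$, the upper bound $v_{i,j}\leq k_j$, and the uniform $L^q$ bounds with $p-1<q<p$, so \eqref{eq:capacity-limit-hypotheses} holds. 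Hence $\mu_{H_i}(D)=\lim_j k_j^{p-1}\capp(E_{i,j}\cap\bar D,O)$ for bounded open $D\Subset O$ with $\mu_{H_i}(\partial D)=0$. Since $x\in A$, we have $M\supset\theta x+(1-\theta)B$, which rescales to $E_{M,j}\supset sE_{B,j}$. Scaling gives $\mu_U(D)=s^{p-n}\mu_{H_M}(sD)$. For $D$ with null boundary for both measures, monotonicity \eqref{eqn9.1}~$(a)$ and scaling \eqref{eqn9.1}~$(c)$, with outer domain $sO$, give
\[
\mu_U(D)=s^{p-n}\lim_j k_j^{p-1}\capp(E_{M,j}\cap s\bar D,sO)
\geq s^{p-n}\lim_j k_j^{p-1}\capp\bigl(s(E_{B,j}\cap\bar D),sO\bigr)=\mu_V(D).
\]
The independence of the limit from $O$ stated in Lemma~\ref{lem:riesz-capacity-limit} handles the change of outer domain from $O$ to $sO$. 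Balls, and finite unions of them, with null boundaries generate the Borel sets, so $\mu_U\geq\mu_V$. The delicate points are the choice of such $D$ for countably many measures simultaneously and checking that the scaled measure of $H_M(s\,\cdot)$ is exactly $s^{p-n}\mu_{H_M}(s\,\cdot)$.

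With \eqref{eq:cone-comparison-hypotheses} verified, Lemma~\ref{lem:cone-comparison} gives a contradiction, so no contact point lies in $A\cap M\cap\partial C$. For $x\in B\cap M\cap\partial C$, fix $z=x$ in \eqref{eq:pointwise} instead. This gives $M\supset\theta A+(1-\theta)x$, and the same argument applies with $s=\theta$, $U=H_M(\theta\,\cdot)$ and $V=H_A$. Regularity of $x$ plays no role in this argument, which is why the proposition covers regular and irregular contact points alike.
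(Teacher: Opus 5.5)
Your proposal is correct and follows the paper's proof essentially step for step: the blow-up via Lemma~\ref{lem:contact-compactness} with $s=1-\theta$, the pointwise order $H_M(s\,\cdot)\geq H_B$ obtained from \eqref{eq:pointwise} with $W_A(x)=m$ fixed, the measure order obtained from $sE_{B,j}\subset E_{M,j}$ through Lemma~\ref{lem:riesz-capacity-limit} and capacity scaling, and the contradiction from Lemma~\ref{lem:cone-comparison}. The only difference is the last step, passing from sets $D$ with null boundary to $\mu_U\geq\mu_V$. The paper integrates over superlevel sets $\{\phi>t\}$ of test functions. Your route through finite unions of balls also works, but the justification should be that every open set is an increasing union of such unions, combined with the regularity of Radon measures, and not merely that these sets generate the Borel $\sigma$-algebra.
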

\begin{proof}
Suppose that $x\in A\cap M\cap\partial C$. Put $s=1-\theta$ and use
the subsequence and notation of Lemma~\ref{lem:contact-compactness}.
Keeping $W_A(x)=m$ fixed in \eqref{eq:pointwise} and using
$W_B\leq m$ gives $W_M(x+s(y-x))\geq W_B(y)$. Since $W_i=w_i$
quasi-everywhere and a fixed dilation preserves polar sets,
$v_{M,j}(sz)\geq v_{B,j}(z)$ almost everywhere. The almost everywhere
convergence in \eqref{eq:contact-compactness} gives
$H_M(sz)\geq H_B(z)$ almost everywhere, hence everywhere by
\eqref{eq:superharmonic-representative} applied to both sides.

Next we compare the Riesz measures. Put $E_{B,j}=(B-x)/r_j$ and
$E_{M,j}=(M-x)/r_j$. Then, for $i=B,M$, $\operatorname{supp}\omega_{i,j}\subset E_{i,j}$,
$v_{i,j}=k_j$ quasi-everywhere on $E_{i,j}$, and the bounds in
Lemma~\ref{lem:contact-compactness} give all hypotheses of
Lemma~\ref{lem:riesz-capacity-limit}. Since $x\in A$,
$sE_{B,j}\subset E_{M,j}$. Let $D$ be bounded and open, let $O$ be a
bounded open set containing $\overline D$, and suppose
$\mu_{H_B}(\partial D)=\mu_{H_M}(s\partial D)=0$. Monotonicity and
scaling of capacity give
\[
 \operatorname{Cap}_p(E_{M,j}\cap s\overline D,sO)
 \geq\operatorname{Cap}_p\bigl(s(E_{B,j}\cap\overline D),sO\bigr)
 =s^{n-p}\operatorname{Cap}_p(E_{B,j}\cap\overline D,O).
\]
Multiplying by $k_j^{p-1}$ and applying
Lemma~\ref{lem:riesz-capacity-limit} in $O$ and $sO$, we obtain
\begin{equation}\label{eq:limiting-source-contraction}
 \mu_{H_M}(sD)\geq s^{n-p}\mu_{H_B}(D).
\end{equation}
For $0\leq\phi\in C_c^\infty(\rn{n})$ and almost every $t>0$, the
boundary of $D_t=\{\phi>t\}$ is null for $\mu_{H_B}$ and for
$D\mapsto s^{p-n}\mu_{H_M}(sD)$, since these boundaries lie in
disjoint level sets of $\phi$. Applying
\eqref{eq:limiting-source-contraction} to $D_t$ and integrating in
$t$ gives $s^{p-n}\mu_{H_M}(s\,\cdot)\geq\mu_{H_B}$. Thus
$U(z)=H_M(sz)$ and $V=H_B$ satisfy $U\geq V$ and $\mu_U\geq\mu_V$, and
by \eqref{eq:contact-cone-signs}, $U=Q(s\,\cdot)$ and $V=Q$ outside
$\mathfrak C$, and $U,V\geq0$ inside $\mathfrak C$.
Lemma~\ref{lem:cone-comparison} gives a contradiction.

If $x\in B\cap M\cap\partial C$, the same argument with $A$ and $B$
interchanged, $s=\theta$, and $W_B(x)=m$ fixed in \eqref{eq:pointwise}
gives $H_M(s\,\cdot)\geq H_A$ and $s^{p-n}\mu_{H_M}(s\,\cdot)\geq\mu_{H_A}$,
and again a contradiction.
\end{proof}

\begin{proof}[Proof of Theorem~\ref{thmA} $(d)$]
Normalize an equality pair as in \eqref{eq:normalization} and
start the common convex levels using Lemma~\ref{lem:farlevels}.
If $T_*<m$, Lemma~\ref{lem:continuation-endpoint} gives
the common convex set $C$ at the level $T_*$.
Put $T=T_*$. There must be a contact with $A$ or $B$.
Indeed, if $A,B\Subset\operatorname{int}C$, convexity and
compactness imply
$M=\theta A+(1-\theta)B\Subset\operatorname{int}C$ as well,
and Lemma~\ref{lem:bootstrap} continues the levels, contrary to
\eqref{eq:continuation-supremum}.
Lemma~\ref{lem:regular-contact} puts every such contact in
$M\cap\partial C$ and makes it irregular for $\Omega_M$.
For $p>n$, all compact sets are regular, so this is impossible.
For $1<p\leq n$, Proposition~\ref{prop:unrestricted-contact}
excludes every contact in $(A\cup B)\cap M\cap\partial C$, and hence
every contact with $A$ or $B$.
Consequently $T_*=m$, and Lemma~\ref{lem:continuation-endpoint}
gives the common decreasing intersection
$\widehat A=\widehat B=\widehat M$ by
\eqref{eq:intersection}, and it is convex.
Undoing the translations and dilations proves that the filled
sets are convex and homothetic.
Conversely, suppose $\widehat E_i=C_i$ are convex and
$C_2=a+\rho C_1$, with $\rho>0$.
Lemma~\ref{lem4.3} gives
\[
\widehat{\lambda E_1+(1-\lambda)E_2}
=(1-\lambda)a+\bigl(\lambda+(1-\lambda)\rho\bigr)C_1.
\]
Translation invariance, homogeneity, and filling invariance imply
\begin{align*}
\Phi_p\bigl(\lambda E_1+(1-\lambda)E_2\bigr)
&=\bigl(\lambda+(1-\lambda)\rho\bigr)\Phi_p(C_1)\\
&=\lambda\Phi_p(E_1)+(1-\lambda)\Phi_p(E_2).
\end{align*}
This proves sufficiency for every $1<p<\infty$. This finishes the proof of Theorem~\ref{thmA} $(d)$. 
\end{proof}

\begin{remark}\label{rmk:unique-continuation}
Analytic continuation is used above only where the gradient is
nonzero. Unique continuation for two $p$-harmonic functions, open for
$n\geq3$ and $p\ne2$ \cite{GM}, would simplify the continuation
argument but would not exclude irregular contacts.
\end{remark}

\begin{remark}\label{rmk:zero-capacity-equality}
Let $E_1,E_2$ be nonempty compact sets, $0<\lambda<1$, and
$S=\lambda E_1+(1-\lambda)E_2$.
$(a)$ If $\Phi_p(E_1)=0<\Phi_p(E_2)$, then
$\Phi_p(S)=(1-\lambda)\Phi_p(E_2)$ if and only if $E_1$ is a single
point, and similarly with the indices interchanged.
$(b)$ If $\Phi_p(E_1)=\Phi_p(E_2)=0$ and $p>n$, both sets are single
points and equality holds.
$(c)$ If $\Phi_p(E_1)=\Phi_p(E_2)=0$ and $1<p\leq n$, equality holds
if and only if $S$ is $p$-polar, since the right-hand side of
\eqref{eq:equality} is zero and a compact set has $\Phi_p=0$ exactly
when it is $p$-polar (by Theorem~\ref{thmB} $(d)$ if $p=n$, and by
the cutoffs $\chi_R$, \eqref{eqn4.6} and the Sobolev inequality if $p<n$).
\end{remark}

\begin{proof}[Proof of $(a)$ and $(b)$]
A single point gives equality by translation invariance and
homogeneity, and for $p>n$ a compact set of zero capacity is a single
point by Lemma~\ref{lem4.2}. This proves $(b)$, and $(a)$ for $p>n$.
Let $1<p\leq n$. We show that for a compact set $K$ with
$\Phi_p(K)>0$ and a vector $v\ne0$,
\begin{equation}\label{eq:strict-translate-union}
\Phi_p\bigl(K\cup(K+v)\bigr)>\Phi_p(K).
\end{equation}
This gives $(a)$. Indeed, if $a\ne b$ lie in $E_1$, then
$K=\lambda a+(1-\lambda)E_2$ and $v=\lambda(b-a)$ satisfy
$K\cup(K+v)\subset S$ and $\Phi_p(K)=(1-\lambda)\Phi_p(E_2)>0$, so
monotonicity and \eqref{eq:strict-translate-union} give
$\Phi_p(S)>(1-\lambda)\Phi_p(E_2)$.

Put $L=K\cup(K+v)$ and suppose $\Phi_p(L)=\Phi_p(K)$.
Let $p<n$. By Lemma~\ref{lem:equality-functions}, $w_L$ is
admissible for $K$ and for $K+v$ in the variational problem stated
there, with energy $\capp(L)=\capp(K)=\capp(K+v)$. Uniqueness and
translation invariance give $w_K=w_L=w_{K+v}=w_K(\cdot-v)$ almost
everywhere. The sets
$D_j=\{x:j\leq\langle x,v\rangle/|v|^2<j+1\}$, $j\in\mathbb Z$,
cover $\rn{n}$, and the integrals of $|w_K|^{p^*}$ over them are
equal with finite sum. Hence $w_K=0$ almost everywhere, contrary to
$\int|\nabla w_K|^p\,dx=\capp(K)>0$.

Let $p=n$, and for $E\in\{K,L,K+v\}$ put $g_E=-w_E$. Off $E$,
$g_E=\bar G_E\geq0$ is $n$-harmonic, and
\eqref{eq:equality-green-asymptotic} gives
$g_E(x)=\gamma\log|x|+1+k_E+o(1)$ with
$k_E=-\gamma\log\Phi_n(E)$. The $C^1$ convergence in the proof of
Lemma~\ref{lem:farlevels}, which uses only these expansions, gives
\begin{equation}\label{eq:zero-case-green-gradient}
\nabla g_E(x)=\gamma\frac{x}{|x|^2}+o(|x|^{-1})
\qquad\text{as }|x|\to\infty,
\end{equation}
uniformly in direction. Since $K_\delta\subset L_\delta$, the
comparison \eqref{eqn4.17} $(c)$ for these closed neighbourhoods and
the outer limit \eqref{eqn2.19} give $g_K\geq g_L$ off $L$.
Likewise $g_{K+v}\geq g_L$ off $L$, and translating the
neighbourhoods gives $g_{K+v}=g_K(\cdot-v)$.

We claim that $g_K=g_L$ outside a large ball. Put $h=g_K-g_L\geq0$.
Since $k_K=k_L$, $h(x)\to0$ as $|x|\to\infty$.
By \eqref{eq:zero-case-green-gradient} there is $R$ such that, for
$|x|>R$ and $0\leq t\leq1$, the vector
$\mathbf v_t(x)=(1-t)\nabla g_L(x)+t\nabla g_K(x)$ has length
between $\gamma/(2|x|)$ and $2\gamma/|x|$. Subtracting the two weak
equations as in the proof of Lemma~\ref{lem9.7} gives
\[
\nabla\cdot(\mathcal B(x)\nabla h)=0,\qquad
\mathcal B(x)=\int_0^1
D_{\mathbf v}\bigl(|\mathbf v|^{n-2}\mathbf v\bigr)\big|_{\mathbf v=\mathbf v_t(x)}\,dt,
\]
in $\{|x|>R\}$, and
$c^{-1}|x|^{2-n}|\zeta|^2\leq\langle\mathcal B(x)\zeta,\zeta\rangle
\leq c|x|^{2-n}|\zeta|^2$ for $\zeta\in\rn{n}$. This linear equation
is uniformly elliptic on every relatively compact subdomain of
$\{|x|>R\}$, so the linear Harnack inequality used in
Lemma~\ref{lem9.7} and connectedness give $h=0$ or $h>0$ there.
Suppose $h>0$, choose $\rho>R$, and put
$d=\min_{\partial B(0,\rho)}h>0$. For $0<\epsilon<1$, the
$n$-harmonic function $(1-\epsilon)g_L+d$ does not exceed $g_K$ on
$\partial B(0,\rho)$, since $g_L\geq0$, and
$g_K-(1-\epsilon)g_L-d=\epsilon\gamma\log|x|+O(1)>0$ on every
sufficiently large sphere. Comparison on annuli and passage of the
outer radius to infinity give $g_K\geq(1-\epsilon)g_L+d$ for
$|x|\geq\rho$. Letting $\epsilon\downarrow0$ gives $h\geq d$ there,
contrary to $h(x)\to0$. This proves the claim.

The claim, applied also to $K+v\subset L$, gives
$g_K=g_L=g_{K+v}=g_K(\cdot-v)$ outside a ball $B(0,R')$.
Choose $t>0$ with $t|v|>R'$. Then all points $(t+j)v$,
$j=0,1,\ldots$, lie outside $B(0,R')$, and successive use of this
identity gives $g_K((t+j)v)=g_K(tv)$ for every $j$. This contradicts
$g_K(x)=\gamma\log|x|+O(1)$ and proves
\eqref{eq:strict-translate-union}.
\end{proof}

\begin{proof}[Proof of Corollary~\ref{cor:self-average}]
Apply Theorem~\ref{thmA} $(d)$ with $E_1=E_2=E$ and the given $\lambda$.
\end{proof}

\begin{proof}[Proof of Corollary~\ref{cor:sum-parallel}]
Theorem~\ref{thmA} $(a), (c)$ and Theorem~\ref{thmB} $(c)$, with $\lambda=1/2$,
and homogeneity of degree one give \eqref{eq:sum-capacity}.
Its positive-capacity equality case is Theorem~\ref{thmA} $(d)$.
Since $E_\delta=E+\delta\bar B$ by Lemma~\ref{lem9.1} $(a)$,
\eqref{eq:parallel-capacity} follows.

If $\Phi_p(E)>0$, the equality criterion for the sum with
$\delta\bar B$ says exactly that $\hat E$ is a closed ball of
positive radius. If $\Phi_p(E)=0$, part $(a)$ of
Remark~\ref{rmk:zero-capacity-equality}, applied with
$\lambda=1/2$, $E_1=E$, $E_2=\delta\bar B$ and a dilation by $2$, says that equality holds
exactly when $E$ is a single point.
Conversely, if $\hat E$ is a closed ball of radius $r\geq0$,
\eqref{eq:filled-neighbourhoods} makes
$\widehat{E_\delta}$ the concentric ball of radius $r+\delta$.
Filling invariance and homogeneity then give equality.
\end{proof}

\end{document}